\definecolor{verydarkblue}{rgb}{0,0,0.5}
\newtheorem{thm}{Theorem}[section]
\newtheorem{lemma}[thm]{Lemma}
\newtheorem{coro}[thm]{Corollary}
\newtheorem{prop}[thm]{Proposition}
\newtheorem{claim}[thm]{Claim}
\newtheorem{thmA}{Theorem}
\newcommand{\thistheoremname}{}
\newtheorem*{genericthm}{\thistheoremname}
\newenvironment{namedthm}[1]
  {\renewcommand{\thistheoremname}{#1}%
   \begin{genericthm}}
  {\end{genericthm}}
\theoremstyle{definition}
\newtheorem{defn}[thm]{Definition}
\newtheorem{prob}[thm]{Problem}
\theoremstyle{remark}
\newcommand{\margindbend}{%
    \makebox[0pt][r]{\small\raisebox{-2pt}{\textdbend}\hspace{1em}}%
}
\newenvironment{ex}{\refstepcounter{thm}\begin{proof}[Example\! \emph{\thethm}]}{\end{proof}}
\newenvironment{rem}{\refstepcounter{thm}\begin{proof}[Remark \emph{\thethm}]}{\end{proof}}
\newenvironment{warn}{\refstepcounter{thm}\begin{proof}[{\margindbend}Warning \emph{\thethm}]}{\end{proof}}
\newenvironment{cons}{\refstepcounter{thm}\begin{proof}[Construction \emph{\thethm}]}{\end{proof}}
\newenvironment{nota}{\refstepcounter{thm}\begin{proof}[Notation \emph{\thethm}]}{\end{proof}}
\numberwithin{equation}{section}
\renewcommand*{\backrefalt}[4]{%
    \tiny%
    (%
    \ifcase #1 not cited%
          \or cit.~on~p.~#2%
          \else cit.~on~pp.~#2%
    \fi%
    )%
}
\def\print@backrefs#1{%
    \space\SentenceSpace%
    \begingroup%
        \expandafter\providecommand\csname brc@#1\endcsname{0}%
        \expandafter\providecommand\csname brcd@#1\endcsname{0}%
        \expandafter\backrefalt%
            \csname brc@#1\expandafter\endcsname%
            \csname brl@#1\expandafter\endcsname%
            \csname brcd@#1\expandafter\endcsname%
            \csname brld@#1\endcsname%
    \endgroup%
}
\def\maketitle{\par
  \@topnum\z@ 
  \@setcopyright
  \thispagestyle{empty}
  \ifx\@empty\shortauthors \let\shortauthors\shorttitle
  \else \andify\shortauthors
  \fi
  \@maketitle@hook
  \begingroup
  \@maketitle
  \toks@\@xp{\shortauthors}\@temptokena\@xp{\shorttitle}%
  \toks4{\def\\{ \ignorespaces}}
  \edef\@tempa{%
    \@nx\markboth{\the\toks4
      \@nx\MakeUppercase{\the\toks@}}{\the\@temptokena}}%
  \@tempa
  \endgroup
  \c@footnote\z@
    \renewcommand{\footnoterule}{%
      \kern -3pt
      \hrule width \textwidth height .5pt
      \kern 2pt
    }
  {
    \renewcommand\thefootnote{}
    \vspace{-2em}
    \footnote{
      \par\vspace{-1.2em}\noindent%
      \setlength{\parindent}{0pt}%
      \def\@footnotetext##1{\noindent{\footnotesize##1}\par}%
      \let\@makefnmark\relax  \let\@thefnmark\relax
      \ifx\@empty\@date\else \@footnotetext{\@setdate}\fi
      \ifx\@empty\@subjclass\else \@footnotetext{\@setsubjclass}\fi
      \ifx\@empty\@keywords\else \@footnotetext{\@setkeywords}\fi
      \ifx\@empty\thankses\else \@footnotetext{%
        \@setthanks}%
      \fi
    }
    \addtocounter{footnote}{-1}
  }
  \@cleartopmattertags
}
\def\@adminfootnotes{\@empty}
\def\@settitle{\begin{center}%
  \baselineskip14\p@\relax
    \bfseries
\Large
  \@title
  \end{center}%
}
\def\@setauthors{%
  \begingroup
  \def\thanks{\protect\thanks@warning}%
  \trivlist
  \centering\footnotesize \@topsep30\p@\relax
  \advance\@topsep by -\baselineskip
  \item\relax
  \author@andify\authors
  \def\\{\protect\linebreak}%
  \large{\authors}%
  \ifx\@empty\contribs
  \else
    ,\penalty-3 \space \@setcontribs
    \@closetoccontribs
  \fi
  \endtrivlist
  \endgroup
}
\def\@setaddresses{\par
  \nobreak \begingroup
\footnotesize
  \def\author##1{\end{minipage}\hskip 2em \begin{minipage}[t]{.5\textwidth minus
  1em}\raggedright%
    ~\\[2em]{\bf##1}\\[.5em]%
  }%
  \interlinepenalty\@M
  \def\address##1##2{\begingroup
    {\ignorespaces##2}\endgroup\\[.5em]}%
  \def\curraddr##1##2{\begingroup
    \@ifnotempty{##2}{\nobreak\indent\curraddrname
      \@ifnotempty{##1}{, \ignorespaces##1\unskip}\/:\space
      ##2\par}\endgroup}%
  \def\email##1##2{\begingroup
    \@ifnotempty{##2}{\nobreak\indent
      \@ifnotempty{##1}{, \ignorespaces##1\unskip}
      \ttfamily##2\par}\endgroup}%
  \def\urladdr##1##2{\begingroup
    \def~{\char`\~}%
    \@ifnotempty{##2}{\nobreak\indent\urladdrname
      \@ifnotempty{##1}{, \ignorespaces##1\unskip}\/:\space
      \ttfamily##2\par}\endgroup}%
  \setlength{\parindent}{0pt}%
  \vfill%
  {
  \hskip -2em%
  \begin{minipage}{0mm}
  \addresses
  \end{minipage}
  }
  \endgroup
}
\renewcommand{\author}[2][]{%
  \ifx\@empty\authors
    \gdef\authors{#2}%
    \g@addto@macro\addresses{\author{#2}}%
  \else
    \g@addto@macro\authors{\and#2}%
    \g@addto@macro\addresses{\author{#2}}%
  \fi
  \@ifnotempty{#1}{%
    \ifx\@empty\shortauthors
      \gdef\shortauthors{#1}%
    \else
      \g@addto@macro\shortauthors{\and#1}%
    \fi
  }%
}
\edef\author{\@nx\@dblarg
  \@xp\@nx\csname\string\author\endcsname}
\def\@secnumfont{\@empty}
\def\section{\@startsection{section}{1}%
  \z@{.7\linespacing\@plus\linespacing}{.5\linespacing}%
  {\large\bfseries\centering}}
\newcommand{\my@ifeq}[3]{%
    \edef\my@a{#1}%
    \edef\my@b{#2}%
    \ifx\my@a\my@b#3\fi%
}
\newcommand{\@doititle@doi}[1]{%
    \href%
        {https://doi.org/\csname bib'doi\endcsname}%
        {\textit{#1}}%
}
\newcommand{\@doititle@url}[1]{%
    \href%
        {\csname bib'url\endcsname}%
        {\textit{#1}}%
}
\newcommand{\@doititle@mr}[1]{{%
    \def\MR##1{##1}%
    \def\fld@elt##1{##1}%
    \href%
        {http://www.ams.org/mathscinet-getitem?mr=\csname bib'review\endcsname}%
        {\textit{#1}}%
}}
\newcommand{\my@eprint}[1]{%
    \IfEmptyBibField{eprinttype}{%
        \IfEmptyBibField{archivePrefix}{%
            \edef\my@etype{arXiv}%
        }{%
            \edef\my@etype{\csname bib'archivePrefix\endcsname}%
        }%
    }{%
        \edef\my@etype{\csname bib'eprinttype\endcsname}%
    }%
    \def\my@arxiv{%
        \href%
            {https://arxiv.org/abs/#1}%
            {\tt arXiv:#1}%
    }%
    \my@ifeq{\my@etype}{arxiv}{\my@arxiv}%
    \my@ifeq{\my@etype}{arXiv}{\my@arxiv}%
    \my@ifeq{\my@etype}{ArXiv}{\my@arxiv}%
    \def\my@hal{%
        \href%
            {https://hal.archives-ouvertes.fr/#1}%
            {\tt #1}%
    }%
    \my@ifeq{\my@etype}{hal}{\my@hal}%
    \my@ifeq{\my@etype}{HAL}{\my@hal}%
    \my@ifeq{\my@etype}{Hal}{\my@hal}%
}
\newcommand{\@doititle@eprint}[1]{%
    \IfEmptyBibField{eprinttype}{%
        \IfEmptyBibField{archivePrefix}{%
            \edef\my@etype{arXiv}%
        }{%
            \edef\my@etype{\csname bib'archivePrefix\endcsname}%
        }%
    }{%
        \edef\my@etype{\csname bib'eprinttype\endcsname}%
    }%
    \def\my@arxiv{%
        \href%
            {https://arxiv.org/abs/\csname bib'eprint\endcsname}%
            {\textit{#1}}%
    }%
    \my@ifeq{\my@etype}{arxiv}{\my@arxiv}%
    \my@ifeq{\my@etype}{arXiv}{\my@arxiv}%
    \my@ifeq{\my@etype}{ArXiv}{\my@arxiv}%
    \def\my@hal{%
        \href%
            {https://hal.archives-ouvertes.fr/\csname bib'eprint\endcsname}%
            {\textit{#1}}%
    }%
    \my@ifeq{\my@etype}{hal}{\my@hal}%
    \my@ifeq{\my@etype}{HAL}{\my@hal}%
    \my@ifeq{\my@etype}{Hal}{\my@hal}%
}
\newcommand{\@doititle}[1]{%
    \IfEmptyBibField{doi}{%
        \IfEmptyBibField{url}{%
            \IfEmptyBibField{review}{%
                \IfEmptyBibField{eprint}{%
                    \let\@tempa\textit
                }{%
                    \let\@tempa\@doititle@eprint
                }
            }{%
                \let\@tempa\@doititle@mr
            }%
        }{%
            \let\@tempa\@doititle@url
        }%
    }{%
        \let\@tempa\@doititle@doi
    }%
    \@tempa{#1}%
}
\newcommand{\PrintSecondAuthors}[1]{%
    \ifx\previous@primary\current@primary
        \@empty
    \else
        \PrintNames{, with }{}{#1}%
    \fi
}
\tikzstyle{mutable}=[inner sep=0.5mm,circle,draw,minimum size=2mm]
\tikzstyle{frozen}=[inner sep=.9mm,rectangle,draw]
\tikzstyle{dot} = [fill=black!25,inner sep=0.5mm,circle,draw,minimum size=1mm]
\tikzstyle{blue dot} = [draw=blue,fill=blue!25,inner sep=0.5mm,circle,draw,minimum size=1mm]
\tikzstyle{marked}=[inner sep=0.5mm,circle,draw,blue!75!black,fill=blue!50]
\tikzstyle{outline}=[thick,line width=1.5mm,draw=black!10]
\tikzstyle{oriented}=[draw=red,thick,decoration={markings,mark=at position 0.52 with {\arrow{>}}},postaction={decorate}]
\tikzstyle{antioriented}=[draw=red,thick,decoration={markings,mark=at position 0.52 with {\arrow{<}}},postaction={decorate}]
\tikzstyle{faded oriented}=[draw=black!25,thick,decoration={markings,mark=at position 0.52 with {\arrow{>}}},postaction={decorate}]\tikzstyle{invisible}=[inner sep=-.3, minimum size=-.3]
\tikzstyle{matching}=[line width=1.5pt,blue]
\def\dddots{\rotatebox{-45}{$\cdots$}}
\newcommand{\high}[1]{\textcolor{dark blue}{ #1}}
\colorlet{gpurple}{red!35!blue}
\colorlet{ggreen}{green!50!black}
\colorlet{dark purple}{red!35!blue}
\colorlet{dark green}{green!70!black}
\colorlet{dark red}{red!80!black}
\colorlet{dark blue}{blue!80!black!80!cyan}
\tikzstyle{mutable}=[inner sep=0.5mm,circle,draw,minimum size=2mm]
\tikzstyle{frozen}=[inner sep=.9mm,rectangle,draw]
\tikzstyle{dot} = [fill=black!25,inner sep=0.5mm,circle,draw,minimum size=1mm]
\tikzstyle{marked}=[inner sep=0.5mm,circle,draw,blue!75!black,fill=blue!50]
\tikzstyle{traj}=[black,opacity=.5]
\def\Fr{\operatorname{F}}
\def\pperp{{\ddagger}}
\def\uni{{\mathrm{uni}}}
\def\C{\mathsf{C}}
\def\k{\Bbbk}
\begin{document}


\title{Juggler's friezes}  

\author{Roi Docampo}

\address[R.\ Docampo]{%
Department of Mathematics\\
University of Oklahoma\\
601 Elm Avenue, Room 423\\
Norman, OK 73019 (USA)%
}

\email{roi@ou.edu}

\author{Greg Muller}

\address[G.\ Muller]{%
Department of Mathematics\\
University of Oklahoma\\
601 Elm Avenue, Room 423\\
Norman, OK 73019 (USA)%
}

\email{gmuller@ou.edu}

\subjclass[2020]{%
Primary {\scriptsize 
05E99
}%
; Secondary {\scriptsize 
39A24
, 14M15
,13F60
}%
.}

\keywords{Friezes, juggling functions, Grassmannians, positroid cells.}

\thanks{%
The research of the first author was partially supported by a grant from the
Simons Foundation (638459,~RD)%
}

\begin{abstract}
This note generalizes $\mathrm{SL}(k)$-friezes to configurations of numbers in
which one of the boundary rows has been replaced by a ragged edge (described by
a \emph{juggling function}). We provide several equivalent
definitions/characterizations of these \textbf{juggler's friezes}, in terms of
determinants, linear recurrences, and a dual juggler's frieze. We generalize
classic results, such as periodicity, duality, and a parametrization by part
of a Grassmannian. We also provide a method of constructing such friezes from
certain $k\times n$ matrices using the \emph{twist} of a matrix defined in
\cite{MS16}.
\end{abstract}

\vspace*{0.1em}
\maketitle


$\mathrm{SL}(k)$-friezes are configurations of numbers in which certain diamonds have determinant $1$ or $0$ (when regarded as matrices). The theory began in \cite{Cox71}, when Coxeter introduced what would now be called $\mathrm{SL}(2)$-friezes. This was followed by the (relatively) classical theory of $\mathrm{SL}(k)$-friezes, including periodicity and many enumerative results \cite{Cox71, CR72, CC73a,Sha84}.

The study of friezes recently resurged after a connection to cluster algebras was found in \cite{CC06}. This fueled a number of generalizations and theorems; most notably, that $\mathrm{SL}(k)$-friezes of height $h$ are parametrized by a certain subvariety of the Grassmannian $\mathrm{Gr}(k,k+h)$ \cite{MGOST14}, and each cluster on this Grassmannian determines a unique positive integral frieze \cite{BFGST21}. Many results on clusters can then be translated into friezes, such as enumerative results and mutation rules; see \cite{MG15,BFGST18a}.
However, the Grassmannian has a much richer structure to exploit: the Grassmannian has a stratification into \emph{positroid varieties}, each of which has a cluster structure. The subvariety and cluster structure corresponding to $\mathrm{SL}(k)$-friezes is the `big cell' in this stratification. 

In this paper, we extend this connection to other positroid varieties by introducing a generalization of $\mathrm{SL}(k)$-friezes, dubbed \textbf{juggler's friezes} for the \emph{juggling functions} that give them their shape.
We prove that juggler's friezes enjoy analogs of many of the vital properties of $\mathrm{SL}(k)$-friezes.
\begin{itemize}
    \item Juggler's friezes are periodic (Theorem \ref{thm: periodicity}).
    \item Juggler's friezes are parametrized by the unimodular positroid variety (Theorem \ref{thm: Frbijection}).
    \item Juggler's friezes are {linear recurrences with superperiodic solutions} (Theorem \ref{thm: juggleqp}).
    \item Juggler's friezes are characterized by a dual transform (Theorem \ref{thm: juggleduality}).
\end{itemize}

Sections 1 and 2 review the theory of $\mathrm{SL}(k)$-friezes, highlighting the results and techniques we later generalize. One original result here is a reformulation of the parametrization of $\mathrm{SL}(k)$-friezes by the Grassmannian in terms of the \emph{twist} of a matrix (Theorem \ref{thm: cons}).

Sections 3 and 4 introduce juggler's friezes and review their aforementioned properties. The exposition intentionally mirrors the preceding sections, to highlight the connection between $\mathrm{SL}(k)$-friezes and the broader theory of juggler's friezes. Proofs are deferred until Section 5.
While we describe the connection between juggler's friezes and cluster algebras in Section \ref{section: clusters}, proofs of these results exceed the scope of this paper, and will appear in a subsequent paper \cite{MulRes2}.


\newpage

\vspace*{\fill}

\tableofcontents

\vfill

\newpage

\section{$\mathrm{SL}(k)$-friezes}

\subsection{A breezy review of $\mathrm{SL}(k)$-friezes}\label{section: breezy}

For a positive integers $h$ and $k$, a \textbf{$\mathrm{SL}(k)$-frieze of height $h$} consists of $(h+1)$-many rows of numbers,\footnote{By `numbers', we mean elements in a fixed field $\k$, although all our examples will have values in the integers.} offset in a diamond grid, such that 
\begin{itemize}
    \item the top and bottom rows consist entirely of 1s, 
    \item each solid $k\times k$-minor is 1 (the \textbf{frieze} condition), and
    \item each solid $(k+1)\times (k+1)$-minor is 0 (the \textbf{tameness} condition).
\end{itemize}
By a \emph{solid minor}, we mean the determinant of a diamond of entries, regarded as a matrix by rotating $45^\circ$ clockwise.\footnote{\textsc{Fine print}: The empty entries above and below the frieze are treated as 0s, and we only impose the minor conditions on diamonds whose center is strictly between the top and bottom rows. A more precise definition is in Definition \ref{defn: frieze}.} 

\begin{rem}
Note that, by our definition, \textbf{every $\mathrm{SL}(k)$-frieze is tame}.
Most authors define `friezes' without the tameness condition, and use the tameness condition to define \emph{tame friezes}.
\end{rem}

\begin{warn}
The \textbf{height} $h$ counts the gaps between rows, not the number of rows themselves!\footnote{This varies among references. \cite{CC73a} call $h+1$ the \emph{order} of a frieze, and \cite{MG15} calls $h-1$ the \emph{width} of a frieze.}
\end{warn}

\begin{ex}\label{ex: intro1}
An $\mathrm{SL}(2)$-frieze of height $6$.
\[
\begin{tikzpicture}[baseline=(M-7-1.base),
    ampersand replacement=\&,
    ]
    \matrix[matrix of math nodes,
        matrix anchor = M-1-8.center,
        origin/.style={},
        throw/.style={},
        pivot/.style={draw,circle,inner sep=0.25mm,minimum size=2mm},       
        nodes in empty cells,
        inner sep=0pt,
        nodes={anchor=center},
        column sep={.4cm,between origins},
        row sep={.4cm,between origins},
    ] (M) at (0,0) {
     \& 1 \& \& 1 \& \& 1 \& \& 1 \& \& 1 \& \& 1 \& \& 1 \& \& 1 \&  \& 1 \& \& 1 \& \& 1 \& \& 1 \& \& 1 \& \& 1 \& \& 1 \& \& 1 \& \& 1 \& \& 1 \& \\
    \cdots \& \& 3 \& \& 2 \& \& 2 \& \& 1 \& \& 4 \& \& 3 \& \& 1 \& \& 2 \& \& 3 \& \& 2 \& \& 2 \& \& 1 \& \& 4 \& \& 3 \& \& 1 \& \& 2 \& \& 3 \& \& \cdots  \\
     \& 5 \& \& 5 \& \& 3 \& \& 1 \& \& 3 \& \& 11 \& \& 2 \& \& 1 \& \& 5 \& \& 5 \& \& 3 \& \& 1 \& \& 3 \& \& 11 \& \& 2 \& \& 1 \& \& 5 \& \& 5 \& \\
    \cdots \& \& 8 \& \& 7 \& \& 1 \& \& 2 \& \& 8 \& \& 7 \& \& 1 \& \& 2 \& \& 8 \& \& 7 \& \& 1 \& \& 2 \& \& 8 \& \& 7 \& \& 1 \& \& 2 \& \& 8 \& \& \cdots  \\
     \& 3 \& \& 11 \& \& 2 \& \& 1 \& \& 5 \& \& 5 \& \& 3\& \& 1 \& \& 3 \& \& 11 \& \& 2 \& \& 1 \& \& 5 \& \& 5 \& \& 3\& \& 1 \& \& 3 \& \& 11 \&  \\
    \cdots \& \& 4 \& \& 3 \& \& 1 \& \& 2 \& \& 3 \& \& 2 \& \& 2 \& \& 1 \& \& 4 \& \& 3 \& \& 1 \& \& 2 \& \& 3 \& \& 2 \& \& 2 \& \& 1 \& \& 4 \& \& \cdots   \\
     \& 1 \& \& 1 \& \& 1 \& \& 1 \& \& 1 \& \& 1 \& \& 1 \& \& 1 \& \& 1 \& \& 1 \& \& 1 \& \& 1 \& \& 1 \& \& 1 \& \& 1 \& \& 1 \& \& 1 \& \& 1 \& \\
    };
    
    \draw[dark blue, fill=dark blue!50,opacity=.25,rounded corners] (M-3-11.center) -- (M-1-13.center) -- (M-3-15.center) -- (M-5-13.center) -- cycle;
    \draw[dark red, fill=dark red!50,opacity=.25,rounded corners] (M-4-20.center) -- (M-1-23.center) -- (M-4-26.center) -- (M-7-23.center) -- cycle;

\end{tikzpicture} \]
Note that the \textcolor{dark blue}{left submatrix} has determinant 1, and the \textcolor{dark red}{right submatrix} has determinant 0.
\end{ex}

\begin{ex}\label{ex: intro2}
An $\mathrm{SL}(3)$-frieze of height $5$.
    \[
    \begin{tikzpicture}[baseline=(M-6-1.base),
    ampersand replacement=\&,
    ]
    \clip[use as bounding box] (-7.5,.3) rectangle (7.5,-2.3);
    \matrix[matrix of math nodes,
        matrix anchor = M-1-29.center,
        origin/.style={},
        throw/.style={},
        pivot/.style={draw,circle,inner sep=0.25mm,minimum size=2mm},       
        nodes in empty cells,
        inner sep=0pt,
        nodes={anchor=center},
        column sep={.4cm,between origins},
        row sep={.4cm,between origins},
    ] (M) at (0,0) {
    \& \& \& \& \& 1 \& \& 1 \& \& 1 \& \& 1 \& \& 1 \& \& 1 \& \& 1 \& \& 1 \& \& 1 \& \& 1 \& \& 1 \& \& 1 \& \& 1 \& \& 1 \& \& 1 \& \& 1 \& \& 1 \& \& 1 \& \& 1 \& \& 1 \& \& 1 \& \& 1 \& \& 1 \& \& 1 \& \& \\
    \& \& \& \& 18 \& \& 1 \& \& 5 \& \& \cdots \& \& 3 \& \& 3 \& \& 3 \& \& 1 \& \& 18 \& \& 1 \& \& 5 \& \& 2 \& \& 3 \& \& 3 \& \& 3 \& \& 1 \& \& 18 \& \& 1 \& \& 5 \& \& 2 \& \& 3 \& \& \cdots \& \& 3 \& \& 1 \& \& \\
    \& \& \& 7 \& \& 16 \& \& 1 \& \& 8 \& \& 3 \& \& 6 \& \& 5 \& \& 2 \& \& 7 \& \& 16 \& \& 1 \& \& 8 \& \& 3 \& \& 6 \& \& 5 \& \& 2 \& \& 7 \& \& 16 \& \& 1 \& \& 8 \& \& 3 \& \& 6 \& \& 5 \& \& 2 \& \& \\
    \& \& 4 \& \& 6 \& \& 9 \& \& 1 \& \& \cdots \& \& 4 \& \& 7 \& \& 3 \& \& 4 \& \& 6 \& \& 9 \& \& 1 \& \& 10 \& \& 4 \& \& 7 \& \& 3 \& \& 4 \& \& 6 \& \& 9 \& \& 1 \& \& 10 \& \& 4 \& \& \cdots \& \& 3 \& \& \\
    \& 2 \& \& 3 \& \& 3 \& \& 4 \& \& 1 \& \& 11 \& \& 2 \& \& 4 \& \& 2 \& \& 3 \& \& 3 \& \& 4 \& \& 1 \& \& 11 \& \& 2 \& \& 4 \& \& 2 \& \& 3 \& \& 3 \& \& 4 \& \& 1 \& \& 11 \& \& 2 \& \& 4 \& \& \\
    1 \& \& 1 \& \& 1 \& \& 1 \& \& 1 \& \& \cdots \& \& 1 \& \& 1 \& \& 1 \& \& 1 \& \& 1 \& \& 1 \& \& 1 \& \& 1 \& \& 1 \& \& 1 \& \& 1 \& \& 1 \& \& 1 \& \& 1 \& \& 1 \& \& 1 \& \& 1 \& \& \cdots \& \& \\
    };
    \end{tikzpicture}\qedhere
    \]
\end{ex}

Coxeter \cite{Cox71} introduced $\mathrm{SL}(2)$-friezes (called \emph{frieze patterns} at the time) to understand and generalize the \emph{pentagramma mirificum}, a recurrence observed by Gauss among the side lengths of a right-angled spherical pentagram, and showed they were always $(h+2)$-periodic (observe that Example \ref{ex: intro1} is $8$-periodic). 
More generally, an $\mathrm{SL}(k)$-frieze of height $h$ are $(h+k)$-periodic \cite{BR10,MGOST14,KV15} (observe that Example \ref{ex: intro2} is $8$-periodic).

The first enumerative result was Coxeter-Conway \cite{CC73a}, who showed that the positive integral $\mathrm{SL}(2)$-friezes of height $h$ are in bijection with triangulations of an $(h+2)$-gon, and are therefore counted by a Catalan number. Despite this, there are infinitely many positive integral $\mathrm{SL}(k)$-friezes of height $h$ except when $\min(k,h)\leq2$, or $\min(k,h)=3$ and $\max(k,h)\leq 6$ (see e.g.~\cite{BFGST21}).

\subsection{(Pre)friezes as matrices}
\label{section: matrices}

Our next goal is to give several characterizations of $\mathrm{SL}(k)$-friezes which replace the frieze and tameness conditions. It will be convenient to refer to infinite strips of numbers without any extra conditions; to this end, a \textbf{prefrieze} will consist of finitely-many rows of numbers offset in a diamond grid, in which the top and bottom rows consist of $1$s; i.e. a frieze minus any conditions on the minors. 

\begin{ex}
A prefrieze of height $3$, satisfying no obvious determinantal identities or periodicity.
    \[
    \begin{tikzpicture}[
        baseline=(M-4-1.base),
    ampersand replacement=\&,
    ]
    \clip[use as bounding box] (-7.5,.3) rectangle (7.5,-1.5);
    \matrix[matrix of math nodes,
        matrix anchor = M-1-29.center,
        origin/.style={},
        throw/.style={},
        pivot/.style={draw,circle,inner sep=0.25mm,minimum size=2mm},       
        nodes in empty cells,
        inner sep=0pt,
        nodes={anchor=center},
        column sep={.4cm,between origins},
        row sep={.4cm,between origins},
    ] (M) at (0,0) {
    \& \& \& \& \& 1 \& \& 1 \& \& 1 \& \& 1 \& \& 1 \& \& 1 \& \& 1 \& \& 1 \& \& 1 \& \& 1 \& \& 1 \& \& 1 \& \& 1 \& \& 1 \& \& 1 \& \& 1 \& \& 1 \& \& 1 \& \& 1 \& \& 1 \& \& 1 \& \& 1 \& \& 1 \& \& 1 \& \& \\
    \& \& \& \&  \& \&  \& \&  \& \& \cdots \& \& 3 \& \& 7 \& \& 9 \& \& 8 \& \& 7 \& \& 7 \& \& 3 \& \& 8 \& \& 1 \& \& 6 \& \& 0 \& \& 6 \& \& 7 \& \& 4 \& \& 5 \& \& 4 \& \& 5 \& \& \cdots \& \&  \& \&  \& \& \\
    \& \& \&  \& \&  \& \&  \& \&  \& \& 8 \& \& 8 \& \& 9 \& \& 7 \& \& 4 \& \& 9 \& \& 1 \& \& 4 \& \& 7 \& \& 7 \& \& 3 \& \& 7 \& \& 4 \& \& 7 \& \& 5 \& \& 7 \& \& 6 \& \& 4 \& \&  \& \&  \& \& \\
    1 \& \& 1 \& \& 1 \& \& 1 \& \& 1 \& \& \cdots \& \& 1 \& \& 1 \& \& 1 \& \& 1 \& \& 1 \& \& 1 \& \& 1 \& \& 1 \& \& 1 \& \& 1 \& \& 1 \& \& 1 \& \& 1 \& \& 1 \& \& 1 \& \& 1 \& \& 1 \& \& \cdots \& \& \\
    };
    \end{tikzpicture}
    \qedhere
    \]
\end{ex}


It will be convenient to identify each prefrieze with a $\mathbb{Z}\times \mathbb{Z}$-matrix by rotating the prefrieze $45^\circ$ clockwise and setting the top row of the prefrieze to be the main diagonal of the matrix. 

\begin{ex}
The $\mathrm{SL}(3)$-frieze in Example \ref{ex: intro2} corresponds to the following matrix.
\[
\begin{tikzpicture}[baseline=(current bounding box.center),
    ampersand replacement=\&,
    ]
    \matrix[matrix of math nodes,
        nodes in empty cells,
        inner sep=0pt,
        nodes={anchor=center,node font=\scriptsize},
        column sep={.35cm,between origins},
        row sep={.35cm,between origins},
        left delimiter={[},
        right delimiter={]},
    ] (M) at (0,0) {
        \dddots \& \& \&  \& \& \& \& \&  \\
         \& 1 \& |[gray]| 0 \& |[gray]| 0 \& |[gray]| 0 \& |[gray]| 0 \& |[gray]| 0 \& |[gray]| 0 \& |[gray]| 0 \& |[gray]| 0 \& |[gray]| 0 \& |[gray]| 0 \& |[gray]| 0 \& |[gray]| 0 \& |[gray]| 0 \& \\
         \& 3 \& 1 \& |[gray]| 0 \& |[gray]| 0 \& |[gray]| 0 \& |[gray]| 0 \& |[gray]| 0 \& |[gray]| 0 \& |[gray]| 0 \& |[gray]| 0 \& |[gray]| 0 \& |[gray]| 0 \& |[gray]| 0 \& |[gray]| 0 \& \\
         \& 6 \& 3 \& 1 \& |[gray]| 0 \& |[gray]| 0 \& |[gray]| 0 \& |[gray]| 0 \& |[gray]| 0 \& |[gray]| 0 \& |[gray]| 0 \& |[gray]| 0 \& |[gray]| 0 \& |[gray]| 0 \& |[gray]| 0 \& \\
         \& 7 \& 5 \& 3 \& 1 \& |[gray]| 0 \& |[gray]| 0 \& |[gray]| 0 \& |[gray]| 0 \& |[gray]| 0 \& |[gray]| 0 \& |[gray]| 0 \& |[gray]| 0 \& |[gray]| 0 \& |[gray]| 0 \& \\
         \& 4 \& 3 \& 2 \& 1 \& 1 \& |[gray]| 0 \& |[gray]| 0 \& |[gray]| 0 \& |[gray]| 0 \& |[gray]| 0 \& |[gray]| 0 \& |[gray]| 0 \& |[gray]| 0 \& |[gray]| 0 \& \\
         \& 1 \& 2 \& 4 \& 7 \& 18 \& 1 \& |[gray]| 0 \& |[gray]| 0 \& |[gray]| 0 \& |[gray]| 0 \& |[gray]| 0 \& |[gray]| 0 \& |[gray]| 0 \& |[gray]| 0 \& \\
         \& |[gray]| 0 \& 1 \& 3 \& 6 \& 16 \& 1 \& 1 \& |[gray]| 0 \& |[gray]| 0 \& |[gray]| 0 \& |[gray]| 0 \& |[gray]| 0 \& |[gray]| 0 \& |[gray]| 0 \& \\
         \& |[gray]| 0 \& |[gray]| 0 \& 1 \& 3 \& 9 \& 1 \& 5 \& 1 \& |[gray]| 0 \& |[gray]| 0 \& |[gray]| 0 \& |[gray]| 0 \& |[gray]| 0 \& |[gray]| 0 \& \\
         \& |[gray]| 0 \& |[gray]| 0 \& |[gray]| 0 \& 1 \& 4 \& 1 \& 8 \& 2 \& 1 \& |[gray]| 0 \& |[gray]| 0 \& |[gray]| 0 \& |[gray]| 0 \& |[gray]| 0  \\
         \& |[gray]| 0 \& |[gray]| 0 \& |[gray]| 0\& |[gray]| 0 \& 1 \& 1 \& 10 \& 3 \& 3 \& 1 \& |[gray]| 0 \& |[gray]| 0 \& |[gray]| 0 \& |[gray]| 0 \\
         \& |[gray]| 0 \& |[gray]| 0 \& |[gray]| 0 \& |[gray]| 0 \& |[gray]| 0 \& 1 \& 11 \& 4 \& 6 \& 3 \& 1 \& |[gray]| 0 \& |[gray]| 0 \& |[gray]| 0 \\
         \& |[gray]| 0 \& |[gray]| 0 \& |[gray]| 0 \& |[gray]| 0 \& |[gray]| 0 \& |[gray]| 0 \& 1 \& 2 \& 7 \& 5 \& 3 \& 1 \& |[gray]| 0 \& |[gray]| 0 \\
         \& |[gray]| 0 \& |[gray]| 0 \& |[gray]| 0 \& |[gray]| 0 \& |[gray]| 0 \& |[gray]| 0 \& |[gray]| 0 \& 1 \& 4 \& 3 \& 2 \& 1 \& 1 \& |[gray]| 0 \\
         \& |[gray]| 0 \& |[gray]| 0 \& |[gray]| 0 \& |[gray]| 0 \& |[gray]| 0 \& |[gray]| 0 \& |[gray]| 0 \& |[gray]| 0 \& 1 \& 2 \& 4 \& 7 \& 18 \& 1 \\
        \& \& \& \& \& \& \& \& \& \& \& \& \& \& \& \dddots \\
    };
\end{tikzpicture}
\]
The implicit zeroes are in gray.
\end{ex}

This identification allows us to index the entries of a prefrieze via their indices as entries of a matrix. Given a prefrieze $\C$, we let $\C_{a,b}$ be the entry in the $a$th row\footnote{This creates a conflict between the `rows' of a frieze and the `rows' of the corresponding matrix. We generally use `rows' to refer to the first concept, and `rows of the corresponding matrix' otherwise, except in Section \ref{section: proofs}.} and $b$th column of the corresponding matrix; e.g.~the following indeterminant prefrieze of height 4.
\[ 
\begin{tikzpicture}[baseline=(current bounding box.center),
    ampersand replacement=\&,
    ]
    \begin{scope}
    \matrix[matrix of math nodes,
        matrix anchor = M-4-8.center,
        throw/.style={},
        origin/.style={dark green,draw,circle,inner sep=0.25mm,minimum size=2mm},
        pivot/.style={draw,circle,inner sep=0.25mm,minimum size=2mm},       
        nodes in empty cells,
        inner sep=0pt,
        nodes={anchor=center},
        column sep={.65cm,between origins},
        row sep={.65cm,between origins},
    ] (M) at (0,0) {
     \& |[throw]| 1 \& \& |[throw]| 1 \& \& |[throw]| 1 \& \& |[throw]| 1 \& \&|[throw]| 1 \& \& |[throw]| 1 \& \& |[throw]| 1 \& \& \cdots \\
    \cdots \& \& \C_{1,0} \& \& \C_{2,1} \& \& \C_{3,2} \& \&\C_{4,3} \& \&\C_{5,4} \& \&\C_{6,5} \& \&\C_{7,6} \&  \\
     \&\C_{1,-1} \& \&\C_{2,0} \& \&\C_{3,1} \& \&\C_{4,2} \& \&\C_{5,3} \& \&\C_{6,4} \& \&\C_{7,5} \& \& \cdots \\
    \cdots \& \&\C_{2,-1} \& \&\C_{3,0} \& \&\C_{4,1} \& \&\C_{5,2} \& \&\C_{6,3} \& \&\C_{7,4} \& \&\C_{8,5} \&  \\
     \& 1 \& \& 1 \& \& 1 \& \& 1 \& \& 1 \& \& 1 \& \& 1 \& \& \cdots \\
    };
    
    \end{scope}

\end{tikzpicture}
\]
If $\C$ has height $h$, then $\C_{a,a}=\C_{a+h,a}=1$ for all $a$, and $\C_{a,b}=0$ for all $a<b$ or $a>b+h$.
If $I,J\subset \mathbb{Z}$, we let $\C_{I,J}$ denote the submatrix of the corresponding matrix with rows in $I$ and columns in $J$. 
This notation allows us to state the definition of an $\mathrm{SL}(k)$-frieze more precisely.

\newpage

\begin{defn}\label{defn: frieze}
An \textbf{$\mathrm{SL}(k)$-frieze} is a prefrieze $\C$ of height $h$ such that
\begin{enumerate}
    \item $\det(\C_{[a,a+k-1],[b,b+k-1]}) =1$ whenever $b\leq a \leq b+h$, and 
    \item $\det(\C_{[a,a+k],[b,b+k]}) =0$ whenever $b< a < b+h$.
\end{enumerate}
\end{defn}

\subsection{Friezes and duality}\label{section: friezedual}

We begin our alternate characterizations of friezes by observing that, instead of considering solid minors of size $k$ and $k+1$, one may instead consider solid minors centered on the second row with sizes between $k$ and $k+h$.

\begin{lemma}\label{lemma: predual}
A prefrieze of height $h$ is a $\mathrm{SL}(k)$-frieze iff 
\begin{enumerate}
    \item $\det(\C_{[a+1,a+k],[a,a+k-1]}) =1$ for all $a\in \mathbb{Z}$, and 
    \item $\det(\C_{[a+1,a+\ell],[a,a+\ell-1]}) =0$ for all $a\in \mathbb{Z}$ and all $\ell$ with $k<\ell\leq k+h$.
\end{enumerate}
\end{lemma}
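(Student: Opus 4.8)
## Proof Proposal for Lemma \ref{lemma: predual}

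The plan is to prove the two implications separately, both resting on one structural fact: in a prefrieze all of whose solid $k$-minors equal $1$ and all of whose interior solid $(k+1)$-minors vanish, $k$ consecutive rows of the associated matrix are linearly independent while $k+1$ consecutive rows are dependent, so they obey a single linear recurrence of length $k+1$ with nonzero leading and trailing coefficients (the ``local rank $k$'' principle familiar from the frieze / $\mathrm{SL}_k$-tiling literature), and likewise for columns.

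For the ``only if'' direction, suppose $\C$ is an $\mathrm{SL}(k)$-frieze. Condition (1) of the lemma is exactly condition (1) of Definition \ref{defn: frieze} read with $a$ replaced by $a+1$ and $b=a$, the constraint $b\le a+1\le b+h$ amounting to $1\le h$. For condition (2) I would fix $a$ and $\ell$ with $k<\ell\le k+h$ and apply the local rank principle to the $\ell\times\ell$ block $\C_{[a+1,a+\ell],[a,a+\ell-1]}$: its $\ell>k$ rows span only a $k$-dimensional space, so its determinant vanishes. The point requiring care is that the contiguous $(k+1)$-sub-minors invoked in this argument must be the interior ones whose vanishing Definition \ref{defn: frieze}(2) guarantees; checking that the block is positioned so that this holds is what pins down the admissible range of $\ell$, and it is the first place I would be careful.

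For the ``if'' direction, assume conditions (1) and (2). I would first recover tameness, i.e.\ Definition \ref{defn: frieze}(2). The case $\ell=k+1$ of hypothesis (2) gives $\det(\C_{[a+1,a+k+1],[a,a+k]})=0$ for all $a$, the vanishing of the solid $(k+1)$-minor sitting one diagonal below the main diagonal; feeding this, together with hypothesis (1), into the Desnanot--Jacobi (Dodgson condensation) identity transports the vanishing to a neighbouring diagonal, and iterating — with the cases $\ell=k+2,\dots,k+h$ of hypothesis (2) supplying exactly the extra vanishing needed at each step — carries it across all interior diagonals, which is Definition \ref{defn: frieze}(2). With tameness in hand the local rank principle applies, so sliding a solid $k$-minor from one diagonal to the next rescales it by a fixed constant; since the minor along the main diagonal is the determinant of a unit lower-triangular block (hence $1$), and hypothesis (1) forces the value $1$ on the adjacent diagonal as well, that constant is $1$, so every solid $k$-minor equals $1$, which is Definition \ref{defn: frieze}(1).

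The step I expect to be the main obstacle is making the Desnanot--Jacobi induction airtight at its base: one must set up the first step correctly right against the top boundary row of $1$s (where the matrix is unit triangular) and verify that, as $\ell$ increases, the minors furnished by hypothesis (2) are precisely the ones the induction consumes on its way down to the bottom boundary row of $1$s. A related but more clerical difficulty is that several of the index ranges degenerate when $h$ is small or comparable to $k$, and those cases are cleanest to settle by direct computation.
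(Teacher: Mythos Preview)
Your ``only if'' direction parallels the paper's sketch: both argue that in a tame frieze a solid block of size $\ell>k$ has rank at most $k$, hence determinant zero. The paper packages this via Dodgson condensation, you via a row-recurrence principle; either can be made precise with the boundary bookkeeping you already flag.

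For the ``if'' direction the paper takes a different and non-inductive route. It passes to the adjugate $\operatorname{Adj}(\C)$ and observes that hypotheses (1)--(2) say precisely that $\operatorname{Adj}(\C)$ is, up to alternating signs, a prefrieze of height $k$. The Jacobi complementary-minor identity
\[
\det(\C_{[a,a+k-1],[b,b+k-1]})=\pm\det\bigl(\operatorname{Adj}(\C)_{[b+k,a+k-1],[b,a-1]}\bigr)
\]
(and its $(k{+}1)$-analogue) then converts each frieze or tameness minor of $\C$ into a minor of a banded width-$k$ matrix; those minors are of visibly (signed) unitriangular or strictly triangular blocks and evaluate to $1$ or $0$ in one stroke, with no recursion.

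Your Desnanot--Jacobi induction has a genuine gap, and it lies in the body of the induction rather than at the base. Writing $D_m(d)$ for a solid $m$-minor at depth $d$ below the main diagonal, one step works: DJ on the $(k{+}2)$-block at $d=1$ gives
\[
D_{k+2}(1)\,D_{k}(1)=D_{k+1}(1)^2-D_{k+1}(2)\,D_{k+1}(0),
\]
and since $D_{k+1}(0)=1$ while the other three inputs vanish by hypothesis, $D_{k+1}(2)=0$. But at $d\ge 2$ the same identity reads
\[
D_{k+2}(d)\,D_{k}(d)=D_{k+1}(d)^2-D_{k+1}(d{+}1)\,D_{k+1}(d{-}1),
\]
and now $D_{k+1}(d{-}1)=0$ by the previous step, so the equation collapses to $0=0$ and says nothing about $D_{k+1}(d{+}1)$. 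Feeding in larger $\ell$ from hypothesis (2) only manufactures more zeros at $d=1$; once you leave the first subdiagonal every candidate nonzero pivot has itself been killed. Interleaving with the frieze condition lets you push $D_k=1$ forward \emph{given} tameness one step behind, but not conversely, so the circularity is not broken. The adjugate approach sidesteps this by treating all depths simultaneously through a single determinantal identity rather than a step-by-step transport.
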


\begin{proof}[Proof sketch]
A more general proof is given by Lemma \ref{lemma: cofrieze}, but we sketch a simpler argument here.

One direction follows from the observation that, if every solid $k\times k$-minor of an $\ell\times \ell$ matrix is $1$, then the determinant of that matrix is equal to the determinant of the matrix of its solid $(k+1)\times (k+1)$-minors. This is a corollary of the Dodgson condensation algorithm, which `resets' at the $(k+1)$th step if all the $k\times k$-minors are $1$. This observation implies that any $\ell\times \ell$-minor of a $\mathrm{SL}(k)$ frieze is $0$, provided its $k\times k$-minors stay in the region where they are forced to be $1$.

The other direction follows by considering the adjugate matrix $\mathrm{Adj}(\C)$ of the matrix corresponding to $\C$.\footnote{While an infinite matrix may not have an adjugate, a unitriangular matrix does; see Lemma \ref{lemma: minordual}.} The frieze and tameness conditions translate to $\mathrm{Adj}(\C)$ via the identities
\[ \det(\C_{[a,a+k-1],[b,b+k-1]}) = \pm\det(\mathrm{Adj}(\C)_{[b+k,a+k-1],[b,a-1]})\]
\[ \det(\C_{[a,a+k],[b,b+k]}) = \pm\det(\mathrm{Adj}(\C)_{[b+k+1,a+k],[b,a-1]})\]
Conditions (1) and (2) in the statement of the lemma imply the determinants on the right-hand-side are respectively upper unitriangular and strictly upper triangular (up to sign). After checking the signs work out, this shows that $\C$ satisfies the frieze and tameness conditions.
\end{proof}

This lemma may seem arbitrary, but we can rephrase it in a more pleasing form by collecting all sufficiently small solid minors of $\C$ centered on the second row into a single object.
Given a prefrieze $\C$ and a positive integer $n$, define the
\textbf{$n$-truncated dual} $\C^\dagger$ to be the diamond grid of numbers (or
equivalently, the $\mathbb{Z}\times \mathbb{Z}$-matrix) defined by
\[
    \C^\dagger_{a,b}
    :=
    \begin{cases}
        \det(\C_{[b+1,a],[b,a-1]}) &\text{if }b\leq a <b+n, \\
        0 & \text{otherwise.}
    \end{cases}
\]
When $a=b$, the corresponding determinant is empty and is defined to be $1$.

\begin{ex}
The $8$-truncated dual $\C^\dagger$ of Example \ref{ex: intro2} is below. 
\[
\begin{tikzpicture}[
    ampersand replacement=\&,
    ]
    \matrix[matrix of math nodes,
        matrix anchor = M-1-8.center,
        origin/.style={},
        throw/.style={},
        pivot/.style={draw,circle,inner sep=0.25mm,minimum size=2mm},       
        nodes in empty cells,
        inner sep=0pt,
        nodes={anchor=center},
        column sep={.4cm,between origins},
        row sep={.4cm,between origins},
    ] (M) at (0,0) {
     \& 1 \& \& 1 \& \& 1 \& \& 1 \& \& 1 \& \& 1 \& \& 1 \& \& 1 \&  \& 1 \& \& 1 \& \& 1 \& \& 1 \& \& 1 \& \& 1 \& \& 1 \& \& 1 \& \& 1 \& \& 1 \& \\
    \cdots \& \& 3 \& \& 3 \& \& 3 \& \& 1 \& \& 18 \& \& 1 \& \& 5 \& \& 2 \& \& 3 \& \& 3 \& \& 3 \& \& 1 \& \& 18 \& \& 1 \& \& 5 \& \& 2 \& \& 3 \& \& \cdots \\
     \& 3 \& \& 3 \& \& 4 \& \& 1 \& \& 11 \& \& 2 \& \& 4 \& \& 2 \& \& 3 \& \& 3 \& \& 4 \& \& 1 \& \& 11 \& \& 2 \& \& 4 \& \& 2 \& \& 3 \& \& 3 \& \\
    \cdots \& \& 1 \& \& 1 \& \& 1 \& \& 1 \& \& 1 \& \& 1 \& \& 1 \& \& 1 \& \& 1 \& \& 1 \& \& 1 \& \& 1 \& \& 1 \& \& 1 \& \& 1 \& \& 1 \& \& 1 \& \& \cdots \\
    };
\end{tikzpicture} \]
The entries depicted above are only the non-zero ones; there are infinitely many implicit rows of $0$ above and below the strip above. Four of these rows correspond to solid minors of $\C$ of sizes 4 to 7 that vanish; the rest are zero by the definition of $\C^\dagger$.
\end{ex}

The minors in Lemma \ref{lemma: predual} are entries of $\C^\dagger$, and the conditions on these minors translate to the requirement that the $k$th row of $\C^\dagger$ consists of $1$s, and all lower rows consist of $0$s; that is, that $\C^\dagger$ is a prefrieze of height $k$.

\begin{lemma}
A prefrieze $\C$ of height $h$ is an $\mathrm{SL}(k)$-frieze iff the $(h+k)$-truncated dual $\C^\dagger$ is a prefrieze of height $k$.
\end{lemma}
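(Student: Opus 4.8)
The plan is to deduce the statement from Lemma~\ref{lemma: predual} by spelling out what it means for the $(h+k)$-truncated dual $\C^\dagger$ to be a prefrieze of height $k$, and observing that this unwinds word-for-word into conditions~(1) and~(2) of that lemma. By definition $\C^\dagger$ is a prefrieze of height $k$ precisely when $\C^\dagger_{a,a}=\C^\dagger_{a+k,a}=1$ for all $a$ and $\C^\dagger_{a,b}=0$ whenever $a<b$ or $a>b+k$. I would check these three families of equalities against the defining formula $\C^\dagger_{a,b}=\det(\C_{[b+1,a],[b,a-1]})$ (valid for $b\le a<b+h+k$, and $0$ otherwise), one family at a time.

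First, the diagonal: $\C^\dagger_{a,a}$ is an empty determinant, hence $1$, so the top row of $\C^\dagger$ is all $1$s with no hypothesis on $\C$. Second, the vanishing outside the band: if $a<b$ then ``$b\le a$'' fails and if $a\ge b+h+k$ then ``$a<b+h+k$'' fails, so in both cases $\C^\dagger_{a,b}=0$ by the ``otherwise'' clause; this disposes for free of $\C^\dagger_{a,b}=0$ for every $a<b$, and of the part of $a>b+k$ with $a-b\ge h+k$. What remains are the content-bearing cases: (a) $\C^\dagger_{a,a-k}=1$ for all $a$, which on inserting the formula and reindexing $c:=a-k$ reads $\det(\C_{[c+1,c+k],[c,c+k-1]})=1$ for all $c$, i.e.\ condition~(1) of Lemma~\ref{lemma: predual}; and (b) $\C^\dagger_{a,b}=0$ whenever $k<a-b<h+k$, which on writing $\ell:=a-b$ reads $\det(\C_{[b+1,b+\ell],[b,b+\ell-1]})=0$ for all $b$ and all $\ell$ with $k<\ell<h+k$, i.e.\ condition~(2) of Lemma~\ref{lemma: predual}. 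Hence ``$\C^\dagger$ is a prefrieze of height $k$'' is the conjunction of~(1) and~(2), which by Lemma~\ref{lemma: predual} is equivalent to ``$\C$ is an $\mathrm{SL}(k)$-frieze.''

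The part worth slowing down on is the off-by-one bookkeeping, on which the whole equivalence hinges. One must confirm that the truncation level $n=h+k$ is exactly the value that (i) forces $\C^\dagger_{a,b}=0$ for every $a-b\ge h+k$, so that the entries below the $k$th row not already pinned by condition~(2) vanish automatically, and (ii) leaves every entry with $a-b\le k$ untruncated --- this uses $h\ge 1$, so that $k<h+k$ --- so that the $0$th and $k$th rows of $\C^\dagger$, and the unconstrained rows between them, are computed by honest minors of $\C$. One must also track the row/column labels carefully, matching the rows $[b+1,a]$, columns $[b,a-1]$ in the definition of $\C^\dagger_{a,b}$ against the $[a+1,a+\ell]$, $[a,a+\ell-1]$ indexing of Lemma~\ref{lemma: predual} after the above substitutions --- routine but error-prone. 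I expect nothing deeper is needed: the real content --- the Dodgson-condensation and adjugate identities --- is already packaged inside Lemma~\ref{lemma: predual}, so once the edges line up the equivalence is immediate. (One could instead unwind straight from Definition~\ref{defn: frieze}, but that redoes the harder work, so I would route through Lemma~\ref{lemma: predual}.)
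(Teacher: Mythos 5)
Your route is the same as the paper's: the paper gives no separate proof of this lemma, deducing it exactly as you do by matching the entries of the truncated dual against the minors in Lemma~\ref{lemma: predual}, and your bookkeeping for the diagonal entries, the truncated region $a-b\ge h+k$, and the $k$th row is correct.

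There is, however, one genuine off-by-one issue, sitting precisely at the boundary you flagged as delicate. The entries of $\C^\dagger$ that are honest minors of $\C$ are those with $a-b<h+k$, so the vanishing of the rows of $\C^\dagger$ below the $k$th gives $\det(\C_{[b+1,b+\ell],[b,b+\ell-1]})=0$ only for $k<\ell<h+k$, whereas condition~(2) of Lemma~\ref{lemma: predual} as stated runs over $k<\ell\le h+k$. Your sentence identifying (b) with condition~(2) silently drops the case $\ell=h+k$, and in the ``if'' direction you cannot invoke Lemma~\ref{lemma: predual} without it: that case does not follow from $\C^\dagger$ being a prefrieze, since $\C^\dagger_{b+h+k,b}=0$ by truncation rather than because the minor vanishes. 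In fact the missing case is false in general. For the $\mathrm{SL}(2)$-frieze of height $2$ whose middle row alternates $1,2,1,2,\dots$, the solid minor of size $k+h=4$ is
\[
\det\begin{pmatrix}1&1&0&0\\1&2&1&0\\0&1&1&1\\0&0&1&2\end{pmatrix}=-1,
\]
consistent with Theorem~\ref{thm: friezeperiodic} (the sign $(-1)^{k-1}$) but not with condition~(2) at $\ell=k+h$. So the discrepancy is really a mis-statement of the range in Lemma~\ref{lemma: predual}: it should read $k<\ell<k+h$. (The paper's own example corroborates this, describing the vanishing minors of an $\mathrm{SL}(3)$-frieze of height $5$ as having sizes $4$ to $7$, not $8$; and the Dodgson-condensation sketch only controls minors all of whose solid $k\times k$ minors lie in the band where they equal $1$, which fails for $\ell=k+h$.) With that correction your identification is exact and the proof closes; as written, the ``if'' direction either rests on an unprovable case or requires you to note and repair the range in the cited lemma.
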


To reap the benefits of this result, note that  $(\C^\dagger)^\dagger=\C$ whenever $\C$ is a prefrieze of height less than $n$.\footnote{This follows from the corresponding property of the adjugate of a matrix, or as a special case of Lemma \ref{lemma: doubledual}.} Therefore, if $\C$ is an $\mathrm{SL}(k)$-frieze of height $h$, then the $(h+k)$-truncated dual of $\C^\dagger$ is a prefrieze of height $k$, and so $\C^\dagger$ is an $\mathrm{SL}(h)$-frieze of height $k$. We can restate this as follows.

\begin{thm}
Taking $(h+k)$-truncated duals defines mutually inverse bijections between the sets of $\mathrm{SL}(k)$-friezes of height $h$ and $\mathrm{SL}(h)$-friezes of height $k$.
\end{thm}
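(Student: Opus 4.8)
The plan is to string together the two preceding lemmas and the involutivity of the truncated dual, which is already essentially done in the text; the task is mainly to verify that the bookkeeping is consistent. First I would fix an $\mathrm{SL}(k)$-frieze $\C$ of height $h$. By the lemma immediately preceding the theorem, the $(h+k)$-truncated dual $\C^\dagger$ is a prefrieze of height $k$. So the map $\C \mapsto \C^\dagger$ does send $\mathrm{SL}(k)$-friezes of height $h$ into prefriezes of height $k$; I need to upgrade "prefrieze of height $k$" to "$\mathrm{SL}(h)$-frieze of height $k$". For that I apply the same lemma in the other direction: $\C^\dagger$ is an $\mathrm{SL}(h)$-frieze of height $k$ iff its $(k+h)$-truncated dual is a prefrieze of height $h$. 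Since $\C$ has height $h < h+k = n$, the footnoted identity $(\C^\dagger)^\dagger = \C$ applies, so the $(h+k)$-truncated dual of $\C^\dagger$ is $\C$ itself, which is a prefrieze of height $h$ by hypothesis. Hence $\C^\dagger$ is an $\mathrm{SL}(h)$-frieze of height $k$, and the map is well-defined from $\mathrm{SL}(k)$-friezes of height $h$ to $\mathrm{SL}(h)$-friezes of height $k$.

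By symmetry in $h$ and $k$ (note $h+k = k+h$, so the truncation parameter is the same), the same argument gives a well-defined map from $\mathrm{SL}(h)$-friezes of height $k$ to $\mathrm{SL}(k)$-friezes of height $h$, sending $\mathcal D \mapsto \mathcal D^\dagger$. It remains to check these are mutually inverse. This is exactly the identity $(\C^\dagger)^\dagger = \C$: if $\C$ is an $\mathrm{SL}(k)$-frieze of height $h < h+k$, then applying the dual twice returns $\C$; and likewise $(\mathcal D^\dagger)^\dagger = \mathcal D$ for $\mathcal D$ of height $k < h+k$. So both compositions are the identity on the respective sets, and we have mutually inverse bijections.

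The only genuine point to be careful about — and the one I expect to be the main obstacle, though it is more a matter of diligence than difficulty — is making sure the hypotheses of the two auxiliary lemmas and of the double-dual identity line up with the correct truncation parameter. All three statements here involve the number $n = h+k$: the preceding lemma characterizes height-$h$ $\mathrm{SL}(k)$-friezes via the $(h+k)$-truncated dual, its analogue for height-$k$ $\mathrm{SL}(h)$-friezes uses the $(k+h)$-truncated dual, and $(\C^\dagger)^\dagger=\C$ requires the height of $\C$ to be strictly less than $n$; since $h < h+k$ and $k < h+k$ (both $h,k$ being positive integers), this holds in both directions. Once this is confirmed, no further computation is needed — the theorem is a formal consequence of the material already established.
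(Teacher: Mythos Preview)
Your proposal is correct and matches the paper's own argument essentially verbatim: the paper derives the theorem in the paragraph immediately preceding it by exactly the chain you describe (the characterizing lemma, the identity $(\C^\dagger)^\dagger=\C$ for prefriezes of height $<n$, and symmetry in $h$ and $k$). Your added care in checking that $h<h+k$ and $k<h+k$ so that the double-dual identity applies in both directions is a welcome bit of explicitness the paper leaves to the reader.
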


\begin{rem}
The frieze $\C^\dagger$ differs from the \emph{combinatorial Gale dual} of $\C$ defined in \cite{MGOST14} by a vertical reflection (called \emph{projective duality} in \emph{loc.~cit.}) and a horizontal shift. 
\end{rem}

\subsection{Friezes as linear recurrences}

Given a prefrieze $\C$ of height $h$, we may consider the matrix equation $\C\mathsf{x=0}$, where $\mathsf{x}=(...,x_{-1},x_0,x_1,x_2,...)$ denotes a vector of variables of length $\mathbb{Z}$. This is equivalent to a system of linear equations of the form
\[ x_{a-h}+\C_{a,a-h+1} x_{a-h+1} + \C_{a,a-h+2} x_{a-h+2} + \cdots + \C_{a,a-1} x_{a-1} + x_a =0 \]
running over all $a\in \mathbb{Z}$. Each such equation can be rewritten as a formula for $x_a$ as a linear combination of earlier variables:
\[ x_a = - \C_{a,a-1} x_{a-1} - \C_{a,a-2}x_{a-2} - \cdots - \C_{a,a-h+1}x_{a-h+1} - x_{a-h}\]
For this reason, we call the matrix equation $\C\mathsf{x=0}$ a \textbf{linear recurrence}.\footnote{Also called a \emph{linear recurrence relation} or a \emph{system of linear difference equations}.}

\begin{thm}\cite{MGOST14}\label{thm: friezeperiodic}
A prefrieze $\C$ of height $h$ is a $\mathrm{SL}(k)$-frieze iff every solution to the linear recurrence $\C \mathsf{x=0}$ 
satisfies $x_{a+h+k} = (-1)^{k-1}x_a$ for all $a\in \mathbb{Z}$.
\end{thm}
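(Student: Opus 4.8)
The plan is to reduce the statement to the duality characterization already established (the $(h+k)$-truncated dual $\C^\dagger$ is a prefrieze of height $k$ iff $\C$ is an $\mathrm{SL}(k)$-frieze) together with a direct analysis of the recurrence's solution space. First I would observe that the solution space $V$ of the linear recurrence $\C\mathsf{x}=0$ is $h$-dimensional: the recurrence $x_a = -\C_{a,a-1}x_{a-1}-\cdots-\C_{a,a-h+1}x_{a-h+1}-x_{a-h}$ expresses any $x_a$ with $a$ large in terms of the previous $h$ entries, and runs both forward and backward (the coefficient of $x_{a-h}$ is $1$, so it is invertible), hence a solution is determined freely by any $h$ consecutive coordinates. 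So the condition ``$x_{a+h+k}=(-1)^{k-1}x_a$ for all solutions'' is a closed linear condition on this finite-dimensional space, and it suffices to check it on a spanning set.

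Next I would set up the bridge to $\C^\dagger$. The key identity is that for a vector $\mathsf{x}$ in the kernel of $\C$, one can build out of windows of $\mathsf{x}$ a new vector whose coordinates are the $\ell\times\ell$ solid minors appearing in the definition of $\C^\dagger$ — concretely, Cramer's rule / the cofactor expansion relates $\C^\dagger_{a,b} = \det(\C_{[b+1,a],[b,a-1]})$ to the components of solutions of $\C\mathsf{x}=0$. More precisely, I expect that $\mathsf{x}\mapsto$ (appropriate minors) intertwines the recurrence $\C\mathsf{x}=0$ with the recurrence $\C^\dagger\mathsf{y}=0$, so that solutions of one correspond to solutions of the other up to a known shift and sign. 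Using the result that $\C$ is an $\mathrm{SL}(k)$-frieze iff $\C^\dagger$ is a prefrieze of height $k$, the row of $1$s at position $k$ and the rows of $0$s below it in $\C^\dagger$ translate exactly into the assertion that $x_{a+h+k}$ is a scalar multiple — with the scalar forced to be $(-1)^{k-1}$ by a sign bookkeeping in the cofactor expansion (the sign $(-1)^{k-1}$ is the standard sign attached to a cyclic shift by one in an alternating $k\times k$ determinant) — of $x_a$.

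Concretely, the forward direction: if $\C$ is an $\mathrm{SL}(k)$-frieze, I would take a solution $\mathsf{x}$ and, for each $a$, expand the vanishing solid minor $\det(\C_{[a+1,a+k+1],[a,a+k]})=0$ (a tameness-type minor, available once we know $\C^\dagger$ has height $k$) along a row, producing a linear dependence among $x_a, x_{a+1},\dots,x_{a+k}$ with coefficients the $k\times k$ minors, i.e. the entries of $\C^\dagger$; since the $k$th row of $\C^\dagger$ is all $1$s and lower rows vanish, this dependence collapses to $x_{a+h+k}=(-1)^{k-1}x_a$. For the converse: if every solution satisfies $x_{a+h+k}=(-1)^{k-1}x_a$, I would run this backwards. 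The $h$-dimensional solution space with this periodicity lets me exhibit enough solutions to read off, via Cramer, that the relevant minors of $\C^\dagger$ are $1$ in row $k$ and $0$ below — equivalently $\C^\dagger$ is a prefrieze of height $k$ — and conclude by the previous lemma.

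The main obstacle I anticipate is the bookkeeping in the second-to-last step: pinning down exactly which minors of $\C$ the coordinates of a solution encode, getting the index shifts right (the total shift is $h+k$, split as "$h$ from the height of $\C$, $k$ from the dual"), and tracking the sign to confirm it is precisely $(-1)^{k-1}$ rather than, say, $(-1)^{k-1}$ times some artifact of the truncation convention. I would handle the sign by testing on the smallest cases ($k=1$, where the recurrence is $x_a=-x_{a-1}$... wait, $k=1$ forces the frieze to be trivial; better $k=2$, Coxeter's case, where $h+k$-periodicity with sign $(-1)^{1}=-1$ is classical) to calibrate, and then the general sign follows from the multilinearity/alternating structure of the determinant expansions, which is uniform in $k$ and $h$.
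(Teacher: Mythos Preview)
Your overall strategy --- reduce to the duality characterization and connect solutions of $\C\mathsf{x}=0$ to entries of $\C^\dagger$ --- matches the paper's approach (which proves this as the uniform-$\pi$ special case of Theorem~\ref{thm: juggleqp}). The paper formalizes the bridge via the \emph{solution matrix} $\sol(\C)$: its columns span $\ker(\C)$, and $\sol(\C)_{a,b} = (-1)^{a+b}\C^\dagger_{a,b}$ on the relevant range, so superperiodicity of those columns is literally the prefrieze condition on $\C^\dagger$ (a row of $\pm 1$s at height $k$, zeros below), with the sign pinned down by Lemma~\ref{lemma: balls}.

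The gap in your forward direction is the mechanism you describe. Expanding $\det(\C_{[a+1,a+k+1],[a,a+k]})$ along a row yields a relation among \emph{entries of $\C$}, not among entries of a solution $\mathsf{x}$; there is no $\mathsf{x}$ anywhere in that cofactor expansion, so the step ``producing a linear dependence among $x_a,\ldots,x_{a+k}$'' does not follow as written. (Note also the index mismatch: you want a relation linking $x_a$ and $x_{a+h+k}$, not $x_a$ and $x_{a+k}$.) The correct bridge is the adjugate identity you gesture at earlier but do not actually use: for each fixed $b$, the sequence $a \mapsto (-1)^{a+b}\C^\dagger_{a,b}$ is itself a solution of $\C\mathsf{x}=0$ (this is $\C\cdot\mathrm{Adj}(\C)=0$ for unitriangular $\C$), and these solutions span the $h$-dimensional solution space. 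Once you have that, the prefrieze conditions on $\C^\dagger$ say directly that these explicit solutions satisfy $x_{a+h+k}=(-1)^{k-1}x_a$, and your spanning-set observation finishes. Your converse direction needs the same reformulation: rather than ``reading off minors via Cramer from enough periodic solutions'', you check that superperiodicity of the spanning columns of $\sol(\C)$ forces the relevant entries of $\C^\dagger$ to be $1$ or $0$.
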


\begin{rem}
\cite{MGOST14} uses an alternating sign convention when associating a linear recurrence to a frieze, which we do not. As a result, the sign $(-1)^{k-1}$ appears different than in \emph{loc.~cit.}
\end{rem}

The solutions to the linear recurrence $\C \mathsf{x} =0$ are closely related to the dual $\C^\dagger$, as follows.

\begin{thm}
Let $\C$ be a $\mathrm{SL}(k)$-frieze of height $h$.
For each $b$, the sequence $\mathbf{x}$ defined by 
\begin{itemize}
    \item $x_a := (-1)^{a+b}\C^\dagger_{a,b} $ when $b\leq a< b+k+h$, and 
    \item $x_{a+h+k} = (-1)^{k-1}x_a$ for all $a$
\end{itemize}
is a solution to $\C \mathsf{x=0}$. Running over all $b$, these solutions span the space of solutions to $\C \mathsf{x=0}$.
\end{thm}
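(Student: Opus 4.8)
The plan is to verify directly that each proposed sequence $\mathbf{x}$ annihilates $\C$, and then to count dimensions to see that these span all solutions. Fix $b$ and let $\mathbf{x}$ be the sequence with $x_a = (-1)^{a+b}\C^\dagger_{a,b}$ for $b\le a<b+k+h$, extended by the rule $x_{a+h+k}=(-1)^{k-1}x_a$. I must show $(\C\mathbf{x})_a = 0$ for every $a$, i.e.\ that $\sum_c \C_{a,c}x_c = 0$, where the sum runs over $a-h\le c\le a$ (the support of the $a$th row of $\C$). By Theorem~\ref{thm: friezeperiodic}, the recurrence $\C\mathbf{x}=0$ already forces any of its solutions to be $(h+k)$-(anti)periodic, so once I check the annihilation identity for one window of consecutive values of $a$ it propagates everywhere; the periodic extension built into the definition of $\mathbf{x}$ is exactly compatible with this. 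So the crux is a single window of identities.

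The key step is to recognize $\sum_c \C_{a,c}x_c = \pm\sum_c \C_{a,c}\C^\dagger_{c,b}$ (up to the alternating signs $(-1)^{c+b}$) as a Laplace/cofactor expansion. Recall $\C^\dagger_{c,b} = \det(\C_{[b+1,c],[b,c-1]})$, a solid minor of $\C$ centered on the second row; in the language of the adjugate used in the proof sketch of Lemma~\ref{lemma: predual}, the entries $\C^\dagger_{c,b}$ are (signed) entries of $\mathrm{Adj}(\C)$ after the appropriate shift and reflection. The identity $\C\cdot\mathrm{Adj}(\C) = \det(\C)\cdot\mathrm{Id}$ for a unitriangular matrix (cf.\ the footnote to Lemma~\ref{lemma: predual} and Lemma~\ref{lemma: minordual}) then says precisely that the relevant weighted row-times-column sums vanish off the diagonal. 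Concretely, I would: (i) translate the definition of $\C^\dagger$ into the statement that the $k$th truncated dual row is all $1$s and lower rows vanish (this is the content of the lemma preceding Theorem~\ref{thm}, using that $\C$ is an $\mathrm{SL}(k)$-frieze of height $h$, so $\C^\dagger$ has height $k$); (ii) write out $\sum_{c=a-h}^{a}\C_{a,c}(-1)^{c+b}\C^\dagger_{c,b}$ and identify it, via the minor identities displayed in the proof sketch of Lemma~\ref{lemma: predual} relating minors of $\C$ to minors of $\mathrm{Adj}(\C)$, with a cofactor expansion of a determinant that has a repeated row (hence is $0$) — the sign $(-1)^{c+b}$ being exactly the cofactor sign; (iii) handle the boundary, i.e.\ the values of $a$ where the window $[b,b+k+h)$ for $c$ meets the support $[a-h,a]$ of the $a$th row only partially, using that $\C^\dagger_{c,b}=0$ outside $b\le c<b+k$ (height $k$) together with the defining relation $x_{a+h+k}=(-1)^{k-1}x_a$ to stitch adjacent windows; the $(-1)^{k-1}$ is forced to make the cofactor signs line up across the wrap.

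Finally, for the spanning claim: the solution space of $\C\mathbf{x}=0$ for an $\mathrm{SL}(k)$-frieze of height $h$ is $k$-dimensional (a solution is determined by $x_0,\dots,x_{k-1}$, since the recurrence lets one solve forward and backward, and by Theorem~\ref{thm: friezeperiodic} it is consistent). Taking $b = 0,1,\dots,k-1$ gives $k$ solutions; their "initial vectors" $(x_b,x_{b+1},\dots)$ have $x_b = (-1)^{b+b}\C^\dagger_{b,b} = 1$ and a strictly triangular pattern below (since $\C^\dagger_{a,b}$ for $a$ slightly above $b$ are the low entries of a unitriangular-type object), so the matrix of initial data is unitriangular, hence invertible, and the $k$ solutions are linearly independent — therefore a basis.

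The main obstacle I anticipate is the careful bookkeeping of signs and index ranges in step (ii)–(iii): matching the cofactor sign $(-1)^{c+b}$ and the wrap-around sign $(-1)^{k-1}$ against the two minor identities from Lemma~\ref{lemma: predual} (which themselves carry unspecified $\pm$), and confirming that the "repeated row" interpretation is literally correct rather than merely morally so. Everything else — the dimension count, the periodic propagation, the triangularity of the initial data — is routine once the core cofactor identity is nailed down. I would organize the write-up so that this sign/index verification is isolated as a short computational lemma, invoking $\C\cdot\mathrm{Adj}(\C)=\det(\C)\,\mathrm{Id}$ and the displayed minor identities rather than re-deriving them.
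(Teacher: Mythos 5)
Your route --- a direct verification via the adjugate identity $\C\cdot\mathrm{Adj}(\C)=\mathrm{Id}$ followed by a dimension count --- is genuinely different from the paper's, which deduces the statement as a special case of Theorem~\ref{thm: qpsols}: there the solution matrix of the recurrence is imported from \cite{MulRes1} (its columns span the kernel, and its sub-diagonal entries are exactly the signed minors $(-1)^{a+b}\det(\C_{[a+1,b],[a,b-1]})$), and the extension across periods is handled by the superperiodicity theorem. Your approach is more elementary and self-contained for the $\mathrm{SL}(k)$ case, but two steps are wrong as written.

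First, the sum $\sum_{c=a-h}^{a}\C_{a,c}(-1)^{c+b}\C^\dagger_{c,b}=\sum_{c}\C_{a,c}\mathrm{Adj}(\C)_{c,b}$ is a cofactor expansion of a determinant with a repeated row only when $a\neq b$; when $a=b$ it is a diagonal entry of $\C\cdot\mathrm{Adj}(\C)$ and equals $1$, not $0$. The vanishing of $(\C\mathsf{x})_b$ is instead supplied by the wrap-around term: the extension rule gives $x_{b-h}=(-1)^{k-1}x_{b+k}=(-1)^{k-1}(-1)^{k}\C^\dagger_{b+k,b}=-1$ (using that $\C^\dagger$ is a prefrieze of height $k$, so $\C^\dagger_{b+k,b}=1$ and $\C^\dagger_{c,b}=0$ for $b+k<c<b+h+k$), and then $\C_{b,b-h}x_{b-h}=-1$ cancels that diagonal $1$. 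So the case $a\equiv b\pmod{h+k}$ must be isolated from the repeated-row case; your step (iii) covers these indices but mischaracterizes the mechanism. (Also, propagating the identity from one window of $a$'s to all of $\mathbb{Z}$ requires the $(h+k)$-periodicity of the entries of $\C$ itself, not Theorem~\ref{thm: friezeperiodic}, which presupposes that $\mathsf{x}$ is already known to be a solution.) Second, the dimension count is off: the recurrence $\C\mathsf{x}=0$ expresses $x_a$ in terms of the preceding $h$ values, so its solution space is $h$-dimensional, not $k$-dimensional, and a solution is determined by $h$ consecutive values rather than $k$. Accordingly one should take $h$ consecutive values of $b$; these give a basis because $x^{(b)}_c=0$ for $b-h<c<b$ while $x^{(b)}_b=1$, so the matrix of initial data on a window of length $h$ is unitriangular. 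With these two corrections the argument goes through.
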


Conceptually, each northwest-southeast diagonal of $\C^\dagger$ determines a solution to $\C \mathsf{x=0}$ by first flipping half the signs and then extending superperiodically. This theorem is essentially \cite[Prop.~5.1.1]{MGOST14}, and a special case of Theorem \ref{thm: qpsols}.

\begin{ex}
If $\C$ is the frieze in Example \ref{ex: intro2}, then two solutions to $\C\mathsf{x=0}$ are
\[ \cdots , 0 , 1 , -3 , 3 , -1 , 0 , 0 , 0 , 0 , 1 , -3 , 3 , -1 , 0 \cdots \]
\[ \cdots , 0 , 0 , 1 , -4 , 3 , -1 , 0 , 0 , 0 , 0 , 1 , -4 , 3 , -1 , \cdots \]
Observe that they each satisfy the condition that $x_{a+8}=x_a$ (since $k-1=2$).
\end{ex}

\section{Constructing $\mathrm{SL}(k)$-friezes}

\label{section: constructfrieze}

\def\lt{\tau^{-1}}

While we now know several methods to recognize $\mathrm{SL}(k)$-friezes, our story lacks a method to construct them, which we remedy now. We give two equivalent constructions, and observe that this parametrizes $\mathrm{SL}(k)$-friezes of height $h$ by a subvariety of the Grassmannian $\mathrm{Gr}(h,k+h)$.

\begin{nota}
Given a $k\times n$ matrix $\mathsf{A}$ and $a\in \mathbb{Z}$, let $\mathsf{A}_a$ denote the $\overline{a}$th column of $\mathsf{A}$, where $\overline{a}\equiv a$ mod $n$. If $I\subset \mathbb{Z}$, let $\mathsf{A}_I$ denote the submatrix of $\mathsf{A}$ consisting of columns indexed by $I$ mod $n$.\footnote{The columns of $\mathsf{A}_I$ remain in the same order as in $\mathsf{A}$; e.g.~if the width $n=5$, then $\mathsf{A}_{\{4,5,6\}}$ would consist of the $1$st, $4$th, and $5$th columns of $\mathsf{A}$, in that order. }
\end{nota}

\subsection{Constructing friezes}

A $k\times n$-matrix $\mathsf{A}$ is \textbf{consecutively unimodular} if $\det(\mathsf{A}_{[a,a+k-1]})=1$ for all $a\in \mathbb{Z}$; i.e.~every submatrix on $k$-many cyclically consecutive\footnote{A \emph{cyclically consecutive} set is the image of a consecutive set under the quotient $\mathbb{Z}\rightarrow \mathbb{Z}/n\mathbb{Z}$.}
 columns has determinant 1. 
\begin{ex}\label{ex: conuni}
The following $3\times 8$ matrix is consecutively unimodular, which can be verified by checking the six solid $3\times 3$-minors and the minors on columns $178$ and $128$.
\[ \mathsf{A} := 
\begin{bmatrix}
1 & 11 & 4 & 6 & 3 & 1 & 0 & 0 \\
0 & 1 & 2 & 7 & 5 & 3 & 1 & 0 \\
0 & 0 & 1 & 4 & 3 & 2 & 1 & 1 \\
\end{bmatrix}
\qedhere
\]
\end{ex}

\begin{thm}[\cite{BFGST21}]\label{thm: detfrieze}
Each consecutively unimodular $k\times n$-matrix $\mathsf{A}$ determines an
$\mathrm{SL}(k)$-frieze $\Fr(\mathsf{A})$ of height $h=n-k$, defined by
\[
    \Fr(\mathsf{A})_{a,b}
    =
    \begin{cases}
        \det(\mathsf{A}_{[{a}+1,{a}+k-1]\cup \{{b}\}}) & \text{if $b\leq a \leq b+n-k$}, \\
        0 & \text{otherwise}.
    \end{cases}
\]
\end{thm}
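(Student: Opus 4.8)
The plan is to verify directly that the array $\Fr(\mathsf{A})$ defined by the formula satisfies Definition~\ref{defn: frieze}, after first checking the easy bookkeeping that it is a prefrieze of height $h = n-k$. For the prefrieze condition: when $a = b$, the set $[a+1,a+k-1]\cup\{b\}$ has size $k$ and equals $[a,a+k-1]$ (since $b=a$), so $\Fr(\mathsf{A})_{a,a} = \det(\mathsf{A}_{[a,a+k-1]}) = 1$ by consecutive unimodularity. When $a = b+h = b+n-k$, the index set is $[b+n-k+1, b+n-1]\cup\{b\}$, which modulo $n$ is the cyclically consecutive block $[b, b+n-1] \setminus [b+1,b+n-k]$ reordered — in fact it is exactly the columns $\{b+n-k+1,\dots,b+n-1,b\}$, a cyclic rotation of $[b+1,b+k]$, hence has determinant $\pm 1$; I would track the sign of the cyclic permutation carefully here and confirm it gives $+1$. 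For $a < b$ or $a > b+h$ the entry is $0$ by fiat, and the boundary rows being $1$ is what was just checked, so $\Fr(\mathsf{A})$ is a bona fide prefrieze.

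For the frieze condition (1), I must show $\det(\Fr(\mathsf{A})_{[a,a+k-1],[b,b+k-1]}) = 1$ whenever $b \leq a \leq b+h$. The key observation is that the entries in this $k\times k$ submatrix are, by construction, the maximal minors $\det(\mathsf{A}_{[a'+1,a'+k-1]\cup\{b'\}})$ for $a' \in [a,a+k-1]$, $b'\in [b,b+k-1]$ — and these index sets all lie in the range where the formula applies (one should check the inequality $b' \le a' \le b'+n-k$ holds throughout the relevant sub-block, which follows from $b\le a\le b+h$ together with $h=n-k$). So this is a matrix of $k\times k$ minors of $\mathsf{A}$ sharing the common column block $[a+1,a+k-1]$, with the final column varying over $[b,b+k-1]$. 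This is precisely the setup of a generalized Plücker/Cauchy--Binet identity: a $k\times k$ matrix whose $(i,j)$ entry is $\det(\mathsf{A}_{C \cup \{c_j\}})$ for a fixed size-$(k-1)$ set $C$ and varying columns $c_j$ has determinant equal to $\det(\mathsf{A}_C') \cdot \det(\mathsf{A}_{\{c_1,\dots,c_k\}})$ up to sign — more precisely, expanding via the Laplace/Sylvester identity, this determinant of minors equals $\det(\mathsf{A}_{[a,a+k-1]})^{k-1}\det(\mathsf{A}_{[a+1,a+k-1]\cup\{b\}\cup\cdots})$-type product, and in our situation, because the "missing" column set from the consecutive block is controlled, everything collapses to products of the unimodular minors, each equal to $1$. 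I would invoke the Sylvester--Franke or the Desnanot--Jacobi (Dodgson) identity in the appropriate form; the cleanest route is probably to recognize $\Fr(\mathsf{A})$ as obtained from the identity-boundary and to use the condensation principle mentioned in the proof sketch of Lemma~\ref{lemma: predual}, namely that once all solid $k\times k$ minors of a matrix are $1$, the determinant of any larger solid block equals the determinant of the array of its $(k+1)\times(k+1)$ minors.

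For the tameness condition (2), $\det(\Fr(\mathsf{A})_{[a,a+k],[b,b+k]}) = 0$ whenever $b < a < b+h$: here the $(k+1)\times(k+1)$ submatrix again consists of maximal minors of $\mathsf{A}$ of the form $\det(\mathsf{A}_{[a'+1,a'+k-1]\cup\{b'\}})$, now for $a'$ and $b'$ ranging over blocks of size $k+1$. The point is that all these minors are built from columns of $\mathsf{A}$ lying in a window of $\mathsf{A}$ of width at most... well, $\mathsf{A}$ has only $k$ rows, so any $k+1$ of its maximal minors satisfy a linear (Plücker) dependency — more concretely, the $(k+1)\times(k+1)$ determinant of such minors is itself a minor-of-minors that must vanish because it can be written, via Cauchy--Binet applied to a product involving a rank-$k$ matrix, as a determinant with a repeated or dependent structure. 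I would make this precise by exhibiting the submatrix as $M N$ where one factor has rank $\le k$, forcing the $(k+1)\times(k+1)$ determinant to be zero. The main obstacle I anticipate is pinning down the exact Plücker/Sylvester identity and especially the sign bookkeeping in the cyclic index arithmetic (the $\overline{a} \equiv a \bmod n$ wraparound, and the sign of the cyclic rotation sending $[b+1,b+k]$ to $\{b+n-k+1,\dots,b+n-1,b\}$); once the correct algebraic identity is identified, the verification is mechanical, and as a sanity check I would test it against Example~\ref{ex: conuni} reproducing the frieze of Example~\ref{ex: intro2}.
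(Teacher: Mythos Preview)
Your outline differs from the paper's route, which goes through the twist: the Remark following Theorem~\ref{thm: cons} observes that once $\Fr(\mathsf{A})$ is identified (up to sign and periodic shifting) with the matrix product $\rt(\mathsf{A})^\top \mathsf{A}$, both the frieze and tameness conditions drop out of Cauchy--Binet. The solid $k\times k$ minor factors as $\det(\rt(\mathsf{A})_{[a,a+k-1]})\det(\mathsf{A}_{[b,b+k-1]}) = 1\cdot 1$, using that $\rt(\mathsf{A})$ is also consecutively unimodular, and any $(k+1)\times(k+1)$ minor vanishes because $\rt(\mathsf{A})^\top \mathsf{A}$ has rank $k$. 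Your tameness sketch (``exhibit the submatrix as $MN$ with one factor of rank $\le k$'') is precisely this idea without naming the factors, so it is fine once you supply them.

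Your frieze argument, however, contains a genuine error. You assert that the entries of the $k\times k$ block share a \emph{common} $(k-1)$-column set $C=[a+1,a+k-1]$, with only the extra column $c_j=b'$ varying. That is false: the block $[a'+1,a'+k-1]$ depends on the row index $a'$. If $C$ were truly fixed, your matrix ``$(i,j)\mapsto\det(\mathsf{A}_{C\cup\{c_j\}})$'' would have all rows identical and determinant zero, so no Sylvester-type identity in that form can give you $1$. The correct observation is that for each fixed $a'$ the map $b'\mapsto\det(\mathsf{A}_{[a'+1,a'+k-1]\cup\{b'\}})$ is linear in the column vector $\mathsf{A}_{b'}$, so the block factors as $V^\top\mathsf{A}_{[b,b+k-1]}$ for some $k\times k$ matrix $V$ depending only on $a$. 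You then still owe $\det(V)=1$. The paper gets this by identifying $V$ with $\rt(\mathsf{A})_{[a,a+k-1]}$ and invoking consecutive unimodularity of the twist. An equivalent elementary fix, if you want to avoid the twist: apply the same factorization with $b$ replaced by $a$ to see that $V^\top\mathsf{A}_{[a,a+k-1]}$ is lower unitriangular (the diagonal entries are $\det(\mathsf{A}_{[a',a'+k-1]})=1$, and above the diagonal $c\in[a'+1,a'+k-1]$ repeats a column), whence $\det(V)=\det(\mathsf{A}_{[a,a+k-1]})^{-1}=1$.
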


\def\rt{\tau}

The construction in Theorem \ref{thm: detfrieze} \emph{a priori} requires computing $k(n-k)$-many $k\times k$-determinants, which is a daunting task even for small examples like Example \ref{ex: conuni}. We can simplify this construction considerably by means of the \emph{twist} of a matrix. 
%
%
The \textbf{(right) twist} of a consecutively unimodular $k\times n$-matrix $\mathsf{A}$ is the $k\times n$-matrix $\rt(\mathsf{A})$ whose columns are defined by the dot product equations
\[
    \mathsf{A}_a \cdot \rt(\mathsf{A})_b
    =
    \begin{cases}
        1 & \text{if }a=b, \\
        0 & \text{if }b<a<b+k. \\
    \end{cases}
\]
Since the matrix is consecutively unimodular, the columns of $\mathsf{A}$ indexed by $[b,b+k-1]$ are linearly independent, and so the above system uniquely determines each column of $\rt(\mathsf{A})$. Many interesting properties of the twist can be found in \cite[Section 6]{MS17}; one relevant result is that the twist of a consecutively unimodular matrix is again consecutively unimodular \cite[Prop.~6.6]{MS17}.

\begin{warn}
The twist may be defined more generally for $k\times n$-matrices of rank $k$, but the definition must be generalized to ensure a linearly independent set of columns; see Section \ref{section: jugtwist}.
\end{warn}

\begin{rem}
The twist of a matrix in this context was introduced in \cite{MS16} and a normalized version was given in \cite{MS17}. Since the two definitions differ by a factor of the determinant of cyclically consecutive columns; they coincide for a consecutively unimodular matrix.
\end{rem}

Using the twist, we can describe an algorithm (Construction \ref{cons: unwrap}, on the following page) for constructing an infinite strip of numbers from a consecutively unimodular matrix. As the following theorem asserts, this construction produces the same $SL(k)$-frieze as Theorem \ref{thm: detfrieze}.

\begin{thm}\label{thm: cons}
If $\mathsf{A}$ is a consecutively unimodular $k\times n$-matrix,
Construction \ref{cons: unwrap} produces $\Fr(\mathsf{A})$, the same $\mathrm{SL}(k)$-frieze as Theorem \ref{thm: detfrieze}.
\end{thm}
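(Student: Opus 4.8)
The statement to prove is that the algorithm of Construction~\ref{cons: unwrap} — which I have not seen in detail, but which manifestly builds an infinite strip out of the columns of $\mathsf{A}$ and its twist $\rt(\mathsf{A})$ — produces exactly the frieze $\Fr(\mathsf{A})$ whose entries are the cyclic $k\times k$ minors $\det(\mathsf{A}_{[a+1,a+k-1]\cup\{b\}})$ from Theorem~\ref{thm: detfrieze}. My plan is to identify what the construction outputs in closed form, match it entry‑by‑entry against the formula in Theorem~\ref{thm: detfrieze}, and verify the boundary/support conditions separately. The key object is the pairing $\mathsf{A}_a \cdot \rt(\mathsf{A})_b$: by the defining dot‑product equations of the twist, this equals $1$ when $a=b$ and $0$ when $b<a<b+k$. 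The expected shape of Construction~\ref{cons: unwrap} is that it writes $\Fr(\mathsf{A})_{a,b}$ as some bilinear expression — most naturally $\mathsf{A}_{?}\cdot\rt(\mathsf{A})_{?}$ or a determinant of a small window of $\mathsf{A}$ and $\rt(\mathsf{A})$ together — and the whole proof reduces to a Cramer's‑rule/Laplace‑expansion identity.

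**Key steps, in order.** First I would recall Cramer's rule in the relevant form: since the columns $\mathsf{A}_b, \mathsf{A}_{b+1},\dots,\mathsf{A}_{b+k-1}$ are a basis of $\k^k$ (consecutive unimodularity) and the box they span has determinant $1$, the coordinates of an arbitrary column $\mathsf{A}_a$ in that basis are exactly signed $k\times k$ minors of $\mathsf{A}$, i.e.\ the numbers $\pm\det(\mathsf{A}_{[b,b+k-1] \text{ with one index replaced by } a})$. Comparing with the defining equations of the twist, this says precisely that $\rt(\mathsf{A})_b$ is (up to the standard sign convention) the last row of the inverse of the matrix $[\mathsf{A}_b\ \cdots\ \mathsf{A}_{b+k-1}]$; equivalently, $\mathsf{A}_a\cdot\rt(\mathsf{A})_b = \det(\mathsf{A}_{[b,b+k-1]\cup\{a\}\setminus\{b+k-1\}})$ (with the replaced column in the last slot), which after reindexing is $\det(\mathsf{A}_{[b+1,b+k-1]\cup\{a\}})$ up to sign. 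Second, I would unwind Construction~\ref{cons: unwrap} and check that the strip it produces is, entrywise, exactly this pairing with the indices arranged as in Theorem~\ref{thm: detfrieze}; this is where the bookkeeping of which shifted copy of $\mathsf{A}$ and $\rt(\mathsf{A})$ the algorithm uses at step $a$ has to be made to line up with the ranges $b\le a\le b+n-k$. Third, I would check the two boundary rows are $1$: the $a=b$ case gives $\mathsf{A}_b\cdot\rt(\mathsf{A})_b=1$ directly, and the $a=b+n-k$ (i.e.\ the bottom) case should reduce, via $n$‑periodicity of column indices and the same dot‑product relations applied to the twist, to $1$ as well — here the consecutively‑unimodular property of $\rt(\mathsf{A})$ itself (\cite[Prop.~6.6]{MS17}) is the input. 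Fourth, I would confirm the support condition (entries outside $b\le a\le b+n-k$ are $0$): this should be immediate once step two identifies the construction's output with the minor formula, since those minors either are not produced by the algorithm or vanish by the $0$‑part of the twist equations.

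**Main obstacle.** The crux is step two: translating the recursive, "unwrapping" nature of Construction~\ref{cons: unwrap} into the static closed‑form minor on the nose, including getting every sign right. Friezes of type $\mathrm{SL}(k)$ carry an inherent $(-1)^{k-1}$ (it appears in Theorem~\ref{thm: friezeperiodic}), the twist is defined only by a \emph{partial} set of orthogonality conditions (one equation per pair with $b<a<b+k$, not a full inverse), and the Laplace expansion of $\det(\mathsf{A}_{[a+1,a+k-1]\cup\{b\}})$ along the column $\mathsf{A}_b$ introduces its own sign depending on where $b$ sits relative to the block $[a+1,a+k-1]$ cyclically. Reconciling these three sign sources — and doing so uniformly across the wrap‑around where indices pass through a multiple of $n$ — is the delicate part; everything else is the standard Cramer/Laplace identity. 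I would handle the signs by fixing one explicit small case (say the $3\times 8$ matrix of Example~\ref{ex: conuni}, whose frieze is Example~\ref{ex: intro2}) to pin down conventions, then verify the general sign is forced by the requirement that the top row come out to $1$.
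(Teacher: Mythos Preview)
Your approach is essentially the paper's: express the construction's output entrywise as $\rt(\mathsf{A})_{\overline a}\cdot\mathsf{A}_{\overline b}$ (with a factor $(-1)^{k-1}$ on the above-diagonal part that gets slid over in step~(3)), then apply Cramer's rule to the defining equations of the twist to identify this dot product with $\det(\mathsf{A}_{[a+1,a+k-1]\cup\{b\}})$, the sign arising from the column-ordering convention exactly cancelling the $(-1)^{k-1}$. Two minor corrections to your sketch: $\rt(\mathsf{A})_b$ is the \emph{first} row of $[\mathsf{A}_b\,\cdots\,\mathsf{A}_{b+k-1}]^{-1}$ (not the last), so Cramer replaces the column $\mathsf{A}_b$ rather than $\mathsf{A}_{b+k-1}$; and your steps~3--4 are unnecessary, since once the entrywise match with the formula of Theorem~\ref{thm: detfrieze} is established the boundary rows and support come for free.
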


\begin{proof}
Let $\C$ be the output of the construction, let $a\leq b<a+n$, and let $\overline{a},\overline{b}$ denote the residues of $a,b$ mod $n$. Then steps (3) and (4) of the construction are equivalent to the following conditional.
\begin{align*}
    \C_{a,b}
    &=
    \begin{cases}
    (\rt(\mathsf{A})^\top \mathsf{A})_{\overline{a},\overline{b}} & \text{if }\overline{a} \geq \overline{b} \\
    (-1)^{k-1}(\rt(\mathsf{A})^\top \mathsf{A})_{\overline{a},\overline{b}} & \text{if }\overline{a} < \overline{b} \\
    \end{cases}
\\
    &=
    \begin{cases}
    \rt(\mathsf{A})_{\overline{a}} \cdot \mathsf{A}_{\overline{b}} & \text{if } \overline{a} \geq \overline{b} \\
    (-1)^{k-1}\rt(\mathsf{A})_{\overline{a}}\cdot \mathsf{A}_{\overline{b}} \hspace{1em} & \text{if }\overline{a} < \overline{b} \\
    \end{cases}
\end{align*}
The $\overline{a}$th column of $\rt(\mathsf{A})$ is defined to be the unique solution to ${\mathsf{A}}_{[\overline{a},\overline{a}+k-1]}^\top\mathsf{x}=\mathsf{e}_{\overline{a}}$.
Therefore,  $\rt(\mathsf{A})_{\overline{a}} = ({\mathsf{A}}_{[\overline{a},\overline{a}+k-1]}^{\top})^{-1} \, \mathsf{e}_{\overline{a}}$, and so
\[ \rt(\mathsf{A})_{\overline{a}} \cdot \mathsf{A}_{\overline{b}}  = (({\mathsf{A}}_{[\overline{a},\overline{a}+k-1]}^{\top})^{-1} \, \mathsf{e}_{\overline{a}}) \cdot \mathsf{A}_{\overline{b}} = 
\mathsf{e}_{\overline{a}} \cdot ({\mathsf{A}}_{[\overline{a},\overline{a}+k-1]})^{-1} \, \mathsf{A}_{\overline{b}}
\]
Equivalently, $\rt(\mathsf{A})_{\overline{a}} \cdot \mathsf{A}_{\overline{b}}$ is the ${\overline{a}}$th entry of the solution to ${\mathsf{A}}_{[\overline{a},\overline{a}+k-1]}\mathsf{x} = \mathsf{A}_{\overline{b}}$. By Cramer's Rule, this is the determinant of ${\mathsf{A}}_{[\overline{a},\overline{a}+k-1]}$ after replacing the $\overline{a}$th column of $\mathsf{A}$ by the $\overline{b}$th column.\footnote{There is also a denominator of $\det(\mathsf{A}_{[\overline{a},\overline{a}+k-1]})$ in Cramer's Rule, but this determinant is 1 by assumption.} This is \emph{almost} $\det(\mathsf{A}_{[\overline{a}+1,\overline{a}+k-1]\cup \{b\}})$; however, our convention for ordering the columns in $\det(\mathsf{A}_{[\overline{a}+1,\overline{a}+k-1]\cup \{b\}})$ means there is an additional factor of $(-1)^{k-1}$ if $\overline{a}<\overline{b}$. That is,
\[ 
\rt(\mathsf{A})_{\overline{a}} \cdot \mathsf{A}_{\overline{b}} =
\begin{cases}
\det(\mathsf{A}_{[\overline{a}+1,\overline{a}+k-1]\cup \{\overline{b}\}}) & \text{if }\overline{a} \geq \overline{b}, \\
(-1)^{k-1}\det(\mathsf{A}_{[\overline{a}+1,\overline{a}+k-1]\cup \{\overline{b}\}}) & \text{if }\overline{a} < \overline{b}. \\
\end{cases}
\]
Therefore, $\C_{a,b} = \det(\mathsf{A}_{[\overline{a}+1,\overline{a}+k-1]\cup \{\overline{b}\}})$. Since this is $0$ whenever $\overline{b}\in [\overline{a}+1,\overline{a}+k-1]$, we obtain the formula in Theorem \ref{thm: detfrieze}.
\end{proof}

\begin{rem}
Theorem \ref{thm: cons} provides an alternate proof of Theorem \ref{thm: detfrieze}. By Cauchy-Binet,
\[ \det(\Fr(\mathsf{A})_{[a,a+k-1],[b,b+k-1]}) =  \det( (\rt(\mathsf{A})^\top \mathsf{A})_{[a,a+k-1],[b,b+k-1]}) =  \det(\rt(\mathsf{A})_{[a,a+k-1]}) \det(\mathsf{A}_{[b,b+k-1]})  \]
Both of the final determinants are $1$, as $\mathsf{A}$ and $\rt(\mathsf{A})$ are consecutively unimodular. By a similar argument, any $(k+1)\times (k+1)$ minor of $\C$ vanishes, since the product $\rt(\mathsf{A})^\top\mathsf{A}$ has rank $k$.
\end{rem}

\newpage

\begin{cons}\label{cons: unwrap}
Let $\mathsf{A}$ be a consecutively unimodular $k\times n$-matrix. As a running example:
\[ \mathsf{A} := 
\begin{bmatrix}
1 & 11 & 4 & 6 & 3 & 1 & 0 & 0 \\
0 & 1 & 2 & 7 & 5 & 3 & 1 & 0 \\
0 & 0 & 1 & 4 & 3 & 2 & 1 & 1 \\
\end{bmatrix}\]
\begin{enumerate}
    \item Compute the twist $\rt(\mathsf{A})$ of $\mathsf{A}$.
    \[ \rt(\mathsf{A}) = 
    \begin{bmatrix}
    1 & 1 & 1 & 1 & 1 & 1 & 0 & 0 \\
    -11 & -10 & -6 & -3 & -1 & 0 & 1 & 0 \\
    18 & 16 & 9 & 4 & 1 & 0 & 0 & 1 \\
    \end{bmatrix}\]
    \item Compute the matrix product $\rt(\mathsf{A})^\top \mathsf{A}$.
    \[ \rt(\mathsf{A})^\top \mathsf{A} = 
    \begin{bmatrix}
    1 & 0 & 0 & 1 & 2 & 4 & 7 & 18 \\
    1 & 1 & 0 & 0 & 1 & 3 & 6 & 16 \\
    1 & 5 & 1 & 0 & 0 & 1 & 3 & 9 \\
    1 & 8 & 2 & 1 & 0 & 0 & 1 & 4 \\
    1 & 10 & 3 & 3 & 1 & 0 & 0 & 1\\
    1 & 11 & 4 & 6 & 3 & 1 & 0 & 0 \\
    0 & 1 & 2 & 7 & 5 & 3 & 1 & 0 \\
    0 & 0 & 1 & 4 & 3 & 2 & 1 & 1 
    \end{bmatrix}\]
    \begin{rem}
    The entries of $\rt(\mathsf{A})^\top \mathsf{A}$ are dot products between columns of $\rt(\mathsf{A})$ and columns of $\mathsf{A}$, so $\rt(\mathsf{A})^\top \mathsf{A}$ has a strip of 0s above the diagonal bounded by two diagonals of $\pm1$s.
    \end{rem}

    \item Take the entries above the diagonal of $\rt(\mathsf{A})^\top \mathsf{A}$, multiply them by $(-1)^{k-1}$, and slide them to the left of the rest of the matrix to form a parallelogram.
    \[
    \begin{tikzpicture}[baseline=(M-6-1.base),
    ampersand replacement=\&,
    ]
    \matrix[matrix of math nodes,
        matrix anchor = M-1-8.center,
        origin/.style={},
        throw/.style={},
        pivot/.style={draw,circle,inner sep=0.25mm,minimum size=2mm},       
        nodes in empty cells,
        inner sep=0pt,
        nodes={anchor=center},
        column sep={.5cm,between origins},
        row sep={.5cm,between origins},
    ] (M) at (0,0) {
    0 \& 0 \& 1 \& 2 \& 4 \& 7 \& 18 \& 1 \&  \&  \&  \&  \&  \&  \&  \\
    \& 0 \& 0 \& 1 \& 3 \& 6 \& 16 \& 1 \& 1 \&  \&  \&  \&  \&  \&   \\
    \& \& 0 \& 0 \& 1 \& 3 \& 9 \& 1 \& 5 \& 1 \&  \&  \&  \&  \&  \\
    \& \& \& 0 \& 0 \& 1 \& 4 \& 1 \& 8 \& 2 \& 1 \&  \&  \&  \&  \\
    \& \& \& \& 0 \& 0 \& 1 \& 1 \& 10 \& 3 \& 3 \& 1 \&  \&  \& \\
    \& \& \& \& \& 0 \& 0 \& 1 \& 11 \& 4 \& 6 \& 3 \& 1 \&  \& \\
    \& \& \& \& \& \& 0 \& 0 \& 1 \& 2 \& 7 \& 5 \& 3 \& 1 \& \\
    \& \& \& \& \& \& \& 0 \& 0 \& 1 \& 4 \& 3 \& 2 \& 1 \& 1 \\
    };
    \draw[thick,dark green,fill=dark green!25,opacity=.25,rounded corners=5mm] ($(M-1-8.center)+(-.25,.5)$) -- ($(M-8-15.center)+(.5,-.25)$) -- ($(M-8-8.center)+(-.25,-.25)$) -- cycle;
    \draw[thick,dark red,fill=dark red!25,opacity=.25,rounded corners] ($(M-1-7.center)+(.25,.25)$) -- ($(M-7-7.center)+(.25,-.5)$) -- ($(M-1-1.center)+(-.5,.25)$) -- cycle;
    \end{tikzpicture}
    \]
    \item Rotate the resulting parallelogram $45^\circ$ counter-clockwise, delete any $0$s below the bottom row of $1$s, and duplicate the rest $n$-periodically in an infinite horizontal strip.
    \[
    \begin{tikzpicture}[baseline=(M-6-1.base),
    ampersand replacement=\&,
    ]
    \clip[use as bounding box] (-7.5,.3) rectangle (7.5,-2.3);
    \matrix[matrix of math nodes,
        matrix anchor = M-1-29.center,
        origin/.style={},
        throw/.style={},
        pivot/.style={draw,circle,inner sep=0.25mm,minimum size=2mm},       
        nodes in empty cells,
        inner sep=0pt,
        nodes={anchor=center},
        column sep={.4cm,between origins},
        row sep={.4cm,between origins},
    ] (M) at (0,0) {
    \& \& \& \& \& 1 \& \& 1 \& \& 1 \& \& 1 \& \& 1 \& \& 1 \& \& 1 \& \& 1 \& \& 1 \& \& 1 \& \& 1 \& \& 1 \& \& 1 \& \& 1 \& \& 1 \& \& 1 \& \& 1 \& \& 1 \& \& 1 \& \& 1 \& \& 1 \& \& 1 \& \& 1 \& \& 1 \& \& \\
    \& \& \& \& 18 \& \& 1 \& \& 5 \& \& 2 \& \& 3 \& \& 3 \& \& 3 \& \& 1 \& \& 18 \& \& 1 \& \& 5 \& \& 2 \& \& 3 \& \& 3 \& \& 3 \& \& 1 \& \& 18 \& \& 1 \& \& 5 \& \& 2 \& \& 3 \& \& 3 \& \& 3 \& \& 1 \& \& \\
    \& \& \& 7 \& \& 16 \& \& 1 \& \& 8 \& \& 3 \& \& 6 \& \& 5 \& \& 2 \& \& 7 \& \& 16 \& \& 1 \& \& 8 \& \& 3 \& \& 6 \& \& 5 \& \& 2 \& \& 7 \& \& 16 \& \& 1 \& \& 8 \& \& 3 \& \& 6 \& \& 5 \& \& 2 \& \& \\
    \& \& 4 \& \& 6 \& \& 9 \& \& 1 \& \& 10 \& \& 4 \& \& 7 \& \& 3 \& \& 4 \& \& 6 \& \& 9 \& \& 1 \& \& 10 \& \& 4 \& \& 7 \& \& 3 \& \& 4 \& \& 6 \& \& 9 \& \& 1 \& \& 10 \& \& 4 \& \& 7 \& \& 3 \& \& \\
    \& 2 \& \& 3 \& \& 3 \& \& 4 \& \& 1 \& \& 11 \& \& 2 \& \& 4 \& \& 2 \& \& 3 \& \& 3 \& \& 4 \& \& 1 \& \& 11 \& \& 2 \& \& 4 \& \& 2 \& \& 3 \& \& 3 \& \& 4 \& \& 1 \& \& 11 \& \& 2 \& \& 4 \& \& \\
    1 \& \& 1 \& \& 1 \& \& 1 \& \& 1 \& \& 1 \& \& 1 \& \& 1 \& \& 1 \& \& 1 \& \& 1 \& \& 1 \& \& 1 \& \& 1 \& \& 1 \& \& 1 \& \& 1 \& \& 1 \& \& 1 \& \& 1 \& \& 1 \& \& 1 \& \& 1 \& \& 1 \& \& \\
    };
    \draw[thick,dark green,fill=dark green!25,opacity=.25,rounded corners=3mm] ($(M-1-6.center)+(-.5,.25)$) -- ($(M-1-20.center)+(.5,.25)$) -- ($(M-6-15.center)+(.25,-.25)$) -- ($(M-6-11.center)+(-.25,-.25)$) -- cycle;
    \draw[thick,dark red,fill=dark red!25,opacity=.25,rounded corners=3mm] ($(M-2-5.center)+(0,.5)$) -- ($(M-6-9.center)+(.5,-.25)$) -- ($(M-6-1.center)+(-.5,-.25)$) -- cycle;
    \draw[thick,dark green,fill=dark green!25,opacity=.25,rounded corners=3mm] ($(M-1-22.center)+(-.5,.25)$) -- ($(M-1-36.center)+(.5,.25)$) -- ($(M-6-31.center)+(.25,-.25)$) -- ($(M-6-27.center)+(-.25,-.25)$) -- cycle;
    \draw[thick,dark red,fill=dark red!25,opacity=.25,rounded corners=3mm] ($(M-2-21.center)+(0,.5)$) -- ($(M-6-25.center)+(.5,-.25)$) -- ($(M-6-17.center)+(-.5,-.25)$) -- cycle;
    \draw[thick,dark green,fill=dark green!25,opacity=.25,rounded corners=3mm] ($(M-1-38.center)+(-.5,.25)$) -- ($(M-1-52.center)+(.5,.25)$) -- ($(M-6-47.center)+(.25,-.25)$) -- ($(M-6-43.center)+(-.25,-.25)$) -- cycle;
    \draw[thick,dark red,fill=dark red!25,opacity=.25,rounded corners=3mm] ($(M-2-37.center)+(0,.5)$) -- ($(M-6-41.center)+(.5,-.25)$) -- ($(M-6-33.center)+(-.5,-.25)$) -- cycle;
    \end{tikzpicture}
    \qedhere
    \]
\end{enumerate}
\end{cons}

\newpage

\subsection{Relation to the Grassmannian}

For all $\mathsf{G}\in \mathrm{SL}(k)$, if $\mathsf{A}$ is consecutively unimodular, then so is $\mathsf{GA}$, and $\Fr(\mathsf{GA})= \Fr(\mathsf{A})$.
Therefore, $\Fr$ descends to a well-defined map
\[ \mathrm{SL}(k)\backslash \{ \text{consecutively unimodular $k\times n$-matrices}\} \xrightarrow{\Fr} \{\text{$\mathrm{SL}(k)$-friezes of height $(n-k)$}\} \]

\begin{thm}\label{thm: Frbijection1}
The map $\Fr$ is a bijection.
\end{thm}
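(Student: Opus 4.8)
The plan is to prove injectivity and surjectivity of $\Fr$ separately, using the twist and Construction~\ref{cons: unwrap} throughout. For injectivity, suppose $\mathsf A$ and $\mathsf B$ are consecutively unimodular $k\times n$-matrices with $\Fr(\mathsf A)=\Fr(\mathsf B)$. By Theorem~\ref{thm: cons}, $\Fr(\mathsf A)$ is obtained from the $n\times n$-matrix $\rt(\mathsf A)^\top\mathsf A$ by steps (3)--(4) of Construction~\ref{cons: unwrap}, and those steps form a fixed invertible recipe (flip the signs of the entries strictly above the diagonal, shear into a parallelogram, rotate, periodize); hence $\rt(\mathsf A)^\top\mathsf A$ can be read back off $\Fr(\mathsf A)$, and likewise for $\mathsf B$, so $\rt(\mathsf A)^\top\mathsf A=\rt(\mathsf B)^\top\mathsf B=:M$. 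First I would observe that $M$ has rank exactly $k$: the $k\times k$ submatrix $M_{[a,a+k-1],[a,a+k-1]}$ is lower unitriangular (its strictly-above-diagonal part lies in the strip of $0$s of $\rt(\mathsf A)^\top\mathsf A$ and its diagonal is $1$s), so $\operatorname{rank}M\ge k$, while $M$ visibly factors through $\k^k$. Since any two rank-$k$ factorizations of $M$ through $\k^k$ differ by a left $\mathrm{GL}_k$-action, comparing $M=\rt(\mathsf A)^\top\mathsf A=\rt(\mathsf B)^\top\mathsf B$ gives $\mathsf B=\mathsf G\mathsf A$ with $\mathsf G\in\mathrm{GL}_k$; and $1=\det(\mathsf B_{[1,k]})=\det(\mathsf G)\det(\mathsf A_{[1,k]})=\det(\mathsf G)$ forces $\mathsf G\in\mathrm{SL}_k$, so $\mathsf A$ and $\mathsf B$ lie in the same orbit.

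For surjectivity, let $\C$ be an $\mathrm{SL}(k)$-frieze of height $h$ and put $n=h+k$. Running steps (3)--(4) of Construction~\ref{cons: unwrap} backwards turns $\C$ into an $n\times n$-matrix $M$: its main diagonal consists of $1$s (the top row of $\C$), the next $k-1$ diagonals of $0$s, the following diagonal of $(-1)^{k-1}$s (the bottom row of $1$s of $\C$, with the sign flip undone), and the remaining entries are the $(\pm1)$-corrected content of $\C$. The key step is to check $\operatorname{rank}M=k$. That $\operatorname{rank}M\ge k$ is the unitriangular block again. For $\operatorname{rank}M\le k$ I would invoke that $\C$ is a frieze: under the inverse of steps (3)--(4), the system $M\mathsf v=0$ becomes the linear recurrence $\C\mathsf x=0$ restricted to $(-1)^{k-1}$-twisted $n$-periodic sequences, and by Theorem~\ref{thm: friezeperiodic} \emph{every} solution of $\C\mathsf x=0$ has this form, so $\dim\ker M\ge h$; hence $\operatorname{rank}M=k$. (Alternatively, vanishing of all solid $(k+1)\times(k+1)$-minors of $\C$ — i.e. tameness — propagates to $\operatorname{rank}M\le k$, much as in the sketch of Lemma~\ref{lemma: predual}.)

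Given $\operatorname{rank}M=k$, fix any factorization $M=\mathsf P\mathsf Q$ with $\mathsf P\in\k^{n\times k}$, $\mathsf Q\in\k^{k\times n}$ of rank $k$. Since $M_{[a,a+k-1],[a,a+k-1]}$ is invertible, $\mathsf Q_a,\dots,\mathsf Q_{a+k-1}$ is a basis of $\k^k$ for each $a$, and the $1$s-diagonal together with the $0$-strip of $M$ force the $a$th row of $\mathsf P$ to be the covector dual to $\mathsf Q_a$ in this basis. Expanding $\mathsf Q_{a+k}$ in $(\mathsf Q_a,\dots,\mathsf Q_{a+k-1})$, its $\mathsf Q_a$-coefficient is $M_{a,a+k}=(-1)^{k-1}$, and a one-line determinant computation (moving $\mathsf Q_a$ past the other $k-1$ columns contributes a sign $(-1)^{k-1}$, which cancels) then gives $\det(\mathsf Q_{[a+1,a+k]})=\det(\mathsf Q_{[a,a+k-1]})$; so this determinant is a nonzero constant $c$. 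Rescaling $\mathsf Q\mapsto\mathsf G\mathsf Q$, $\mathsf P\mapsto\mathsf P\mathsf G^{-1}$ with $\det\mathsf G=c^{-1}$, I may assume $\mathsf A:=\mathsf Q$ is consecutively unimodular; for this normalized factorization the $a$th row of $\mathsf P$ is dual to $\mathsf A_a$ in $(\mathsf A_a,\dots,\mathsf A_{a+k-1})$, which is exactly the defining property of $\rt(\mathsf A)_a$, so $\mathsf P=\rt(\mathsf A)^\top$ and $M=\rt(\mathsf A)^\top\mathsf A$. Then running Construction~\ref{cons: unwrap} forwards on $\mathsf A$ applies steps (3)--(4) to $M$ and returns $\C$; that is, $\Fr(\mathsf A)=\C$.

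The hard part is the surjectivity direction, and within it the claim $\operatorname{rank}M=k$: this is the one place the full strength of the frieze axioms is needed, and making the dictionary between $\ker M$ and the superperiodic solutions of $\C\mathsf x=0$ precise requires carefully tracking the sign flips and index shears built into Construction~\ref{cons: unwrap}. Everything else is linear-algebra bookkeeping, the one pleasant point being that $M_{a,a+k}=(-1)^{k-1}$ — a shadow of $\C$ having a \emph{bottom} row of $1$s — is precisely what makes $\det(\mathsf Q_{[a,a+k-1]})$ independent of $a$, so that the $\mathrm{GL}_k$-ambiguity in the factorization can be spent to make $\mathsf A$ consecutively unimodular.
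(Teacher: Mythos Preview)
Your argument is correct, but it is substantially more elaborate than the paper's. The paper's proof is a single line: an explicit inverse to $\Fr$ is given by $\C\mapsto \C_{[n-k+1,n],[1,n]}$, the $k\times n$ submatrix of the frieze (regarded as a $\mathbb{Z}\times\mathbb{Z}$-matrix) on rows $n-k+1,\ldots,n$ and columns $1,\ldots,n$. One checks directly that this submatrix is consecutively unimodular (its first $k$ columns are upper unitriangular, and the bottom row of $1$s in $\C$ propagates the determinant cyclically) and that applying $\Fr$ returns $\C$; no twist, no rank-$k$ factorization, no appeal to Theorem~\ref{thm: friezeperiodic} is needed.

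Your route---reconstruct $M=\rt(\mathsf A)^\top\mathsf A$ from $\C$, show $\operatorname{rank}M=k$ via superperiodicity of solutions, then factor and normalize---is essentially the strategy the paper uses later for the general juggler's case (Theorem~\ref{thm: Frbijection}, proved via Lemmas~\ref{lemma: qpkernel} and~\ref{lemma: Frinv}). So you have rediscovered the general machinery and specialized it, which is fine but overkill here. One minor gap to be aware of: your constancy argument for $\det(\mathsf Q_{[a,a+k-1]})$ treats only the non-wrapping range $1\le a\le n-k+1$; at the wrap-around step the coefficient $c_0$ becomes $M_{n-k+1,1}=\C_{h+1,1}=1$ rather than $(-1)^{k-1}$, but the paper's column-ordering convention for $\mathsf Q_{[a,a+k-1]}$ introduces a compensating $(-1)^{k-1}$, so the conclusion survives. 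You should make that case explicit.
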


\begin{proof}[Proof sketch]
An inverse of $\Fr$ is given by sending $\C$ to the submatrix $\C_{[n-k+1,n],[1,n]}$.
\end{proof}

This can be related to the Grassmannian $\mathrm{Gr}(k,n)$ of $k$-planes in $n$-space, via the bijection
\[ \mathrm{GL}(k)\backslash \{ \text{$k\times n$-matrices of rank $k$}\} \xrightarrow{\mathrm{rowspan}} \mathrm{Gr}(k,n) \]
Restricting this to $\mathrm{SL}(k)$-orbits of consecutively unimodular matrices gives a closed inclusion
\[ \mathrm{SL}(k)\backslash \{ \text{consecutively unimodular $k\times n$-matrices}\} \xrightarrow{\mathrm{rowspan}} \mathrm{Gr}(k,n) \]
If we let $\mathrm{Gr}_{\uni}(k,n)$ denote the image of this map, then Theorem \ref{thm: Frbijection1} descends to a bijection
\[ \Fr:\mathrm{Gr}_{\uni}(k,n) \xrightarrow{\sim} \{\text{$\mathrm{SL}(k)$-friezes of height $(n-k)$}\} \]

\begin{rem}
This parametrization of $\mathrm{SL}(k)$-friezes by a subvariety of a Grassmannian has appeared in \cite[Prop.~3.2.1]{MGOST14} and \cite[Theorem 3.1]{BFGST21}.
\end{rem}

Now that we have introduced the twist of a consecutively unimodular matrix, we can relate it to the ($n$-truncated) dual of a frieze.
Given a $k\times n$ matrix $\mathsf{A}$ of rank $k$, define a \textbf{positive complement} of $\mathsf{A}$ to be any $(n-k)\times n$ matrix $\mathsf{A}^\pperp$ such that, for every $k$-element set $I\subset [n]$,
\[ \det(\mathsf{A}_I) = \det(\mathsf{A}^\pperp_{[n]\smallsetminus I}) \]
Such a matrix may be constructed by first finding a matrix $\mathsf{B}$ whose rows span the orthogonal complement to the rowspan of $\mathsf{A}$, negating odd numbered columns of $\mathsf{B}$, and then rescaling until the above identities are satisfied. The following facts are straightforward from the definition.
\begin{itemize}
    \item If $\mathsf{A}^\pperp$ is a positive complement of $\mathsf{A}$, then $\mathsf{A}$ is a positive complement of $\mathsf{A}^\pperp$.
    \item Positive complements exist and are unique up to left multiplication by $\mathrm{SL}(n-k)$.
    \item The positive complement of a consecutively unimodular matrix is consecutively unimodular.
\end{itemize}
Therefore, sending a matrix to its positive complement descends to well-defined isomorphisms
\[ \pperp: \mathrm{Gr}(k,n) \xrightarrow{\sim} \mathrm{Gr}(n-k,n) \text{ and } \pperp: \mathrm{Gr}_{\uni}(k,n) \xrightarrow{\sim} \mathrm{Gr}_{\uni}(n-k,n) \]
whose inverse is $\pperp$. This is related to $\dagger$ and $\tau$ by the following commutative diagram of bijections.
%
\[ \begin{tikzpicture}[xscale=3.5, yscale=1.75]
    \node (a0) at (0,0) {$\mathrm{Gr}_{\text{uni}}(k,n)$};
    \node (a1) at (1,.33) {$\mathrm{Gr}_{\text{uni}}(k,n)$};
    \node (c1) at (1,-.33) {$\mathrm{Gr}_{\text{uni}}(n-k,n)$};
    \node (a2) at (2,0) {$\mathrm{Gr}_{\text{uni}}(n-k,n)$};
    \node (b0) at (0,-1) {$\{\text{$\mathrm{SL}(k)$-friezes of height $(n-k)$}\}$};
    \node (b2) at (2,-1) {$\{\text{$\mathrm{SL}(n-k)$-friezes of height $k$}\}$};

    \draw[->] (a0) to node[above] {$\tau$} (a1);
    \draw[<->] (a1) to node[above] {$\ddagger$} (a2);
    \draw[->] (a2) to node[below] {$\tau$} (c1);
    \draw[<->] (c1) to node[below] {$\ddagger$} (a0);
    \draw[<->] (b0) to node[above] {$\dagger$} (b2);
    
    \draw[->] (a0) to node[left] {$\Fr$} (b0);
    \draw[->] (a2) to node[left] {$\Fr$} (b2);
\end{tikzpicture}\]
The commutativity will be proven as a special case of Theorem \ref{thm: dualities} and Proposition \ref{prop: twistduality}.

\newpage

\section{Juggler's friezes}

\label{section: jugfrieze}

\begin{center}
\emph{Theorems in Sections \ref{section: jugfrieze} and \ref{section: jugcons} are proven in Section \ref{section: proofs}.}
\end{center}

We now describe a generalization of $\mathrm{SL}(k)$-friezes, which we call \emph{juggler's friezes}, for which analogs of each of the preceding definitions and constructions hold. 
The simplest motivation for this generalization is to observe that Construction \ref{cons: unwrap} can be applied to a more general class of matrices (\emph{$\pi$-unimodular matrices}), yielding grids of numbers with certain minors necessarily equal to $0$, $1$, or $-1$.
However, instead of starting with this construction, we mirror the order of the preceding exposition: a definition in terms of certain minors, followed by several equivalent characterizations, and then conclude with the analog of Construction \ref{cons: unwrap}.

\subsection{Juggling functions}

First, we need a combinatorial object to describe the shape of our generalized friezes.
A \textbf{juggling function of period\footnote{We emphasize that the choice of a distinguished period $n$ is part of the data of a juggling function.} $n$} is a bijection $\pi:\mathbb{Z}\rightarrow \mathbb{Z}$ such that, for all $i$, $i\leq \pi(i) \leq i+n$ and $\pi(i+n)=\pi(i)+n$. Juggling functions are a set of combinatorial objects in bijection with \emph{positroids}; matroids with positive representations (see \cite{KLS13}).

\begin{rem}
A juggling function $\pi$ can be thought of as a transcription of how a juggler catches and throws balls over time. At each moment $a$, the juggler catches the ball they threw at moment $\pi^{-1}(a)$ and immediately throws it again to be caught at moment $\pi(a)$...unless $\pi(a)=a$, in which case they neither catch nor throw a ball.
\end{rem}

The \textbf{number of balls} in a juggling function of period $n$ is 
\[ \frac{1}{n} \sum_{i=1}^n (\pi(i)-i) \]
which is always an integer. The \textbf{siteswap notation} for a juggling function is the list of integers
\[ (\pi(1)-1,\pi(2)-2,...,\pi(n)-n)\]
A \textbf{loop} (resp.~a \textbf{coloop}) of a juggling function is an $a\in \mathbb{Z}$ such that $\pi(a)=a$ (resp.~$\pi(a)=a+n$).\footnote{This terminology is inherited from the matroid associated to $\pi$.}

\begin{rem}
Our definitions and theorems will frequently have extra cases for loops or coloops, but these rarely arise in interesting examples and can be safely ignored in a first reading.
\end{rem}

\begin{ex}
The juggling function with siteswap notation `53635514' is the function with values
\[ \begin{array}{|c|cccccccc|}
\hline
i & 1 & 2 & 3 & 4 & 5 & 6 & 7 & 8  \\
\hline
\pi(i) & 6 & 5 & 9 & 7 & 10 & 11 & 8 & 12 \\
\hline
\end{array}\]
extended to the rest of $\mathbb{Z}$ via the rule that $\pi(i+ 8) = \pi(i)+ 8$.
\end{ex}

Given a juggling function $\pi$ of period $n$, the \textbf{dual juggling function} $\pi^\dagger$
\[ \pi^\dagger(a) := \pi^{-1}(a) +n \]
is a juggling function with period $n$.
If $\pi$ has $h$-many balls, then $\pi^\dagger$ has $(n-h)$-many balls.
\begin{ex}
The dual juggling function of the preceding example is $8$-periodic with values
\[ \begin{array}{|c|cccccccc|}
\hline
i & 1 & 2 & 3 & 4 & 5 & 6 & 7 & 8  \\
\hline
\pi^\dagger(i) & 3 & 5 & 6 & 8 & 10 & 9 & 12 & 15 \\
\hline
\end{array}\]
which has siteswap notation `23345357'.
\end{ex}

\subsection{$\pi$-prefriezes}

Before defining juggler's friezes, we generalize `prefriezes' to configurations of numbers with a ragged lower edge.
Unlike prefriezes in the previous sense, which always had $1$s on the boundary, this new object may have $1$s or $-1$s on the boundary, 
depending on the parity of the following set. Given a juggling function $\pi$ and integers $a,b\in \mathbb{Z}$, define
\[ S_\pi (a,b) : = \{ i\in \mathbb{Z} \mid a<i \text{ and } \pi(i)< b \} = (a,b) \cap \pi^{-1}(a,b) \]

\begin{rem}
In the juggling perspective, $S_\pi(a,b)$ is the set of times a ball is thrown after moment $a$ that will be caught before moment $b$. 
\end{rem}

\begin{defn}
Given a juggling function $\pi$, 
a \textbf{$\pi$-prefrieze} consists of rows of numbers offset in a diamond grid $\C$ with entries indexed as in Section \ref{section: matrices}\footnote{Or, equivalently, a $\mathbb{Z}\times \mathbb{Z}$-matrix rotated $45^\circ$ counterclockwise.}, such that
\[ \C_{a,b} = \left\{
\begin{array}{cl}
1 & \text{if } a=b\\
(-1)^{|S_\pi(b,a)|} & \text{if }a=\pi(b)\\
0 & \text{if } 
a\not\in [b,\pi(b)] \text{ or } b\not\in [\pi^{-1}(a),a] 
\end{array}
\right\} \]
\end{defn}
\noindent Note that this definition allows $\C_{a,b}$ to be arbitrary if $\pi^{-1}(a) < b < a < \pi(b)$.

\begin{warn}
The period $n$ of the juggling function $\pi$ is not used in the definition of a $\pi$-prefrieze; however, this data will be essential for the \emph{frieze} and \emph{tameness} conditions in the next section.
\end{warn}


\begin{cons}\label{cons: pipre}
We outline a more visually-oriented construction of a $\pi$-prefrieze. 
First, construct the \textbf{diagram of $\pi$} as follows.
\begin{itemize}
    \item Draw a row of green circles (i.e.~at position $\C_{a,a}$ for each $a\in \mathbb{Z}$).
    \item Draw a blue circle at position $\C_{\pi(a),a}$ for each $a\in \mathbb{Z}$.\footnote{If $\pi(a)=a$, the circle $\C_{a,a}$ will be both blue and green, with no adjacent lines.}
    \item Draw a line between each pair of circles which share a diagonal (i.e.\ in the same row or column of the corresponding matrix).
%
%
%
%
\end{itemize}
As a running example, the juggling function $\pi$ with siteswap notation `53635514' has diagram
\[\begin{tikzpicture}[baseline=(current bounding box.center),
    ampersand replacement=\&,
    ]
    \clip[use as bounding box] (-9.5,-2.7) rectangle (6.3,0.3);
    \matrix[matrix of math nodes,
        matrix anchor = M-2-24.center,
        nodes in empty cells,
        inner sep=0pt,
        gthrow/.style={dark green,draw,circle,inner sep=0mm,minimum size=4mm},
        rthrow/.style={dark red,draw,circle,inner sep=0mm,minimum size=4mm},
        bthrow/.style={dark blue,draw,circle,inner sep=0mm,minimum size=4mm},
        pthrow/.style={dark purple,draw,circle,inner sep=0mm,minimum size=4mm},
        nodes={anchor=center,node font=\scriptsize},
        column sep={0.4cm,between origins},
        row sep={0.4cm,between origins},
    ] (M) at (0,0) {
 \&  \&  \&  \&  \&  \&  \&  \&  \&  \&  \&  \&  \&  \&  \&  \&  \&  \&  \&  \&  \&  \&  \&  \&  \&  \&  \&  \&  \&  \&  \&  \&  \&  \&  \&  \&  \&  \&  \\
 \& |[gthrow]|  \&  \& |[gthrow]|  \&  \& |[gthrow]|  \&  \& |[gthrow]|  \&  \& |[gthrow,double]|  \&  \& |[gthrow]|  \&  \& |[gthrow]|  \&  \& |[gthrow]|  \&  \& |[gthrow]|  \&  \& |[gthrow]|  \&  \& |[gthrow]|  \&  \& |[gthrow]|  \&  \& |[gthrow]|  \&  \& |[gthrow]|  \&  \& |[gthrow]|  \&  \& |[gthrow]|  \&  \& |[gthrow]|  \&  \& |[gthrow]|  \&  \& |[gthrow]|  \&  \\
\cdots \&  \&  \&  \&  \&  \& |[bthrow]|  \&  \&  \&  \&  \&  \&  \&  \&  \&  \&  \&  \&  \&  \&  \&  \& |[bthrow]|  \&  \&  \&  \&  \&  \&  \&  \&  \&  \&  \&  \&  \&  \&  \&  \& \cdots \\
 \&  \&  \&  \&  \&  \&  \&  \&  \&  \&  \&  \&  \&  \&  \&  \&  \&  \&  \&  \&  \&  \&  \&  \&  \&  \&  \&  \&  \&  \&  \&  \&  \&  \&  \&  \&  \&  \&  \\
\cdots \&  \& |[bthrow]|  \&  \&  \&  \&  \&  \&  \&  \&  \&  \&  \&  \& |[bthrow]|  \&  \&  \&  \& |[bthrow]|  \&  \&  \&  \&  \&  \&  \&  \&  \&  \&  \&  \& |[bthrow]|  \&  \&  \&  \& |[bthrow]|  \&  \&  \&  \& \cdots \\
 \&  \&  \&  \&  \&  \&  \&  \&  \&  \&  \& |[bthrow]|  \&  \&  \&  \&  \&  \&  \&  \&  \&  \&  \&  \&  \&  \&  \&  \& |[bthrow]|  \&  \&  \&  \&  \&  \&  \&  \&  \&  \&  \&  \\
 \&  \&  \&  \&  \&  \& |[bthrow]|  \&  \& |[bthrow]|  \&  \&  \&  \&  \&  \& |[bthrow]|  \&  \&  \&  \&  \&  \&  \&  \& |[bthrow]|  \&  \& |[bthrow]|  \&  \&  \&  \&  \&  \& |[bthrow]|  \&  \&  \&  \&  \&  \&  \&  \& \cdots \\
 \&  \&  \& |[bthrow]|  \&  \&  \&  \&  \&  \&  \&  \&  \&  \&  \&  \&  \&  \&  \&  \& |[bthrow]|  \&  \&  \&  \&  \&  \&  \&  \&  \&  \&  \&  \&  \&  \&  \&  \& |[bthrow]|  \&  \&  \&  \\
 \&  \&  \&  \&  \&  \&  \&  \&  \&  \&  \&  \&  \&  \&  \&  \&  \&  \&  \&  \&  \&  \&  \&  \&  \&  \&  \&  \&  \&  \&  \&  \&  \&  \&  \&  \&  \&  \& \\
    };

    \draw[traj] (M-3-1) to (M-2-2) to (M-7-7) to (M-2-12) to (M-5-15) to (M-2-18) to (M-7-23) to (M-2-28) to (M-5-31) to (M-2-34) to (M-7-39);
    \draw[traj] (M-5-1) to (M-2-4) to (M-7-9) to (M-2-14) to (M-8-20) to (M-2-26) to (M-7-31) to (M-2-36) to (M-5-39);
    \draw[traj] (M-3-1) to (M-5-3) to (M-2-6) to (M-3-7) to (M-2-8) to (M-6-12) to (M-2-16) to (M-5-19) to (M-2-22) to (M-3-23) to (M-2-24) to (M-6-28) to (M-2-32) to (M-5-35) to (M-2-38) to (M-3-39);
    \draw[traj] (M-5-1) to (M-8-4) to (M-2-10) to (M-7-15) to (M-2-20) to (M-7-25) to (M-2-30) to (M-8-36) to (M-5-39);
\end{tikzpicture}\]
The doubled green circle above denotes the position $\C_{1,1}$.

\newpage

A $\pi$-prefrieze can be constructed from the diagram of $\pi$ by putting:
\begin{itemize}
    \item a $1$ in each green circle,
    \item a $(-1)^s$ in each blue circle, where $s$ is the number of blue circles in the cone above it, and 
    \item any number at each intersection between lines.
\end{itemize}
All other entries are implicitly zero. An example of a $\pi$-prefrieze on the diagram above is below.
\[\begin{tikzpicture}[baseline=(current bounding box.south),
    ampersand replacement=\&,
    ]
    \clip[use as bounding box] (-9.5,-2.7) rectangle (6.3,0.3);
    \matrix[matrix of math nodes,
        matrix anchor = M-2-24.center,
        nodes in empty cells,
        inner sep=0pt,
        gthrow/.style={dark green,draw,circle,inner sep=0mm,minimum size=4mm},
        rthrow/.style={dark red,draw,circle,inner sep=0mm,minimum size=4mm},
        bthrow/.style={dark blue,draw,circle,inner sep=0mm,minimum size=4mm},
        pthrow/.style={dark purple,draw,circle,inner sep=0mm,minimum size=4mm},
        nodes={anchor=center,node font=\scriptsize},
        column sep={0.4cm,between origins},
        row sep={0.4cm,between origins},
    ] (M) at (0,0) {
 \&  \&  \&  \&  \&  \&  \&  \&  \&  \&  \&  \&  \&  \&  \&  \&  \&  \&  \&  \&  \&  \&  \&  \&  \&  \&  \&  \&  \&  \&  \&  \&  \&  \&  \&  \&  \&  \&  \\
 \& |[gthrow]| 1 \&  \& |[gthrow]| 1 \&  \& |[gthrow]| 1 \&  \& |[gthrow]| 1 \&  \& |[gthrow]| 1 \&  \& |[gthrow]| 1 \&  \& |[gthrow]| 1 \&  \& |[gthrow]| 1 \&  \& |[gthrow]| 1 \&  \& |[gthrow]| 1 \&  \& |[gthrow]| 1 \&  \& |[gthrow]| 1 \&  \& |[gthrow]| 1 \&  \& |[gthrow]| 1 \&  \& |[gthrow]| 1 \&  \& |[gthrow]| 1 \&  \& |[gthrow]| 1 \&  \& |[gthrow]| 1 \&  \& |[gthrow]| 1 \&  \\
\cdots \&  \& 5 \&  \& 1 \&  \& |[bthrow]| 1 \&  \& 3 \&  \& 2 \&  \& 1 \&  \& 5 \&  \& 1 \&  \& 5 \&  \& 1 \&  \& |[bthrow]| 1 \&  \& 3 \&  \& 2 \&  \& 1 \&  \& 5 \&  \& 1 \&  \& 5 \&  \& 1 \&  \& \cdots \\
 \& 3 \&  \& 2 \&  \&  \&  \&  \&  \& 5 \&  \& 1 \&  \& 3 \&  \& 2 \&  \& 3 \&  \& 2 \&  \&  \&  \&  \&  \& 5 \&  \& 1 \&  \& 3 \&  \& 2 \&  \& 3 \&  \& 2 \&  \&  \&  \\
\cdots \&  \& |[bthrow]| 1 \&  \&  \&  \& -2 \&  \&  \&  \& 2 \&  \& 1 \&  \& |[bthrow]| 1 \&  \& 1 \&  \& |[bthrow]| 1 \&  \&  \&  \& -2 \&  \&  \&  \& 2 \&  \& 1 \&  \& |[bthrow]| 1 \&  \& 1 \&  \& |[bthrow]| 1 \&  \&  \&  \& \cdots \\
 \&  \&  \&  \&  \& -1 \&  \& -3 \&  \&  \&  \& |[bthrow]| 1 \&  \&  \&  \&  \&  \&  \&  \&  \&  \& -1 \&  \& -3 \&  \&  \&  \& |[bthrow]| 1 \&  \&  \&  \&  \&  \&  \&  \&  \&  \& -1 \&  \\
 \&  \&  \&  \&  \&  \& |[bthrow]| -1 \&  \& |[bthrow]| -1 \&  \&  \&  \&  \&  \& |[bthrow,double]| -1 \&  \&  \&  \&  \&  \&  \&  \& |[bthrow]| -1 \&  \& |[bthrow]| -1 \&  \&  \&  \&  \&  \& |[bthrow]| -1 \&  \&  \&  \&  \&  \&  \&  \& \cdots \\
 \&  \&  \& |[bthrow]| 1 \&  \&  \&  \&  \&  \&  \&  \&  \&  \&  \&  \&  \&  \&  \&  \& |[bthrow,double]| 1 \&  \&  \&  \&  \&  \&  \&  \&  \&  \&  \&  \&  \&  \&  \&  \& |[bthrow]| 1 \&  \&  \&  \\
 \&  \&  \&  \&  \&  \&  \&  \&  \&  \&  \&  \&  \&  \&  \&  \&  \&  \&  \&  \&  \&  \&  \&  \&  \&  \&  \&  \&  \&  \&  \&  \&  \&  \&  \&  \&  \&  \& \\
    };

    \draw[traj,opacity=.25] (M-3-1) to (M-2-2) to (M-7-7) to (M-2-12) to (M-5-15) to (M-2-18) to (M-7-23) to (M-2-28) to (M-5-31) to (M-2-34) to (M-7-39);
    \draw[traj,opacity=.25] (M-5-1) to (M-2-4) to (M-7-9) to (M-2-14) to (M-8-20) to (M-2-26) to (M-7-31) to (M-2-36) to (M-5-39);
    \draw[traj,opacity=.25] (M-3-1) to (M-5-3) to (M-2-6) to (M-3-7) to (M-2-8) to (M-6-12) to (M-2-16) to (M-5-19) to (M-2-22) to (M-3-23) to (M-2-24) to (M-6-28) to (M-2-32) to (M-5-35) to (M-2-38) to (M-3-39);
    \draw[traj,opacity=.25] (M-5-1) to (M-8-4) to (M-2-10) to (M-7-15) to (M-2-20) to (M-7-25) to (M-2-30) to (M-8-36) to (M-5-39);
\end{tikzpicture}
\]
Of the two doubled blue circles,
$\C_{6,1}=(-1)^1$, as there is a single blue circle in the cone above (at $\C_{5,2}$), and $\C_{9,3}=(-1)^2$, as there are two blue circles in the cone above (at $\C_{7,4}$ and $\C_{8,7}$).
\end{cons}

\subsection{Juggler's friezes}

We can now state the primary definition of this note: a generalization of friezes to shapes defined by juggling functions.

\begin{defn}\label{defn: pifrieze}
Given a juggler's function $\pi$ of period $n$, a \textbf{$\pi$-frieze} is a $\pi$-prefrieze satisfying the following conditions.
\begin{itemize}
    \item (the \textbf{frieze} condition) For all $a, b\in \mathbb{Z}$ with $\pi^\dagger(a) \leq b < a+n \leq \pi(b)$,
    \[ \det (\C_{[a,b] \smallsetminus I , [a,b] \smallsetminus \pi^\dagger(I)}) = 1 \]
    where $I:= \{ i \in \mathbb{Z} \mid a \leq i \text{ and } \pi^\dagger(i) \leq b\}$; note this coincides with $S_{\pi^\dagger}(a-1,b+1)$.
    \item (the \textbf{tameness} condition) For all $a,b\in \mathbb{Z}$ with $\pi^\dagger(a) <b<a+n$ or $b<a+n<\pi(b)$, 
    \[ \det (\C_{(a,b] \smallsetminus J , [a,b) \smallsetminus \pi^\dagger(J)}) = 0 \]
    where $J:= \{ i \in \mathbb{Z} \mid a < i \text{ and } \pi^\dagger(i) < b\}$; note this coincides with $S_{\pi^\dagger}(a,b)$.
\end{itemize}
\end{defn}
\noindent 
While the determinants in the definition are not solid minors, a diagrammatic construction of these determinants is given in the next two sections.
Absent an explicit choice of $\pi$, we refer to this genre of friezes as \textbf{juggler's friezes}.

\begin{ex}
The running example of a $\pi$-prefrieze in Construction \ref{cons: pipre} is, in fact, a $\pi$-frieze.
\end{ex}

%

\begin{warn}
Note that we include tameness in the definition, where others might call these \emph{tame juggler's friezes}. One reason for this is that \emph{wild juggler's friezes} ($\pi$-prefriezes only satisfying the frieze condition) can be poorly behaved, even compared with wild $\mathrm{SL}(k)$-friezes. E.g.~tame $\pi$-friezes may not form an irreducible component of the space of wild $\pi$-friezes.
\end{warn}

\begin{rem}\label{rem: redundant}
If $\C$ is a $\pi$-prefrieze, then $\det (\C_{[a,b] \smallsetminus I , [a,b]
\smallsetminus \pi^\dagger(I)}) = 1$ for any $a,b$ with $a\leq b <a+n$; that
is, the conditions $\pi^\dagger(a)\leq b$ and $a+n\leq \pi(b)$ may be dropped
from the frieze condition. However, this larger class of determinants
immediately reduces to the given ones. When $\pi^\dagger(a)\not\leq b$ or
$a+n\not\leq \pi(b)$, the matrix $\C_{[a,b] \smallsetminus I , [a,b]
\smallsetminus \pi^\dagger(I)}$ can be block decomposed into a lower
unitriangular matrix and a matrix of the form $\C_{[a',b'] \smallsetminus I' ,
[a',b'] \smallsetminus \pi^\dagger(I')}$, where $\pi^\dagger(a')\leq b'$ and
$a'+n\leq \pi(b')$.\footnote{However, the inequalities in the tameness
condition cannot be dropped or even weakened to `and'; see Section
\ref{section: visualtame}.}
\end{rem}

\subsection{Visualizing the frieze condition}\label{section: visualfrieze}

To visualize the frieze condition, first take the diagram of $\pi^\dagger$, reflect it vertically, make the blue circles red, and attach it to the $\pi$-prefrieze $\C$ along the green circles. In the running example:
    \[\begin{tikzpicture}[baseline=(current bounding box.center),
    ampersand replacement=\&,
    ]
    \matrix[matrix of math nodes,
        matrix anchor = M-1-20.center,
        nodes in empty cells,
        inner sep=0pt,
        gthrow/.style={dark green,draw,circle,inner sep=0mm,minimum size=4mm},
        rthrow/.style={dark red,draw,circle,inner sep=0mm,minimum size=4mm},
        bthrow/.style={dark blue,draw,circle,inner sep=0mm,minimum size=4mm},
        bbthrow/.style={dark blue,draw,circle,inner sep=0mm,minimum size=4mm},
        pthrow/.style={dark purple,draw,circle,inner sep=0mm,minimum size=4mm},
        nodes={anchor=center,node font=\scriptsize},
        column sep={0.4cm,between origins},
        row sep={0.4cm,between origins},
    ] (M) at (0,0) {
 \&  \&  \&  \&  \&  \&  \&  \&  \&  \&  \&  \&  \&  \& |[rthrow]| \&  \&  \&  \&  \&  \&  \&  \&  \&  \&  \&  \&  \&  \&  \&  \& |[rthrow]| \&  \&  \&  \&  \&  \&  \&  \&  \\
 \&  \&  \&  \&  \&  \&  \&  \&  \&  \&  \&  \&  \&  \&  \&  \&  \&  \&  \&  \&  \&  \&  \&  \&  \&  \&  \&  \&  \&  \&  \&  \&  \&  \&  \&  \&  \&  \&  \\
\cdots \&  \&  \&  \&  \&  \& |[rthrow]| \&  \&  \&  \& |[rthrow]| \&  \&  \&  \&  \&  \&  \&  \&  \&  \&  \&  \& |[rthrow]| \&  \&  \&  \& |[rthrow]| \&  \&  \&  \&  \&  \&  \&  \&  \&  \&  \&  \& \cdots \\
 \&  \&  \& |[rthrow]| \&  \&  \&  \&  \&  \&  \&  \&  \&  \&  \&  \&  \&  \&  \&  \& |[rthrow]| \&  \&  \&  \&  \&  \&  \&  \&  \&  \&  \&  \&  \&  \&  \&  \& |[rthrow]|  \&  \&  \&  \\
\cdots \&  \&  \&  \&  \&  \& |[rthrow]| \&  \&  \&  \&  \&  \&  \&  \& |[rthrow]| \&  \& |[rthrow]| \&  \&  \&  \&  \&  \& |[rthrow]| \&  \&  \&  \&  \&  \&  \&  \& |[rthrow]| \&  \& |[rthrow]| \&  \&  \&  \&  \&  \& \cdots \\
 \&  \&  \&  \&  \&  \&  \&  \&  \&  \&  \& |[rthrow]| \&  \&  \&  \&  \&  \&  \&  \&  \&  \&  \&  \&  \&  \&  \&  \& |[rthrow]| \&  \&  \&  \&  \&  \&  \&  \&  \&  \&  \&  \\
\cdots \&  \&  \&  \&  \&  \&  \&  \&  \&  \&  \&  \&  \&  \&  \&  \&  \&  \&  \&  \&  \&  \&  \&  \&  \&  \&  \&  \&  \&  \&  \&  \&  \&  \&  \&  \&  \&  \& \cdots \\
 \& |[gthrow]| 1 \&  \& |[gthrow]| 1 \&  \& |[gthrow]| 1 \&  \& |[gthrow]| 1 \&  \& |[gthrow]| 1 \&  \& |[gthrow]| 1 \&  \& |[gthrow]| 1 \&  \& |[gthrow]| 1 \&  \& |[gthrow]| 1 \&  \& |[gthrow]| 1 \&  \& |[gthrow]| 1 \&  \& |[gthrow]| 1 \&  \& |[gthrow]| 1 \&  \& |[gthrow]| 1 \&  \& |[gthrow]| 1 \&  \& |[gthrow]| 1 \&  \& |[gthrow]| 1 \&  \& |[gthrow]| 1 \&  \& |[gthrow]| 1 \&  \\
\cdots \&  \& 5 \&  \& 1 \&  \& |[bthrow]| 1 \&  \& 3 \&  \& 2 \&  \& 1 \&  \& 5 \&  \& 1 \&  \& 5 \&  \& 1 \&  \& |[bthrow]| 1 \&  \& 3 \&  \& 2 \&  \& 1 \&  \& 5 \&  \& 1 \&  \& 5 \&  \& 1 \&  \& \cdots \\
 \& 3 \&  \& 2 \&  \&  \&  \&  \&  \& 5 \&  \& 1 \&  \& 3 \&  \& 2 \&  \& 3 \&  \& 2 \&  \&  \&  \&  \&  \& 5 \&  \& 1 \&  \& 3 \&  \& 2 \&  \& 3 \&  \& 2 \&  \&  \&  \\
\cdots \&  \& |[bthrow]| 1 \&  \&  \&  \& -2 \&  \&  \&  \& 2 \&  \& 1 \&  \& |[bthrow]| 1 \&  \& 1 \&  \& |[bthrow]| 1 \&  \&  \&  \& -2 \&  \&  \&  \& 2 \&  \& 1 \&  \& |[bthrow]| 1 \&  \& 1 \&  \& |[bthrow]| 1 \&  \&  \&  \& \cdots \\
 \&  \&  \&  \&  \& -1 \&  \& -3 \&  \&  \&  \& |[bthrow]| 1 \&  \&  \&  \&  \&  \&  \&  \&  \&  \& -1 \&  \& -3 \&  \&  \&  \& |[bthrow]| 1 \&  \&  \&  \&  \&  \&  \&  \&  \&  \& -1 \&  \\
 \&  \&  \&  \&  \&  \& |[bbthrow]| -1 \&  \& |[bbthrow]| -1 \&  \&  \&  \&  \&  \& |[bbthrow]| -1 \&  \&  \&  \&  \&  \&  \&  \& |[bbthrow]| -1 \&  \& |[bbthrow]| -1 \&  \&  \&  \&  \&  \& |[bbthrow]| -1 \&  \&  \&  \&  \&  \&  \&  \& \cdots \\
 \&  \&  \& |[bthrow]| 1 \&  \&  \&  \&  \&  \&  \&  \&  \&  \&  \&  \&  \&  \&  \&  \& |[bthrow]| 1 \&  \&  \&  \&  \&  \&  \&  \&  \&  \&  \&  \&  \&  \&  \&  \& |[bthrow]| 1 \&  \&  \&  \\
 \&  \&  \&  \&  \&  \&  \&  \&  \&  \&  \&  \&  \&  \&  \&  \&  \&  \&  \&  \&  \&  \&  \&  \&  \&  \&  \&  \&  \&  \&  \&  \&  \&  \&  \&  \&  \&  \& \\
    };

    \draw[traj,opacity=.25] (M-9-1) to (M-8-2) to (M-13-7) to (M-8-12) to (M-11-15) to (M-8-18) to (M-13-23) to (M-8-28) to (M-11-31) to (M-8-34) to (M-13-39);
    \draw[traj,opacity=.25] (M-11-1) to (M-8-4) to (M-13-9) to (M-8-14) to (M-14-20) to (M-8-26) to (M-13-31) to (M-8-36) to (M-11-39);
    \draw[traj,opacity=.25] (M-9-1) to (M-11-3) to (M-8-6) to (M-9-7) to (M-8-8) to (M-12-12) to (M-8-16) to (M-11-19) to (M-8-22) to (M-9-23) to (M-8-24) to (M-12-28) to (M-8-32) to (M-11-35) to (M-8-38) to (M-9-39);
    \draw[traj,opacity=.25] (M-11-1) to (M-14-4) to (M-8-10) to (M-13-15) to (M-8-20) to (M-13-25) to (M-8-30) to (M-14-36) to (M-11-39);
    
    \draw[traj,opacity=.25] (M-7-1) to (M-8-2) to (M-3-7) to (M-8-12) to (M-5-15) to (M-8-18) to (M-3-23) to (M-8-28) to (M-5-31) to (M-8-34) to (M-3-39);
    \draw[traj,opacity=.25] (M-7-1) to (M-4-4) to (M-8-8) to (M-1-15) to (M-8-22) to (M-3-27) to (M-8-32) to (M-4-36) to (M-7-39);
    \draw[traj,opacity=.25] (M-5-1) to (M-8-4) to (M-5-7) to (M-8-10) to (M-6-12) to (M-8-14) to (M-5-17) to (M-8-20) to (M-5-23) to (M-8-26) to (M-6-28) to (M-8-30) to (M-5-33) to (M-8-36) to (M-5-39);
    \draw[traj,opacity=.25] (M-3-1) to (M-8-6) to (M-3-11) to (M-8-16) to (M-4-20) to (M-8-24) to (M-1-31) to (M-8-38) to (M-7-39);

\end{tikzpicture}\]
For each $a,b$ with $b\leq a <b-n$, consider $\C_{[a,b],[a,b]}$ (the diamond centered on the green circles indexed by the interval $[a,b]$) and delete any diagonal containing a red circle. The resulting matrix is $\C_{[a,b] \smallsetminus I , [a,b] \smallsetminus \pi^\dagger(I)}$ and the frieze condition is that all matrices of this form have determinant 1.
    \[\begin{tikzpicture}[baseline=(current bounding box.center),
    ampersand replacement=\&,
    ]
    \matrix[matrix of math nodes,
        matrix anchor = M-1-20.center,
        nodes in empty cells,
        inner sep=0pt,
        gthrow/.style={dark green,draw,circle,inner sep=0mm,minimum size=4mm},
        rthrow/.style={dark red,draw,circle,inner sep=0mm,minimum size=4mm},
        bthrow/.style={dark blue,draw,circle,inner sep=0mm,minimum size=4mm},
        bbthrow/.style={dark blue,draw,circle,inner sep=0mm,minimum size=4mm},
        pthrow/.style={dark purple,draw,circle,inner sep=0mm,minimum size=4mm},
        nodes={anchor=center,node font=\scriptsize,minimum size=3mm},
        column sep={0.4cm,between origins},
        row sep={0.4cm,between origins},
    ] (M) at (0,0) {
 \&  \&  \&  \&  \&  \&  \&  \&  \&  \&  \&  \&  \&  \& |[rthrow]| \&  \&  \&  \&  \&  \&  \&  \&  \&  \&  \&  \&  \&  \&  \&  \& |[rthrow]| \&  \&  \&  \&  \&  \&  \&  \&  \\
 \&  \&  \&  \&  \&  \&  \&  \&  \&  \&  \&  \&  \&  \&  \&  \&  \&  \&  \&  \&  \&  \&  \&  \&  \&  \&  \&  \&  \&  \&  \&  \&  \&  \&  \&  \&  \&  \&  \\
\cdots \&  \&  \&  \&  \&  \& |[rthrow]| \&  \&  \&  \& |[rthrow]| \&  \&  \&  \&  \&  \&  \&  \&  \&  \&  \&  \& |[rthrow]| \&  \&  \&  \& |[rthrow]| \&  \&  \&  \&  \&  \&  \&  \&  \&  \&  \&  \& \cdots \\
 \&  \&  \& |[rthrow]| \&  \&  \&  \&  \&  \&  \&  \&  \&  \&  \&  \&  \&  \&  \&  \& |[rthrow]| \&  \&  \&  \&  \&  \&  \&  \&  \&  \&  \&  \&  \&  \&  \&  \& |[rthrow]|  \&  \&  \&  \\
\cdots \&  \&  \&  \&  \&  \& |[rthrow]| \&  \&  \&  \&  \&  \&  \&  \& |[rthrow]| \&  \& |[rthrow]| \&  \&  \&  \&  \&  \& |[rthrow]| \&  \&  \&  \&  \&  \&  \&  \& |[rthrow]| \&  \& |[rthrow]| \&  \&  \&  \&  \&  \& \cdots \\
 \&  \&  \&  \&  \&  \&  \&  \&  \&  \&  \& |[rthrow]| \&  \&  \&  \&  \&  \&  \&  \&  \&  \&  \&  \&  \&  \&  \&  \& |[rthrow]| \&  \&  \&  \&  \&  \&  \&  \&  \&  \&  \&  \\
\cdots \&  \&  \&  \&  \&  \&  \&  \&  \&  \&  \&  \&  \&  \&  \&  \&  \&  \&  \&  \&  \&  \&  \&  \&  \&  \&  \&  \&  \&  \&  \&  \&  \&  \&  \&  \&  \&  \& \cdots \\
 \& |[gthrow]| 1 \&  \& |[gthrow]| 1 \&  \& |[gthrow]| 1 \&  \& |[gthrow]| 1 \&  \& |[gthrow]| 1 \&  \& |[gthrow]| 1 \&  \& |[gthrow]| 1 \&  \& |[gthrow]| 1 \&  \& |[gthrow]| 1 \&  \& |[gthrow]| 1 \&  \& |[gthrow]| 1 \&  \& |[gthrow]| 1 \&  \& |[gthrow]| 1 \&  \& |[gthrow]| 1 \&  \& |[gthrow]| 1 \&  \& |[gthrow]| 1 \&  \& |[gthrow]| 1 \&  \& |[gthrow]| 1 \&  \& |[gthrow]| 1 \&  \\
\cdots \&  \& 5 \&  \& 1 \&  \& |[bthrow]| 1 \&  \& 3 \&  \& 2 \&  \& 1 \&  \& 5 \&  \& 1 \&  \& 5 \&  \& 1 \&  \& |[bthrow]| 1 \&  \& 3 \&  \& 2 \&  \& 1 \&  \& 5 \&  \& 1 \&  \& 5 \&  \& 1 \&  \& \cdots \\
 \& 3 \&  \& 2 \&  \&  \&  \&  \&  \& 5 \&  \& 1 \&  \& 3 \&  \& 2 \&  \& 3 \&  \& 2 \&  \&  \&  \&  \&  \& 5 \&  \& 1 \&  \& 3 \&  \& 2 \&  \& 3 \&  \& 2 \&  \&  \&  \\
\cdots \&  \& |[bthrow]| 1 \&  \&  \&  \& -2 \&  \&  \&  \& 2 \&  \& 1 \&  \& |[bthrow]| 1 \&  \& 1 \&  \& |[bthrow]| 1 \&  \&  \&  \& -2 \&  \&  \&  \& 2 \&  \& 1 \&  \& |[bthrow]| 1 \&  \& 1 \&  \& |[bthrow]| 1 \&  \&  \&  \& \cdots \\
 \&  \&  \&  \&  \& -1 \&  \& -3 \&  \&  \&  \& |[bthrow]| 1 \&  \&  \&  \&  \&  \&  \&  \&  \&  \& -1 \&  \& -3 \&  \&  \&  \& |[bthrow]| 1 \&  \&  \&  \&  \&  \&  \&  \&  \&  \& -1 \&  \\
 \&  \&  \&  \&  \&  \& |[bbthrow]| -1 \&  \& |[bbthrow]| -1 \&  \&  \&  \&  \&  \& |[bbthrow]| -1 \&  \&  \&  \&  \&  \&  \&  \& |[bbthrow]| -1 \&  \& |[bbthrow]| -1 \&  \&  \&  \&  \&  \& |[bbthrow]| -1 \&  \&  \&  \&  \&  \&  \&  \& \cdots \\
 \&  \&  \& |[bthrow]| 1 \&  \&  \&  \&  \&  \&  \&  \&  \&  \&  \&  \&  \&  \&  \&  \& |[bthrow]| 1 \&  \&  \&  \&  \&  \&  \&  \&  \&  \&  \&  \&  \&  \&  \&  \& |[bthrow]| 1 \&  \&  \&  \\
 \&  \&  \&  \&  \&  \&  \&  \&  \&  \&  \&  \&  \&  \&  \&  \&  \&  \&  \&  \&  \&  \&  \&  \&  \&  \&  \&  \&  \&  \&  \&  \&  \&  \&  \&  \&  \&  \& \\
    };

    \draw[traj,opacity=.25] (M-9-1) to (M-8-2) to (M-13-7) to (M-8-12) to (M-11-15) to (M-8-18) to (M-13-23) to (M-8-28) to (M-11-31) to (M-8-34) to (M-13-39);
    \draw[traj,opacity=.25] (M-11-1) to (M-8-4) to (M-13-9) to (M-8-14) to (M-14-20) to (M-8-26) to (M-13-31) to (M-8-36) to (M-11-39);
    \draw[traj,opacity=.25] (M-9-1) to (M-11-3) to (M-8-6) to (M-9-7) to (M-8-8) to (M-12-12) to (M-8-16) to (M-11-19) to (M-8-22) to (M-9-23) to (M-8-24) to (M-12-28) to (M-8-32) to (M-11-35) to (M-8-38) to (M-9-39);
    \draw[traj,opacity=.25] (M-11-1) to (M-14-4) to (M-8-10) to (M-13-15) to (M-8-20) to (M-13-25) to (M-8-30) to (M-14-36) to (M-11-39);
    
    \draw[traj,opacity=.25] (M-7-1) to (M-8-2) to (M-3-7) to (M-8-12) to (M-5-15) to (M-8-18) to (M-3-23) to (M-8-28) to (M-5-31) to (M-8-34) to (M-3-39);
    \draw[traj,opacity=.25] (M-7-1) to (M-4-4) to (M-8-8) to (M-1-15) to (M-8-22) to (M-3-27) to (M-8-32) to (M-4-36) to (M-7-39);
    \draw[traj,opacity=.25] (M-5-1) to (M-8-4) to (M-5-7) to (M-8-10) to (M-6-12) to (M-8-14) to (M-5-17) to (M-8-20) to (M-5-23) to (M-8-26) to (M-6-28) to (M-8-30) to (M-5-33) to (M-8-36) to (M-5-39);
    \draw[traj,opacity=.25] (M-3-1) to (M-8-6) to (M-3-11) to (M-8-16) to (M-4-20) to (M-8-24) to (M-1-31) to (M-8-38) to (M-7-39);

    \draw[dark blue, rounded corners] (M-8-3.center) -- (M-4-7.center) -- (M-8-11.center) -- (M-12-7.center) -- cycle;
    \draw[dark blue, fill=dark blue!25,opacity=.5,rounded corners] (M-9-4.center) -- (M-6-7.center) -- (M-9-10.center) -- (M-12-7.center) -- cycle;
    \draw[red,thick] (M-5-7.north west) to (M-8-10.south east);
    \draw[red,thick] (M-5-7.north east) to (M-8-4.south west);

    \draw[dark green,rounded corners] (M-8-11.center) -- (M-3-16.center) -- (M-8-21.center) -- (M-13-16.center) -- cycle;
    \draw[dark green, fill=dark green!25,opacity=.50,rounded corners] (M-10-13.center) -- (M-7-16.center) -- (M-10-19.center) -- (M-13-16.center) -- cycle;
    \draw[red,thick] (M-4-16.north west) to (M-8-20.south east);
    \draw[red,thick] (M-5-17.north east) to (M-9-13.south west);
    \draw[red,thick] (M-5-15.north west) to (M-9-19.south east);
    \draw[red,thick] (M-4-16.north east) to (M-8-12.south west);
    
    \draw[dark purple, rounded corners] (M-8-21.center) -- (M-2-27.center) -- (M-8-33.center) -- (M-14-27.center) -- cycle;
    \draw[dark purple, fill=dark purple!25,opacity=.5,rounded corners] (M-9-22.center) -- (M-5-26.center) -- (M-6-27.center) -- (M-10-23.center) -- cycle;
    \draw[dark purple, fill=dark purple!25,opacity=.5,rounded corners] (M-11-24.center) -- (M-7-28.center) -- (M-10-31.center) -- (M-14-27.center) -- cycle;
    \draw[red,thick] (M-3-27.north east) to (M-8-22.south west);
    \draw[red,thick] (M-3-27.north west) to (M-8-32.south east);
    \draw[red,thick] (M-4-26.north west) to (M-9-31.south east);
    \draw[red,thick] (M-5-29.north east) to (M-10-24.south west);

\end{tikzpicture}\]
Depicted above are the submatrices corresponding to the intervals $\textcolor{dark blue}{[-2,1]}$, $\textcolor{dark green}{[2,6]}$, and $\textcolor{dark purple}{[7,12]}$; one may check that all three have determinant 1.

\begin{rem}
This construction makes it easy to check whether $\pi^\dagger(a)\leq b$ and $a + n \leq \pi(b)$. The first inequality (respectively, second inequality) is equivalent to the presence of a red circle along the upper left diagonal (resp.\ upper right diagonal) of the diamond. Therefore, these inequalities are equivalent to requiring that the upper left and upper right diagonals are deleted.\footnote{Read as a matrix, this is equivalent to requiring that the top row and the right column of $\C_{[a,b],[a,b]}$ are deleted.} However, as observed in Remark \ref{rem: redundant}, ignoring these inequalities does not change the frieze condition.
\end{rem}

\subsection{Visualizing the tameness condition}\label{section: visualtame}

As before, attach the reflected diagram of $\pi^\dagger$ to the top of the $\pi$-prefrieze. For each $a,b$ with $b\leq a <b-n$, consider $\C_{[a,b],[a,b]}$ (the diamond centered on the green circles indexed by the interval $[a,b]$) as before. 
Next, delete the upper left diagonal and the upper right diagonal (regardless of whether they contain a red circle). Then, delete any diagonal containing a red circle in the remaining matrix. The resulting matrix is $\C_{(a,b] \smallsetminus J , [a,b) \smallsetminus \pi^\dagger(J)}$, and the tameness condition is that this determinant $0$ whenever $\pi^\dagger(a)<b$ or $a+n<\pi(b)$.
    \[\begin{tikzpicture}[baseline=(current bounding box.center),
    ampersand replacement=\&,
    ]
    \matrix[matrix of math nodes,
        matrix anchor = M-1-20.center,
        nodes in empty cells,
        inner sep=0pt,
        gthrow/.style={dark green,draw,circle,inner sep=0mm,minimum size=4mm},
        rthrow/.style={dark red,draw,circle,inner sep=0mm,minimum size=4mm},
        bthrow/.style={dark blue,draw,circle,inner sep=0mm,minimum size=4mm},
        bbthrow/.style={dark blue,draw,circle,inner sep=0mm,minimum size=4mm},
        pthrow/.style={dark purple,draw,circle,inner sep=0mm,minimum size=4mm},
        nodes={anchor=center,node font=\scriptsize,minimum size=3mm},
        column sep={0.4cm,between origins},
        row sep={0.4cm,between origins},
    ] (M) at (0,0) {
 \&  \&  \&  \&  \&  \&  \&  \&  \&  \&  \&  \&  \&  \& |[rthrow]| \&  \&  \&  \&  \&  \&  \&  \&  \&  \&  \&  \&  \&  \&  \&  \& |[rthrow]| \&  \&  \&  \&  \&  \&  \&  \&  \\
 \&  \&  \&  \&  \&  \&  \&  \&  \&  \&  \&  \&  \&  \&  \&  \&  \&  \&  \&  \&  \&  \&  \&  \&  \&  \&  \&  \&  \&  \&  \&  \&  \&  \&  \&  \&  \&  \&  \\
\cdots \&  \&  \&  \&  \&  \& |[rthrow]| \&  \&  \&  \& |[rthrow]| \&  \&  \&  \&  \&  \&  \&  \&  \&  \&  \&  \& |[rthrow]| \&  \&  \&  \& |[rthrow]| \&  \&  \&  \&  \&  \&  \&  \&  \&  \&  \&  \& \cdots \\
 \&  \&  \& |[rthrow]| \&  \&  \&  \&  \&  \&  \&  \&  \&  \&  \&  \&  \&  \&  \&  \& |[rthrow]| \&  \&  \&  \&  \&  \&  \&  \&  \&  \&  \&  \&  \&  \&  \&  \& |[rthrow]|  \&  \&  \&  \\
\cdots \&  \&  \&  \&  \&  \& |[rthrow]| \&  \&  \&  \&  \&  \&  \&  \& |[rthrow]| \&  \& |[rthrow]| \&  \&  \&  \&  \&  \& |[rthrow]| \&  \&  \&  \&  \&  \&  \&  \& |[rthrow]| \&  \& |[rthrow]| \&  \&  \&  \&  \&  \& \cdots \\
 \&  \&  \&  \&  \&  \&  \&  \&  \&  \&  \& |[rthrow]| \&  \&  \&  \&  \&  \&  \&  \&  \&  \&  \&  \&  \&  \&  \&  \& |[rthrow]| \&  \&  \&  \&  \&  \&  \&  \&  \&  \&  \&  \\
\cdots \&  \&  \&  \&  \&  \&  \&  \&  \&  \&  \&  \&  \&  \&  \&  \&  \&  \&  \&  \&  \&  \&  \&  \&  \&  \&  \&  \&  \&  \&  \&  \&  \&  \&  \&  \&  \&  \& \cdots \\
 \& |[gthrow]| 1 \&  \& |[gthrow]| 1 \&  \& |[gthrow]| 1 \&  \& |[gthrow]| 1 \&  \& |[gthrow]| 1 \&  \& |[gthrow]| 1 \&  \& |[gthrow]| 1 \&  \& |[gthrow]| 1 \&  \& |[gthrow]| 1 \&  \& |[gthrow]| 1 \&  \& |[gthrow]| 1 \&  \& |[gthrow]| 1 \&  \& |[gthrow]| 1 \&  \& |[gthrow]| 1 \&  \& |[gthrow]| 1 \&  \& |[gthrow]| 1 \&  \& |[gthrow]| 1 \&  \& |[gthrow]| 1 \&  \& |[gthrow]| 1 \&  \\
\cdots \&  \& 5 \&  \& 1 \&  \& |[bthrow]| 1 \&  \& 3 \&  \& 2 \&  \& 1 \&  \& 5 \&  \& 1 \&  \& 5 \&  \& 1 \&  \& |[bthrow]| 1 \&  \& 3 \&  \& 2 \&  \& 1 \&  \& 5 \&  \& 1 \&  \& 5 \&  \& 1 \&  \& \cdots \\
 \& 3 \&  \& 2 \&  \&  \&  \&  \&  \& 5 \&  \& 1 \&  \& 3 \&  \& 2 \&  \& 3 \&  \& 2 \&  \&  \&  \&  \&  \& 5 \&  \& 1 \&  \& 3 \&  \& 2 \&  \& 3 \&  \& 2 \&  \&  \&  \\
\cdots \&  \& |[bthrow]| 1 \&  \&  \&  \& -2 \&  \&  \&  \& 2 \&  \& 1 \&  \& |[bthrow]| 1 \&  \& 1 \&  \& |[bthrow]| 1 \&  \&  \&  \& -2 \&  \&  \&  \& 2 \&  \& 1 \&  \& |[bthrow]| 1 \&  \& 1 \&  \& |[bthrow]| 1 \&  \&  \&  \& \cdots \\
 \&  \&  \&  \&  \& -1 \&  \& -3 \&  \&  \&  \& |[bthrow]| 1 \&  \&  \&  \&  \&  \&  \&  \&  \&  \& -1 \&  \& -3 \&  \&  \&  \& |[bthrow]| 1 \&  \&  \&  \&  \&  \&  \&  \&  \&  \& -1 \&  \\
 \&  \&  \&  \&  \&  \& |[bbthrow]| -1 \&  \& |[bbthrow]| -1 \&  \&  \&  \&  \&  \& |[bbthrow]| -1 \&  \&  \&  \&  \&  \&  \&  \& |[bbthrow]| -1 \&  \& |[bbthrow]| -1 \&  \&  \&  \&  \&  \& |[bbthrow]| -1 \&  \&  \&  \&  \&  \&  \&  \& \cdots \\
 \&  \&  \& |[bthrow]| 1 \&  \&  \&  \&  \&  \&  \&  \&  \&  \&  \&  \&  \&  \&  \&  \& |[bthrow]| 1 \&  \&  \&  \&  \&  \&  \&  \&  \&  \&  \&  \&  \&  \&  \&  \& |[bthrow]| 1 \&  \&  \&  \\
 \&  \&  \&  \&  \&  \&  \&  \&  \&  \&  \&  \&  \&  \&  \&  \&  \&  \&  \&  \&  \&  \&  \&  \&  \&  \&  \&  \&  \&  \&  \&  \&  \&  \&  \&  \&  \&  \& \\
    };

    \draw[traj,opacity=.25] (M-9-1) to (M-8-2) to (M-13-7) to (M-8-12) to (M-11-15) to (M-8-18) to (M-13-23) to (M-8-28) to (M-11-31) to (M-8-34) to (M-13-39);
    \draw[traj,opacity=.25] (M-11-1) to (M-8-4) to (M-13-9) to (M-8-14) to (M-14-20) to (M-8-26) to (M-13-31) to (M-8-36) to (M-11-39);
    \draw[traj,opacity=.25] (M-9-1) to (M-11-3) to (M-8-6) to (M-9-7) to (M-8-8) to (M-12-12) to (M-8-16) to (M-11-19) to (M-8-22) to (M-9-23) to (M-8-24) to (M-12-28) to (M-8-32) to (M-11-35) to (M-8-38) to (M-9-39);
    \draw[traj,opacity=.25] (M-11-1) to (M-14-4) to (M-8-10) to (M-13-15) to (M-8-20) to (M-13-25) to (M-8-30) to (M-14-36) to (M-11-39);
    
    \draw[traj,opacity=.25] (M-7-1) to (M-8-2) to (M-3-7) to (M-8-12) to (M-5-15) to (M-8-18) to (M-3-23) to (M-8-28) to (M-5-31) to (M-8-34) to (M-3-39);
    \draw[traj,opacity=.25] (M-7-1) to (M-4-4) to (M-8-8) to (M-1-15) to (M-8-22) to (M-3-27) to (M-8-32) to (M-4-36) to (M-7-39);
    \draw[traj,opacity=.25] (M-5-1) to (M-8-4) to (M-5-7) to (M-8-10) to (M-6-12) to (M-8-14) to (M-5-17) to (M-8-20) to (M-5-23) to (M-8-26) to (M-6-28) to (M-8-30) to (M-5-33) to (M-8-36) to (M-5-39);
    \draw[traj,opacity=.25] (M-3-1) to (M-8-6) to (M-3-11) to (M-8-16) to (M-4-20) to (M-8-24) to (M-1-31) to (M-8-38) to (M-7-39);

    \draw[dark blue, rounded corners] (M-8-1.center) -- (M-3-6.center) -- (M-8-11.center) -- (M-13-6.center) -- cycle;
    \draw[dark blue, fill=dark blue!25,opacity=.5,rounded corners] (M-9-2.center) -- (M-5-6.center) -- (M-9-10.center) -- (M-13-6.center) -- cycle;
    \draw[red,thick] (M-4-6.north west) to (M-8-10.south east);
    \draw[red,thick] (M-4-6.north east) to (M-8-2.south west);
    
    \draw[dark green, rounded corners] (M-8-11.center) -- (M-2-17.center) -- (M-8-23.center) -- (M-14-17.center) -- cycle;
    \draw[dark green, fill=dark green!25,opacity=.5,rounded corners] (M-10-13.center) -- (M-6-17.center) -- (M-10-21.center) -- (M-14-17.center) -- cycle;
    \draw[red,thick] (M-3-17.north west) to (M-8-22.south east);
    \draw[red,thick] (M-3-17.north east) to (M-8-12.south west);
    \draw[red,thick] (M-4-16.north west) to (M-9-21.south east);
    \draw[red,thick] (M-4-18.north east) to (M-9-13.south west);
    
    \draw[dark purple, rounded corners] (M-8-23.center) -- (M-3-28.center) -- (M-8-33.center) -- (M-13-28.center) -- cycle;
    \draw[dark purple, fill=dark purple!25,opacity=.5,rounded corners] (M-10-25.center) -- (M-7-28.center) -- (M-10-31.center) -- (M-13-28.center) -- cycle;
    \draw[red,thick] (M-4-28.north west) to (M-8-32.south east);
    \draw[red,thick] (M-4-28.north east) to (M-8-24.south west);
    \draw[red,thick] (M-5-27.north west) to (M-9-31.south east);
    \draw[red,thick] (M-5-29.north east) to (M-9-25.south west);

\end{tikzpicture}\]
Depicted above are the submatrices corresponding to the intervals $\textcolor{dark blue}{[-3,1]}$, $\textcolor{dark green}{[2,7]}$, and $\textcolor{dark purple}{[8,12]}$; one may check that they have determinants $0$, $0$, and $1$, respectively.
The latter determinant does not contradict the tameness condition because it fails both conditions: $\pi^\dagger(8) \not< 12$ and $8+8 \not < \pi(12)$.

\begin{rem}\label{rem: tamenessidiagram}
A pair $(a,b)$ satisfies $\pi^\dagger(a)< b$ or $a + n < \pi(b)$ if the corresponding diamond contains a red circle which is in the upper left diagonal or the upper right diagonal, but not both. Note that the diamond associated to $(8,12)$ does not have a red circle in either top diagonal. 
\end{rem}


\subsection{Recovering $\mathrm{SL}(k)$-friezes}

While the definition of a juggler's frieze is less elegant than that of a $\mathrm{SL}(k)$-frieze, the latter is a special case of the former.
A juggling function is \textbf{uniform} if there is some $h$ for which $\pi(a) = a+h$ for all $a$; note that this $h$ is the number of balls of $\pi$. 

\begin{warn}
With uniform juggling functions, it is particularly important to remember the
choice of period is part of the data of a juggling function, since
$\pi(a+m)=\pi(a)+m$ for all $m\in \mathbb{Z}$!
\end{warn}

\begin{prop}
Let $\pi$ be a uniform juggling function with $h$-many balls and period $n$.
\begin{enumerate}
    \item A $\pi$-prefrieze is the same as a prefrieze of height $h$.
    \item A $\pi$-frieze is the same as an $\mathrm{SL}(n-h)$-frieze of height $h$.
\end{enumerate}
\end{prop}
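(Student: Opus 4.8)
The plan is to prove both parts by unwinding the definitions after substituting $\pi(a)=a+h$ for all $a$. Note that then $\pi^\dagger(a)=\pi^{-1}(a)+n=a+(n-h)$, so $\pi^\dagger$ is again uniform, with $n-h$ balls; write $k:=n-h$.

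For part (1), first compute, for uniform $\pi$, that $S_\pi(b,a)=\{\,i\mid b<i<a-h\,\}$, which is empty precisely when $a=\pi(b)=b+h$, so that the ``$a=\pi(b)$'' clause in the definition of a $\pi$-prefrieze forces $\C_{a,b}=1$ there. Next observe that the two vanishing clauses $a\notin[b,\pi(b)]$ and $b\notin[\pi^{-1}(a),a]$ are literally the same condition, namely $a<b$ or $a>b+h$. Hence the constraints defining a $\pi$-prefrieze reduce exactly to $\C_{a,a}=\C_{a+h,a}=1$ for all $a$ and $\C_{a,b}=0$ for $a<b$ or $a>b+h$, with all remaining entries free; by the description of prefriezes recalled in Section \ref{section: matrices}, this is precisely a prefrieze of height $h$.

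For part (2), by part (1) it remains to match the frieze and tameness conditions. In the frieze condition, $I=S_{\pi^\dagger}(a-1,b+1)=\{\,i\mid a\le i,\ \pi^\dagger(i)\le b\,\}=[a,b-k]$, so $[a,b]\smallsetminus I=[b-k+1,b]$ and $[a,b]\smallsetminus\pi^\dagger(I)=[a,a+k-1]$, while the hypothesis $\pi^\dagger(a)\le b<a+n\le\pi(b)$ becomes $a+k\le b\le a+n-1$. After the reindexing $(\alpha,\beta):=(b-k+1,a)$ this says $\det(\C_{[\alpha,\alpha+k-1],[\beta,\beta+k-1]})=1$ for all $\beta<\alpha\le\beta+h$, and the one missing boundary case $\alpha=\beta$ holds automatically because $\C_{[\beta,\beta+k-1],[\beta,\beta+k-1]}$ is lower unitriangular in a prefrieze of height $h$; this is exactly Definition \ref{defn: frieze}(1) for $\mathrm{SL}(k)$. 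Similarly $J=S_{\pi^\dagger}(a,b)=[a+1,b-k-1]$, so $(a,b]\smallsetminus J=[b-k,b]$ and $[a,b)\smallsetminus\pi^\dagger(J)=[a,a+k]$, and for uniform $\pi$ both trigger conditions $\pi^\dagger(a)<b<a+n$ and $b<a+n<\pi(b)$ collapse to $a+k<b<a+n$; with $(\alpha,\beta):=(b-k,a)$ this becomes $\det(\C_{[\alpha,\alpha+k],[\beta,\beta+k]})=0$ for all $\beta<\alpha<\beta+h$, which is exactly Definition \ref{defn: frieze}(2). This identifies $\pi$-friezes with $\mathrm{SL}(n-h)$-friezes of height $h$.

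The argument is essentially bookkeeping; the points needing care are checking that the set-differences $[a,b]\smallsetminus I$ etc.\ have the claimed form even in the boundary instances where $I$ (resp.\ $J$) is smallest, confirming that the boundary case $\alpha=\beta$ in the frieze condition is free rather than an extra constraint, and verifying that the disjunction in the tameness hypothesis genuinely collapses for uniform $\pi$. I expect the index manipulation in the tameness condition—tracking the half-open intervals $(a,b]$ and $[a,b)$ together with the shift by $\pi^\dagger$—to be the fussiest step, but it poses no real obstacle.
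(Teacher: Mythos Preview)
Your proposal is correct and follows essentially the same approach as the paper: both proofs substitute $\pi(a)=a+h$ and $\pi^\dagger(a)=a+k$ (with $k=n-h$) directly into the definitions and simplify the resulting index sets to recover Definition~\ref{defn: frieze}. Your version is in fact slightly more careful than the paper's, since you explicitly dispose of the boundary case $\alpha=\beta$ in the frieze condition and verify that the disjunction in the tameness hypothesis collapses to a single inequality, points the paper leaves implicit.
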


\begin{proof}
%
Since $S_\pi(a,\pi(a)) = S_\pi(a,a+k)=\varnothing$, a $\pi$-prefrieze consists of $(h+1)$-many rows in a diamond grid, in which the top and bottom rows consist of $1$s; that is, a prefrieze of height $h$.

For any $a$, $\pi^\dagger(a) = \pi^{-1}(a)+ n = a+n-k$; that is, $\pi^\dagger$ is the uniform juggling function with $(n-h)$-many balls and period $n$. Then there is a $\pi$-frieze condition for each pair $a,b$ with
\begin{equation*}
a+n-k \leq b < a+n \leq b+k 
\end{equation*}
which can be rewritten as $a< b-n+k+1 < a+k$.
The corresponding identity in the $\pi$-frieze condition is then
\[ \det( \C_{[b-n+k+1,b] , [a,a+n-k-1]}) =1 \]
The $\pi$-frieze condition coincides with Condition (1) in Definition \ref{defn: frieze} for $k=n-h$.

As for tameness, observe that $\pi^\dagger(a) < b< a+n$ iff $b<a_n<\pi(b)$. Assuming either holds, the corresponding identity in the tameness condition is 
\[  \det( \C_{[b-n+k,b] , [a,a+n-k]}) = 0 \]
The tameness condition for $\pi$-friezes  coincides with Condition (2) in Definition \ref{defn: frieze} for $k=n-h$.
\end{proof}

\begin{ex}
Let $\pi$ be the uniform $8$-periodic juggling function with $5$ balls. The
$\mathrm{SL}(3)$-frieze of height $5$ from Example \ref{ex: intro2} is seen to be a $\pi$-prefrieze by drawing the diagram of $\pi$.
    \[
    \begin{tikzpicture}[baseline=(M-6-1.base),
    ampersand replacement=\&,
    ]
    \clip[use as bounding box] (-7.5,.3) rectangle (7.5,-2.3);
    \matrix[matrix of math nodes,
        matrix anchor = M-1-29.center,
        origin/.style={},
        gthrow/.style={dark green,draw,circle,inner sep=0mm,minimum size=4mm},
        rthrow/.style={dark red,draw,circle,inner sep=0mm,minimum size=4mm},
        bthrow/.style={dark blue,draw,circle,inner sep=0mm,minimum size=4mm},
        pivot/.style={draw,circle,inner sep=0.25mm,minimum size=2mm},       
        nodes in empty cells,
        inner sep=0pt,
        nodes={anchor=center},
        column sep={.4cm,between origins},
        row sep={.4cm,between origins},
    ] (M) at (0,0) {
    \& \& \& \& \& |[gthrow]| 1 \& \& |[gthrow]| 1 \& \& 1 \& \& |[gthrow]| 1 \& \& |[gthrow]| 1 \& \& |[gthrow]| 1 \& \& |[gthrow]| 1 \& \& |[gthrow]| 1 \& \& |[gthrow]| 1 \& \& |[gthrow]| 1 \& \& |[gthrow]| 1 \& \& |[gthrow]| 1 \& \& |[gthrow]| 1 \& \& |[gthrow]| 1 \& \& |[gthrow]| 1 \& \& |[gthrow]| 1 \& \& |[gthrow]| 1 \& \& |[gthrow]| 1 \& \& |[gthrow]| 1 \& \& |[gthrow]| 1 \& \& |[gthrow]| 1 \& \& 1 \& \& |[gthrow]| 1 \& \& |[gthrow]| 1 \& \& \\
    \& \& \& \& 18 \& \& 1 \& \& 5 \& \& \cdots \& \& 3 \& \& 3 \& \& 3 \& \& 1 \& \& 18 \& \& 1 \& \& 5 \& \& 2 \& \& 3 \& \& 3 \& \& 3 \& \& 1 \& \& 18 \& \& 1 \& \& 5 \& \& 2 \& \& 3 \& \& \cdots \& \& 3 \& \& 1 \& \& \\
    \& \& \& 7 \& \& 16 \& \& 1 \& \& 8 \& \& 3 \& \& 5 \& \& 2 \& \& 2 \& \& 7 \& \& 16 \& \& 1 \& \& 8 \& \& 3 \& \& 5 \& \& 2 \& \& 2 \& \& 7 \& \& 16 \& \& 1 \& \& 8 \& \& 3 \& \& 5 \& \& 2 \& \& 2 \& \& \\
    \& \& 4 \& \& 6 \& \& 9 \& \& 1 \& \& \cdots \& \& 4 \& \& 7 \& \& 3 \& \& 4 \& \& 6 \& \& 9 \& \& 1 \& \& 10 \& \& 4 \& \& 7 \& \& 3 \& \& 4 \& \& 6 \& \& 9 \& \& 1 \& \& 10 \& \& 4 \& \& \cdots \& \& 3 \& \& \\
    \& 2 \& \& 3 \& \& 3 \& \& 4 \& \& 1 \& \& 11 \& \& 2 \& \& 4 \& \& 2 \& \& 3 \& \& 3 \& \& 4 \& \& 1 \& \& 11 \& \& 2 \& \& 4 \& \& 2 \& \& 3 \& \& 3 \& \& 4 \& \& 1 \& \& 11 \& \& 2 \& \& 4 \& \& \\
    1 \& \& 1 \& \& 1 \& \& 1 \& \& 1 \& \& \cdots \& \& |[bthrow]| 1 \& \& |[bthrow]| 1 \& \& |[bthrow]| 1 \& \& |[bthrow]| 1 \& \& |[bthrow]| 1 \& \& |[bthrow]| 1 \& \& |[bthrow]| 1 \& \& |[bthrow]| 1 \& \& |[bthrow]| 1 \& \& |[bthrow]| 1 \& \& |[bthrow]| 1 \& \& |[bthrow]| 1 \& \& |[bthrow]| 1 \& \& |[bthrow]| 1 \& \& |[bthrow]| 1 \& \& |[bthrow]| 1 \& \& |[bthrow]| 1 \& \& \cdots \& \& \\
    };

    \draw[traj,opacity=.25] (M-1-12) to (M-6-17) to (M-1-22) to (M-6-27) to (M-1-32) to (M-6-37) to (M-1-42) to (M-5-46);
    \draw[traj,opacity=.25] (M-3-12) to (M-6-15) to (M-1-20) to (M-6-25) to (M-1-30) to (M-6-35) to (M-1-40) to (M-6-45) to (M-5-46);
    \draw[traj,opacity=.25] (M-5-12) to (M-6-13) to (M-1-18) to (M-6-23) to (M-1-28) to (M-6-33) to (M-1-38) to (M-6-43) to (M-3-46);
    \draw[traj,opacity=.25] (M-5-12) to (M-1-16) to (M-6-21) to (M-1-26) to (M-6-31) to (M-1-36) to (M-6-41) to (M-1-46);
    \draw[traj,opacity=.25] (M-3-12) to (M-1-14) to (M-6-19) to (M-1-24) to (M-6-29) to (M-1-34) to (M-6-39) to (M-1-44) to (M-3-46);
    \end{tikzpicture}
    \]
For this choice of $\pi$, the construction in Section \ref{section: visualfrieze} produces $3\times 3$ solid submatrices.
    \[
    \begin{tikzpicture}[baseline=(M-6-1.base),
    ampersand replacement=\&,
    ]
    \clip[use as bounding box] (-7.5,0) rectangle (7.5,-4.85);
    \matrix[matrix of math nodes,
        matrix anchor = M-1-29.center,
        origin/.style={},
        gthrow/.style={dark green,draw,circle,inner sep=0mm,minimum size=4mm},
        rthrow/.style={dark red,draw,circle,inner sep=0mm,minimum size=4mm},
        bthrow/.style={dark blue,draw,circle,inner sep=0mm,minimum size=4mm},
        pivot/.style={draw,circle,inner sep=0.25mm,minimum size=2mm},       
        nodes in empty cells,
        inner sep=0pt,
        nodes={anchor=center,minimum size=3mm},
        column sep={.4cm,between origins},
        row sep={.4cm,between origins},
    ] (M) at (0,0) {
    \& \& \& \&  \& \&  \& \&  \& \&  \& \&  \& \&  \& \&  \& \&  \& \&  \& \&  \& \&  \& \&  \& \&  \& \&  \& \&  \& \&  \& \&  \& \&  \& \&  \& \&  \& \&  \& \&  \& \&  \& \&  \& \& \\
    \& \& \& \&  \& \&  \& \&  \& \&  \& \&  \& \&  \& \&  \& \&  \& \&  \& \&  \& \&  \& \&  \& \&  \& \&  \& \&  \& \&  \& \&  \& \&  \& \&  \& \&  \& \&  \& \&  \& \&  \& \&  \& \& \\
    \& \& \& \&  \& \&  \& \&  \& \&  \& \&  \& \&  \& \&  \& \&  \& \&  \& \&  \& \&  \& \&  \& \&  \& \&  \& \&  \& \&  \& \&  \& \&  \& \&  \& \&  \& \&  \& \&  \& \&  \& \&  \& \& \\
    \& \&  \& \&  \& \&  \& \&  \& \&  \& \& |[rthrow]|  \& \& |[rthrow]|  \& \& |[rthrow]|  \& \& |[rthrow]|  \& \& |[rthrow]|  \& \& |[rthrow]|  \& \& |[rthrow]|  \& \& |[rthrow]|  \& \& |[rthrow]|  \& \& |[rthrow]|  \& \& |[rthrow]|  \& \& |[rthrow]|  \& \& |[rthrow]|  \& \& |[rthrow]|  \& \& |[rthrow]|  \& \& |[rthrow]|  \& \&  \& \&  \& \&  \& \&  \& \& \\
    \& \& \& \&  \& \&  \& \&  \& \&  \& \&  \& \&  \& \&  \& \&  \& \&  \& \&  \& \&  \& \&  \& \&  \& \&  \& \&  \& \&  \& \&  \& \&  \& \&  \& \&  \& \&  \& \&  \& \&  \& \&  \& \& \\
    \& \& \& \&  \& \&  \& \&  \& \&  \& \&  \& \&  \& \&  \& \&  \& \&  \& \&  \& \&  \& \&  \& \&  \& \&  \& \&  \& \&  \& \&  \& \&  \& \&  \& \&  \& \&  \& \&  \& \&  \& \&  \& \& \\
    \&  \& \&  \& \& |[gthrow]| 1 \& \& |[gthrow]| 1 \& \& 1 \& \& |[gthrow]| 1 \& \& |[gthrow]| 1 \& \& |[gthrow]| 1 \& \& |[gthrow]| 1 \& \& |[gthrow]| 1 \& \& |[gthrow]| 1 \& \& |[gthrow]| 1 \& \& |[gthrow]| 1 \& \& |[gthrow]| 1 \& \& |[gthrow]| 1 \& \& |[gthrow]| 1 \& \& |[gthrow]| 1 \& \& |[gthrow]| 1 \& \& |[gthrow]| 1 \& \& |[gthrow]| 1 \& \& |[gthrow]| 1 \& \& |[gthrow]| 1 \& \& |[gthrow]| 1 \& \& 1 \& \& |[gthrow]| 1 \& \& |[gthrow]| 1 \& \& \\
    \& \& \& \& 18 \& \& 1 \& \& 5 \& \& \cdots \& \& 3 \& \& 3 \& \& 3 \& \& 1 \& \& 18 \& \& 1 \& \& 5 \& \& 2 \& \& 3 \& \& 3 \& \& 3 \& \& 1 \& \& 18 \& \& 1 \& \& 5 \& \& 2 \& \& 3 \& \& \cdots \& \& 3 \& \& 1 \& \& \\
    \& \& \& 7 \& \& 16 \& \& 1 \& \& 8 \& \& 3 \& \& 5 \& \& 2 \& \& 2 \& \& 7 \& \& 16 \& \& 1 \& \& 8 \& \& 3 \& \& 5 \& \& 2 \& \& 2 \& \& 7 \& \& 16 \& \& 1 \& \& 8 \& \& 3 \& \& 5 \& \& 2 \& \& 2 \& \& \\
    \& \& 4 \& \& 6 \& \& 9 \& \& 1 \& \& \cdots \& \& 4 \& \& 7 \& \& 3 \& \& 4 \& \& 6 \& \& 9 \& \& 1 \& \& 10 \& \& 4 \& \& 7 \& \& 3 \& \& 4 \& \& 6 \& \& 9 \& \& 1 \& \& 10 \& \& 4 \& \& \cdots \& \& 3 \& \& \\
    \& 2 \& \& 3 \& \& 3 \& \& 4 \& \& 1 \& \& 11 \& \& 2 \& \& 4 \& \& 2 \& \& 3 \& \& 3 \& \& 4 \& \& 1 \& \& 11 \& \& 2 \& \& 4 \& \& 2 \& \& 3 \& \& 3 \& \& 4 \& \& 1 \& \& 11 \& \& 2 \& \& 4 \& \& \\
    1 \& \& 1 \& \& 1 \& \& 1 \& \& 1 \& \& \cdots \& \& |[bthrow]| 1 \& \& |[bthrow]| 1 \& \& |[bthrow]| 1 \& \& |[bthrow]| 1 \& \& |[bthrow]| 1 \& \& |[bthrow]| 1 \& \& |[bthrow]| 1 \& \& |[bthrow]| 1 \& \& |[bthrow]| 1 \& \& |[bthrow]| 1 \& \& |[bthrow]| 1 \& \& |[bthrow]| 1 \& \& |[bthrow]| 1 \& \& |[bthrow]| 1 \& \& |[bthrow]| 1 \& \& |[bthrow]| 1 \& \& |[bthrow]| 1 \& \& \cdots \& \& \\
    \& \& \& \&  \& \&  \& \&  \& \&  \& \&  \& \&  \& \&  \& \&  \& \&  \& \&  \& \&  \& \&  \& \&  \& \&  \& \&  \& \&  \& \&  \& \&  \& \&  \& \&  \& \&  \& \&  \& \&  \& \&  \& \& \\
    };
    
    \foreach \a [evaluate=\a as \y using int(\a+5)] in {12,14,...,40}
        {
            \draw[traj,opacity=.25] (M-7-\a) to (M-12-\y);
        }
    \foreach \a [evaluate=\a as \y using int(\a+5)] in {13,15,...,41}
        {
            \draw[traj,opacity=.25] (M-12-\a) to (M-7-\y);
        }


    \draw[dark blue, rounded corners] (M-7-11.center) -- (M-4-14.center) -- (M-7-17.center) -- (M-10-14.center) -- cycle;
    \draw[dark blue, fill=dark blue!25,opacity=.5, rounded corners] (M-7-11.center) -- (M-4-14.center) -- (M-7-17.center) -- (M-10-14.center) -- cycle;

    \draw[dark blue, rounded corners] (M-7-17.center) -- (M-3-21.center) -- (M-7-25.center) -- (M-11-21.center) -- cycle;
    \draw[dark blue, fill=dark blue!25,opacity=.5,rounded corners] (M-8-18.center) -- (M-5-21.center) -- (M-8-24.center) -- (M-11-21.center) -- cycle;
    \draw[red,thick] (M-4-21.north west) to (M-7-24.south east);
    \draw[red,thick] (M-4-21.north east) to (M-7-18.south west);
    
    \draw[dark blue, rounded corners] (M-7-25.center) -- (M-2-30.center) -- (M-7-35.center) -- (M-12-30.center) -- cycle;
    \draw[dark blue, fill=dark blue!25,opacity=.5,rounded corners] (M-9-27.center) -- (M-6-30.center) -- (M-9-33.center) -- (M-12-30.center) -- cycle;
    \draw[red,thick] (M-4-29.north west) to (M-8-33.south east);
    \draw[red,thick] (M-3-30.north west) to (M-7-34.south east);
    \draw[red,thick] (M-3-30.north east) to (M-7-26.south west);
    \draw[red,thick] (M-4-31.north east) to (M-8-27.south west);

    \draw[dark blue, rounded corners] (M-7-35.center) -- (M-1-41.center) -- (M-7-47.center) -- (M-13-41.center) -- cycle;
    \draw[dark blue, fill=dark blue!25,opacity=.5,rounded corners] (M-10-38.center) -- (M-7-41.center) -- (M-10-44.center) -- (M-13-41.center) -- cycle;
    \draw[red,thick] (M-4-39.north west) to (M-9-44.south east);
    \draw[red,thick] (M-3-40.north west) to (M-8-45.south east);
    \draw[red,thick] (M-2-41.north west) to (M-7-46.south east);
    \draw[red,thick] (M-2-41.north east) to (M-7-36.south west);
    \draw[red,thick] (M-3-42.north east) to (M-8-37.south west);
    \draw[red,thick] (M-4-43.north east) to (M-9-38.south west);

    \end{tikzpicture}
    \]
Similarly, the construction in Section \ref{section: visualtame} produces $4 \times 4$ solid submatrices.
    \[
    \begin{tikzpicture}[baseline=(M-6-1.base),
    ampersand replacement=\&,
    ]
    \clip[use as bounding box] (-7.5,0) rectangle (7.5,-4.85);
    \matrix[matrix of math nodes,
        matrix anchor = M-1-29.center,
        origin/.style={},
        gthrow/.style={dark green,draw,circle,inner sep=0mm,minimum size=4mm},
        rthrow/.style={dark red,draw,circle,inner sep=0mm,minimum size=4mm},
        bthrow/.style={dark blue,draw,circle,inner sep=0mm,minimum size=4mm},
        pivot/.style={draw,circle,inner sep=0.25mm,minimum size=2mm},       
        nodes in empty cells,
        inner sep=0pt,
        nodes={anchor=center,minimum size=3mm},
        column sep={.4cm,between origins},
        row sep={.4cm,between origins},
    ] (M) at (0,0) {
    \& \& \& \&  \& \&  \& \&  \& \&  \& \&  \& \&  \& \&  \& \&  \& \&  \& \&  \& \&  \& \&  \& \&  \& \&  \& \&  \& \&  \& \&  \& \&  \& \&  \& \&  \& \&  \& \&  \& \&  \& \&  \& \& \\
    \& \& \& \&  \& \&  \& \&  \& \&  \& \&  \& \&  \& \&  \& \&  \& \&  \& \&  \& \&  \& \&  \& \&  \& \&  \& \&  \& \&  \& \&  \& \&  \& \&  \& \&  \& \&  \& \&  \& \&  \& \&  \& \& \\
    \& \& \& \&  \& \&  \& \&  \& \&  \& \&  \& \&  \& \&  \& \&  \& \&  \& \&  \& \&  \& \&  \& \&  \& \&  \& \&  \& \&  \& \&  \& \&  \& \&  \& \&  \& \&  \& \&  \& \&  \& \&  \& \& \\
    \& \&  \& \&  \& \&  \& \&  \& \&  \& \& |[rthrow]|  \& \& |[rthrow]|  \& \& |[rthrow]|  \& \& |[rthrow]|  \& \& |[rthrow]|  \& \& |[rthrow]|  \& \& |[rthrow]|  \& \& |[rthrow]|  \& \& |[rthrow]|  \& \& |[rthrow]|  \& \& |[rthrow]|  \& \& |[rthrow]|  \& \& |[rthrow]|  \& \& |[rthrow]|  \& \& |[rthrow]|  \& \& |[rthrow]|  \& \&  \& \&  \& \&  \& \&  \& \& \\
    \& \& \& \&  \& \&  \& \&  \& \&  \& \&  \& \&  \& \&  \& \&  \& \&  \& \&  \& \&  \& \&  \& \&  \& \&  \& \&  \& \&  \& \&  \& \&  \& \&  \& \&  \& \&  \& \&  \& \&  \& \&  \& \& \\
    \& \& \& \&  \& \&  \& \&  \& \&  \& \&  \& \&  \& \&  \& \&  \& \&  \& \&  \& \&  \& \&  \& \&  \& \&  \& \&  \& \&  \& \&  \& \&  \& \&  \& \&  \& \&  \& \&  \& \&  \& \&  \& \& \\
    \&  \& \&  \& \& |[gthrow]| 1 \& \& |[gthrow]| 1 \& \& 1 \& \& |[gthrow]| 1 \& \& |[gthrow]| 1 \& \& |[gthrow]| 1 \& \& |[gthrow]| 1 \& \& |[gthrow]| 1 \& \& |[gthrow]| 1 \& \& |[gthrow]| 1 \& \& |[gthrow]| 1 \& \& |[gthrow]| 1 \& \& |[gthrow]| 1 \& \& |[gthrow]| 1 \& \& |[gthrow]| 1 \& \& |[gthrow]| 1 \& \& |[gthrow]| 1 \& \& |[gthrow]| 1 \& \& |[gthrow]| 1 \& \& |[gthrow]| 1 \& \& |[gthrow]| 1 \& \& 1 \& \& |[gthrow]| 1 \& \& |[gthrow]| 1 \& \& \\
    \& \& \& \& 18 \& \& 1 \& \& 5 \& \& \cdots \& \& 3 \& \& 3 \& \& 3 \& \& 1 \& \& 18 \& \& 1 \& \& 5 \& \& 2 \& \& 3 \& \& 3 \& \& 3 \& \& 1 \& \& 18 \& \& 1 \& \& 5 \& \& 2 \& \& 3 \& \& \cdots \& \& 3 \& \& 1 \& \& \\
    \& \& \& 7 \& \& 16 \& \& 1 \& \& 8 \& \& 3 \& \& 5 \& \& 2 \& \& 2 \& \& 7 \& \& 16 \& \& 1 \& \& 8 \& \& 3 \& \& 5 \& \& 2 \& \& 2 \& \& 7 \& \& 16 \& \& 1 \& \& 8 \& \& 3 \& \& 5 \& \& 2 \& \& 2 \& \& \\
    \& \& 4 \& \& 6 \& \& 9 \& \& 1 \& \& \cdots \& \& 4 \& \& 7 \& \& 3 \& \& 4 \& \& 6 \& \& 9 \& \& 1 \& \& 10 \& \& 4 \& \& 7 \& \& 3 \& \& 4 \& \& 6 \& \& 9 \& \& 1 \& \& 10 \& \& 4 \& \& \cdots \& \& 3 \& \& \\
    \& 2 \& \& 3 \& \& 3 \& \& 4 \& \& 1 \& \& 11 \& \& 2 \& \& 4 \& \& 2 \& \& 3 \& \& 3 \& \& 4 \& \& 1 \& \& 11 \& \& 2 \& \& 4 \& \& 2 \& \& 3 \& \& 3 \& \& 4 \& \& 1 \& \& 11 \& \& 2 \& \& 4 \& \& \\
    1 \& \& 1 \& \& 1 \& \& 1 \& \& 1 \& \& \cdots \& \& |[bthrow]| 1 \& \& |[bthrow]| 1 \& \& |[bthrow]| 1 \& \& |[bthrow]| 1 \& \& |[bthrow]| 1 \& \& |[bthrow]| 1 \& \& |[bthrow]| 1 \& \& |[bthrow]| 1 \& \& |[bthrow]| 1 \& \& |[bthrow]| 1 \& \& |[bthrow]| 1 \& \& |[bthrow]| 1 \& \& |[bthrow]| 1 \& \& |[bthrow]| 1 \& \& |[bthrow]| 1 \& \& |[bthrow]| 1 \& \& |[bthrow]| 1 \& \& \cdots \& \& \\
    \& \& \& \&  \& \&  \& \&  \& \&  \& \&  \& \&  \& \&  \& \&  \& \&  \& \&  \& \&  \& \&  \& \&  \& \&  \& \&  \& \&  \& \&  \& \&  \& \&  \& \&  \& \&  \& \&  \& \&  \& \&  \& \& \\
    };
    
    \foreach \a [evaluate=\a as \y using int(\a+5)] in {12,14,...,40}
        {
            \draw[traj,opacity=.25] (M-7-\a) to (M-12-\y);
        }
    \foreach \a [evaluate=\a as \y using int(\a+5)] in {13,15,...,41}
        {
            \draw[traj,opacity=.25] (M-12-\a) to (M-7-\y);
        }
        
    \draw[dashed, dark blue, rounded corners] (M-7-11.center) -- (M-4-14.center) -- (M-7-17.center) -- (M-10-14.center) -- cycle;

    \draw[dashed,dark blue, rounded corners] (M-7-17.center) -- (M-3-21.center) -- (M-7-25.center) -- (M-11-21.center) -- cycle;
    
    \draw[dark blue, rounded corners] (M-7-25.center) -- (M-2-30.center) -- (M-7-35.center) -- (M-12-30.center) -- cycle;
    \draw[dark blue, fill=dark blue!25,opacity=.5,rounded corners] (M-8-26.center) -- (M-4-30.center) -- (M-8-34.center) -- (M-12-30.center) -- cycle;
    \draw[red,thick] (M-3-30.north west) to (M-7-34.south east);
    \draw[red,thick] (M-3-30.north east) to (M-7-26.south west);

    \draw[dark blue, rounded corners] (M-7-35.center) -- (M-1-41.center) -- (M-7-47.center) -- (M-13-41.center) -- cycle;
    \draw[dark blue, fill=dark blue!25,opacity=.5,rounded corners] (M-9-37.center) -- (M-5-41.center) -- (M-9-45.center) -- (M-13-41.center) -- cycle;
    \draw[red,thick] (M-3-40.north west) to (M-8-45.south east);
    \draw[red,thick] (M-2-41.north west) to (M-7-46.south east);
    \draw[red,thick] (M-2-41.north east) to (M-7-36.south west);
    \draw[red,thick] (M-3-42.north east) to (M-8-37.south west);

    \end{tikzpicture}
    \]
    We see that this $\mathrm{SL}(5)$-frieze is also a $\pi$-frieze.
    Note that the dashed diamonds above do not satisfy $\pi^\dagger(a)<b$ or $a+n<\pi(b)$, so they do not contribute a minor to the tameness condition.
\end{ex}

\subsection{Juggler's friezes and duality}

Juggler's friezes may be characterized by an analogous duality condition to friezes; however, the definition of the dual $\C^\dagger$ must be modified slightly to account for the possibility of loops in $\pi$.
Given a $\pi$-prefrieze $\C$, define the \textbf{dual} $\C^\dagger$ as follows. 
\[ 
\C^\dagger_{a,b} 
:= \begin{cases}
\det(\C_{[a,b-1],[a+1,b]}) & \text{if $a \geq b >a-n$},\\
(-1)^{\text{(\# of balls in $\pi$)}-1} & \text{if $\pi(a)=a=b+n$},\\
0 & \text{otherwise}.
\end{cases}
\]
Note that the choice of $n$ which was explicit in the case of $\mathrm{SL}(k)$-friezes is now implicit in the juggling function $\pi$.
If $\pi$ has no loops, then the second case above may be ignored and $\C^\dagger$ coincides with the definition of the `$n$-truncated dual' given in Section \ref{section: friezedual}.

If $\C$ is a general $\pi$-prefrieze, then $\C^\dagger$ need not be a $\pi'$-prefrieze for any juggling function $\pi'$. However, the dual acts by an involution on the set of juggler's friezes, by the following theorem.

\begin{namedthm}{Theorem~\ref{thm: juggleduality}}
If $\C$ is a $\pi$-frieze, then the dual $\C^\dagger$ is a $\pi^\dagger$-frieze, and $(\C^\dagger)^\dagger=\C$.
\end{namedthm}

Furthermore, $\pi$-friezes are characterized among $\pi$-prefriezes by this duality; see Lemma \ref{lemma: juggleduality}.

\begin{ex}
\label{ex: dualjugfrieze}
The dual to the $\pi$-frieze in Construction \ref{cons: pipre} is given below.
\[\begin{tikzpicture}[baseline=(current bounding box.south),
    ampersand replacement=\&,
    ]
    \clip[use as bounding box] (-9.5,-3.1) rectangle (6.3,0.3);
    \matrix[matrix of math nodes,
        matrix anchor = M-2-24.center,
        nodes in empty cells,
        inner sep=0pt,
        gthrow/.style={dark green,draw,circle,inner sep=0mm,minimum size=4mm},
        rthrow/.style={dark red,draw,circle,inner sep=0mm,minimum size=4mm},
        bthrow/.style={dark blue,draw,circle,inner sep=0mm,minimum size=4mm},
        pthrow/.style={dark purple,draw,circle,inner sep=0mm,minimum size=4mm},
        nodes={anchor=center,node font=\scriptsize},
        column sep={0.4cm,between origins},
        row sep={0.4cm,between origins},
    ] (M) at (0,0) {
    \&  \&  \&  \&  \&  \&  \&  \&  \&  \&  \&  \&  \&  \&  \&  \&  \&  \&  \&  \&  \&  \&  \&  \&  \&  \&  \&  \&  \&  \&  \&  \&  \&  \&  \&  \&  \&  \&  \\
 \& |[gthrow]| 1 \&  \& |[gthrow]| 1 \&  \& |[gthrow]| 1 \&  \& |[gthrow]| 1 \&  \& |[gthrow]| 1 \&  \& |[gthrow]| 1 \&  \& |[gthrow]| 1 \&  \& |[gthrow]| 1 \&  \& |[gthrow]| 1 \&  \& |[gthrow]| 1 \&  \& |[gthrow]| 1 \&  \& |[gthrow]| 1 \&  \& |[gthrow]| 1 \&  \& |[gthrow]| 1 \&  \& |[gthrow]| 1 \&  \& |[gthrow]| 1 \&  \& |[gthrow]| 1 \&  \& |[gthrow]| 1 \&  \& |[gthrow]| 1 \&  \\
    \cdots \& \& 5 \& \& 1 \& \& 5 \& \& 1 \& \& 1 \& \& 3 \& \& 2 \& \& 1 \& \& 5 \& \& 1 \& \& 5 \& \& 1 \& \& 1 \& \& 3 \& \& 2 \& \& 1 \& \& 5 \& \& 1 \& \& \cdots \\
     \& 2 \& \& 3 \& \& 2 \& \& 3 \& \& 1 \& \& 3 \& \& 1 \& \& |[bthrow]| 1 \& \& 2 \& \& 3 \& \& 2 \& \& 3 \& \& 1 \& \& 3 \& \& 1 \& \& |[bthrow]| 1 \& \& 2 \& \& 3 \& \& 2 \& \\
    \cdots  \& \& |[bthrow]| 1 \& \& |[bthrow]| 1 \& \& 1 \& \& 3 \& \& |[bthrow]| 1 \& \& 1 \& \&  \& \&  \& \& |[bthrow]| 1 \& \& |[bthrow]| 1 \& \& 1 \& \& 3 \& \& |[bthrow]| 1 \& \& 1 \& \&  \& \&  \& \& |[bthrow]| 1 \& \& |[bthrow]| 1 \& \& \cdots \\
    \&  \& \&  \& \&  \& \& |[bthrow]| 1 \& \&  \& \&  \& \&  \& \& -1 \& \&  \& \&  \& \&  \& \& |[bthrow]| 1 \& \&  \& \&  \& \&  \& \& -1 \& \&  \& \&  \& \&  \&  \\
     \& \&  \& \&  \& \&  \& \&  \& \& |[bthrow]| -1 \& \&  \& \& |[bthrow]| -1 \& \&  \& \&  \& \&  \& \&  \& \&  \& \& |[bthrow]| -1 \& \&  \& \& |[bthrow]| -1 \& \&  \& \&  \& \&  \& \&  \\
    \&  \& \&  \& \&  \& \&  \& \&  \& \&  \& \&  \& \&  \& \&  \& \&  \& \&  \& \&  \& \&  \& \&  \& \&  \& \&  \& \&  \& \&  \& \&  \&  \\
     \& \& |[bthrow]| -1 \& \&  \& \&  \& \&  \& \&  \& \&  \& \&  \& \&  \& \& |[bthrow]| -1 \& \&  \& \&  \& \&  \& \&  \& \&  \& \&  \& \&  \& \& |[bthrow]| -1 \& \&  \& \& \\
    };

    \draw[traj,opacity=.25] (M-3-1) to (M-2-2) to (M-5-5) to (M-2-8) to (M-5-11) to (M-2-14) to (M-4-16) to (M-2-18) to (M-5-21) to (M-2-24) to (M-5-27) to (M-2-30) to (M-4-32) to (M-2-34) to (M-5-37) to (M-3-39);
    \draw[traj,opacity=.25] (M-5-1) to (M-2-4) to (M-6-8) to (M-2-12) to (M-9-19) to (M-2-26) to (M-7-31) to (M-2-36) to (M-5-39);
    \draw[traj,opacity=.25] (M-3-1) to (M-5-3) to (M-2-6) to (M-7-11) to (M-2-16) to (M-5-19) to (M-2-22) to (M-7-27) to (M-2-32) to (M-5-35) to (M-2-38) to (M-3-39);
    \draw[traj,opacity=.25] (M-7-1) to (M-9-3) to (M-2-10) to (M-7-15) to (M-2-20) to (M-6-24) to (M-2-28) to (M-9-35) to (M-5-39);
\end{tikzpicture}
\]
This is easily checked to be a $\pi^\dagger$-prefrieze, where $\pi^\dagger$ is has siteswap notation 23345357. The frieze and tameness conditions are less obvious; however, by Lemma \ref{lemma: juggleduality}, the fact that $\C^\dagger$ is a $\pi^\dagger$-prefrieze immediately implies that $\C$ (respectively, $\C^\dagger$) is a $\pi$-frieze (respectively, a $\pi^\dagger$-frieze).
\end{ex}


\subsection{Juggler's friezes as linear recurrences}

Regarded as a matrix (as in Section \ref{section: matrices}), a juggler's frieze $\C$ determines a linear recurrence $\C \mathsf{x}=0$. As in Theorem \ref{thm: friezeperiodic}, juggler's friezes are characterized among $\pi$-prefriezes as those whose solutions are \emph{superperiodic} in the following sense.

\begin{namedthm}{Theorem~\ref{thm: juggleqp}}
Let $\pi$ be an $n$-periodic juggling function with $h$-many balls.
A $\pi$-prefrieze $\C$ is a $\pi$-frieze iff every solution to the linear recurrence $\C \mathsf{x=0}$ 
satisfies $x_{a+n} = (-1)^{n-h-1}x_a$, $\forall a\in \mathbb{Z}$.
\end{namedthm}

As a consequence of the theorem, the space of solutions to a $\pi$-frieze is
invariant under shifting the indices of a sequence by $n$; that is, sending
each sequence $x$ to the sequence $S(x)$ with $S(x)_a := x_{a+n}$. Since
juggler's friezes are determined by their space of solutions
(Lemma \ref{lemma: linrec}, part \ref{lemma: linrec1}),
this immediately implies that juggler's friezes are periodic.

\begin{namedthm}{Theorem~\ref{thm: periodicity}}
If $\pi$ is an $n$-periodic juggling function, then the entries of a $\pi$-frieze $\C$ are $n$-periodic; that is, $\C_{a,b} = \C_{a+n,b+n}$.
\end{namedthm}
\begin{rem}
Note that the entries of a juggler's frieze are periodic, even though the solutions to the corresponding linear recurrence are only superperiodic.
\end{rem}

Using the dual juggler's frieze and superperiodicity, we can describe a spanning family of solutions to the linear recurrence.

\begin{namedthm}{Theorem~\ref{thm: qpsols}}
Let $\pi$ be an $n$-periodic jugging function with $h$-many balls, and let $\C$ be a $\pi$-frieze. For any $b$ with $b<\pi(b)$, the sequence $\mathsf{x}$ defined by 
\begin{itemize}
    \item $x_a := (-1)^{a+b} \C^\dagger_{a,b} $ when $b\leq a< b+k+h$, and 
    \item $x_{a+n} = (-1)^{n-h-1}x_a$ for all $a$
\end{itemize}
is a solution to $\C \mathsf{x=0}$. These solutions collectively span the space of solutions to $\C \mathsf{x=0}$.
\end{namedthm}

\noindent When $b=\pi(b)$, the sequence defined above is not a solution to $\C \mathsf{x=0}$.

\begin{rem}
This spanning set can be refined to a basis; see Remark \ref{rem: schedule}.
\end{rem}


\section{Constructing juggler's friezes}

\label{section: jugcons}

As in Section \ref{section: constructfrieze}, we may construct all juggler's friezes by twisting certain matrices.



\subsection{$\pi$-unimodular matrices}

First, we need a class of matrices that parametrize juggler's friezes.
Given a juggling function $\pi$, the \textbf{landing schedule at $a\in \mathbb{Z}$} is the set
\[ L_a := \{ b\in \mathbb{Z} \mid \pi^{-1}(b)<a \leq b \} \]
Intuitively, $L_a$ consists of the times at which the balls in the air right before time $a$ will be caught.
Matching earlier notation, we let $\overline{L_a}\subset \{1,2,...,n\}$ denote the residues of $L_a$ mod $n$.

We list some basic properties of landing schedules (see \cite{KLS13}).
\begin{enumerate}
    \item For all $a\in\mathbb{Z}$, $|L_a|$ equals the number of balls in $\pi$.
    \item If $a<\pi(a)$, then $a\in L_a$ and $L_{a+1} = L_a \smallsetminus \{a\} \cup \{\pi(a)\}$.
    \item If $a=\pi(a)$, then $a\not\in L_a$ and $L_{a+1}=L_a$.
\end{enumerate}

\begin{rem}
The list $(\overline{L}_1,\overline{L}_2,....,\overline{L}_n)$ is the \emph{Grassmann necklace}, a type of combinatorial object in bijection with positroids and juggling functions (see \cite{KLS13}).
\end{rem}

\begin{defn}
\label{defn: piunimodular}
Let $\pi$ be $n$-periodic with $k$-many balls. A $k\times n$-matrix $\mathsf{A}$ is \textbf{$\pi$-unimodular} if\footnote{Recall that, for any $I \subset \mathbb{Z}$, $\mathsf{A}_I$ denotes the submatrix of $\mathsf{A}$ with columns congruent to $I$ mod $n$, in the order in which they appear in $\mathsf{A}$.}
\begin{itemize}
    \item $\det(\mathsf{A}_{L_a})=1$ for all $a\in \mathbb{Z}$, and 
    \item for all intervals $[a,b]\subset \mathbb{Z}$ with $b<a+n$, $\mathrm{rank}(\mathsf{A}_{[a,b]}) \leq |L_a \cap [a,b]|$.
\end{itemize}
\end{defn}

\begin{rem}
The second condition is one of many equivalent ways to state that the rowspan of $\mathsf{A}$ is in the positroid variety corresponding to $\pi$; see \cite[Section 5]{KLS13}.
\end{rem}



\begin{ex}
\label{ex: piunimodular}
If $\pi$ is the juggling function with siteswap notation 23345357, then
\[ \begin{array}{|c|cccccccc|}
\hline
a & 1 & 2 & 3 & 4 & 5 & 6 & 7 & 8 \\
\hline 
\overline{L_a} & 1247 & 2347 & 3457 & 4567 & 5678 & 2678 & 1278 & 1248 \\
\hline
\end{array}\]
The following matrix is $\pi$-unimodular; note the 8 minors indexed by the above $\overline{L_a}$ are 1.
\[
\begin{bmatrix}
1 & 0 & -1 & 0 & 1 & 2 & 0 & -3 \\
0 & 1 & 2 & 0 & -1 & -1 & 0 & 1 \\
0 & 0 & 0 & 1 & 2 & 1 & 0 & -1 \\
0 & 0 & 0 & 0 & 0 & 0 & 1 & 1
\end{bmatrix}\qedhere \]
\end{ex}

\subsection{Constructing juggler's friezes}
\label{section: jugtwist}

Given a $\pi$-unimodular $k\times n$-matrix $\mathsf{A}$, define $\Fr(\mathsf{A})$ to be the diamond grid of numbers (or $\mathbb{Z}\times \mathbb{Z}$ matrix) with the following entries.
\begin{equation}\label{eq: detformula}
\Fr(\mathsf{A})_{a,b} := \begin{cases}
(-1)^{|S_{\pi^\dagger}(b,a)|} \det(\mathsf{A}_{L_a\smallsetminus \{a\} \cup \{b\} }) & \text{if $b\leq a < b+n$ and $a<\pi(a)$}, \\
1 & \text{if }\pi(a)=a=b, \\
(-1)^{k} & \text{if }\pi(a)=a=b+n, \\
0 & \text{otherwise}.
\end{cases}
\end{equation}
In this definition, if $b\in L_a\smallsetminus \{a\}$, we set $\det(\mathsf{A}_{L_a\smallsetminus \{a\} \cup \{b\} })=0$.
 If there are no loops in $\pi$ (that is, no $a$ with $\pi(a)=a$), then this definition simplifies to
\[ \Fr(\mathsf{A})_{a,b} = \begin{cases}
(-1)^{|S_{\pi^\dagger}(b,a)|} \det(\mathsf{A}_{L_a\smallsetminus \{a\} \cup \{b\} }) & \text{if $b\leq a < b+n$}, \\
0 & \text{otherwise}.
\end{cases} \]


\begin{namedthm}{Theorem~\ref{thm: detpifrieze}}
If $\mathsf{A}$ is a $\pi$-unimodular matrix, then $\Fr(\mathsf{A})$ is a $\pi^\dagger$-frieze.
%
%
\end{namedthm}

\begin{ex}
If $\mathsf{A}$ is the $\pi$-unimodular matrix in the preceding example, then a few entries of $\Fr(\mathsf{A})$ may be computed as follows.
\begin{align*}
\Fr(\mathsf{A})_{2,1} &= (-1)^{0} \det(\mathsf{A}_{L_2 \smallsetminus \{2\} \cup \{1\}}) 
= \det(\mathsf{A}_{\{1347\}}) = 2
%
 \\
\Fr(\mathsf{A})_{2,-2} &= (-1)^1\det (\mathsf{A}_{L_2 \smallsetminus \{2\} \cup \{ -2\} })
=(-1) \det(\mathsf{A}_{\{3467\}}) = -3
\\
\Fr(\mathsf{A})_{6,1} &= (-1)^1 \det(\mathsf{A}_{L_6 \smallsetminus \{6\} \cup \{1\}} ) = (-1)\det(\mathsf{A}_{L_7}) = -1
\qedhere
\end{align*}
\end{ex}


The juggler's frieze $\Fr(\mathsf{A})$ defined in Theorem \ref{thm: detpifrieze} can be constructed from (a more general version of) the twist. 
The \textbf{(right) twist} of a $\pi$-unimodular $k\times n$-matrix $\mathsf{A}$ is the $k\times n$-matrix $\rt(\mathsf{A})$ whose columns are defined by the dot product equations
\[ \rt(\mathsf{A})_a \cdot \mathsf{A}_b =
\begin{cases}
1 & \text{if }b\in L_a \cap \{a\} \\
0 & \text{if }b\in L_a \smallsetminus \{a\} 
\\
\end{cases}
\]
Since $\det(\mathsf{A}_{L_a})=1$ for each $a$, the above system uniquely determines each column of $\rt(\mathsf{A})$. Note that, if $a$ is not a loop of $\pi$, then $a\in L_a$ and the first condition above reduces to $\rt(\mathsf{A})_a\cdot \mathsf{A}_a=1$. If $a$ is a loop of $\pi$, then $\rt(\mathsf{A})_a$ is orthogonal to a basis and so $\rt(\mathsf{A})_a=0$.


\vspace{-0.3em}
\begin{rem}
If $\mathsf{A}$ is $\pi$-unimodular, then $\tau(\mathsf{A})$ is $\pi$-unimodular.
\end{rem}
\vspace{-0.3em}

The twist can be used to give an alternate construction of $\Fr(\mathsf{A})$ (Construction \ref{cons: piunwrap}, next page), which is both faster in practice and the original motivation for the definition of juggler's friezes.

\begin{namedthm}{Theorem~\ref{thm: equivcons}}
If  $\pi$ is loop-free and $\mathsf{A}$ is a $\pi$-unimodular matrix, then Construction \ref{cons: piunwrap} produces $\Fr(\mathsf{A})$, the $\pi^\dagger$-frieze defined in Theorem \ref{thm: detpifrieze}.
\end{namedthm}

\vspace{-0.3em}
\noindent See Remark \ref{rem: loopcons} for a modification of this construction which covers loops.

\newpage

\begin{cons}\label{cons: piunwrap}
Let $\mathsf{A}$ be a $\pi$-unimodular $k\times n$-matrix, for some loop-free juggling function $\pi$. As an example with siteswap 23345357,
\[ \mathsf{A} := 
\begin{bmatrix}
1 & 0 & -1 & 0 & 1 & 2 & 0 & -3 \\
0 & 1 & 2 & 0 & -1 & -1 & 0 & 1 \\
0 & 0 & 0 & 1 & 2 & 1 & 0 & -1 \\
0 & 0 & 0 & 0 & 0 & 0 & 1 & 1
\end{bmatrix}\]
\begin{enumerate}
    \item Compute the twist $\rt(\mathsf{A})$ of $\mathsf{A}$.
    \[ \rt(\mathsf{A}) = 
    \begin{bmatrix}
    1 & 2 & 1 & 1 & 0 & -1 & 0 & 0 \\
    0 & 1 & 1 & 3 & 1 & 0 & 0 & 0 \\
    0 & 0 & 0 & 1 & 1 & 3 & 1 & 0 \\
    0 & 0 & 0 & 0 & 0 & 0 & 1 & 1
    \end{bmatrix}\]
    \item Compute the matrix product $\rt(\mathsf{A})^\top \mathsf{A}$.
    \[ \rt(\mathsf{A})^\top \mathsf{A} = 
    \begin{bmatrix}
    1 & 0 & -1 & 0 & 1 & 2 & 0 & -3 \\
    2 & 1 & 0 & 0 & 1 & 3 & 0 & -5 \\
    1 & 1 & 1 & 0 & 0 & 1 & 0 & -2 \\
    1 & 3 & 5 & 1 & 0 & 0 & 0 & -1 \\
    0 & 1 & 2 & 1 & 1 & 0 & 0 & 0 \\
    -1 & 0 & 1 & 3 & 5 & 1 & 0 & 0 \\
    0 & 0 & 0 & 1 & 2 & 1 & 1 & 0 \\
    0 & 0 & 0 & 0 & 0 & 0 & 1 & 1
    \end{bmatrix}\]
    \item Take the entries above the diagonal of $\rt(\mathsf{A})^\top \mathsf{A}$, multiply them by $(-1)^{k-1}$, and slide them to the left of the rest of the matrix. 
    \[
    \begin{tikzpicture}[baseline=(current bounding box.south),
    ampersand replacement=\&,
    ]
    \matrix[matrix of math nodes,
        matrix anchor = M-1-8.center,
        origin/.style={},
        throw/.style={},
        pivot/.style={draw,circle,inner sep=0.25mm,minimum size=2mm},       
        nodes in empty cells,
        inner sep=0pt,
        nodes={anchor=center},
        column sep={.5cm,between origins},
        row sep={.5cm,between origins},
    ] (M) at (0,0) {
    0 \& 1 \& 0 \& -1 \& -2 \& 0 \& 3 \& 1 \&  \&  \&  \&  \&  \&  \&  \\
    \& 0 \& 0 \& -1 \& -3 \& 0 \& 5 \& 2 \& 1 \&  \&  \&  \&  \&  \&   \\
    \& \& 0 \& 0 \& -1 \& 0 \& 2 \& 1 \& 1 \& 1 \&  \&  \&  \&  \&  \\
    \& \& \& 0 \& 0 \& 0 \& 1 \& 1 \& 3 \& 5 \& 1 \&  \&  \&  \&  \\
    \& \& \& \& 0 \& 0 \& 0 \& 0 \& 1 \& 2 \& 1 \& 1 \&  \&  \& \\ 
    \& \& \& \& \& 0 \& 0 \& -1 \& 0 \& 1 \& 3 \& 5 \& 1 \&  \& \\ 
    \& \& \& \& \& \& 0 \& 0 \& 0 \& 0 \& 1 \& 2 \& 1 \& 1 \& \\ 
    \& \& \& \& \& \& \& 0 \& 0 \& 0 \& 0 \& 0 \& 0 \& 1 \& 1 \\ 
    };
    \draw[thick,dark green,fill=dark green!25,opacity=.25,rounded corners=5mm] ($(M-1-8.center)+(-.25,.5)$) -- ($(M-8-15.center)+(.5,-.25)$) -- ($(M-8-8.center)+(-.25,-.25)$) -- cycle;
    \draw[thick,dark red,fill=dark red!25,opacity=.25,rounded corners] ($(M-1-7.center)+(.25,.25)$) -- ($(M-7-7.center)+(.25,-.5)$) -- ($(M-1-1.center)+(-.5,.25)$) -- cycle;
    \end{tikzpicture}
    \]
    \item Rotate the resulting parallelogram $45^\circ$ counter-clockwise, delete superfluous $0$s, and duplicate the remaining numbers $n$-periodically in an infinite horizontal strip.
    \[
    \begin{tikzpicture}[baseline=(current bounding box.south),
    ampersand replacement=\&,
    ]
    \clip[use as bounding box] (-7.1,.3) rectangle (7.1,-2.7);
    \matrix[matrix of math nodes,
        matrix anchor = M-1-29.center,
        origin/.style={},
        throw/.style={},
        pivot/.style={draw,circle,inner sep=0.25mm,minimum size=2mm},       
        nodes in empty cells,
        inner sep=0pt,
        nodes={anchor=center},
        column sep={.4cm,between origins},
        row sep={.4cm,between origins},
    ] (M) at (0,0) {
    \& \& \& \& \& 1 \& \& 1 \& \& 1 \& \& 1 \& \& 1 \& \& 1 \& \& 1 \& \& 1 \& \& 1 \& \& 1 \& \& 1 \& \& 1 \& \& 1 \& \& 1 \& \& 1 \& \& 1 \& \& 1 \& \& 1 \& \& 1 \& \& 1 \& \& 1 \& \& 1 \& \& 1 \& \& 1 \& \& \\
    \& \& \& \& 3 \& \& 2 \& \& 1 \& \& 5 \& \& 1 \& \& 5 \& \& 1 \& \& 1 \& \& 3 \& \& 2 \& \& 1 \& \& 5 \& \& 1 \& \& 5 \& \& 1 \& \& 1 \& \& 3 \& \& 2 \& \& 1 \& \& 5 \& \& 1 \& \& 5 \& \& 1 \& \& 1 \& \& \\
    \& \& \&  \& \& 5 \& \& 1 \& \& 3 \& \& 2 \& \& 3 \& \& 2 \& \&  \& \&  \& \& 5 \& \& 1 \& \& 3 \& \& 2 \& \& 3 \& \& 2 \& \&  \& \&  \& \& 5 \& \& 1 \& \& 3 \& \& 2 \& \& 3 \& \& 2 \& \&  \& \& \\
    \& \& -2 \& \&  \& \& 2 \& \& 1 \& \& 1 \& \& 1 \& \& 1 \& \&  \& \& -2 \& \&  \& \& 2 \& \& 1 \& \& 1 \& \& 1 \& \& 1 \& \&  \& \& -2 \& \&  \& \& 2 \& \& 1 \& \& 1 \& \& 1 \& \& 1 \& \&  \& \& \\
    \& -1 \& \& -3 \& \&  \& \& 1 \& \&  \& \&  \& \&  \& \&  \& \& -1 \& \& -3 \& \&  \& \& 1 \& \&  \& \&  \& \&  \& \&  \& \& -1 \& \& -3 \& \&  \& \& 1 \& \&  \& \&  \& \&  \& \&  \& \& \\
     \& \& -1 \& \& -1 \& \&  \& \&  \& \& -1 \& \&  \& \&  \& \&  \& \& -1 \& \& -1 \& \&  \& \&  \& \& -1 \& \&  \& \&  \& \&  \& \& -1 \& \& -1 \& \&  \& \&  \& \& -1 \& \&  \& \&  \& \& \\
    \&  \& \&  \& \&  \& \&  \& \&  \& \&  \& \&  \& \& 1 \& \&  \& \&  \& \&  \& \&  \& \&  \& \&  \& \&  \& \& 1 \& \&  \& \&  \& \&  \& \&  \& \&  \& \&  \& \&  \& \& 1 \& \& \\
    };
    \draw[thick,dark green,fill=dark green!25,opacity=.25,rounded corners=3mm] ($(M-1-6.center)+(-.5,.25)$) -- ($(M-1-20.center)+(.5,.25)$) -- ($(M-7-14.center)+(.25,-.25)$) -- ($(M-7-12.center)+(-.25,-.25)$) -- cycle;
    \draw[thick,dark red,fill=dark red!25,opacity=.25,rounded corners=3mm] ($(M-2-5.center)+(0,.5)$) -- ($(M-7-10.center)+(.5,-.25)$) -- ($(M-7-1.center)+(-.5,-.25)$) -- cycle;
    \draw[thick,dark green,fill=dark green!25,opacity=.25,rounded corners=3mm] ($(M-1-22.center)+(-.5,.25)$) -- ($(M-1-36.center)+(.5,.25)$) -- ($(M-7-30.center)+(.25,-.25)$) -- ($(M-7-28.center)+(-.25,-.25)$) -- cycle;
    \draw[thick,dark red,fill=dark red!25,opacity=.25,rounded corners=3mm] ($(M-2-21.center)+(0,.5)$) -- ($(M-7-26.center)+(.5,-.25)$) -- ($(M-7-16.center)+(-.5,-.25)$) -- cycle;
    \draw[thick,dark green,fill=dark green!25,opacity=.25,rounded corners=3mm] ($(M-1-38.center)+(-.5,.25)$) -- ($(M-1-52.center)+(.5,.25)$) -- ($(M-7-46.center)+(.25,-.25)$) -- ($(M-7-44.center)+(-.25,-.25)$) -- cycle;
    \draw[thick,dark red,fill=dark red!25,opacity=.25,rounded corners=3mm] ($(M-2-37.center)+(0,.5)$) -- ($(M-7-42.center)+(.5,-.25)$) -- ($(M-7-32.center)+(-.5,-.25)$) -- cycle;
    \end{tikzpicture}
    \qedhere
    \]
\end{enumerate}
\end{cons}

\subsection{Relation to the Grassmannian}

Since the formula for $\Fr(\mathsf{A})$ in Theorem \ref{thm: detpifrieze} only depends on determinants of $k\times k$-submatrices of $\mathsf{A}$, the juggler's frieze $\Fr(\mathsf{A})$ only depends on the left $\mathrm{SL}(k)$-orbit of $\mathsf{A}$ within the set of $\pi$-unimodular matrices.
This can be sharpened to a bijection.

\begin{namedthm}{Theorem~\ref{thm: Frbijection}}
For each juggling function $\pi$ with $k$-many balls,
the map $\Fr$ descends to a bijection
\[ \mathrm{SL}(k)\backslash \{ \text{$\pi$-unimodular matrices}\} \xrightarrow{\Fr} \{\text{$\pi^\dagger$-friezes}\} \]
\end{namedthm}


As a consequence, $\Fr$ factors through a well-known description of the Grassmannian.
\[ \mathrm{SL}(k)\backslash \{ \text{$\pi$-unimodular matrices}\}
\subset \mathrm{GL}(k)\backslash \{\text{$k\times n$-matrices of rank $k$}\}
 \xrightarrow{\mathrm{rowspan}} \mathrm{Gr}(k,n) \]
 Since the $\pi$-unimodular matrices may be defined in terms of $k\times k$-minors, the image of this inclusion is a closed subvariety of $\mathrm{Gr}(k,n)$, which we denote by $\mathrm{Gr}_\uni(\pi)$. That is,
 \[\mathrm{Gr}_{\uni}(\pi) := \{\text{rowspans of $\pi$-unimodular matrices}\} \subset \mathrm{Gr}(k,n) \]
 


\begin{coro}
The map $\Fr$ induces a bijection $\mathrm{Gr}_\uni(\pi)\xrightarrow{\sim} \{\text{$\pi^\dagger$-friezes}\}$.
\end{coro}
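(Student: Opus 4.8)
The plan is to deduce the corollary directly from Theorem~\ref{thm: Frbijection} by identifying $\mathrm{Gr}_\uni(\pi)$ with the set of $\mathrm{SL}(k)$-orbits of $\pi$-unimodular matrices. First I would check that $\mathrm{rowspan}$ restricts to a surjection from the set of $\pi$-unimodular $k\times n$-matrices onto $\mathrm{Gr}_\uni(\pi)$; this is immediate, since $\mathrm{Gr}_\uni(\pi)$ is \emph{defined} as the set of rowspans of such matrices. Next I would observe that two $\pi$-unimodular matrices $\mathsf{A},\mathsf{A}'$ have the same rowspan precisely when $\mathsf{A}'=\mathsf{G}\mathsf{A}$ for some $\mathsf{G}\in\mathrm{GL}(k)$, and that in this situation $1=\det(\mathsf{A}'_{L_a})=\det(\mathsf{G})\det(\mathsf{A}_{L_a})=\det(\mathsf{G})$ for every $a$, so that in fact $\mathsf{G}\in\mathrm{SL}(k)$. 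Hence $\mathrm{rowspan}$ descends to a bijection $\mathrm{SL}(k)\backslash\{\text{$\pi$-unimodular matrices}\}\xrightarrow{\sim}\mathrm{Gr}_\uni(\pi)$.

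I would then recall that $\Fr(\mathsf{G}\mathsf{A})=\Fr(\mathsf{A})$ for $\mathsf{G}\in\mathrm{SL}(k)$, since the entries of $\Fr(\mathsf{A})$ given by the formula~\eqref{eq: detformula} are (signed) $k\times k$-minors of $\mathsf{A}$, each scaled by $\det(\mathsf{G})=1$; this is exactly the $\mathrm{SL}(k)$-invariance already invoked to state Theorem~\ref{thm: Frbijection}. Composing the inverse of the bijection of the previous paragraph with the bijection $\mathrm{SL}(k)\backslash\{\text{$\pi$-unimodular matrices}\}\xrightarrow{\Fr}\{\text{$\pi^\dagger$-friezes}\}$ of Theorem~\ref{thm: Frbijection} produces a bijection $\mathrm{Gr}_\uni(\pi)\xrightarrow{\sim}\{\text{$\pi^\dagger$-friezes}\}$; by construction it sends $\mathrm{rowspan}(\mathsf{A})$ to $\Fr(\mathsf{A})$, so it is indeed the map ``induced by $\Fr$''.

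There is no serious obstacle here: all the substance lives in Theorem~\ref{thm: Frbijection}, and the corollary is a matter of repackaging. The only point that needs a one-line argument is the refinement from $\mathrm{GL}(k)$-orbits to $\mathrm{SL}(k)$-orbits, which follows because the normalization $\det(\mathsf{A}_{L_a})=1$ from Definition~\ref{defn: piunimodular} forces any rowspan-preserving change of basis between $\pi$-unimodular matrices to have determinant $1$. One should also note that, since $\mathrm{Gr}_\uni(\pi)$ is by definition the image of the $\pi$-unimodular matrices under $\mathrm{rowspan}$, nothing further about its being a closed subvariety is required for the set-theoretic bijection asserted here.
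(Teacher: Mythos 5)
Your proposal is correct and matches the paper's (implicit) argument: the corollary is treated there as an immediate repackaging of Theorem~\ref{thm: Frbijection} via the definition of $\mathrm{Gr}_\uni(\pi)$ as the image of $\mathrm{rowspan}$, and your refinement from $\mathrm{GL}(k)$- to $\mathrm{SL}(k)$-orbits using $\det(\mathsf{A}_{L_a})=1$ is exactly the computation the paper performs in the injectivity step of its proof of that theorem.
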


\begin{rem}
The variety $\mathrm{Gr}_\uni(\pi)$ is related to the \emph{positroid variety}
of $\pi$ \cite{KLS13}, which we denote $\mathrm{Gr}(\pi)$, consisting of
rowspans of matrices satisfying only the second condition in Definition
\ref{defn: piunimodular}. The variety $\mathrm{Gr}_\uni(\pi)$ is then the subset
of $\mathrm{Gr}(\pi)$ on which the Pl\"ucker coordinates of the form
$\Delta_{L_a}$ are all equal to $1$; it was dubbed the \emph{critical variety} in \cite{Gal21}.
The bijection $\Fr$ can be extended to $\mathrm{Gr}(\pi)$, by replacing $\pi^\dagger$-friezes with \emph{quasiperiodic reduced
recurrence matrices with juggling function $\pi^\dagger$}. Many of the results
in this paper extend, as we hope to show in \cite{MulRes2}.
\end{rem}

Sending a matrix to its positive complement descends to isomorphisms
\[ \ddagger : \mathrm{Gr}(\pi) \xrightarrow{\sim} \mathrm{Gr}(\pi^\dagger) 
\text{ and }
\ddagger : \mathrm{Gr}_{\uni}(\pi) \xrightarrow{\sim} \mathrm{Gr}_{\uni}(\pi^\dagger) 
\]
with inverses given by applying $\ddagger$ again.
%
%
%
%
Duality for friezes is then related to the twist as follows.

\begin{namedthm}
{Theorem~\ref{thm: dualities}}
Let $\mathsf{A}$ be a $\pi$-unimodular matrix. Then
\[ \mathrm{F}(\mathsf{A})^\dagger = \mathrm{F}(\rt(\mathsf{A}) ^\pperp) = \mathrm{F}(\rt^{-1}(\mathsf{A} ^\pperp)) \]
\end{namedthm}

%
%

The preceding theorem implies the commutativity of the following diagram of bijections.
\[ \begin{tikzpicture}[xscale=3.5, yscale=1.75]
    \node (a0) at (0,0) {$\mathrm{Gr}_{\text{uni}}(\pi)$};
    \node (a1) at (1,.33) {$\mathrm{Gr}_{\text{uni}}(\pi)$};
    \node (c1) at (1,-.33) {$\mathrm{Gr}_{\text{uni}}(\pi^\dagger)$};
    \node (a2) at (2,0) {$\mathrm{Gr}_{\text{uni}}(\pi^\dagger)$};
    \node (b0) at (0,-1) {$\{ \text{$\pi^\dagger$-friezes}\} $};
    \node (b2) at (2,-1) {$\{ \text{$\pi$-friezes}\} $};

    \draw[-angle 90] (a0) to node[above] {$\tau$} (a1);
    \draw[angle 90-angle 90] (a1) to node[above] {$\ddagger$} (a2);
    \draw[-angle 90] (a2) to node[below] {$\tau$} (c1);
    \draw[angle 90-angle 90] (c1) to node[below] {$\ddagger$} (a0);
    \draw[angle 90-angle 90] (b0) to node[above] {$\dagger$} (b2);
    
    \draw[-angle 90] (a0) to node[left] {$\mathrm{F}$} (b0);
    \draw[-angle 90] (a2) to node[left] {$\mathrm{F}$} (b2);
\end{tikzpicture}\]

\begin{ex}
A computation of $\mathsf{A}^\ddagger$ and $\Fr(\lt(\mathsf{A}^\ddagger))$ for the matrix $\mathsf{A}$ in Example \ref{ex: piunimodular} is given in the next section.
The latter may be seen to coincide with $\Fr(\mathsf{A})^\dagger$, appearing in Example \ref{ex: dualjugfrieze}.
\end{ex}

\subsection{Positivity, enumeration, and clusters}\label{section: clusters}


As mentioned in Section \ref{section: breezy}, many notable results count positive integral $\mathrm{SL}(k)$-friezes, often via bijections with other interesting sets of objects. 



To pose an analogous problem for juggler's friezes, we need to amend the notion of positivity.
\begin{defn}
A $\pi$-prefrieze over $\mathbb{R}$ is \textbf{positive} if $(-1)^{|S_\pi(a,b)|}\C_{a,b} >0$ when not forced to be $0$.
\end{defn}

\begin{rem}
When $\pi$ is uniform, $S_\pi(a,b)$ is empty whenever $\C_{a,b}$ is not zero, and so the positivity condition becomes $\C_{a,b}> 0$ when $\C_{a,b}$ is not forced to be $0$.
\end{rem}

Many of the constructions in this paper have been defined so as to preserve positivity. For example, the dual of a positive juggler's frieze is positive, and $\Fr$ defines a bijection between the positive part of $\mathrm{Gr}_{\uni}(\pi)$ and the set of positive $\pi^\dagger$-friezes.


Restricting to the positive integral friezes gives a potentially interesting enumerative problem.

\begin{prob}
For which juggling functions $\pi$ are there finitely many positive integral $\pi$-friezes? Given such $\pi$, how many are there? Are they in bijection with other objects of interest?
\end{prob}


%
%

We claim that one robust source of juggler's friezes is a \emph{cluster structure} on $\mathrm{Gr}_{\uni}(\pi)$, which we hope to prove in a subsequent work \cite{MulRes2}.


\begin{claim}
The variety $\mathrm{Gr}_{\uni}(\pi)$ admits a cluster structure, 
each cluster on $\mathrm{Gr}_{\uni}(\pi)$ determines a positive integral $\pi^\dagger$-frieze, and the map from clusters to positive integral $\pi^\dagger$-friezes is an inclusion.
\end{claim}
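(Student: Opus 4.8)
The plan is to derive everything from the known cluster structure on the open positroid variety. First I would recall that the open positroid variety $\mathrm{Gr}^\circ(\pi)\subset\mathrm{Gr}(k,n)$ — the locus of $\mathrm{Gr}(\pi)$ on which all the necklace Plücker coordinates $\Delta_{\overline{L_1}},\dots,\Delta_{\overline{L_n}}$ are nonzero — carries a cluster structure (due to Galashin and Lam, specializing for uniform $\pi$ to Scott's cluster structure on the Grassmannian) whose coordinate ring is equal to the associated cluster algebra and whose frozen variables are precisely these $n$ necklace Plückers. By Definition \ref{defn: piunimodular}, $\mathrm{Gr}_{\uni}(\pi)$ is the closed subvariety of $\mathrm{Gr}^\circ(\pi)$ cut out by $\Delta_{\overline{L_a}}=1$ for all $a$, i.e.\ the ``critical variety'' of \cite{Gal21}. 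The first step is then the essentially formal fact that specializing all the frozen variables of a cluster algebra to $1$ yields again a cluster algebra — the one with trivial coefficients and the same mutable exchange quivers — whose spectrum is exactly $\mathrm{Gr}_{\uni}(\pi)$. This endows $\mathrm{Gr}_{\uni}(\pi)$ with a cluster structure in which every seed $\Sigma$ of $\mathrm{Gr}^\circ(\pi)$ descends to a seed $\bar\Sigma$ with the same quiver on mutable vertices, whose cluster is an algebraically independent family of regular functions with cluster torus an open dense $(\k^*)^{d}\subset\mathrm{Gr}_{\uni}(\pi)$, where $d=\dim\mathrm{Gr}_{\uni}(\pi)$.

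Next, to a seed $\bar\Sigma$ I would attach the point $p_{\bar\Sigma}\in\mathrm{Gr}_{\uni}(\pi)$ which is the unique point of its cluster torus at which every cluster variable of $\bar\Sigma$ equals $1$, and then the frieze $\Fr(p_{\bar\Sigma})$, using the bijection $\Fr$ of Theorem \ref{thm: Frbijection}. That this is a positive integral $\pi^\dagger$-frieze follows from the formula \eqref{eq: detformula}: the entries of $\Fr(p_{\bar\Sigma})$ are, up to the sign $(-1)^{|S_{\pi^\dagger}(b,a)|}$, the values at $p_{\bar\Sigma}$ of the Plücker coordinates $\Delta_{L_a\smallsetminus\{a\}\cup\{b\}}$, which are regular functions on $\mathrm{Gr}_{\uni}(\pi)$ and hence lie in its cluster algebra. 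By the Laurent phenomenon each such Plücker coordinate is a Laurent polynomial with integer coefficients in the cluster variables of $\bar\Sigma$, and by the positivity theorem for cluster algebras these coefficients are nonnegative; evaluating at $p_{\bar\Sigma}$ (all cluster variables $=1$) therefore gives a nonnegative integer, strictly positive precisely when the Plücker coordinate is not identically zero on $\mathrm{Gr}_{\uni}(\pi)$, i.e.\ precisely when the corresponding frieze entry is not forced to vanish. In particular $p_{\bar\Sigma}$ lies in the positive part of $\mathrm{Gr}_{\uni}(\pi)$, so $\Fr(p_{\bar\Sigma})$ is positive by the positivity property of $\Fr$ noted above, and it is integral by the computation just made.

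Finally, for injectivity of $\bar\Sigma\mapsto\Fr(p_{\bar\Sigma})$: since $\Fr$ is a bijection it suffices to show that $p_{\bar\Sigma}$ determines $\bar\Sigma$. If $p_{\bar\Sigma}=p_{\bar\Sigma'}$, then every cluster variable of $\bar\Sigma'$, being a subtraction-free Laurent polynomial in the cluster variables of $\bar\Sigma$ that evaluates to $1$ when all of the latter are set to $1$, must be a single Laurent monomial in the cluster variables of $\bar\Sigma$; since cluster variables are irreducible elements of the cluster algebra, each cluster variable of $\bar\Sigma'$ is itself a cluster variable of $\bar\Sigma$, so $\bar\Sigma$ and $\bar\Sigma'$ have the same cluster, and a seed of a cluster structure of this type is determined by its cluster. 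I expect the main obstacle to be the first step: making precise that specializing the Galashin--Lam cluster structure along $\Delta_{\overline{L_a}}=1$ recovers the coordinate ring of $\mathrm{Gr}_{\uni}(\pi)$ with its reduced scheme structure, and that this is a cluster structure in the strict sense (coincidence of cluster algebra, upper cluster algebra, and coordinate ring) — rather than the comparatively routine positivity and injectivity arguments. The full details, together with the compatibility of this cluster structure with the duality $\dagger$ and the twist, will be carried out in \cite{MulRes2}.
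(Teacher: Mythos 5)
The paper does not actually prove this statement: it is deliberately labelled a \emph{Claim}, and both the introduction and Section \ref{section: clusters} state explicitly that the proof is deferred to the subsequent paper \cite{MulRes2}. The only content the paper offers is the remark immediately following the Claim, which sketches precisely the strategy you adopt: the positroid variety $\mathrm{Gr}(\pi)$ carries a cluster structure \cite{Sco06,Pos,Lec16,SSBW20,GL19}, and $\mathrm{Gr}_{\uni}(\pi)$ is the locus where the frozen variables equal $1$, so the cluster structure descends with the same mutation type. Your outline is therefore consistent with the authors' intended route, but there is no proof in the paper against which to check the details.

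Taken on its own terms, your sketch has the right architecture (it is the standard ``unitary frieze'' argument of \cite{BFGST21} transported to positroid strata), but two steps beyond the one you flag deserve attention. First, you apply ``the positivity theorem for cluster algebras'' to the Pl\"ucker coordinates $\Delta_{L_a\smallsetminus\{a\}\cup\{b\}}$ appearing in \eqref{eq: detformula}; the Laurent-positivity theorems apply to \emph{cluster variables}, and on a general positroid stratum it is not automatic that every such Pl\"ucker coordinate is a cluster variable (nor even that it lies in the cluster algebra rather than only the upper cluster algebra). One either needs to know these Pl\"uckers are cluster variables, or to argue positivity directly, e.g.\ via the dimer-partition-function/twist formulas of \cite{MS16,MS17}, which express them subtraction-freely in face weights that are Laurent monomials in a cluster. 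Second, the injectivity step leans on unique-factorization-type properties: a Laurent monomial $\prod x_i^{a_i}$ with mixed signs of exponents that is irreducible in the cluster algebra is not immediately a single $x_i$ unless one controls units and factorization in $\mathcal{A}$. Neither issue is fatal, but both are genuine gaps that the eventual proof in \cite{MulRes2} would have to close.
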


\begin{rem}
We can say a bit about this cluster structure. 
The positroid variety $\mathrm{Gr}(\pi)$ has a cluster structure \cite{Sco06,Pos,Lec16, SSBW20, GL19}. Since $\mathrm{Gr}_{\uni}(\pi)\subset \mathrm{Gr}(\pi)$ is the subvariety on which the frozen cluster variables of $\mathrm{Gr}(\pi)$ are $1$, the cluster structure descends to $\mathrm{Gr}_{\uni}(\pi)$ and has the same mutation type.
In particular, the above claim implies that there are infinitely many positive integral $\pi^\dagger$-friezes whenever $\mathrm{Gr}(\pi)$ has infinite mutation type.
%
%
\end{rem}


%
%
%


\section*{Intermission: Computing a dual juggler's frieze via the twist}

\label{section: twistcomputation}

As an excuse to postpone the (rather technical) proofs of the main theorems, we provide a worked example of Theorem \ref{thm: dualities}.
Consider the matrix $\mathsf{A}$ from Example \ref{ex: piunimodular}, copied below.
\[ \mathsf{A} := 
\begin{bmatrix}
1 & 0 & -1 & 0 & 1 & 2 & 0 & -3 \\
0 & 1 & 2 & 0 & -1 & -1 & 0 & 1 \\
0 & 0 & 0 & 1 & 2 & 1 & 0 & -1 \\
0 & 0 & 0 & 0 & 0 & 0 & 1 & 1
\end{bmatrix}\]
Since $\mathsf{A}$ is in reduced row echelon form, a basis of the kernel of $\mathsf{A}$ may be computed via back substitution. We put this basis together into the rows of the following matrix.
    \[ \mathsf{B}
     = 
    \begin{bmatrix}
    1 & -2 & 1 & 0 & 0 & 0 & 0 & 0 \\
    -1 & 1 & 0 & -2 & 1 & 0 & 0 & 0 \\
    -2 & 1 & 0 & -1 & 0 & 1 & 0 & 0 \\
    3 & -1 & 0 & 1 & 0 & 0 & -1 & 1 
    \end{bmatrix}
    \]
A positive complement of $\mathsf{A}$ may be given by negating columns 1, 3, 5, and 7 of $\mathsf{B}$. However, it will be convenient to also negate rows 1 and 2 (as this will make the entries of the inverse twist non-negative). The resulting positive complement (and its inverse twist) are below.
    \[ \mathsf{A}^\ddagger
     = 
    \begin{bmatrix}
    1 & 2 & 1 & 0 & 0 & 0 & 0 & 0 \\
    -1 & -1 & 0 & 2 & 1 & 0 & 0 & 0 \\
    2 & 1 & 0 & -1 & 0 & 1 & 0 & 0 \\
    -3 & -1 & 0 & 1 & 0 & 0 & 1 & 1 
    \end{bmatrix}
    \hspace{1cm}
    \lt(\mathsf{A}^\ddagger) = 
    \begin{bmatrix}
    1 & 1 & 1 & 0 & 0 & 0 & 0 & 0 \\
    0 & 1 & 3 & 1 & 1 & 0 & 0 & 0 \\
    0 & 0 & 1 & 2 & 5 & 1 & 0 & 0 \\
    0 & 0 & 0 & 1 & 3 & 1 & 1 & 1 \\
    \end{bmatrix}\]

We now use Construction \ref{cons: piunwrap} to construct $\Fr(\lt(\mathsf{A}^\ddagger))$. We already know the twist of $\lt(\mathsf{A}^\ddagger)$, since $\rt(\lt(\mathsf{A}^\ddagger)) = \mathsf{A}^\ddagger$. Next, the matrix product $\rt(\lt(\mathsf{A}^\ddagger))^\top \lt(\mathsf{A}^\ddagger) = (\mathsf{A}^\ddagger)^\top \lt(\mathsf{A}^\ddagger)$.
    \begin{align*}
     (\mathsf{A}^\ddagger)^\top \lt(\mathsf{A}^\ddagger) 
    &=
    \begin{bmatrix}
    1 & 0 & 0 & 0 & 0 & -1 & -3 & -3 \\
    2 & 1 & 0 & 0 & 1 & 0 & -1 & -1 \\
    1 & 1 & 1 & 0 & 0 & 0 & 0 & 0 \\
    0 & 2 & 5 & 1 & 0 & 0 & 1 & 1 \\
    0 & 1 & 3 & 1 & 1 & 0 & 0 & 0 \\
    0 & 0 & 1 & 2 & 5 & 1 & 0 & 0 \\
    0 & 0 & 0 & 1 & 3 & 1 & 1 & 1 \\
    0 & 0 & 0 & 1 & 3 & 1 & 1 & 1 \\
    \end{bmatrix}
    \end{align*}

    Next, we take the entries above the diagonal of $\rt(\mathsf{A})^\top \mathsf{A}$, multiply them by $(-1)^{k-1}=(-1)^3$, and slide them to the left of the rest of the matrix. 
    \[
    \begin{tikzpicture}[baseline=(current bounding box.south),
    ampersand replacement=\&,
    ]
    \matrix[matrix of math nodes,
        matrix anchor = M-1-8.center,
        origin/.style={},
        throw/.style={},
        pivot/.style={draw,circle,inner sep=0.25mm,minimum size=2mm},       
        nodes in empty cells,
        inner sep=0pt,
        nodes={anchor=center},
        column sep={.5cm,between origins},
        row sep={.5cm,between origins},
    ] (M) at (0,0) {
    0 \& 0 \& 0 \& 0 \& 1 \& 3 \& 3 \& 1 \&  \&  \&  \&  \&  \&  \&  \\
    \& 0 \& 0 \& -1 \& 0 \& 1 \& 1 \& 2 \& 1 \&  \&  \&  \&  \&  \&   \\
    \& \& 0 \& 0 \& 0 \& 0 \& 0 \& 1 \& 1 \& 1 \&  \&  \&  \&  \&  \\
    \& \& \& 0 \& 0 \& -1 \& -1 \& 0 \& 2 \& 5 \& 1 \&  \&  \&  \&  \\
    \& \& \& \& 0 \& 0 \& 0 \& 0 \& 1 \& 3 \& 1 \& 1 \&  \&  \& \\ 
    \& \& \& \& \& 0 \& 0 \& 0 \& 0 \& 1 \& 2 \& 5 \& 1 \&  \& \\ 
    \& \& \& \& \& \& -1 \& 0 \& 0 \& 0 \& 1 \& 3 \& 1 \& 1 \& \\ 
    \& \& \& \& \& \& \& 0 \& 0 \& 0 \& 1 \& 3 \& 1 \& 1 \& 1 \\ 
    };
    \draw[thick,dark green,fill=dark green!25,opacity=.25,rounded corners=5mm] ($(M-1-8.center)+(-.25,.5)$) -- ($(M-8-15.center)+(.5,-.25)$) -- ($(M-8-8.center)+(-.25,-.25)$) -- cycle;
    \draw[thick,dark red,fill=dark red!25,opacity=.25,rounded corners] ($(M-1-7.center)+(.25,.25)$) -- ($(M-7-7.center)+(.25,-.5)$) -- ($(M-1-1.center)+(-.5,.25)$) -- cycle;
    \end{tikzpicture}
    \]
    Finally, we rotate the resulting parallelogram $45^\circ$ counter-clockwise, delete superfluous $0$s, and duplicate the remaining numbers $n$-periodically in an infinite horizontal strip, yielding $\Fr(\lt(\mathsf{A}^\ddagger))$.
    \[
    \begin{tikzpicture}[baseline=(current bounding box.south),
    ampersand replacement=\&,
    ]
    \clip[use as bounding box] (-7.1,.3) rectangle (7.1,-3.1);
    \matrix[matrix of math nodes,
        matrix anchor = M-1-29.center,
        origin/.style={},
        throw/.style={},
        pivot/.style={draw,circle,inner sep=0.25mm,minimum size=2mm},       
        nodes in empty cells,
        inner sep=0pt,
        nodes={anchor=center},
        column sep={.4cm,between origins},
        row sep={.4cm,between origins},
    ] (M) at (0,0) {
    \& \& \& \& \& 1 \& \& 1 \& \& 1 \& \& 1 \& \& 1 \& \& 1 \& \& 1 \& \& 1 \& \& 1 \& \& 1 \& \& 1 \& \& 1 \& \& 1 \& \& 1 \& \& 1 \& \& 1 \& \& 1 \& \& 1 \& \& 1 \& \& 1 \& \& 1 \& \& 1 \& \& 1 \& \& 1 \& \& \\
    \& \& \& \& 3 \& \& 2 \& \& 1 \& \& 5 \& \& 1 \& \& 5 \& \& 1 \& \& 1 \& \& 3 \& \& 2 \& \& 1 \& \& 5 \& \& 1 \& \& 5 \& \& 1 \& \& 1 \& \& 3 \& \& 2 \& \& 1 \& \& 5 \& \& 1 \& \& 5 \& \& 1 \& \& 1 \& \& \\
    \& \& \& 3 \& \& 1 \& \& 1 \& \& 2 \& \& 3 \& \& 2 \& \& 3 \& \& 1 \& \& 3 \& \& 1 \& \& 1 \& \& 2 \& \& 3 \& \& 2 \& \& 3 \& \& 1 \& \& 3 \& \& 1 \& \& 1 \& \& 2 \& \& 3 \& \& 2 \& \& 3 \& \& 1 \& \& \\
    \& \& 1 \& \& 1 \& \&  \& \&  \& \& 1 \& \& 1 \& \& 1 \& \& 3 \& \& 1 \& \& 1 \& \&  \& \&  \& \& 1 \& \& 1 \& \& 1 \& \& 3 \& \& 1 \& \& 1 \& \&  \& \&  \& \& 1 \& \& 1 \& \& 1 \& \& 3 \& \& \\
    \&  \& \&  \& \&  \& \& -1 \& \&  \& \&  \& \&  \& \& 1 \& \&  \& \&  \& \&  \& \& -1 \& \&  \& \&  \& \&  \& \& 1 \& \&  \& \&  \& \&  \& \& -1 \& \&  \& \&  \& \&  \& \& 1 \& \& \\
     \& \& -1 \& \&  \& \& -1 \& \&  \& \&  \& \&  \& \&  \& \&  \& \& -1 \& \&  \& \& -1 \& \&  \& \&  \& \&  \& \&  \& \&  \& \& -1 \& \&  \& \& -1 \& \&  \& \&  \& \&  \& \&  \& \& \\
    \&  \& \&  \& \&  \& \&  \& \&  \& \&  \& \&  \& \&  \& \&  \& \&  \& \&  \& \&  \& \&  \& \&  \& \&  \& \&  \& \&  \& \&  \& \&  \& \&  \& \&  \& \&  \& \&  \& \&  \& \& \\
     \& \&  \& \&  \& \&  \& \&  \& \&  \& \&  \& \&  \& \&  \& \&  \& \&  \& \&  \& \&  \& \& -1 \& \&  \& \&  \& \&  \& \&  \& \&  \& \&  \& \&  \& \& -1 \& \&  \& \&  \& \& \\
    };
    \draw[thick,dark green,fill=dark green!25,opacity=.25,rounded corners=3mm] ($(M-1-6.center)+(-.5,.25)$) -- ($(M-1-20.center)+(.5,.25)$) -- ($(M-8-13.center)+(.25,-.25)$) -- ($(M-8-13.center)+(-.25,-.25)$) -- cycle;
    \draw[thick,dark green,fill=dark green!25,opacity=.25,rounded corners=3mm] ($(M-1-22.center)+(-.5,.25)$) -- ($(M-1-36.center)+(.5,.25)$) -- ($(M-8-29.center)+(.25,-.25)$) -- ($(M-8-29.center)+(-.25,-.25)$) -- cycle;
    \draw[thick,dark red,fill=dark red!25,opacity=.25,rounded corners=3mm] ($(M-2-21.center)+(0,.5)$) -- ($(M-8-27.center)+(.5,-.25)$) -- ($(M-8-15.center)+(-.5,-.25)$) -- cycle;
    \draw[thick,dark green,fill=dark green!25,opacity=.25,rounded corners=3mm] ($(M-1-38.center)+(-.5,.25)$) -- ($(M-1-52.center)+(.5,.25)$) -- ($(M-8-45.center)+(.25,-.25)$) -- ($(M-8-45.center)+(-.25,-.25)$) -- cycle;
    \draw[thick,dark red,fill=dark red!25,opacity=.25,rounded corners=3mm] ($(M-2-37.center)+(0,.5)$) -- ($(M-8-43.center)+(.5,-.25)$) -- ($(M-8-31.center)+(-.5,-.25)$) -- cycle;
    \end{tikzpicture}
    \]
    This juggler's frieze is readily seen to coincide with $\Fr(\mathsf{A})^\dagger$, as given in Example \ref{ex: dualjugfrieze}.

\newpage

\section{Proofs of Theorems}

\label{section: proofs}

In this section, we prove the aforementioned theorems. In the interest of precision, we will exclusively use  `prefriezes', `$\pi$-prefriezes', and related terms to refer to $\mathbb{Z}\times \mathbb{Z}$-matrices, rather than grids of numbers in a diamond pattern.
This primarily affects the meaning of the terms `row', `column', and `diagonal'; e.g.~in this section we would say a prefrieze has two diagonals of $1$s.

\begin{warn}
The reader is cautioned that a majority of what follows consists of verifying that various signs are correct. These arguments can be skipped without losing any key ideas.
\end{warn}


\subsection{Duality for juggler's friezes (Theorem \ref{thm: juggleduality})}


A $\mathbb{Z}\times \mathbb{Z}$-matrix $\C$ is \textbf{lower unitriangular} if $\C_{a,b}=0$ for all $a<b$ and $\C_{a,b}=1$ for all $a=b$. Rotated counterclockwise and deleting $0$s above the diagonal, such a matrix becomes a diamond grid of numbers whose top row consists of $1$s, but may continue infinitely downwards (as well as left and right, of course).
This definition covers half of the definition of a $\pi$-prefrieze. The other half can be split off of the definition as follows.
\begin{lemma}\label{lemma: prefrieze}
A lower unitriangular $\mathbb{Z}\times \mathbb{Z}$-matrix $\C$ is a $\pi$-prefrieze iff, for all $a,b$ with $b\leq \pi^{-1}(a)$ or $\pi(b)\leq a$, 
\begin{equation}\label{eq: prefrieze}
\C_{a,b} = \begin{cases}
(-1)^{|S_\pi(b,a)|} & \text{if }a = \pi(b), \\
0 & \text{if } b<\pi^{-1}(a) \text{ or } \pi(b) < a.
\end{cases}
\end{equation}
\end{lemma}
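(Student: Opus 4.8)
The plan is a direct bookkeeping argument showing that, for a lower unitriangular $\mathbb{Z}\times\mathbb{Z}$-matrix $\C$, the three clauses defining a $\pi$-prefrieze are equivalent to \eqref{eq: prefrieze} on the region $R := \{(a,b)\mid b\le\pi^{-1}(a)\text{ or }\pi(b)\le a\}$. First I would record the two order facts coming from $\pi$ being a juggling function: $\pi(i)\ge i$ for all $i$, hence $\pi^{-1}(a)\le a$ for all $a$; consequently $a=\pi(b)$ forces $b=\pi^{-1}(a)\le a$, and each of $\pi(b)<a$ and $b<\pi^{-1}(a)$ forces $b<a$. Using these I would check that \eqref{eq: prefrieze} is well-posed: its two cases, $a=\pi(b)$ and ``$b<\pi^{-1}(a)$ or $\pi(b)<a$'', are mutually exclusive (when $a=\pi(b)$ we have $b=\pi^{-1}(a)$ and $\pi(b)=a$) and exhaust $R$ (if $b\le\pi^{-1}(a)$ then either $b<\pi^{-1}(a)$ or $b=\pi^{-1}(a)$, i.e.\ $a=\pi(b)$; similarly if $\pi(b)\le a$). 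I would also note that the ``free'' locus $\pi^{-1}(a)<b<a<\pi(b)$, on which the $\pi$-prefrieze definition imposes nothing, is exactly the complement of $R$ inside $\{b<a\}$, while on $\{b=a\}$ and $\{b>a\}$ the values $\C_{a,a}=1$ and $\C_{a,b}=0$ are already fixed by lower unitriangularity; so \eqref{eq: prefrieze} on $R$ together with lower unitriangularity carries exactly the information in the $\pi$-prefrieze conditions.

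For the forward implication, suppose $\C$ is a $\pi$-prefrieze (which is automatically lower unitriangular, since $\C_{a,a}=1$ and $a<b$ gives $a\notin[b,\pi(b)]$, hence $\C_{a,b}=0$). Let $(a,b)\in R$. If $a=\pi(b)$, the second clause of the $\pi$-prefrieze definition gives $\C_{a,b}=(-1)^{|S_\pi(b,a)|}$, consistently with $\C_{b,b}=1$ in the degenerate case $\pi(b)=b$ where $S_\pi(b,b)=\varnothing$. If $b<\pi^{-1}(a)$, then $b\notin[\pi^{-1}(a),a]$, so $\C_{a,b}=0$; and if $\pi(b)<a$, then $a\notin[b,\pi(b)]$, so $\C_{a,b}=0$. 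These are precisely the values prescribed by \eqref{eq: prefrieze}.

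For the converse, suppose $\C$ is lower unitriangular and \eqref{eq: prefrieze} holds on $R$; I would verify the three clauses of the $\pi$-prefrieze definition. The clause $a=b$ holds by unitriangularity. If $a=\pi(b)$, then $(a,b)\in R$ and the first line of \eqref{eq: prefrieze} gives $\C_{a,b}=(-1)^{|S_\pi(b,a)|}$. Finally, ``$a\notin[b,\pi(b)]$ or $b\notin[\pi^{-1}(a),a]$'' is equivalent to ``$a<b$, or $\pi(b)<a$, or $b<\pi^{-1}(a)$'': in the first case $\C_{a,b}=0$ by unitriangularity, and in the remaining two $(a,b)\in R$ and the second line of \eqref{eq: prefrieze} gives $\C_{a,b}=0$. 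Hence $\C$ is a $\pi$-prefrieze.

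Since every step is elementary, I do not anticipate a real obstacle; the only delicate points are the case analysis confirming that the two lines of \eqref{eq: prefrieze} partition $R$ and the identification of the unconstrained locus with the complement of $R$ below the diagonal, so that no condition is dropped or imposed twice, together with keeping the sign conventions straight in the degenerate case $\pi(b)=b$.
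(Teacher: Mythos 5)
Your proof is correct, and it takes the only natural route: the paper itself offers no proof of this lemma, treating it as an immediate unpacking of the definition of a $\pi$-prefrieze against lower unitriangularity, which is exactly the bookkeeping you carry out (including the useful checks that the two lines of \eqref{eq: prefrieze} partition the region $R$, that the unconstrained locus $\pi^{-1}(a)<b<a<\pi(b)$ is the complement of $R$ below the diagonal, and that the degenerate case $a=\pi(b)=b$ is consistent since $S_\pi(b,b)=\varnothing$). Nothing is missing.
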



This condition on entries can be replaced by a condition on larger minors, via the following lemma. 
Recall that $S_\pi(a,b) := \{i \in \mathbb{Z} \mid a<i \text{ and }\pi(i) <b\}$.

\begin{lemma}
\label{lemma: cofrieze}
A lower unitriangular $\mathbb{Z}\times \mathbb{Z}$-matrix $\C$ is a $\pi$-prefrieze iff
 the following hold.
\begin{enumerate}
    \item For all $a<b$, 
    \begin{equation}\label{eq: cofrieze}
    \det (\C_{\pi(S_\pi(a,b)),S_\pi(a,b)}) = 1 
    \end{equation}
    \item For all $a<b$ such that $a<\pi^{-1}(b)$ or $\pi(a)< b$,
    \begin{equation}\label{eq: cotame}
    \det(\C_{\pi(S_\pi(a,b))\cup\{b\},\{a\}\cup S_\pi(a,b) }) = 0 
    \end{equation}
\end{enumerate}
\end{lemma}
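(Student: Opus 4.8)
The plan is to prove both directions by translating the entry-level characterization of $\pi$-prefriezes (Lemma~\ref{lemma: prefrieze}) into the minor conditions \eqref{eq: cofrieze} and \eqref{eq: cotame}, using the same Dodgson-condensation / block-triangularity bookkeeping that powered the sketch of Lemma~\ref{lemma: predual}. The key observation making the statement natural is that the index sets appearing are exactly the ``throw times'' and ``catch times'' of the juggling picture: the columns of $\C_{\pi(S_\pi(a,b)),S_\pi(a,b)}$ are indexed by $S_\pi(a,b)$ and the rows by $\pi(S_\pi(a,b))$, and each such column $i$ has its distinguished (green) $1$ on the diagonal at position $(i,i)$ and its distinguished $(-1)^{|S_\pi(b',i)|}$ at $(\pi(i),i)$; after reindexing, these determinants are themselves ``solid'' in a suitable sense relative to the restriction of $\pi$ to $S_\pi(a,b)$.

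\textbf{Forward direction ($\pi$-prefrieze $\Rightarrow$ the minor identities).} First I would assume $\C$ is a $\pi$-prefrieze and compute $\det(\C_{\pi(S_\pi(a,b)),S_\pi(a,b)})$. Order $S_\pi(a,b) = \{i_1 < i_2 < \dots < i_r\}$; then $\pi(i_1),\dots,\pi(i_r)$ are the rows, though not necessarily in increasing order. I claim that after permuting rows into increasing order, the resulting matrix is lower unitriangular: by the $\pi$-prefrieze conditions, $\C_{\pi(i_s),i_t}$ vanishes unless $i_t \le \pi^{-1}(\pi(i_s)) = i_s$ is false, i.e. unless $\pi(i_s) \in [i_t,\pi(i_t)]$; combined with $\pi(i_s) = \pi(i_t)$ forcing the diagonal entry $(-1)^{|S_\pi(i_t,\pi(i_t))|}$ and the fact that these $(-1)$-factors telescope to $1$ once one tracks the sign of the row permutation. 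The sign tracking is the crux: the permutation sending $(i_1,\dots,i_r)$ to the increasing rearrangement of $(\pi(i_1),\dots,\pi(i_r))$ has sign $(-1)^{\#\{(s,t): s<t,\ \pi(i_s) > \pi(i_t)\}}$, and one checks the inversions correspond precisely to nested throw-catch pairs, so this sign cancels the product of the $(-1)^{|S_\pi(\cdot,\cdot)|}$ diagonal factors. For \eqref{eq: cotame}, adjoining row $b$ and column $a$ to the same matrix: column $a$ has all entries $\C_{\pi(i_s),a}$, and since $a < i_s$ for all $s$ and (by the extra hypothesis $a < \pi^{-1}(b)$ or $\pi(a)<b$) the enlarged set of columns is still ``too wide'' for its set of rows in the juggling sense, the matrix is block lower-triangular with one block strictly taller than it is wide, forcing determinant $0$.

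\textbf{Converse direction.} Conversely, assume \eqref{eq: cofrieze} and \eqref{eq: cotame} hold for all relevant $a<b$ (given that $\C$ is already lower unitriangular). I would recover the entry formula \eqref{eq: prefrieze} of Lemma~\ref{lemma: prefrieze} by specializing to the smallest nontrivial index sets. Taking $a,b$ with $|S_\pi(a,b)|=1$, say $S_\pi(a,b)=\{i\}$, \eqref{eq: cofrieze} reads $\C_{\pi(i),i}=1$ up to a sign; running over the appropriate windows pins down each entry $\C_{\pi(b),b}=(-1)^{|S_\pi(b,\pi(b))|}$. To obtain the vanishing $\C_{a,b}=0$ for $b<\pi^{-1}(a)$ or $\pi(b)<a$, I would use \eqref{eq: cotame} with $S_\pi$ chosen empty (or a singleton), so that the $2\times 2$ or $1\times 1$ degenerate minor directly reads off $\C_{a,b}=0$, and then induct on the size of the index set: if all smaller instances of \eqref{eq: prefrieze} are known, expanding a general determinant in \eqref{eq: cofrieze} (or \eqref{eq: cotame}) along its last row via Dodgson condensation isolates the single unknown entry and forces its value. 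Then invoke Lemma~\ref{lemma: prefrieze} to conclude $\C$ is a $\pi$-prefrieze.

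\textbf{Expected main obstacle.} As in the $\mathrm{SL}(k)$ case, the only genuine difficulty is the sign bookkeeping in the forward direction: verifying that the sign of the row-sorting permutation on $(\pi(i_1),\dots,\pi(i_r))$ exactly cancels the product $\prod_s (-1)^{|S_\pi(i_s,\pi(i_s))|}$ of the diagonal $(-1)$-powers (and, for \eqref{eq: cotame}, that the extra row/column insertion preserves the block structure). I expect this to follow from a clean combinatorial lemma about how $|S_\pi(\cdot,\cdot)|$ behaves under restriction — essentially that $|S_\pi(a,b)|$ counts certain crossings of juggling trajectories, and a row permutation's inversions are exactly the crossings it removes — but stating and proving that lemma carefully is where the real work lies; everything else is Dodgson condensation and induction.
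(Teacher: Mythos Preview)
Your strategy is close to the paper's, which also reduces both directions to cofactor expansions playing off the entry condition \eqref{eq: prefrieze}. The paper organizes this as a single induction on $|S_\pi(a,b)|$: in the forward direction it expands $\det(\C_{\pi(S_\pi(a,b)),S_\pi(a,b)})$ along the column $i_1=\min S_\pi(a,b)$, whose only nonzero entry is $\C_{\pi(i_1),i_1}=(-1)^{|S_\pi(i_1,\pi(i_1))|}$ sitting in row position $|S_\pi(i_1,\pi(i_1))|+1$ (so the cofactor sign cancels), with complementary minor exactly the smaller case $S_\pi(i_1,b)$. Your all-at-once triangularity argument is essentially this induction unwound, but you have the shape wrong: with rows listed as $\pi(i_1),\dots,\pi(i_r)$ the matrix is \emph{upper} triangular with diagonal entries $(-1)^{|S_\pi(i_s,\pi(i_s))|}$, not lower unitriangular. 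The sign identity you anticipate, $\sum_s|S_\pi(i_s,\pi(i_s))|=\#\{s<t:\pi(i_s)>\pi(i_t)\}$, does hold (because each $S_\pi(i_s,\pi(i_s))\subseteq S_\pi(a,b)$), and the paper's induction is just this identity peeled off one term at a time.

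Your forward argument for \eqref{eq: cotame} has a genuine gap: the matrix is square, so ``block lower-triangular with one block strictly taller than it is wide'' does not make sense and cannot force the determinant to vanish. The correct observation, which the paper uses, is simply that row $b$ is identically zero: $\C_{b,i_j}=0$ since $\pi(i_j)<b$, and $\C_{b,a}=0$ by the prefrieze condition under the hypothesis $a<\pi^{-1}(b)$ or $\pi(a)<b$. (Equivalently, column $a$ is zero since $a<i_s$ for all $s$.) For the converse your plan matches the paper---expand along row $b$ to isolate $\C_{b,a}$, using the inductive hypothesis to kill the $\C_{b,i_j}$, and then invoke \eqref{eq: cotame} or (when $\pi(a)=b$, so that $\{a\}\cup S_\pi(a,b)=S_\pi(a-1,b+1)$) \eqref{eq: cofrieze}---but the tool is ordinary Laplace cofactor expansion, not Dodgson condensation.
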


\noindent We note that Condition (1) is redundant and can be optimized; see Remark \ref{rem: optimize}.


\begin{proof}
Note that if $|S_\pi(a,b)|=0$, Equation \eqref{eq: cofrieze} is vacuous and Equations \eqref{eq: prefrieze} and \eqref{eq: cotame} coincide.

$(\Rightarrow)$
Assume that $\C$ is a $\pi$-prefrieze, and that $\C$ satisfies Equations \eqref{eq: cofrieze} and \eqref{eq: cotame} for any $a,b$ with $|S_\pi(a,b)|<k$. 
Choose $a<b$ with $|S_\pi(a,b)|=k$, and index the elements in increasing order:
\[ S_\pi(a,b) = \{ i_1 < i_2 < \cdots i_ k \}\]
Note that, for all $j$, $S_\pi(i_{j},b) = \{i_{j+1},i_{j+2},...,i_k\} $ and $S_\pi(i_1,\pi(i_{j}))\subset S_\pi(i_1,b)\subsetneq S_\pi(a,b)$.

Therefore, $|S_\pi(i_1,\pi(i_j))| < |S_\pi(a,b)|=k$, and so $\C_{\pi(i_1),i_1}=(-1)^{|S_\pi(i_1,\pi(i_1))|}$ and $\C_{\pi(i_j),i_1}=0$ for $j>1$ by Equation \eqref{eq: prefrieze}. Taking a cofactor expansion of $\det(\C_{\pi(S_\pi(a,b)),S_\pi(a,b)})$ along the $i_1$th column and using that $\C_{\pi(i_1),i_1}$ is in the $(|S_\pi(i_1,\pi(i_1))|+1)$th row of $\C_{\pi(S_\pi(a,b)),S_\pi(a,b)}$,
\[ \det (\C_{\pi(S_\pi(a,b)),S_\pi(a,b)}) = (-1)^{|S_\pi(i_1,\pi(i_1))|}\C_{\pi(i_1),i_1} \det (\C_{\pi(S_\pi(i_1,b)),S_\pi(i_1,b)}) = \det (\C_{\pi(S_\pi(i_1,b)),S_\pi(i_1,b)}) \]
Since $S_\pi(i_1,b) \subsetneq S_\pi(a,b)$, the right-hand side is $1$ by the assumed case of Equation \eqref{eq: cotame}. Therefore, Equation \eqref{eq: cofrieze} holds for this $a,b$.

Now, assume that $a< \pi^{-1}(b)$ or $\pi(a)< b$, so that $\C_{b,a}=0$ by Equation \eqref{eq: prefrieze}.
Since $|S_\pi(i_{j},b)| < |S_\pi(a,b)|=k$ and $\pi(i_{j})<b$,
$ \C_{b,i_j} = 0 $ for all $j$ by Equation \eqref{eq: prefrieze}.
Taking a cofactor expansion of $\det(\C_{\pi(S_\pi(a,b))\cup \{b\} ,\{a\} \cup S_\pi(a,b)})$ along the $b$th row,
\[ \det(\C_{\pi(S_\pi(a,b))\cup \{b\} ,\{a\} \cup S_\pi(a,b)}) = (-1)^{|S_\pi(a,b)|} \C_{b,a} \det(\C_{\pi(S_\pi(a,b)) ,S_\pi(a,b)}) 
=0
\]
Therefore, Equation \eqref{eq: cotame} holds for this $a,b$. Taking the limit as $k$ goes to infinity implies that Equations \eqref{eq: cofrieze} and \eqref{eq: cotame} hold in general.

$(\Leftarrow)$
Assume that $\C$ satisfies Equations \eqref{eq: cofrieze} and \eqref{eq: cotame} in general, and that $\C$ satisfies Equation \eqref{eq: prefrieze} for any $a,b$ with $|S_\pi(a,b)|<k$. Choose some $a<b$ such that $|S_\pi(a,b)|=k$. Again taking a cofactor expansion of $\det(\C_{\pi(S_\pi(a,b))\cup \{b\} ,\{a\} \cup S_\pi(a,b)})$ along the $b$th row,
\[ \det(\C_{\pi(S_\pi(a,b))\cup \{b\} ,\{a\} \cup S_\pi(a,b)}) = (-1)^{|S_\pi(a,b)|} \C_{b,a} \det(\C_{\pi(S_\pi(a,b)) ,S_\pi(a,b)}) 
= (-1)^{|S_\pi(a,b)|} \C_{b,a}
\]
If $a<\pi^{-1}(b)$ or $\pi(a) < b$, then Equation \eqref{eq: cotame} implies that the left-hand-side is $0$, and so $\C_{b,a}=0$.
If $\pi(a)=b$, then $S_\pi(a,b)\cup \{a\} = S_\pi(a-1,b+1)$ and Equation \eqref{eq: cofrieze} implies that the left-hand-side is $1$; and so $\C_{b,a}= (-1)^{|S_\pi(a,b)|}$.
Therefore, Equation \eqref{eq: prefrieze} holds for this $a,b$.
Taking the inductive limit over all $k$ implies that Equation \eqref{eq: prefrieze} holds in general, and so $\C$ is a $\pi$-prefrieze.
%
%
\end{proof}

\begin{rem}
\label{rem: optimize}
Condition (1) in Lemma \ref{lemma: cofrieze} is redundant as stated, because many pairs $(a,b)$ may yield the same set $S_\pi(a,b)$ and therefore the same Equation \eqref{eq: cofrieze}. One may restrict to a irredundant family by imposing Equation \eqref{eq: cofrieze} for $a,b$ such that $a+1 \leq \pi^{-1}(b-1)$ and $\pi(a+1) \leq b-1$. 
Reparametrizing $a+1\mapsto a$ and $b-1\mapsto b$,
this condition becomes: $\det (\C_{\pi(S),S}) = 1$ for all $a,b$ with $a\leq \pi^{-1}(b)$ and $\pi(a)\leq b$,
where $S= \{ i\in \mathbb{Z} \mid a\leq i \text{ and } \pi(i) \leq b\} = S_\pi(a-1,b+1)$.
\end{rem}

We would like a characterization of when $\C^\dagger$ is a $\pi^\dagger$-prefrieze. To do this, we need a relation between minors of $\C$ and $\C^\dagger$.

\begin{lemma}\label{lemma: minordual}
Given subsets $I,J$ of an interval  $[a,b]$ with $b-a<n$, 
\[\det(\C^\dagger_{I,J}) = \det(\C_{[a,b]\smallsetminus J,[a,b]\smallsetminus I}) \]
\end{lemma}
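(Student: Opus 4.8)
The plan is to recognize Lemma \ref{lemma: minordual} as an instance of the classical Jacobi complementary minor identity for the adjugate, adapted to the infinite but unitriangular setting. Recall that for a finite invertible matrix $M$ with $\det(M)=1$, one has $M^{-1} = \mathrm{Adj}(M)$, and Jacobi's theorem says that for index sets $I,J$ of equal size, $\det((M^{-1})_{I,J}) = \pm \det(M_{J^c, I^c})$, where the complements are taken within the ambient index set and the sign is $(-1)^{\sum I + \sum J}$. The definition of $\C^\dagger$ given in the excerpt, namely $\C^\dagger_{a,b} = \det(\C_{[b+1,a],[a+1,b]})$ when $a\geq b>a-n$ (together with the loop correction and zeros elsewhere), is exactly the adjugate of $\C$ restricted to a band of width $n$; so the lemma should follow by localizing to a finite window and invoking Jacobi, with the main work being to pin down that the signs cancel because everything sits inside a unitriangular matrix.

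Concretely, first I would reduce to a finite computation. Fix the interval $[a,b]$ with $b-a < n$ and fix $I,J\subseteq[a,b]$; since all the matrix entries involved ($\C_{c,d}$ with $c,d\in[a,b]$, and $\C^\dagger_{c,d}$ with $c,d\in[a,b]$) only depend on entries $\C_{c,d}$ with indices in a bounded window (because $\C^\dagger_{c,d}$ is a determinant of a submatrix of $\C$ with rows and columns in $[a-n+1, b]$ roughly, and it vanishes outside the band of width $n$), I can replace $\C$ by a sufficiently large finite lower-unitriangular truncation $M = \C_{[a-n+1,b+n-1],[a-n+1,b+n-1]}$ (or any window large enough to contain all relevant entries). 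Because $M$ is lower unitriangular, $\det(M)=1$ and $M$ is invertible; I claim $(M^{-1})_{c,d} = \C^\dagger_{c,d}$ for $c,d\in[a,b]$. This is the key identification: the $(c,d)$-entry of $\mathrm{Adj}(M)$ is $(-1)^{c+d}$ times the minor of $M$ deleting row $c$ and column $d$, and for a unitriangular band matrix this minor telescopes (using the $1$s on the diagonal above and below the band) down to exactly the determinant $\det(\C_{[d+1,c],[c+1,d]})$ appearing in the definition of $\C^\dagger$ — the telescoping is the same cofactor-expansion argument already used in the proof of Lemma \ref{lemma: cofrieze}. One has to check the sign $(-1)^{c+d}$ against the $(-1)^{|S_\pi(\ldots)|}$-type signs baked into $\C^\dagger$; this is where the "verifying signs are correct" caveat of the section applies, and it is plausibly the main obstacle, but it is a bookkeeping computation, not a conceptual one.

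Once $\C^\dagger = M^{-1}$ on the relevant window is established, Jacobi's complementary minor identity gives
\[ \det(\C^\dagger_{I,J}) = \det((M^{-1})_{I,J}) = (-1)^{\sigma} \det(M_{[a-n+1,b+n-1]\smallsetminus J,\ [a-n+1,b+n-1]\smallsetminus I}), \]
where $\sigma = \sum_{i\in I} i + \sum_{j\in J} j$ computed with respect to the ambient window. The complement $[a-n+1,b+n-1]\smallsetminus J$ contains the full blocks $[a-n+1,a-1]$ and $[b+1,b+n-1]$ together with $[a,b]\smallsetminus J$; since $M$ is lower unitriangular, expanding the determinant along these "extra" rows/columns (which meet the rest of the matrix only in a unitriangular way) contributes a factor of $1$ and strips them off, leaving precisely $\det(\C_{[a,b]\smallsetminus J,[a,b]\smallsetminus I})$. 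The Jacobi sign $(-1)^\sigma$ must work out to $+1$: one checks that the index sets $I$ and $J$, sitting in the same interval $[a,b]$, contribute the same parity (or that the stripping of the unitriangular blocks absorbs the discrepancy), again a sign-chase. Assembling these pieces yields the stated identity $\det(\C^\dagger_{I,J}) = \det(\C_{[a,b]\smallsetminus J,[a,b]\smallsetminus I})$.

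An alternative, slightly more self-contained route avoids quoting Jacobi: directly expand $\det(\C^\dagger_{I,J})$ using the definition of each entry as a minor of $\C$, and manipulate via the Cauchy–Binet / Plücker-type relations, or do a double induction on $|I|=|J|$ mirroring the cofactor-expansion proof of Lemma \ref{lemma: cofrieze}. I would present the Jacobi-adjugate argument as the main line since it is cleanest, and the hard part in either approach is the same: confirming that all the signs — the $(-1)^{c+d}$ from the adjugate, the $(-1)^{|S_\pi|}$ in the definition of $\C^\dagger$, and the $(-1)^\sigma$ from Jacobi — conspire to give $+1$, so that the identity holds on the nose with no sign.
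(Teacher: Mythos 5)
Your proposal is correct and follows essentially the same route as the paper: the paper also defines the adjugate $\mathrm{Adj}(\C)_{a,b}=(-1)^{a+b}\det(\C_{[a+1,b],[a,b-1]})$ of the infinite lower unitriangular matrix, invokes the Jacobi complementary-minor identity $\det(\mathrm{Adj}(\C)_{I,J})=(-1)^{\sum I+\sum J}\det(\C_{[a,b]\smallsetminus J,[a,b]\smallsetminus I})$, and notes that $\C^\dagger_{a,b}=(-1)^{a+b}\mathrm{Adj}(\C)_{a,b}$, so the diagonal sign twist multiplies the $(I,J)$-minor by exactly $(-1)^{\sum I+\sum J}$ and cancels the Jacobi sign. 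The sign-chase you flag as the main obstacle is thus resolved in one line, and your finite-window truncation is an unnecessary (though harmless) detour, since the adjugate of a lower unitriangular $\mathbb{Z}\times\mathbb{Z}$-matrix is already well defined.
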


\begin{proof}
A lower unipotent matrix $\C$ has a well-defined adjugate matrix $\mathsf{Adj}(\C)$, defined by
\[ \mathrm{Adj}(\C)_{a,b} := (-1)^{a+b}\det(\C_{[a+1,b],[a,b-1]})\]
This adjugate is lower unitriangular and enjoys many nice properties, including 
\[ \det(\mathrm{Adj}(\C)_{I,J}) = (-1)^{\sum I + \sum J} \det(\C_{[a,b]\smallsetminus J,[a,b] \smallsetminus I}) \]
Here, $\sum I$ and $\sum J$ denote the sum of the elements of $I$ and $J$, respectively.

For $b\leq a < b+n$, the dual coincides with the adjugate up to sign.
\[ \C^\dagger_{a,b} = \det(\C_{[a+1,b],[a,b-1]}) = (-1)^{a+b} \mathrm{Adj}(\mathsf{C})_{a,b} \]
Therefore, 
\[ \det(\C^\dagger_{I,J}) = (-1)^{\sum I + \sum J} \det(\mathrm{Adj}(\C)_{I,J})= \det(\C_{[a,b]\smallsetminus J,[a,b]\smallsetminus I}) \qedhere\]
\end{proof}


\begin{lemma}\label{lemma: juggleduality}
A $\pi$-prefrieze $\C$ is a $\pi$-frieze iff $\C^\dagger$ is a $\pi^\dagger$-prefrieze. 
\end{lemma}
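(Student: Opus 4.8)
The plan is to translate the defining conditions of a $\pi$-frieze (the frieze and tameness conditions in Definition \ref{defn: pifrieze}) into statements about entries of $\C^\dagger$, using the minor-duality identity of Lemma \ref{lemma: minordual}, and then recognize the resulting statements as precisely the condition that $\C^\dagger$ is a $\pi^\dagger$-prefrieze, via the combinatorial characterization in Lemma \ref{lemma: cofrieze} (in its optimized form, Remark \ref{rem: optimize}). The key point is that the determinants appearing in the frieze and tameness conditions are exactly of the shape $\det(\C_{[a,b]\smallsetminus J,[a,b]\smallsetminus I})$ covered by Lemma \ref{lemma: minordual}, once one checks that the index sets match up.

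First I would set up the dictionary. Fix $a\leq b$ with $b-a<n$. In the frieze condition, the relevant set is $I_0:=S_{\pi^\dagger}(a-1,b+1)=\{i\mid a\leq i,\ \pi^\dagger(i)\leq b\}$, and the matrix is $\C_{[a,b]\smallsetminus I_0,\,[a,b]\smallsetminus \pi^\dagger(I_0)}$. Applying Lemma \ref{lemma: minordual} with the roles $I=\pi^\dagger(I_0)$, $J=I_0$ — noting $\pi^\dagger(I_0)\subset[a,b]$ exactly when $a+n\leq\pi(b)$ and $\pi^\dagger(a)\leq b$, which are precisely the inequalities in the frieze condition (see the Remark after Section \ref{section: visualfrieze}) — this determinant equals $\det(\C^\dagger_{\pi^\dagger(I_0),\,I_0})$. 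One now checks that $I_0$ and $\pi^\dagger(I_0)$ coincide with the sets $S$ and $\pi^\dagger(S)$ from Remark \ref{rem: optimize} applied to $\pi^\dagger$: indeed $I_0=\{i\mid a\leq i,\ \pi^\dagger(i)\leq b\}$ is verbatim the set $S$ there, and the constraints "$a\leq(\pi^\dagger)^{-1}(b)$ and $\pi^\dagger(a)\leq b$" are exactly the two frieze-condition inequalities rewritten (here one uses $(\pi^\dagger)^{-1}(b)=\pi(b)-n$). Thus the frieze condition for $\C$ (with respect to $\pi$) says exactly that $\det(\C^\dagger_{\pi^\dagger(S),S})=1$ for all the relevant $a,b$ — which is Condition (1) of Lemma \ref{lemma: cofrieze} for $\pi^\dagger$, in the optimized form.

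Next, the same computation applied to the tameness condition: there the set is $J_0:=S_{\pi^\dagger}(a,b)$ and the matrix is $\C_{(a,b]\smallsetminus J_0,\,[a,b)\smallsetminus \pi^\dagger(J_0)}$, which is of the form $\C_{[a+1,b]\smallsetminus \pi^\dagger(J_0),\,[a,b-1]\smallsetminus J_0}$. Here I would apply Lemma \ref{lemma: minordual} on the interval $[a,b]$ with $I=\pi^\dagger(J_0)\cup\{b\}$ and $J=J_0\cup\{a\}$: then $[a,b]\smallsetminus J=[a+1,b]\smallsetminus J_0$ and $[a,b]\smallsetminus I=[a,b-1]\smallsetminus\pi^\dagger(J_0)$, matching the tameness matrix. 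So the tameness determinant equals $\det(\C^\dagger_{\pi^\dagger(J_0)\cup\{b\},\,\{a\}\cup J_0})$, and one verifies this is exactly the left-hand side of Equation \eqref{eq: cotame} for $\pi^\dagger$ with $J_0=S_{\pi^\dagger}(a,b)$, under the hypothesis "$\pi^\dagger(a)<b$ or $(\pi^\dagger)^{-1}(b)<a$" — which is precisely "$\pi^\dagger(a)<b$ or $a+n<\pi(b)$", the tameness hypothesis. Finally I would invoke the $(\Leftrightarrow)$ of Lemma \ref{lemma: cofrieze} (applied to $\C^\dagger$ and $\pi^\dagger$): since $\C^\dagger$ is automatically lower unitriangular (its diagonal entries are the empty determinant $1$, and it vanishes strictly above the diagonal by definition), the equivalences established above show $\C$ is a $\pi$-frieze $\iff$ $\C^\dagger$ satisfies Conditions (1) and (2) of Lemma \ref{lemma: cofrieze} for $\pi^\dagger$ $\iff$ $\C^\dagger$ is a $\pi^\dagger$-prefrieze.

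The main obstacle I anticipate is bookkeeping rather than conceptual: verifying that the index sets $I_0,\pi^\dagger(I_0),J_0,\pi^\dagger(J_0)$ and the auxiliary singletons $\{a\},\{b\}$ really do slot into Lemma \ref{lemma: minordual} and into the sets $S_{\pi^\dagger}(\cdot,\cdot)$ of Lemma \ref{lemma: cofrieze} without sign or off-by-one errors — in particular confirming that $\pi^\dagger(I_0)\subset[a,b]$ (resp.\ $b\notin J_0$, $a\notin\pi^\dagger(J_0)$, etc.) holds exactly under the stated inequalities, and that the redundant pairs $(a,b)$ dropped in Remark \ref{rem: optimize} correspond to the pairs excluded by the frieze-condition inequalities. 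There are no signs to track in Lemma \ref{lemma: minordual} once $I,J$ lie in a common interval, so the only real care is in the combinatorics of which diagonals get deleted; I would organize this by working entirely on the interval $[a,b]$ and passing through the optimized form of Lemma \ref{lemma: cofrieze}, which is tailored to match these determinants.
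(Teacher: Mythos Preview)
Your proposal is correct and follows essentially the same route as the paper: apply Lemma~\ref{lemma: cofrieze} (in the optimized form of Remark~\ref{rem: optimize}) to $\C^\dagger$ with respect to $\pi^\dagger$, then use Lemma~\ref{lemma: minordual} to convert the resulting minors of $\C^\dagger$ into exactly the frieze and tameness determinants of Definition~\ref{defn: pifrieze}. (One small slip: where you rewrite the tameness matrix as ``of the form $\C_{[a+1,b]\smallsetminus \pi^\dagger(J_0),\,[a,b-1]\smallsetminus J_0}$'' the sets $J_0$ and $\pi^\dagger(J_0)$ are swapped, but your subsequent application of Lemma~\ref{lemma: minordual} is correct.)
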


\begin{proof}
Let $\C$ be a $\pi$-prefrieze. By Lemma \ref{lemma: cofrieze} and Remark \ref{rem: optimize}, $\C^\dagger$ is $\pi^\dagger$-prefrieze iff 
\begin{itemize}
    \item For all $a,b$ with $a\leq \pi^{\dagger-1}(b)$ and $\pi^\dagger(a) \leq b$, and $b<a+n$,
    \[ \det(\C^\dagger_{{\pi^\dagger}(I),I})
    = 1\]
    where $I:= \{ i \in \mathbb{Z} \mid a \leq i \text{ and } \pi^\dagger(i) \leq b\}=S_{\pi^\dagger}(a-1,b+1)$.
    \item For all $a,b$ with $a<\pi^{\dagger-1}(b)$ or $\pi^\dagger(a)< b$, and $b<a+n$,
    \[
    \det(\C^\dagger_{\pi^\dagger(J)\cup\{b\},\{a\}\cup J }) 
    = 0 
    \]
    where $J:= \{ i \in \mathbb{Z} \mid a < i \text{ and } \pi^\dagger(i) < b\}=S_{\pi^\dagger}(a,b)$.
\end{itemize}
Using Lemma \ref{lemma: minordual} to translate this into $\C$, $\C^\dagger$ is a $\pi^\dagger$-prefrieze iff
\begin{itemize}
    \item For all $a,b$ with $a\leq \pi^{\dagger-1}(b)$ and $\pi^\dagger(a) \leq b$, and $b<a+n$,
    \[ \det(\C_{[a,b] \smallsetminus I,[a,b] \smallsetminus {\pi^\dagger}(I)}) 
    = 1\]
    where $I:= \{ i \in \mathbb{Z} \mid a \leq i \text{ and } \pi^\dagger(i) \leq b\}=S_{\pi^\dagger}(a-1,b+1)$.
    \item For all $a,b$ with $a<\pi^{\dagger-1}(b)$ or $\pi^\dagger(a)< b$, and $b<a+n$,
    \[
    \det(\C_{(a,b]\smallsetminus J, [a,b) \smallsetminus \pi^\dagger(J)}) 
    = 0 
    \]
    where $J:= \{ i \in \mathbb{Z} \mid a < i \text{ and } \pi^\dagger(i) < b\}=S_{\pi^\dagger}(a,b)$.
\end{itemize}
These are precisely the frieze and tameness conditions in Definition \ref{defn: pifrieze}.
\end{proof}

As a consequence of Lemma \ref{lemma: juggleduality}, we can take the dual of the dual of a $\pi$-frieze.

\begin{lemma}\label{lemma: doubledual}
If $\C$ is a $\pi$-frieze, then $(\C^\dagger)^\dagger=\C$.
\end{lemma}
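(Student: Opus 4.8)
The plan is to reduce the statement to the analogous fact about adjugates of lower unitriangular matrices. First I would record that, by Lemma \ref{lemma: juggleduality}, if $\C$ is a $\pi$-frieze then $\C^\dagger$ is a $\pi^\dagger$-prefrieze, so in particular $\C^\dagger$ is lower unitriangular and its own dual $(\C^\dagger)^\dagger$ is well-defined. Both $\C$ and $(\C^\dagger)^\dagger$ are then $\mathbb{Z}\times\mathbb{Z}$ matrices that are lower unitriangular, so it suffices to check they agree entry by entry. The key tool is the relation between $\dagger$ and the adjugate established in the proof of Lemma \ref{lemma: minordual}: for $b \leq a < b+n$ one has $\C^\dagger_{a,b} = (-1)^{a+b}\,\mathrm{Adj}(\C)_{a,b}$, together with the standard involutivity $\mathrm{Adj}(\mathrm{Adj}(\C)) = \C$ for lower unitriangular matrices (each entry of $\mathrm{Adj}(\C)$ is, up to sign, a solid minor of $\C$, and the double-adjugate identity follows from Dodgson condensation / Jacobi's complementary minor identity, exactly as used implicitly throughout Section \ref{section: friezedual}).

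The main computation is then just bookkeeping of the index ranges and signs. For indices with $b \leq a < b+n$, applying the definition of $\dagger$ twice gives
\[
(\C^\dagger)^\dagger_{a,b} = \det(\C^\dagger_{[a,b-1],[a+1,b]}) = \det(\C_{[a,b]\smallsetminus[a+1,b],\,[a,b]\smallsetminus[a,b-1]}) = \C_{a,b},
\]
where the middle equality is Lemma \ref{lemma: minordual} applied on the interval $[a,b]$ (with $I = [a+1,b]$, $J=[a,b-1]$, noting $b-a<n$), and the last equality is because $[a,b]\smallsetminus[a+1,b] = \{a\}$ and $[a,b]\smallsetminus[a,b-1]=\{b\}$. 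For indices with $a < b$ both sides are $0$ (lower unitriangularity), and for $a = b$ both sides are $1$. The only genuinely delicate cases are the "outside" ones: $a \geq b+n$, and the loop exceptions $\pi(a)=a=b+n$ built into the definition of $\C^\dagger$.

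The hard part will be handling the loop cases and the wrap-around entries $a \geq b+n$ correctly. For $a \geq b+n$ I would argue that $(\C^\dagger)^\dagger_{a,b}$ is forced to vanish by the $\pi$-prefrieze structure of $\C^\dagger$ combined with periodicity (Theorem \ref{thm: periodicity}, which applies since $\C$ and hence $\C^\dagger$ is a frieze), matching $\C_{a,b}=0$ there. For a loop $a$ of $\pi$ (so $\pi(a)=a$ and $a$ is a coloop of $\pi^\dagger$, i.e. $\pi^\dagger(a)=a+n$), the second clause in the definition of $\dagger$ assigns $\C^\dagger_{a,a-n}=(-1)^{h-1}$ where $h$ is the number of balls of $\pi$; applying $\dagger$ once more, the corresponding clause for $\pi^\dagger$ (which has $n-h$ balls) contributes a sign $(-1)^{(n-h)-1}$, and I would check that together with the sign picked up from $S_{\pi^\dagger}$ over the relevant window, these combine to reproduce $\C_{a,a}=1$ and the entries $\C_{a,b}$ along the loop. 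This sign reconciliation — essentially verifying $(-1)^{h-1}\cdot(-1)^{(n-h)-1}\cdot(\text{sign of }|S|)=1$ in each loop/coloop configuration — is the one place where care is required, and it parallels the sign checks already done in the proof of Lemma \ref{lemma: minordual} and in Construction \ref{cons: piunwrap}; everything else is immediate from Lemma \ref{lemma: minordual} and Lemma \ref{lemma: juggleduality}.
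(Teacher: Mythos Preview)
Your core argument---applying Lemma~\ref{lemma: minordual} with $I=[b+1,a]$, $J=[b,a-1]$ to obtain $(\C^\dagger)^\dagger_{a,b}=\C_{a,b}$ for all $b\le a<b+n$---is exactly what the paper does, and this handles every entry in the main strip in one line.

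Your treatment of the boundary, however, has two real problems. First, invoking Theorem~\ref{thm: periodicity} is both unnecessary and risks circularity: periodicity is proved via Theorem~\ref{thm: juggleqp}, whose proof as written cites Theorem~\ref{thm: juggleduality}, which in turn rests on this lemma. The paper avoids this entirely. For $a>b+n$ (and for $a=b+n$ when $b$ is not a coloop of $\pi$), both $(\C^\dagger)^\dagger_{a,b}$ and $\C_{a,b}$ vanish directly by the ``otherwise'' clauses in the definitions of the dual and of a $\pi$-prefrieze---no periodicity is needed.

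Second, you have identified the wrong exceptional case. The loop clause that fires when computing $(\C^\dagger)^\dagger$ is the one for loops of $\pi^\dagger$, and these are the \emph{coloops} of $\pi$ (i.e.\ $\pi(a)=a+n$), not its loops. When $a$ is a loop of $\pi$, both $\C_{a+n,a}$ and $(\C^\dagger)^\dagger_{a+n,a}$ are simply $0$; the special entry $\C^\dagger_{a,a-n}=(-1)^{h-1}$ that you highlight lies outside the width-$n$ strip and never appears in any of the determinants defining $(\C^\dagger)^\dagger$, so the ``sign reconciliation'' you propose between $(-1)^{h-1}$ and $(-1)^{(n-h)-1}$ is not computing anything relevant. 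The genuine nontrivial case is a coloop $a$ of $\pi$: there $\C_{a+n,a}=(-1)^{|S_\pi(a,a+n)|}\neq 0$, and the paper verifies agreement with the loop-clause value of $(\C^\dagger)^\dagger_{a+n,a}$ by the explicit identification $S_{\pi^\dagger}(a,a+n)=L_a\setminus\{a\}$, of cardinality one less than the number of balls in $\pi$.
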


\begin{proof}
If $a\geq b > a-n$, then $(\C^\dagger)^\dagger_{a,b} :=\det(\C^\dagger_{[a+1,b],[a,b-1]})$, which equals  $\C_{a,b}$ by 
Lemma \ref{lemma: minordual}.

Consider $a$ with $\pi(a)=a+n$ (a loop of $\pi$), so that $\pi^\dagger(a)=a$ (a coloop of $\pi^\dagger$). Then 
\[ S_{\pi^\dagger}(a,a+n) = \{ i \in \mathbb{Z} \mid a < i \text{ and } \pi^\dagger(i) <a+n\} = \{ i \in \mathbb{Z} \mid a < i \text{ and } \pi^{-1}(i) < a \} 
=L_a\smallsetminus \{a\}
\]
In particular, the cardinality is one less than the number of balls in $\pi$, and so 
\[ (\C^\dagger)^\dagger_{a+n,a} := (-1)^{\text{(\# of balls in $\pi$)} -1} = (-1)^{|S_{\pi^\dagger}(a,a+n)|} =: \C_{a+n,a} \]
All other entries in $(\C^\dagger)^\dagger$ and $\C$ are zero by definition, and so $(\C^\dagger)^\dagger=\C$.
\end{proof}

\begin{thmA}
\label{thm: juggleduality}
If $\C$ is a $\pi$-frieze, then the dual $\C^\dagger$ is a $\pi^\dagger$-frieze, and $(\C^\dagger)^\dagger=\C$.
\end{thmA}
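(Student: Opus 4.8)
The plan is to obtain Theorem~\ref{thm: juggleduality} as a short formal consequence of Lemma~\ref{lemma: juggleduality} and Lemma~\ref{lemma: doubledual}, invoking the first of these twice: once for $\pi$ and once for $\pi^\dagger$. The only preliminary fact I would record is that duality of juggling functions is an involution, $(\pi^\dagger)^\dagger=\pi$. This is immediate from the definition $\pi^\dagger(a):=\pi^{-1}(a)+n$: if $\pi^\dagger(b)=a$ then $\pi^{-1}(b)+n=a$, so $b=\pi(a)-n$, hence $(\pi^\dagger)^{-1}(a)=\pi(a)-n$ and therefore $(\pi^\dagger)^\dagger(a)=(\pi^\dagger)^{-1}(a)+n=\pi(a)$. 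I would also note in passing that the period $n$ is unchanged, so there is no ambiguity in applying the constructions to $\pi^\dagger$.

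Next I would argue as follows. Let $\C$ be a $\pi$-frieze; in particular it is a $\pi$-prefrieze, so by Lemma~\ref{lemma: juggleduality} the dual $\C^\dagger$ is a $\pi^\dagger$-prefrieze. By Lemma~\ref{lemma: doubledual} we have $(\C^\dagger)^\dagger=\C$, which is again a $\pi$-prefrieze, and by the preliminary observation $\pi=(\pi^\dagger)^\dagger$. Thus $\C^\dagger$ is a $\pi^\dagger$-prefrieze whose dual is a $(\pi^\dagger)^\dagger$-prefrieze — precisely the hypothesis of the reverse implication of Lemma~\ref{lemma: juggleduality} with $\pi^\dagger$ in place of $\pi$. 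Hence $\C^\dagger$ is a $\pi^\dagger$-frieze. Together with the identity $(\C^\dagger)^\dagger=\C$ already supplied by Lemma~\ref{lemma: doubledual}, this is exactly the assertion of the theorem.

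The one point requiring care is the order of the steps: Lemma~\ref{lemma: doubledual} needs $\C$ to be a $\pi$-frieze in order to conclude $(\C^\dagger)^\dagger=\C$, and that identity must be available before the second application of Lemma~\ref{lemma: juggleduality} — but there is no circularity, since that second application only uses that $(\C^\dagger)^\dagger$ is a $\pi$-prefrieze, not that it is a $\pi$-frieze. Beyond this bookkeeping I do not expect any genuine obstacle: all of the substantive work (the cofactor-expansion argument of Lemma~\ref{lemma: cofrieze}, the adjugate identity of Lemma~\ref{lemma: minordual}, the translation in Lemma~\ref{lemma: juggleduality}, and the loop case in Lemma~\ref{lemma: doubledual}) is already in hand, so the proof of the theorem itself is purely formal.
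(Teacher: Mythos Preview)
Your proposal is correct and follows essentially the same approach as the paper's own proof: apply Lemma~\ref{lemma: juggleduality} to get that $\C^\dagger$ is a $\pi^\dagger$-prefrieze, invoke Lemma~\ref{lemma: doubledual} to get $(\C^\dagger)^\dagger=\C$, then apply Lemma~\ref{lemma: juggleduality} again (with $\pi^\dagger$ in place of $\pi$) to conclude $\C^\dagger$ is a $\pi^\dagger$-frieze. Your explicit verification that $(\pi^\dagger)^\dagger=\pi$ and your remark on the non-circularity of the argument are welcome clarifications, but the logical structure is identical to the paper's.
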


\begin{proof}
If $\C$ is a $\pi$-frieze, then $\C^\dagger$ is a $\pi^\dagger$-prefrieze by Lemma \ref{lemma: juggleduality}. Taking the dual again, $(\C^\dagger)^\dagger=\C$ by Lemma \ref{lemma: doubledual}.
Then $(\C^\dagger)^\dagger$ is a $(\pi^\dagger)^\dagger=\pi$-prefrieze, so Lemma \ref{lemma: juggleduality} implies that $\C^\dagger$ is a $\pi^\dagger$-frieze.
\end{proof}

\subsection{Superperiodicity of solutions (Theorems \ref{thm: juggleqp}, \ref{thm: periodicity}, and \ref{thm: qpsols})}

\def\sol{\mathrm{Sol}}

This theorem follows from a number of results about linear recurrences proven in \cite{MulRes1}. The following lemma translates these results into the context and language of this paper.

\begin{lemma}
\label{lemma: linrec}
Let $\C$ be a $\pi$-prefrieze. 
\begin{enumerate}
    \item
    \label{lemma: linrec1}
    If $\C'$ is another $\pi$-prefrieze with $\ker(\C)=\ker(\C')$, then $\C=\C'$.
    \item There is a unique matrix $\mathbb{Z}\times\mathbb{Z}$-matrix $\sol(\C)$, called the \emph{solution matrix} of $\C$, such that 
    \begin{enumerate}
        \item $\C\sol(\C)=0$,
        \item $\sol(\C)_{a,a}=1$ whenever $\pi(a)\neq a$, and
        \item $\sol(\C)_{a,b}=0$ whenever $\pi(a)=b=a$ or $a<b<\pi(a)$.
    \end{enumerate}
    \item $\sol(\C)$ is also the unique $\mathbb{Z}\times \mathbb{Z}$-matrix such that
    \begin{enumerate}
        \item $\sol(\C)\C=0$,
        \item $\sol(\C)_{a,a}=1$ whenever $\pi(a)\neq a$, and
        \item $\sol(\C)_{a,b}=0$ whenever $\pi(b)=a=b$ or $\pi^{-1}(b)<a<b$.
    \end{enumerate}
    \item The columns of $\sol(\C)$ span the space of solutions to $\C x=0$, whose dimension equals the number of balls in $\pi$.
    \item For all $b<a$, $\sol(\C)_{a,b} = (-1)^{a+b} \det(\C_{[a+1,b],[a,b-1]})$. 
\end{enumerate}
\end{lemma}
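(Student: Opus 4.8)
The plan is to prove all five parts by transporting the results of \cite{MulRes1} on linear recurrences along a dictionary with $\pi$-prefriezes, so that the only real work is setting up that dictionary and matching the various normalizations; there is no new computation.

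The dictionary is the following: a $\pi$-prefrieze $\C$, read as a $\ZZ\times\ZZ$-matrix, is precisely a reduced recurrence matrix with juggling function $\pi$ in the sense of \cite{MulRes1} --- it is lower unitriangular, its support lies in the band $\{(a,b):\pi^{-1}(a)\le b\le a\le\pi(b)\}$, and its entries on the two boundary diagonals of that band ($a=b$ and $a=\pi(b)$) are the prescribed $1$'s and signs. Under this identification $\ker\C$ is the solution space of the recurrence, and the number of balls of $\pi$ is its order. Part~\ref{lemma: linrec1}, the existence of $\sol(\C)$ together with its two dual characterizations (2) and (3), and the dimension count (4), then become literal restatements of results in \cite{MulRes1}. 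The one thing to check is that the band conditions imposed here on $\sol(\C)$ agree with those of \emph{loc.\ cit.}: the requirement ``$\sol(\C)_{a,a}=1$ for $\pi(a)\ne a$, and $\sol(\C)_{a,b}=0$ for $\pi(a)=b=a$ or $a<b<\pi(a)$'' says exactly that, on the solution side, $\{a\le b\le\pi(a)\}$ is the forbidden support, with the same loop exception that appears in the definition of a $\pi$-prefrieze --- which is immediate once the conventions are lined up.

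For part (5), the formula $\sol(\C)_{a,b}=(-1)^{a+b}\det(\C_{[a+1,b],[a,b-1]})$ for $b<a$ is the Cramer-type description of the strictly lower-triangular part of the fundamental solution matrix, likewise proved in \cite{MulRes1}, and I would transport it directly. It is worth flagging what it does and does not say: $\sol(\C)$ is \emph{not} the adjugate $\mathrm{Adj}(\C)$ of the proof of Lemma~\ref{lemma: minordual} --- the adjugate is lower triangular and is the formal (two-sided) inverse of $\C$, whereas $\sol(\C)$ satisfies $\C\,\sol(\C)=0$ and typically has nonzero entries far above the diagonal, where the solutions of the recurrence wrap around --- but the two matrices do coincide on and strictly below the diagonal away from loops, and that coincidence is precisely (5). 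If a self-contained argument is wanted, I would use the uniqueness in (2): the $b$-th column of $\sol(\C)$ is the unique sequence annihilated by $\C$ carrying the prescribed band of zeros and a $1$ in position $b$, and solving the relevant finite subsystem of $\C x=0$ by Cramer's rule returns $(-1)^{a+b}\det(\C_{[a+1,b],[a,b-1]})$ in position $a$; the minor identities of Lemma~\ref{lemma: minordual} are exactly what relate this to the stated adjugate expression.

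The step I expect to be the genuine obstacle is entirely clerical: pinning down the interval conventions in the adjugate/Cramer formula, tracking the signs $(-1)^{a+b}$ consistently between the $\C$-side and the dual side (the paper's standing warning about sign-checking is in full force here), and dispatching the loop and coloop special cases, so that the normalizations of $\sol(\C)$ match those of \cite{MulRes1} on the nose. Once that bookkeeping is done, each of (1)--(5) is a one-line citation.
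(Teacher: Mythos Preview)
Your approach is essentially the paper's own: the lemma is proved by identifying a $\pi$-prefrieze with a reduced recurrence matrix in the sense of \cite{MulRes1} and then citing the relevant results there for each of (1)--(5). Two small clerical points worth aligning: the paper records the dictionary as ``reduced recurrence matrix of shape $\pi^{-1}$'' (not $\pi$), and for (4) it notes that \cite{MulRes1} only gives the solution space as the \emph{closure} of the column span, which is harmless here because finite-dimensionality makes the span already closed.
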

\noindent Combining (4) with the definition of $\C^\dagger$, we see that $\sol(\C)_{a,b} = (-1)^{a+b} \C^\dagger_{a,b}$ when $b\leq a<\pi(b)$.

\begin{proof}
A $\pi$-prefrieze is a \emph{reduced recurrence matrix of shape $\pi^{-1}$} in the language of \cite{MulRes1}. 
\begin{enumerate}
    \item is \cite[Theorem 6.5]{MulRes1}; specifically, the uniqueness of the reduced recurrence matrix in a given equivalence class.
    \item is \cite[Theorem 8.3]{MulRes1}.
    \item is \cite[Theorem 10.5]{MulRes1}, since $\pi^{-1}$ is bijective.
    \item is \cite[Theorems 8.12 and 7.14]{MulRes1}. Note that the solution space was shown to be the closure of the span of the columns of $\sol(\C)$ in the product topology. However, since $\pi$ has finitely many balls, the solution space is finite dimensional and therefore closed.
    \item is \cite[Proposition 8.6]{MulRes1}.
    \qedhere
\end{enumerate}
%
%
\end{proof}

Before the proof of Theorem \ref{thm: juggleqp}, the following formula will ensure the signs work out.


\begin{lemma}\label{lemma: balls}
Let $\pi$ be a juggling function of period $n$. Then, for all $a$,
\[  \pi (a) -a + |S_{\pi^\dagger}(\pi(a),a+n)| - |S_\pi(a,\pi(a))| = \text{(\# of balls in $\pi$)} \]
\end{lemma}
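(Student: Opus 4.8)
The plan is to prove Lemma~\ref{lemma: balls} by a direct counting argument, interpreting every term as a cardinality and then checking that the signed sum of these cardinalities telescopes to the number of balls. Write $k$ for the number of balls in $\pi$. Recall $\pi^\dagger(i) = \pi^{-1}(i)+n$, so that $\pi^{\dagger-1}(j)=\pi(j-n)=\pi(j)-n$. The key observation is that each of the three ``moving'' quantities in the formula can be rewritten as the size of a set of integers in the window $(a,a+n]$ (or a closely related window), after which the identity becomes a statement about a partition of that window into four pieces.

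First I would unwind the definitions. By definition $S_\pi(a,\pi(a)) = \{\, i : a < i,\ \pi(i) < \pi(a)\,\}$; since $\pi$ is a bijection with $\pi(i+n)=\pi(i)+n$, the constraint $\pi(i)<\pi(a)$ together with $i>a$ forces $a<i<a+n$, so $S_\pi(a,\pi(a)) \subseteq (a,a+n)$. Next, using $\pi^\dagger(i)=\pi^{-1}(i)+n$, one computes
\[
S_{\pi^\dagger}(\pi(a),a+n) = \{\, i : \pi(a) < i,\ \pi^{-1}(i) < a\,\},
\]
and again $\pi^{-1}(i)<a$ with $i>\pi(a)$ pins $i$ to the window $(\pi(a),\pi(a)+n)$; reindexing $i \mapsto \pi(i)$ (a bijection) turns this into $\{\, i : a < i,\ \pi(i) > \pi(a) \text{ or } \pi(i) \le \pi(a)-n \dots \,\}$ — this is the step where I expect to have to be careful, so the cleaner route is to keep it as a subset of $(\pi(a),\pi(a)+n)$ and count directly. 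Finally $\pi(a)-a$ counts the integers in the half-open interval $(a,\pi(a)]$, which has exactly $\pi(a)-a$ elements.

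The main step is then to assemble a bijective or inclusion–exclusion argument on the window $W := (a, a+n]$, which has exactly $n$ elements. Partition $W$ according to the behavior of $\pi$: for $i\in W$, either $\pi(i) \le a+n$ (the ball thrown at time $i$ lands at or before $a+n$) or $\pi(i) > a+n$. Within the first class, further split by whether $\pi(i) \le \pi(a)$ or $\pi(i) > \pi(a)$; similarly track whether $i \le \pi(a)$ or $i > \pi(a)$. The count $|L_{a}|=k$ from the landing-schedule facts (item (1) before Definition~\ref{defn: piunimodular}), which says the number of balls in the air just before any time equals $k$, is the external fact that ties this to $k$: the set of $i\in W$ with $\pi(i) > a+n$ has size $k$ up to a boundary correction, and the remaining $n-k$ indices ``land inside'' $W$. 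Expressing $\pi(a)-a$, $|S_{\pi^\dagger}(\pi(a),a+n)|$, and $|S_\pi(a,\pi(a))|$ each as a count of indices in prescribed sub-boxes of $W$ and verifying that
\[
(\pi(a)-a) + |S_{\pi^\dagger}(\pi(a),a+n)| - |S_\pi(a,\pi(a))| = |L_a| = k
\]
is then a matter of checking that the over- and under-counted regions cancel exactly.

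The hard part will be bookkeeping the boundary cases — in particular when $a$ is a loop ($\pi(a)=a$, so the left-hand side should read $0 + |S_{\pi^\dagger}(a,a+n)| - 0$, and one must recover $k$ from $|S_{\pi^\dagger}(a,a+n)| = |L_a \setminus \{a\}| = k$, which is precisely the computation already carried out in the proof of Lemma~\ref{lemma: doubledual}) or a coloop ($\pi(a)=a+n$, forcing $S_\pi(a,\pi(a))$ and $S_{\pi^\dagger}(\pi(a),a+n)=S_{\pi^\dagger}(a+n,a+n)$ to be empty, so the identity reduces to $n + 0 - 0 = k$ only if $k=n$, which indeed holds since every time is a coloop when one time is and $\pi$ has full support). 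An alternative, perhaps cleaner, approach that sidesteps some of this is to prove the identity first for a single transposition-like elementary juggling move and then propagate, using that both sides are ``local'' in $\pi$; but I expect the direct window-counting argument above to be the most transparent, with the loop/coloop checks isolated as short special cases at the end.
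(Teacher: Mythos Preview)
Your direct-counting strategy is viable and is genuinely different from the paper's argument. The paper does not count directly: it first shows the expression is independent of $a$ by comparing $a$ with $a+1$ (splitting into cases $\pi(a)<\pi(a+1)$ and $\pi(a)>\pi(a+1)$), and then averages over $a\in[n]$, using that the average of $\pi(a)-a$ is the ball count and that the averaged $|S_\pi|$ and $|S_{\pi^\dagger}|$ terms both count orbits of inversions and hence cancel. Your idea of invoking $|L_{a+1}|=k$ instead is more direct and, once set up correctly, avoids both the induction on $a$ and any boundary cases.

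However, your sketch does not pin down the partition, and the window $(a,a+n]$ is not the cleanest choice. After your reindexing $i\mapsto j=\pi^{-1}(i)$, which gives $|S_{\pi^\dagger}(\pi(a),a+n)|=|\{j<a:\pi(j)>\pi(a)\}|$, the right set to partition is $T:=\pi^{-1}\bigl((a,\pi(a)]\bigr)$, which has exactly $\pi(a)-a$ elements. Split $T$ according to whether $j\le a$ or $j>a$. The piece with $j>a$ is precisely $S_\pi(a,\pi(a))$. The piece with $j\le a$ sits inside $U:=\{j\le a:\pi(j)>a\}$, and $\pi(U)=L_{a+1}$, so $|U|=k$; its complement in $U$ is $\{j\le a:\pi(j)>\pi(a)\}=\{j<a:\pi(j)>\pi(a)\}$, which has size $|S_{\pi^\dagger}(\pi(a),a+n)|$. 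Hence $\pi(a)-a=(k-|S_{\pi^\dagger}(\pi(a),a+n)|)+|S_\pi(a,\pi(a))|$, which is the lemma. This works uniformly; no loop/coloop cases are needed.

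Your coloop check is wrong and should be dropped. When $\pi(a)=a+n$, the set $S_\pi(a,a+n)$ is not empty: it consists of those $i\in(a,a+n)$ with $\pi(i)<a+n$, and there are exactly $n-k$ of them (the complement in $(a,a+n)$ is $\pi^{-1}(L_{a+n+1})\smallsetminus\{a\}$, of size $k-1$). So the left-hand side is $n+0-(n-k)=k$, as it must be. The assertion that one coloop forces every index to be a coloop is also false (e.g.\ siteswap $312$ in period $3$ has a coloop at $1$ only).
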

\begin{proof}
First, we show that the value of the expression 
\begin{equation}\label{eq: conserved}
 \pi (a) - a + |S_{\pi^\dagger}(\pi(a),a+n)| - |S_\pi(a,\pi(a))| 
\end{equation}
is independent of $a$.

Consider the case that $\pi(a)<\pi(a+1)$. Then $S_\pi(a,\pi(a)) \subseteq S_\pi(a+1,\pi(a+1)) $ and
\begin{align*}
S_\pi(a+1,\pi(a+1)) \smallsetminus S_\pi(a,\pi(a)) 
&= \{ i\in \mathbb{Z} \mid a+1<i \text{ and } \pi(a) < \pi(i) < \pi(a+1) \} 
\end{align*}
Similarly, $S_{\pi^\dagger}(\pi(a),a+n)) \supseteq S_{\pi^\dagger}(\pi(a+1), a+n+1 )) $ and
\begin{align*}
S_{\pi^\dagger}(\pi(a),a+n)) \smallsetminus S_{\pi^\dagger}(\pi(a+1), a+n+1 )) 
&= \{ i\in \mathbb{Z} \mid \pi(a)<i < \pi(a+1) \text{ and } \pi^\dagger(i) < a+n  \} 
\end{align*}
Acting on this set by $\pi^{-1}$ gives
\begin{align*}
\pi^{-1}(S_{\pi^\dagger}(\pi(a),a+n)) &\smallsetminus S_{\pi^\dagger}(\pi(a+1), a+n+1 )) ) \\
&= \{ i\in \mathbb{Z} \mid \pi(a)<\pi(i) < \pi(a+1) \text{ and } i+n < a+n  \} 
\end{align*}
Comparing sets, we see that the interval
$(\pi(a),\pi(a+1) ) $ decomposes as a disjoint union
\[ \left( S_\pi(a+1,\pi(a+1)) \smallsetminus S_\pi(a,\pi(a)) \right) \sqcup \left( \pi^{-1}(S_{\pi^\dagger}(\pi(a),a+n)) \smallsetminus S_{\pi^\dagger}(\pi(a+1), a+n+1 ))) \right) \]
Counting elements on either side, $\pi(a+1) - \pi(a) -1$ equals
\begin{align*}
|S_\pi(a+1,\pi(a+1))| - |S_\pi(a,\pi(a)) | + |S_{\pi^\dagger}(\pi(a),a+n)| - |S_{\pi^\dagger}(\pi(a+1), a+n+1 ) |
\end{align*}
Therefore, the value of Expression \eqref{eq: conserved} is the same for $a$ and $a+1$. An analogous argument applies when $\pi(a) > \pi(a+1)$, and so \eqref{eq: conserved} is constant for all $a$.

Let $h$ denote the value of \eqref{eq: conserved}. Averaging \eqref{eq: conserved} over $a$ from $1$ to $n$,
\begin{align*}
h 
&= \frac{1}{n}\sum_{a=1}^n \pi (a) - a + |S_{\pi^\dagger}(\pi(a),a+n)| - |S_\pi(a,\pi(a))| \\
&= \text{(\# of balls in $\pi$)} + \frac{1}{n} \left( \sum_{a=1}^n  |S_{\pi^\dagger}(\pi(a),a+n)| - \sum_{a=1}^n |S_\pi(a,\pi(a))| \right)
\end{align*}
Observe that $\sum_{a=1}^n |S_\pi(a,\pi(a))|$ counts orbits of \emph{inversions} in $\pi$; that is, pairs $(a,b)$ with $a<b$ and $\pi(a)>\pi(b)$, up to the relation $(a,b) \sim (a+n,b+n)$. Similarly, $\sum_{a=1}^n  |S_{\pi^\dagger}(\pi(a),a+n)| $ counts orbits of {inversions} in $\pi^\dagger$. To see the two sums cancel, observe that $(a,b)$ is an inversion in $\pi$ iff $(\pi(b),\pi(a))$ is an inversion in $\pi^\dagger$.
It follows that $h$ equals the number of balls in $\pi$.
\end{proof}


\begin{thmA}
\label{thm: juggleqp}
Let $\pi$ be an $n$-periodic juggling function with $h$-many balls.
A $\pi$-prefrieze $\C$ is a $\pi$-frieze iff every solution to the linear recurrence $\C \mathsf{x=0}$ 
satisfies $x_{a+n} = (-1)^{n-h-1}x_a$, $\forall a\in \mathbb{Z}$.
\end{thmA}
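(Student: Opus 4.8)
The plan is to deduce Theorem~\ref{thm: juggleqp} from the duality result (Theorem~\ref{thm: juggleduality}) together with the structural properties of the solution matrix collected in Lemma~\ref{lemma: linrec}, using Lemma~\ref{lemma: balls} to fix the sign. The key observation is that the shift operator $S$ on $\mathbb{Z}$-indexed sequences, $S(\mathsf{x})_a := x_{a+n}$, is itself (up to the global sign $(-1)^{n-h-1}$) realized by $\sol(\C)$ through the $n$-periodicity inherent in $\pi$. So I would reformulate the superperiodicity condition ``$x_{a+n} = (-1)^{n-h-1}x_a$ for all solutions $\mathsf{x}$'' as a single matrix identity and then check it precisely in two steps: first showing it holds automatically for the distinguished solutions coming from the columns of $\sol(\C)=\C^\dagger$ when $\C$ is a $\pi$-frieze, and conversely that if every solution is superperiodic then $\C^\dagger$ is forced to be a $\pi^\dagger$-prefrieze, so Lemma~\ref{lemma: juggleduality} applies.

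Concretely, for the ($\Rightarrow$) direction I would assume $\C$ is a $\pi$-frieze. By Theorem~\ref{thm: juggleduality}, $\C^\dagger$ is a $\pi^\dagger$-frieze; by Lemma~\ref{lemma: linrec}(5) and the remark following it, $\sol(\C)_{a,b} = (-1)^{a+b}\C^\dagger_{a,b}$ in the range $b \le a < \pi(b)$, and $\sol(\C)_{a,b}=0$ for $a<b<\pi(a)$ or on loops. The solutions span is exactly the column span of $\sol(\C)$ (Lemma~\ref{lemma: linrec}(4)), so it suffices to verify $x_{a+n} = (-1)^{n-h-1} x_a$ for each column $\mathsf{x} = \sol(\C)_{\bullet,b}$. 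Now the entry $\sol(\C)_{a+n,b}$ is a determinant of a submatrix of $\C$ shifted by $n$ in the row index only; using $n$-periodicity of $\C$ (which I would either invoke via Theorem~\ref{thm: periodicity} if available here, or — more carefully, to avoid circularity — derive directly from the $n$-periodicity of $\pi$ and the defining formulas for a $\pi$-prefrieze together with the uniqueness in Lemma~\ref{lemma: linrec}(1)) one rewrites this as $\C_{a,b-n}$-type data, i.e.~$\sol(\C)_{a+n,b} = \pm\,\sol(\C)_{a,b-n}$, and then as $\pm\,\sol(\C)_{a,b}$ using that $\sol(\C)$ is itself ``$n$-quasiperiodic'' with the scalar we are trying to identify. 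The sign bookkeeping is where Lemma~\ref{lemma: balls} enters: the exponent accumulated in translating the determinant is exactly $\pi(a)-a + |S_{\pi^\dagger}(\pi(a),a+n)| - |S_\pi(a,\pi(a))|$, which by the lemma equals $h$, and combined with the $(-1)^{a+b}$ factors from Lemma~\ref{lemma: linrec}(5) this collapses to $(-1)^{n-h-1}$.

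For the ($\Leftarrow$) direction I would run the argument in reverse. Assume every solution of $\C\mathsf{x}=0$ satisfies $x_{a+n}=(-1)^{n-h-1}x_a$. The solution space is spanned by the columns of $\sol(\C)$, so this is equivalent to the matrix identity $S \circ \sol(\C) = (-1)^{n-h-1}\,\sol(\C)$ after the appropriate index shift. Feeding in Lemma~\ref{lemma: linrec}(5), this pins down the values of all the ``off-support'' entries of $\C^\dagger$: specifically it forces $\C^\dagger_{a,b}$ (equivalently, the determinants $\det(\C_{[a+1,b],[a,b-1]})$) to vanish outside the band $b \le a < \pi^\dagger(b)$ — wait, I should be careful: what it forces is that $\C^\dagger$ satisfies the entrywise constraints of Lemma~\ref{lemma: prefrieze} for $\pi^\dagger$, namely the correct $\pm 1$ on the $a=\pi^\dagger(b)$ diagonal (with the sign $(-1)^{|S_{\pi^\dagger}(b,a)|}$, matched again via Lemma~\ref{lemma: balls}) and $0$ past it. That is exactly the statement that $\C^\dagger$ is a $\pi^\dagger$-prefrieze, so Lemma~\ref{lemma: juggleduality} yields that $\C$ is a $\pi$-frieze.

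The main obstacle I anticipate is neither direction's logical skeleton — which is short once the dictionary $\sol(\C) \leftrightarrow \C^\dagger$ from Lemma~\ref{lemma: linrec} is in hand — but the sign and indexing bookkeeping in relating $\sol(\C)_{a+n,b}$ to $\sol(\C)_{a,b}$, and the subtle point of whether one is allowed to invoke $n$-periodicity of $\C$ (Theorem~\ref{thm: periodicity}) without circularity. To be safe I would prove the needed periodicity of $\C$ as a $\pi$-prefrieze directly (it does not use the frieze/tameness conditions, only that $\pi(i+n)=\pi(i)+n$, the defining formula for a $\pi$-prefrieze in the on-support positions, and uniqueness from Lemma~\ref{lemma: linrec}(1)), thereby keeping the argument self-contained; Theorem~\ref{thm: periodicity} then drops out as the stated corollary of Theorem~\ref{thm: juggleqp} rather than an input to it. The verification that the accumulated sign is precisely $(-1)^{n-h-1}$ — reconciling the $|S_\pi|$, $|S_{\pi^\dagger}|$, and $\pi(a)-a$ terms via Lemma~\ref{lemma: balls} — is the one genuinely fiddly computation, and I would isolate it as a short sublemma rather than weaving it inline.
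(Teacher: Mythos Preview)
Your overall strategy---reduce via Lemma~\ref{lemma: juggleduality} to ``$\C^\dagger$ is a $\pi^\dagger$-prefrieze,'' using the dictionary $\sol(\C)\leftrightarrow\C^\dagger$---matches the paper, and your $(\Leftarrow)$ direction is essentially correct. The gap is in $(\Rightarrow)$: you compute $\sol(\C)_{a+n,b}$ by invoking $n$-periodicity of $\C$, but that is Theorem~\ref{thm: periodicity}, which the paper derives \emph{from} Theorem~\ref{thm: juggleqp}. Your proposed workaround (prove periodicity for $\pi$-prefriezes directly via Lemma~\ref{lemma: linrec}(1)) fails because a general $\pi$-prefrieze is not $n$-periodic---the entries at intersection points are unconstrained---and applying Lemma~\ref{lemma: linrec}(1) to $\C$ and its $n$-shift requires $\ker(\C)$ to be shift-invariant, which is exactly the superperiodicity you are proving. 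Even granting periodicity, your step $\sol(\C)_{a,b-n}\to\pm\sol(\C)_{a,b}$ again assumes the conclusion.

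The paper avoids this entirely: rather than computing $\sol(\C)_{a+n,b}$ entry by entry, it observes that the row-shifted matrix $\big((-1)^{n-h-1}\sol(\C)_{a+n,b}\big)_{a,b}$ still annihilates $\C$ on the right (since $\sol(\C)\C=0$), so by the \emph{uniqueness} characterization in Lemma~\ref{lemma: linrec}(3) it equals $\sol(\C)$ iff it matches $\sol(\C)$ on the handful of normalizing entries listed there. This reduces the full matrix identity to three local conditions (the paper's Lemma~\ref{lemma: sol}), which are then checked---with Lemma~\ref{lemma: balls} supplying the sign, as you anticipated---to be exactly the $\pi^\dagger$-prefrieze constraints on $\C^\dagger$ from Lemma~\ref{lemma: prefrieze}. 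The argument is symmetric in both directions and never uses periodicity of $\C$.
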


\begin{proof}
For simplicity, let $k:=\text{(\# of balls in $\pi^\dagger$)} = n-\text{(\# of balls in $\pi$)}$.
Let $\Sigma$ denote the permutation matrix with $\Sigma_{a,a+1}=1$ and all other entries $0$. Then an infinite sequence $x$ (regarded as a vector of height $\mathbb{Z}$) satisfies 
\begin{equation}\label{eq: qp}
x_{a+n} = (-1)^{k-1}x_a
\end{equation}
for all $a$ iff $\Sigma^n x = (-1)^{k-1}x$. 

Let $\C$ be a $\pi$-prefrieze and let $\sol(\C)$ be the solution matrix of $\C$. By Lemma \ref{lemma: linrec}.4, the columns of $\sol(\C)$ span of the solutions to $\C x=0$, and so every solution to $\C x=0$ satisfies Equation \eqref{eq: qp} iff every column of $\sol(\C)$ does. 
Therefore, by Theorem \ref{thm: juggleduality}, to prove Theorem \ref{thm: juggleqp} it will suffice to show that $\C^\dagger$ is a $\pi^\dagger$-prefrieze iff $\Sigma^n \sol(\C) = (-1)^{k-1} \sol(\C)$, which we will show via a lemma.

\begin{lemma}\label{lemma: sol}
$\Sigma^n \sol(\C) = (-1)^{k-1} \sol(\C)$ iff the following conditions hold.
\begin{enumerate}
    \item $\sol(\C)_{a+n,b}=0$ whenever $a<b<\pi(a)$,
    \item $\sol(\C)_{a+n,b}=0$ whenever $\pi^{-1}(b)<a<b$, and
    \item $\sol(\C)_{a+n,a}=(-1)^{k-1}$ whenever $\pi(a)\neq a$.
\end{enumerate}
\end{lemma}

\begin{proof}
Observe that, by Lemma \ref{lemma: linrec}.3,\footnote{Associating products of infinite matrices is not always possible. It is allowed here because $(-1)^{k-1}\Sigma^n$ is a generalized permutation matrix.}
\[ \left( (-1)^{k-1}\Sigma^n\sol(\C)\right) \C = (-1)^{k-1} \Sigma^n \left( \sol(\C) \C \right) = 0 \]
Therefore, $ (-1)^{k-1}\Sigma^n\sol(\C)$ satisfies Condition (a) in Lemma \ref{lemma: linrec}.3. If $\pi(b)=b$, then the $b$th column of $\sol(\C)$ consists entirely of zeros, by \cite[Lemma 8.9]{MulRes1} (this also follows from $\sol(\C)\C=0$), and so $\sol(\C)_{a+n,b}=0$ whenever $\pi(b)=a=b$.

Using that $\left( (-1)^{k-1}\Sigma^n\sol(\C)\right)_{a,b} = (-1)^{k-1} \sol(\C)_{a+n,b}$, the remaining conditions of Lemma \ref{lemma: linrec}.3 imply that 
$\left( (-1)^{k-1}\Sigma^n\sol(\C)\right)=\sol(\C)$ if and only if Conditions (2) and (3) above hold. Once we know $\left( (-1)^{k-1}\Sigma^n\sol(\C)\right)=\sol(\C)$, then Condition (1) follows from Lemma \ref{lemma: linrec}.2. 
\end{proof}

Consider $b<a+n<b+n$. Then 
\begin{align*}
\sol(\C)_{a+n,b} 
&= (-1)^{a+b+n} \det(\C_{[a+n+1,b],[a+n,b-1]}) 
= (-1)^{a+b+n} \C^\dagger_{a+n,b}
\end{align*}
Then Conditions (1) and (2) in Lemma \ref{lemma: sol} may be rewritten as
\begin{enumerate}
    \item $\C^\dagger_{a,b}=0$ whenever $a<b+n<\pi^{\dagger-1}(a)$, and
    \item $\C^\dagger_{a,b}=0$ whenever $\pi^{\dagger}(b)<a<b+n$.
\end{enumerate}
These equalities coincide with the vanishing part of the $\pi^\dagger$-prefrieze condition for $\C^\dagger$ (the second case of Equation \eqref{eq: prefrieze}).

For $a$ with $\pi(a)\neq a$, consider the $(a+n,a)$th entry in the equation $\sol(\C)\C=0$, which is
\[\sum_{b\in \mathbb{Z}} \sol(\C)_{a+n,b}\C_{b,a} =0 \]
Since $\C$ is a $\pi$-prefrieze, $\C_{b,a}=0$ whenever $b\not\in [a,\pi(a)]$. 
Assuming Condition (1) in Lemma \ref{lemma: sol}, the above sum reduces to
\[ \sol(\C)_{a+n,a} + (-1)^{a+\pi(a)+n}\C^\dagger_{a+n,\pi(a)}\C_{\pi(a),a}=0 \]
By the $\pi$-prefrieze condition, $\C_{\pi(a),a}=(-1)^{|S_\pi(a,\pi(a))|}$ and so
\[ \sol(\C)_{a+n,a} = (-1)^{a+\pi(a)+n-1 + |S_\pi(a,\pi(a))| } \C^\dagger_{\pi^\dagger(\pi(a)),\pi(a)} \]
Using Lemma \ref{lemma: balls},
\[ \sol(\C)_{a+n,a} = (-1)^{|S_{\pi^\dagger}(\pi(a),a+n)| + k-1} \C^\dagger_{\pi^\dagger(\pi(a)),\pi(a)} \]
Therefore, $ \sol(\C)_{a+n,a}= (-1)^{k-1}$ iff $\C^\dagger_{\pi^\dagger(\pi(a)),\pi(a)} = (-1)^{|S_{\pi^\dagger}(\pi(a),a+n)|}$, and so Condition (3) in Lemma \ref{lemma: sol} is equivalent to the non-vanishing part of the $\pi^\dagger$-prefrieze condition for $\C^\dagger$ (the first case of Equation \eqref{eq: prefrieze}).
Therefore, $\C^\dagger$ is a $\pi^\dagger$-prefrieze iff the conditions in Lemma \ref{lemma: sol} hold; equivalently, if every solution to $\C x = 0$ satisfies Equation \eqref{eq: qp}.
%
%
\end{proof}

\begin{thmA}
\label{thm: periodicity}
If $\pi$ is an $n$-periodic juggling function, then the entries of a $\pi$-frieze $\C$ are $n$-periodic; that is, $\C_{a,b} = \C_{a+n,b+n}$.
\end{thmA}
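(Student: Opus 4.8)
\end{thmA}

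The plan is to derive this from Theorem~\ref{thm: juggleqp} together with the uniqueness clause of Lemma~\ref{lemma: linrec}, which is essentially the remark following Theorem~\ref{thm: juggleqp} spelled out. Let $\C$ be a $\pi$-frieze, let $h$ be the number of balls in $\pi$, and define a new diamond grid $\C'$ by $\C'_{a,b} := \C_{a+n,b+n}$. The goal is to prove $\C' = \C$, which is exactly the assertion $\C_{a,b} = \C_{a+n,b+n}$.

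First I would check that $\C'$ is itself a $\pi$-prefrieze. Since $\pi$ is $n$-periodic we have $\pi(b+n) = \pi(b)+n$, so $a \in [b,\pi(b)]$ iff $a+n \in [b+n,\pi(b+n)]$, and likewise $b \in [\pi^{-1}(a),a]$ iff $b+n \in [\pi^{-1}(a+n),a+n]$; moreover $S_\pi(b+n,a+n) = S_\pi(b,a)+n$, hence $|S_\pi(b+n,a+n)| = |S_\pi(b,a)|$. Each of the three defining cases for $\C'_{a,b} = \C_{a+n,b+n}$ thus reduces to the corresponding case of the $\pi$-prefrieze condition for $\C$, which holds by hypothesis.

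Next I would compare the kernels of $\C$ and $\C'$ as linear recurrences. Let $S$ be the index-shift on bi-infinite sequences, $(Sx)_a := x_{a+n}$; it is a bijection, and a direct reindexing gives $(\C' x)_a = (\C(S^{-1}x))_{a+n}$ for every sequence $x$ (this is the only manipulation of infinite matrices required, and it is harmless because $\C$ is banded). Consequently $\C' x = 0$ iff $S^{-1}x \in \ker(\C)$, i.e.\ $\ker(\C') = S(\ker(\C))$. By Theorem~\ref{thm: juggleqp}, every $y \in \ker(\C)$ satisfies $y_{a+n} = (-1)^{n-h-1}y_a$, that is $Sy = (-1)^{n-h-1}y$; since $(-1)^{n-h-1}$ is its own multiplicative inverse we also get $S^{-1}y = (-1)^{n-h-1}y$. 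Both $Sy$ and $S^{-1}y$ are therefore scalar multiples of $y$, hence lie in the subspace $\ker(\C)$, so $S(\ker(\C)) = \ker(\C)$ and thus $\ker(\C') = \ker(\C)$.

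Finally, $\C$ and $\C'$ are $\pi$-prefriezes with the same kernel, so Lemma~\ref{lemma: linrec}, part~\ref{lemma: linrec1}, gives $\C = \C'$, i.e.\ $\C_{a,b} = \C_{a+n,b+n}$. I do not expect a genuine obstacle in this argument; the only places that demand care are the routine reindexing establishing $(\C'x)_a = (\C(S^{-1}x))_{a+n}$ and the elementary sign bookkeeping that turns the superperiodicity relation into the two identities $Sy = S^{-1}y = (-1)^{n-h-1}y$.
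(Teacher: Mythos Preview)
Your proposal is correct and follows essentially the same route as the paper: define the shifted array $\C'$, use Theorem~\ref{thm: juggleqp} (superperiodicity of solutions) to show $\ker(\C')=\ker(\C)$, and conclude $\C=\C'$ via Lemma~\ref{lemma: linrec}, part~\ref{lemma: linrec1}. The paper's argument is slightly terser (it computes $(\C'\mathsf{v})_a$ directly rather than factoring through the shift operator $S$, and it does not spell out the verification that $\C'$ is a $\pi$-prefrieze), but the substance is identical.
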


\begin{proof}
Let $\C$ be a $\pi$-frieze, and let $\C'$ denote the $\pi$-frieze defined by translating the entries by $n$:
\[ (\C')_{a,b} := \C_{a+n,b+n} \]
Let $\mathsf{v}\in \ker(\C)$. By Theorem \ref{thm: juggleqp}, $v_{a+n} = (-1)^{n-h-1}x_a$ for all $a$. 
\[ (\C'\mathsf{v})_a 
= \sum_{b\in \mathbb{Z}} (\C')_{a,b} \mathsf{v}_b 
= \sum_{b\in \mathbb{Z}} \C_{a+n,b+n} \left((-1)^{n-h-1}\mathsf{v}_{b+n} \right)
= (-1)^{n-h-1}(\C\mathsf{v})_{a+n}
\]
Since $\mathsf{v}\in \ker(\C)$, this is always $0$, and so $\mathsf{v}\in \ker(\C')$. A symmetric argument shows that $\ker(\C)=\ker(\C')$. By Lemma \ref{lemma: linrec}.1, $\C=\C'$.
\end{proof}

As a consequence, the solution matrix of a $\pi$-frieze can constructed by `tiling' the dual $\C^\dagger$, as follows.
%
%
Given an $n$-periodic juggling function $\pi$ with $h$-many balls, 
define the \textbf{tiling} of a $\pi$-frieze $\C$ to be the $\mathbb{Z}\times\mathbb{Z}$-matrix $\mathsf{Tiling}(\C)$ defined by
\[ \mathsf{Tiling}(\C)_{a,b} = \sum_{i\in \mathbb{Z}} (-1)^{a+b+i(n-h-1)}\C_{a+in,b}\]
This first modifies the sign of each $\C_{a,b}$ by $(-1)^{a+b}$, and then extends the columns superperiodically.

\begin{rem}
\cite{MGOST14} and \cite{MG15} define tilings of $\mathrm{SL}(k)$-friezes without modifying the signs by $(-1)^{a+b}$ first. This is because the linear recurrence they associate to a frieze is not $\C\mathsf{x=0}$, but instead use an alternating sign convention.
\end{rem}

\begin{prop}
\label{prop: soltiling}
If $\C$ is a juggler's frieze, then $\sol(\C)= \mathrm{Tiling}(\C^\dagger)$.
\end{prop}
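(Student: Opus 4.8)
The plan is to compare $\sol(\C)$ and $\mathsf{Tiling}(\C^\dagger)$ one column at a time, using Theorem~\ref{thm: qpsols} to give both of them a common explicit description. Throughout I would use that, by Lemma~\ref{lemma: juggleduality} and Theorem~\ref{thm: periodicity}, $\C^\dagger$ is an $n$-periodic $\pi^\dagger$-prefrieze: in particular it is lower unitriangular and $\C^\dagger_{a,b}=0$ unless $b\le a\le\pi^\dagger(b)\le b+n$, and $\pi^\dagger(b)<b+n$ when $b$ is not a loop of $\pi$. Since $\pi^\dagger$ has $n-h$ balls, the sign exponent appearing in $\mathsf{Tiling}(\C^\dagger)_{a,b}=\sum_{i\in\ZZ}(-1)^{a+b+i(n-(n-h)-1)}\C^\dagger_{a+in,b}$ is $i(h-1)$, which is congruent mod $2$ to the exponent $i(n-h-1)$ governing the superperiodicity of solutions to $\C\mathsf{x}=0$ (Theorem~\ref{thm: juggleqp}); keeping this parity coincidence straight is the first bookkeeping point.

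Fix a column index $b$ with $\pi(b)\neq b$. For $a$ in the window $b\le a<b+n$, the band structure of $\C^\dagger$ kills every term of the sum for $\mathsf{Tiling}(\C^\dagger)_{a,b}$ except the $i=0$ term, so $\mathsf{Tiling}(\C^\dagger)_{a,b}=(-1)^{a+b}\C^\dagger_{a,b}$ there, and off the window this column satisfies $z_{a+n}=(-1)^{n-h-1}z_a$ by construction. On the other hand, the $b$-th column of $\sol(\C)$ agrees with $(-1)^{a+b}\C^\dagger_{a,b}$ on the same window: for $a=b$ this is the normalization $\sol(\C)_{b,b}=1$, and for $b<a<b+n$ it is Lemma~\ref{lemma: linrec}.5 together with the definition of $\C^\dagger$ and — in the range $a\geq\pi(b)$ — the identification of $\C^\dagger$ with the sign-twisted adjugate of $\C$ made in the proof of Lemma~\ref{lemma: minordual}; moreover it satisfies $x_{a+n}=(-1)^{n-h-1}x_a$ by Theorem~\ref{thm: juggleqp}. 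Both columns are therefore equal to the solution $\mathsf{x}$ assembled in Theorem~\ref{thm: qpsols} from this window, hence equal to each other.

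It remains to treat a loop $b$ of $\pi$, where the $b$-th column of $\sol(\C)$ vanishes identically (by \cite[Lemma~8.9]{MulRes1}). Since column $b$ of $\C$ equals $\mathsf{e}_b$, the minor of $\C$ defining $\C^\dagger_{a,b}$ has a zero column for $b<a<b+n$, so $\C^\dagger_{a,b}=0$ there, while $\C^\dagger_{b,b}=1$ and $\C^\dagger_{b+n,b}=(-1)^{|S_{\pi^\dagger}(b,b+n)|}$ from the $\pi^\dagger$-prefrieze structure. A short counting argument — the one already used in the proof of Lemma~\ref{lemma: doubledual}, showing $S_{\pi^\dagger}(b,b+n)=L_b$ — gives $|S_{\pi^\dagger}(b,b+n)|=|L_b|=h$, and then $\mathsf{Tiling}(\C^\dagger)_{b,b}=1+(-1)^{h-1}(-1)^h=0$; the same band/parity bookkeeping forces the rest of the column to vanish, and the dual correction for coloops is handled identically.

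The hard part is, as the paper repeatedly warns, getting every sign right: (i) the parity match between the superperiodicity exponent $n-h-1$ for $\C$ and the exponent $n-(n-h)-1$ appearing inside $\mathsf{Tiling}(\C^\dagger)$; (ii) checking that $\sol(\C)_{a,b}=(-1)^{a+b}\C^\dagger_{a,b}$ holds on the whole length-$n$ window, not merely on the sub-range $b\le a<\pi(b)$ recorded in the remark after Lemma~\ref{lemma: linrec}; and (iii) the loop and coloop corrections, where the lightweight consequence $|L_b|=\#\{\text{balls of }\pi\}$ of Lemma~\ref{lemma: balls} supplies exactly the parity that makes the two surviving terms cancel. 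An alternative, slightly longer route would bypass Theorem~\ref{thm: qpsols} and instead verify directly that $\mathsf{Tiling}(\C^\dagger)$ satisfies the three conditions characterizing $\sol(\C)$ in Lemma~\ref{lemma: linrec}.2, then invoke the uniqueness asserted there; the sign analysis involved is essentially the same.
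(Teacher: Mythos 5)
Your proposal follows essentially the same route as the paper's proof: show the two matrices agree column-by-column on a window of length $n$ (via Lemma~\ref{lemma: linrec}.5 when $\pi(b)\neq b$, and via a separate two-term cancellation when $b$ is a loop), then conclude equality everywhere because the columns of both sides satisfy the same superperiodicity. Your invocation of Theorem~\ref{thm: qpsols} is harmless rather than circular, since you only use the ``window values plus superperiodic extension'' description of a column and not the downstream assertion that it solves $\C\mathsf{x}=0$. One concrete slip worth fixing: the exponent $i(h-1)$ appearing in $\mathrm{Tiling}(\C^\dagger)$ is \emph{not} congruent to $i(n-h-1)$ mod $2$ in general (for $i=1$ that would force $n$ even). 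The correct mechanism is that replacing $a$ by $a+n$ and reindexing $i\mapsto i-1$ produces a factor $(-1)^{n}\cdot(-1)^{h-1}=(-1)^{n-h-1}$, where the $(-1)^{n}$ comes from the prefactor $(-1)^{a+b}$; only after combining these two contributions does the tiling's superperiodicity match that of the solutions. With that correction the argument is sound, and your loop-case computation $1+(-1)^{h-1}(-1)^{h}=0$ (using the value of $\C^\dagger_{b+n,b}$ forced by the $\pi^\dagger$-prefrieze condition) accomplishes the same cancellation the paper records as $1-1=0$.
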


\begin{proof}
First, assume $b\leq a<b+n$. If $b\neq \pi(b)$, then by Lemma \ref{lemma: linrec}.5,
\[ \sol(\C)_{a,b} = (-1)^{a+b}\C^\dagger_{a,b} = \mathrm{Tiling}(\C^\dagger)_{a,b} \]
If $b=\pi(b)$, then $\sol(\C)_{a,b}=0$. If $a>b$, then
\[ \mathrm{Tiling}(\C^\dagger)_{a,b} = \C^\dagger_{a,b} = 0\]
If $a=b=\pi(b)$,  then 
\[ \mathrm{Tiling}(\C^\dagger)_{a,b} = \C^\dagger_{a,b} - \C^\dagger_{a+n,b} = 1 -1 = 0 \]
In both cases, $\sol(\C)_{a,b} = \mathrm{Tiling}(\C^\dagger)_{a,b}$.

Next, we check that the columns of $\mathrm{Tiling}(\C^\dagger)_{a,b}$ satisfy Equation \eqref{eq: qp}:\footnote{Note that the number of balls in $\pi^\dagger$ is $k:=n-h$, not $h$.}
\begin{align*}
\mathrm{Tiling}(\C^\dagger)_{a+n,b} 
&=  \sum_{i\in \mathbb{Z}} (-1)^{a+n+b+i(n-k-1)}\C^\dagger_{a+n+in,b} \\
&=  \sum_{i\in \mathbb{Z}} (-1)^{a+b+i(n-k-1)-(k+1)}\C^\dagger_{a+in,b}
= (-1)^{k-1}\mathrm{Tiling}(\C^\dagger)_{a,b} 
\end{align*}
Since the columns of $\sol(\C)$ also satisfy this equation, the two matrices coincide everywhere.
\end{proof}

\begin{thmA}
\label{thm: qpsols}
Let $\pi$ be an $n$-periodic jugging function with $h$-many balls, and let $\C$ be a $\pi$-frieze. For any $b$ with $b<\pi(b)$, the sequence $\mathsf{x}$ defined by 
\begin{itemize}
    \item $x_a := (-1)^{a+b} \C^\dagger_{a,b} $ when $b\leq a< b+k+h$, and 
    \item $x_{a+n} = (-1)^{n-h-1}x_a$ for all $a$
\end{itemize}
is a solution to $\C \mathsf{x=0}$. These solutions collectively span the space of solutions to $\C \mathsf{x=0}$.
\end{thmA}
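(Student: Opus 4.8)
The plan is to derive everything from Proposition~\ref{prop: soltiling}, which identifies the solution matrix $\sol(\C)$ with $\mathrm{Tiling}(\C^\dagger)$. By Lemma~\ref{lemma: linrec}, parts (4) and (2), the columns of $\sol(\C)$ span the solution space of $\C\mathsf{x}=0$, so it suffices to recognize the sequences defined in the statement as exactly the columns of $\mathrm{Tiling}(\C^\dagger)$ indexed by those $b$ with $b<\pi(b)$ (equivalently, those $b$ that are not loops, since for a loop $b=\pi(b)$ the $b$th column of $\sol(\C)$ is zero).

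First I would fix $b$ with $b<\pi(b)$ and unwind the definition of $\mathrm{Tiling}(\C^\dagger)$: its $(a,b)$th entry is $\sum_{i\in\mathbb{Z}}(-1)^{a+b+i(n-h-1)}\C^\dagger_{a+in,b}$. I would first observe that, in the range $b\le a<b+n$, at most one summand is nonzero: $\C^\dagger_{c,b}=0$ unless $b\le c<\pi(b)$ (from the definition of $\C^\dagger$ together with the $\pi$-prefrieze vanishing pattern — note $\pi(b)-b<n$), so only $i=0$ contributes, giving $\mathrm{Tiling}(\C^\dagger)_{a,b}=(-1)^{a+b}\C^\dagger_{a,b}$ for $b\le a<\pi(b)$, and $0$ for $\pi(b)\le a<b+n$. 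Since $\pi(b)\le b+n\le b+k+h$ (recall $k+h=n$ here, as $k$ denotes the number of balls of $\pi^\dagger$, which is $n-h$), this matches the first bullet of the statement: $x_a=(-1)^{a+b}\C^\dagger_{a,b}$ for $b\le a<b+k+h$, with the understanding that $\C^\dagger_{a,b}=0$ once $a\ge\pi(b)$. Then I would check the superperiodicity: the computation in the proof of Proposition~\ref{prop: soltiling} shows $\mathrm{Tiling}(\C^\dagger)_{a+n,b}=(-1)^{k-1}\mathrm{Tiling}(\C^\dagger)_{a,b}$, and since $k=n-h$ we have $(-1)^{k-1}=(-1)^{n-h-1}$, which is exactly the second bullet. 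Hence the sequence $\mathsf{x}$ in the statement is precisely the $b$th column of $\mathrm{Tiling}(\C^\dagger)=\sol(\C)$, so $\C\mathsf{x}=0$ by Lemma~\ref{lemma: linrec}(2a).

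For the spanning claim, I would argue that the columns of $\sol(\C)$ indexed by $b$ with $b<\pi(b)$ already span: by $n$-periodicity of the frieze (Theorem~\ref{thm: periodicity}) and the superperiodicity relation $\Sigma^n\sol(\C)=(-1)^{k-1}\sol(\C)$ established inside the proof of Theorem~\ref{thm: juggleqp}, the column space of $\sol(\C)$ is $\Sigma^n$-stable, so it is spanned by any set of columns representing all residues mod $n$; the columns indexed by loops $b=\pi(b)$ are zero, so they may be discarded, and the residues represented by the non-loop $b$ in a window $\{1,\dots,n\}$ suffice. Combined with Lemma~\ref{lemma: linrec}(4), which says these columns span the full (finite-dimensional) solution space, this gives the second sentence of the theorem.

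The only genuinely delicate point is bookkeeping the endpoints and signs: verifying that the truncation range $b\le a<b+k+h$ in the statement is the correct one (i.e.\ that $\C^\dagger_{a,b}$ has already vanished by the time $a$ reaches $b+n$, which forces the ``extend superperiodically'' rule to be consistent with the explicit formula), and that $(-1)^{n-h-1}=(-1)^{k-1}$ under the convention $k=\text{(\# balls of }\pi^\dagger)=n-h$. I expect the main obstacle to be purely this sign/index reconciliation between the two normalizations ($h$ balls of $\pi$ versus $k$ balls of $\pi^\dagger$) and the loop exceptional case; once those are pinned down, the result is an immediate corollary of Proposition~\ref{prop: soltiling}, Theorem~\ref{thm: periodicity}, and Lemma~\ref{lemma: linrec}.
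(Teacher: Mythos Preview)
Your approach is essentially the same as the paper's: invoke Proposition~\ref{prop: soltiling} to identify the sequences with columns of $\sol(\C)=\mathrm{Tiling}(\C^\dagger)$, then use Lemma~\ref{lemma: linrec}(4) plus the vanishing of loop columns for the spanning claim. One small slip: your claim that $\C^\dagger_{a,b}=0$ once $a\ge\pi(b)$ is not the right vanishing threshold (the $\pi^\dagger$-prefrieze condition gives vanishing past $\pi^\dagger(b)=\pi^{-1}(b)+n$, not $\pi(b)$), but this is harmless since $k+h=n$ and the formula $x_a=(-1)^{a+b}\C^\dagger_{a,b}$ is asserted on the full window $b\le a<b+n$ anyway; also, your $\Sigma^n$-stability digression is unnecessary, as the theorem ranges over all $b\in\mathbb{Z}$ with $b<\pi(b)$, so discarding the zero loop columns from Lemma~\ref{lemma: linrec}(4) already suffices.
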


\begin{proof}
By Lemma \ref{lemma: linrec}.4, the columns of $\sol(\C)$ span the space of solutions to $\C\mathsf{x=0}$. When $b=\pi(b)$, the $b$th column of $\sol(\C)$ is identically zero, so it suffices to consider the columns for which $b<\pi(b)$. By Proposition \ref{prop: soltiling}, the $b$th column of $\sol(\C)$ coincides with the $b$th column of $\mathrm{Tiling}(\C^\dagger)$, which is the sequence defined in the statement of Theorem \ref{thm: qpsols}.
\end{proof}

\begin{rem}\label{rem: schedule}
By \cite[Prop.~7.12]{MulRes1}, a basis of solutions to $\C\mathsf{x=0}$ is given by the columns of $\sol(\C)$ index by any \emph{schedule} of $\pi$, such as a landing schedule $L_a$. 
\end{rem}

\subsection{Equivalence of constructions (Theorem \ref{thm: equivcons})} 

The relation between the determinantal construction of $\Fr(\mathsf{A})$ and Construction \ref{cons: piunwrap} follows primarily from the following lemma.

\begin{lemma}
\label{lemma: taudotproduct}
Let $\mathsf{A}$ be a $\pi$-unimodular $k\times n$-matrix. For any $1\leq a,b \leq n$, 
\[ \rt(\mathsf{A})_{{a}} \cdot \mathsf{A}_{{b}} 
=\begin{cases}
(-1)^{|S_{\pi^\dagger}(b,a)|}\det(\mathsf{A}_{L_a\smallsetminus \{a\} \cup \{b\} }) & \text{if }{a} \geq {b} \text{ and }a<\pi(a),\\
(-1)^{k-1+|S_{\pi^\dagger}(b,a+n)|}\det(\mathsf{A}_{L_a\smallsetminus \{a\} \cup \{b\} }) & \text{if }{a} < {b} \text{ and }a<\pi(a),\\
0 & \text{if }a=\pi(a).
\end{cases}
\]
\end{lemma}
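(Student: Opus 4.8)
The plan is to reduce the dot product $\rt(\mathsf{A})_a \cdot \mathsf{A}_b$ to a determinant via Cramer's rule, exactly as in the proof of Theorem \ref{thm: cons}, and then carefully track the two sign factors: one coming from the cyclic reordering of columns (as in the $\mathrm{SL}(k)$ case) and one coming from the fact that the relevant index set is the landing schedule $L_a$ rather than a consecutive interval. The loop case $a=\pi(a)$ is immediate since then $\rt(\mathsf{A})_a=0$ by the definition of the twist, so I will dispose of it first.

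First I would assume $a<\pi(a)$, so that $a\in L_a$. Since $\det(\mathsf{A}_{L_a})=1$, the columns $\mathsf{A}_i$ for $i\in L_a$ form a basis of $\k^k$; write $\mathsf{A}_{L_a}$ for the $k\times k$ matrix with these columns in increasing order of their representatives. The defining equations of the twist say that $\rt(\mathsf{A})_a$ is the unique vector with $\rt(\mathsf{A})_a\cdot \mathsf{A}_i = \delta_{i,a}$ for $i\in L_a$; equivalently, $(\mathsf{A}_{L_a})^\top \rt(\mathsf{A})_a = \mathsf{e}_{j}$ where $j$ is the position of $a$ inside $L_a$ (listed in increasing order). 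Hence $\rt(\mathsf{A})_a = ((\mathsf{A}_{L_a})^\top)^{-1}\mathsf{e}_j$, and so
\[ \rt(\mathsf{A})_a\cdot \mathsf{A}_b = \mathsf{e}_j \cdot (\mathsf{A}_{L_a})^{-1}\mathsf{A}_b, \]
which is the $j$th coordinate of the solution $\mathsf{x}$ to $\mathsf{A}_{L_a}\mathsf{x}=\mathsf{A}_b$. By Cramer's rule (the denominator $\det(\mathsf{A}_{L_a})$ being $1$), this equals $\det$ of $\mathsf{A}_{L_a}$ with its $j$th column replaced by $\mathsf{A}_b$. That determinant is, up to a sign, $\det(\mathsf{A}_{L_a\smallsetminus\{a\}\cup\{b\}})$: replacing the column $\mathsf{A}_a$ (in position $j$) by $\mathsf{A}_b$ gives the columns indexed by $L_a\smallsetminus\{a\}\cup\{b\}$, but not necessarily in increasing order, so the discrepancy is the sign of the permutation needed to sort $b$ into its correct slot among the representatives of $L_a\smallsetminus\{a\}$.

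The main work — and the main obstacle — will be identifying this sorting sign with $(-1)^{|S_{\pi^\dagger}(b,a)|}$ (when $a\ge b$) or $(-1)^{k-1+|S_{\pi^\dagger}(b,a+n)|}$ (when $a<b$). The mod-$n$ reduction of $b$ is inserted among the representatives of $L_a\smallsetminus\{a\}$, and the number of transpositions needed is the number of elements of $L_a\smallsetminus\{a\}$ whose chosen representatives lie between $b$'s representative and $a$'s representative. The key combinatorial identity to establish is that this count of "elements jumped over" equals $|S_{\pi^\dagger}(b,a)|$ in the case $a\ge b$; recalling $L_a=\{c\in\mathbb{Z}\mid \pi^{-1}(c)<a\le c\}$ and $\pi^\dagger(i)=\pi^{-1}(i)+n$, one checks that $i\in S_{\pi^\dagger}(b,a)$ — i.e.\ $b<i<a$ and $\pi^{-1}(i)+n<a$, equivalently $\pi^{-1}(i)<a-n$ — corresponds exactly to a landing time $c=i$ (or $i$ shifted) that has been ``jumped over.'' In the case $a<b$, one extra cyclic wrap occurs when passing $b$'s representative past the end of the block, contributing an extra cycle of length $k-1$ and hence the factor $(-1)^{k-1}$, while the remaining jumps are counted by $S_{\pi^\dagger}(b,a+n)$. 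I would handle the two cases $a\ge b$ and $a<b$ separately, in each case writing down explicitly the increasing list of representatives of $L_a$ with $a$ replaced by $b$, comparing it to the sorted list, and counting inversions; the bookkeeping is delicate but elementary, and Lemma \ref{lemma: balls} is not needed here (it will be used downstream). A useful sanity check at the end is the loop-free specialization to $\mathrm{SL}(k)$-friezes, where $S_{\pi^\dagger}(b,a)=\varnothing$ and the formula recovers the signs appearing in the proof of Theorem \ref{thm: cons}.
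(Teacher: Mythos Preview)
Your approach via Cramer's rule followed by counting the transpositions needed to sort the replaced column into place is exactly the paper's proof. One small correction for the case $a<b$: there is no cyclic wrap (both representatives lie in $[1,n]$ with $a<b$, so the new column simply moves rightward), and the factor $(-1)^{k-1}$ instead arises from the complementary count $|L_a\cap(a,b)| = (k-1) - |L_a\cap(b,a+n)|$, the latter cardinality being $|S_{\pi^\dagger}(b,a+n)|$.
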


\begin{proof}
If $a=\pi(a)$ (i.e.~$a$ is a loop of $\pi$), then $a\not\in L_a$ and so the ${a}$th column of $\rt(\mathsf{A})$ is defined to be the unique solution to $\left({\mathsf{A}}_{{L_a}}\right)^\top\mathsf{x}=\mathsf{0}$. Therefore, $\rt(\mathsf{A})_a=0$, and so $\rt(\mathsf{A})_a\cdot \mathsf{A}_b=0$ for all $b$.

If $a<\pi(a)$, then $a\in L_a$ and so the ${a}$th column of $\rt(\mathsf{A})$ is defined to be the unique solution to $\left({\mathsf{A}}_{{L_a}}\right)^\top\mathsf{x}=\mathsf{e}_{{a}}$. Therefore,  $\rt(\mathsf{A})_{{a}} = \left({\mathsf{A}}_{{L_a}}\right)^{\top-1} \mathsf{e}_{{a}}$, and so
\[ \rt(\mathsf{A})_{{a}} \cdot \mathsf{A}_{{b}}  
= (\left({\mathsf{A}}_{{L_a}}\right)^{\top-1} \mathsf{e}_{{a}}) \cdot \mathsf{A}_{{b}} 
= \mathsf{e}_{{a}} \cdot \left({\mathsf{A}}_{{L_a}}\right)^{-1}\mathsf{A}_{{b}}
\]
Equivalently, $\rt(\mathsf{A})_{{a}} \cdot \mathsf{A}_{{b}}$ is the ${{a}}$th entry of the solution to ${\mathsf{A}}_{{L_a}}\mathsf{x} = \mathsf{A}_{{b}}$. By Cramer's Rule, $\rt(\mathsf{A})_{{a}} \cdot \mathsf{A}_{{b}}$ equals the determinant of ${\mathsf{A}}_{{L_a}}$ after replacing the ${a}$th column of $\mathsf{A}$ by the ${b}$th column of $\mathsf{A}$.\footnote{There is also a denominator of $\det(\mathsf{A}_{L_a})$ in Cramer's Rule, but this determinant is 1 by $\pi$-unimodularity.} By our convention for ordering the columns\footnote{Specifically, the columns in $\mathsf{A}_{L_a}$ retain the same ordering as in $\mathsf{A}$.} in $\mathsf{A}_{L_a\smallsetminus \{a\} \cup \{b\}}$, 
\[ \rt(\mathsf{A})_{{a}} \cdot \mathsf{A}_{{b}} = (-1)^{s} \det(\mathsf{A}_{L_a\smallsetminus \{a\} \cup \{b\} })\]
where $s$ is the number of columns of $\mathsf{A}_{L_a}$ between the ${a}$th column and the ${b}$th column.

If $b\in \overline{L_a}$ (i.e.~$b$ is congruent to an element of $L_a$), then $\det(\mathsf{A}_{L_a\smallsetminus \{a\} \cup \{b\} })=0$ and so $s$ is irrelevant.

If ${b}\leq {a}$ and $b\not \in \overline{L_a}$, then $s$ is the number of columns of $A_{L_a}$ right of the ${b}$th column and left of the ${a}$th column. Lifting to $\mathbb{Z}$, this is the cardinality of the set $L_{a-n}\cap [b+1,a-1]$.
\begin{align*}
s &= | L_{a-n} \cap [b+1,a-1] |
= | \{ i\in [b+1,a-1] \mid \pi^{-1}(i)<a-n\} | \\
&= | \{ i\in [b+1,a-1] \mid \pi^{\dagger}(i)<a\} |
= | [b+1,a-1] \cap \pi^\dagger([b+1,a-1]) |
=: |S_{\pi^\dagger}(b,a)|
\end{align*}

If ${b}>{a}$ and $b\not \in \overline{L_a}$, then $s$ is the number of columns of $A_{L_a}$ right of the ${a}$th column and left of the ${b}$th column. Lifting to $\mathbb{Z}$, this is the cardinality of the set $L_{a}\cap [a+1,b-1]$. Since $\{a\}\in L_a$ and $\{b\}\not\in L_a$, $L_a$ decomposes as a disjoint union
\[ L_a = \{ a\} \sqcup (L_a \cap [a+1,b-1]) \sqcup (L_a \cap [b+1,a+n-1]) \]
The cardinality of $L_a$ is the number of balls of $\pi$, which is $k$. By the same argument as the previous case, the cardinality of $L_a \cap [b+1,a+n-1]$ is $|S_{\pi^\dagger}(b,a+n)|$. Therefore, in this case,
\[ s = k - 1 - |S_{\pi^\dagger}(b,a+n)| \]
This completes the proof.
\end{proof}

This allows us to restate the definition of $\Fr(\mathsf{A})$ as follows.

\begin{coro}\label{coro: twistcons} 
Let $\Fr(\mathsf{A})$ be defined as in Theorem \ref{thm: detpifrieze}. Then
\[ \Fr(\mathsf{A})_{a,b} := \begin{cases}
\rt(\mathsf{A})_{\overline{a}} \cdot \mathsf{A}_{\overline{b}} & \text{if $b\leq a < b+n$, $\overline{b}\leq \overline{a}$, and $a<\pi(a)$}, \\
(-1)^{k-1}\rt(\mathsf{A})_{\overline{a}} \cdot \mathsf{A}_{\overline{b}} & \text{if $b\leq a < b+n$, $\overline{b}> \overline{a}$, and $a<\pi(a)$}, \\
1 & \text{if }\pi(a)=a=b, \\
(-1)^{k} & \text{if }\pi(a)=a=b+n, \\
0 & \text{otherwise}. 
\end{cases} \]
where $1\leq \overline{a},\overline{b} \leq n$ denote the respective residues of $a,b$ mod $n$.
\end{coro}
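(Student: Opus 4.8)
The goal is to prove Corollary~\ref{coro: twistcons}, which restates the determinantal formula \eqref{eq: detformula} for $\Fr(\mathsf{A})_{a,b}$ in terms of dot products between columns of the twist $\rt(\mathsf{A})$ and columns of $\mathsf{A}$. Since Lemma~\ref{lemma: taudotproduct} already computes $\rt(\mathsf{A})_{\overline a} \cdot \mathsf{A}_{\overline b}$ explicitly as a signed determinant $\pm\det(\mathsf{A}_{L_a \smallsetminus \{a\} \cup \{b\}})$, the corollary should be essentially a bookkeeping exercise: match the signs in the three cases of Lemma~\ref{lemma: taudotproduct} against the cases of \eqref{eq: detformula}.

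The plan is to proceed case by case, according to whether $\pi(a) = a$ (a loop), $\overline a \geq \overline b$, or $\overline a < \overline b$. For the loop case $\pi(a) = a$, Lemma~\ref{lemma: taudotproduct} gives $\rt(\mathsf{A})_{\overline a} \cdot \mathsf{A}_{\overline b} = 0$, which does not match the entries $1$ (when $b = a$) or $(-1)^k$ (when $b = a + n$) appearing in \eqref{eq: detformula}; this is exactly why those entries are spelled out separately as their own cases in the corollary's formula, and why the hypothesis $\pi$ loop-free appears in Theorem~\ref{thm: equivcons}. So in the loop subcases I would simply observe that the corollary's formula \emph{defines} those entries directly (they agree with \eqref{eq: detformula} by inspection), and the ``otherwise'' clause handles all remaining loop-indices where $\mathsf{A}$-dot products vanish and $\Fr(\mathsf{A})$ is zero by \eqref{eq: detformula}. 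For the non-loop cases, I would restrict to $b \leq a < b+n$ (outside this range, $\Fr(\mathsf{A})_{a,b} = 0$ by \eqref{eq: detformula}, and likewise the corollary declares it $0$), write $\overline a, \overline b$ for the residues, and then compare. When $\overline a \geq \overline b$: Lemma~\ref{lemma: taudotproduct} gives $\rt(\mathsf{A})_{\overline a} \cdot \mathsf{A}_{\overline b} = (-1)^{|S_{\pi^\dagger}(\overline b, \overline a)|} \det(\mathsf{A}_{L_{\overline a} \smallsetminus \{\overline a\} \cup \{\overline b\}})$, so I need to check this equals $(-1)^{|S_{\pi^\dagger}(b,a)|}\det(\mathsf{A}_{L_a \smallsetminus \{a\} \cup \{b\}})$ — the subscripted-set determinants agree because $\mathsf{A}$ only sees columns mod $n$, and $|S_{\pi^\dagger}(b,a)| = |S_{\pi^\dagger}(\overline b, \overline a)|$ since the quantity only depends on the difference $a - b$ (and on $\pi^\dagger$'s $n$-periodicity). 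When $\overline a < \overline b$: Lemma~\ref{lemma: taudotproduct} gives the sign $(-1)^{k-1+|S_{\pi^\dagger}(\overline b, \overline a + n)|}$, which I compare with the desired $(-1)^{k-1} \cdot (-1)^{|S_{\pi^\dagger}(b,a)|}$; again the key is $|S_{\pi^\dagger}(b,a)| = |S_{\pi^\dagger}(\overline b, \overline a + n)|$, which holds because translating all three of $b, a$ by the same multiple of $n$ (to bring $\overline b \leq \overline a + n$ with $\overline a + n - \overline b = a - b$) does not change the cardinality of the relevant set.

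The main (and essentially only) obstacle is confirming the translation-invariance identity $|S_{\pi^\dagger}(b,a)| = |S_{\pi^\dagger}(b+mn, a+mn)|$ and that, with the right choice of representatives, these reduce to the quantities appearing in Lemma~\ref{lemma: taudotproduct}'s two non-loop cases depending on the sign of $\overline a - \overline b$. This follows immediately from the definition $S_{\pi^\dagger}(a,b) = (a,b)\cap \pi^{\dagger-1}(a,b)$ together with $\pi^\dagger(i+n) = \pi^\dagger(i)+n$: the map $i \mapsto i + mn$ is a bijection $S_{\pi^\dagger}(b,a) \to S_{\pi^\dagger}(b+mn, a+mn)$. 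One must take a little care that in the case $\overline a < \overline b$ the correct shift lands us at $S_{\pi^\dagger}(\overline b, \overline a + n)$ rather than $S_{\pi^\dagger}(\overline b, \overline a)$ (the latter would be empty or nonsensical since $\overline a < \overline b$); tracking this is exactly the content of why Lemma~\ref{lemma: taudotproduct} has the ``$+n$'' in its second case. Once these index identifications are pinned down, the corollary follows by direct substitution, and I would also remark that $\Fr(\mathsf{A})_{a,b}$ vanishes automatically when $\overline b \in \overline{L_a} \smallsetminus \{\overline a\}$ (consistent with both formulas), since a repeated column forces the determinant to zero.
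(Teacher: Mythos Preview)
Your proposal is correct and follows the same approach as the paper, which treats the corollary as an immediate restatement of Lemma~\ref{lemma: taudotproduct} without an explicit proof. You have filled in the only non-obvious details the paper omits: the $n$-periodicity identity $|S_{\pi^\dagger}(b,a)| = |S_{\pi^\dagger}(b+mn,a+mn)|$ and the observation that when $b\leq a<b+n$ one has $a-b=\overline a-\overline b$ (if $\overline b\leq\overline a$) or $a-b=(\overline a+n)-\overline b$ (if $\overline b>\overline a$), which is exactly what is needed to match the two non-loop cases of the lemma to the two non-loop cases of the corollary.
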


\begin{thmA}
\label{thm: equivcons}
If  $\pi$ is loop-free and $\mathsf{A}$ is a $\pi$-unimodular matrix, then Construction \ref{cons: piunwrap} produces $\Fr(\mathsf{A})$, the $\pi^\dagger$-frieze defined in Theorem \ref{thm: detpifrieze}.
\end{thmA}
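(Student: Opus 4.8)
The plan is to mirror the proof of the analogous Theorem~\ref{thm: cons} from Section~\ref{section: constructfrieze}, using Corollary~\ref{coro: twistcons} to bridge between the determinantal definition of $\Fr(\mathsf{A})$ and the output of Construction~\ref{cons: piunwrap}. First I would unwind what the four steps of Construction~\ref{cons: piunwrap} produce. Step (1) computes $\rt(\mathsf{A})$, step (2) forms $\rt(\mathsf{A})^\top\mathsf{A}$, step (3) takes the strictly-upper-triangular part, scales by $(-1)^{k-1}$, and slides it to the left, and step (4) rotates by $45^\circ$ and duplicates $n$-periodically. Tracking indices through this, the grid $\C$ produced by the construction satisfies, for $b\leq a<b+n$ with residues $\overline a,\overline b\in\{1,\dots,n\}$,
\[
\C_{a,b} = \begin{cases}
(\rt(\mathsf{A})^\top\mathsf{A})_{\overline a,\overline b} & \text{if }\overline a\geq \overline b,\\
(-1)^{k-1}(\rt(\mathsf{A})^\top\mathsf{A})_{\overline a,\overline b} & \text{if }\overline a<\overline b,
\end{cases}
\]
and $\C_{a,b}=0$ otherwise (the ``superfluous $0$s'' deleted in step (4) being precisely the entries outside this range, together with those forced to vanish). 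Since $(\rt(\mathsf{A})^\top\mathsf{A})_{\overline a,\overline b} = \rt(\mathsf{A})_{\overline a}\cdot\mathsf{A}_{\overline b}$, this is exactly the loop-free case of the formula in Corollary~\ref{coro: twistcons}, so $\C = \Fr(\mathsf{A})$. Finally, $\Fr(\mathsf{A})$ is a $\pi^\dagger$-frieze by Theorem~\ref{thm: detpifrieze}.

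The key steps, in order: (i) set up the index bookkeeping so that the position in the final infinite strip is correctly matched to a residue pair $(\overline a,\overline b)$ — this requires being careful about how step (3)'s ``sliding to the left'' shifts columns and how the $45^\circ$ rotation in step (4) sends matrix entries to diamond-grid positions (the top row of $1$s comes from the diagonal of $\rt(\mathsf{A})^\top\mathsf{A}$, which is all $1$s since $\rt(\mathsf{A})_a\cdot\mathsf{A}_a=1$ in the loop-free case); (ii) identify the entries deleted as ``superfluous $0$s'' in step (4) with exactly those $(a,b)$ for which $b\not\leq a$, or $a\geq b+n$, or the dot product $\rt(\mathsf{A})_{\overline a}\cdot\mathsf{A}_{\overline b}$ vanishes because $\overline b\in\overline{L_a}\smallsetminus\{\overline a\}$ (these last are the genuine zeros of a $\pi^\dagger$-prefrieze); (iii) invoke Corollary~\ref{coro: twistcons} to conclude the formula agrees with the determinantal definition of $\Fr(\mathsf{A})$; (iv) cite Theorem~\ref{thm: detpifrieze} for the frieze property. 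Because the loop-free hypothesis is in force, the $\pi(a)=a$ cases of Corollary~\ref{coro: twistcons} never arise, which is what lets us read everything off the single product $\rt(\mathsf{A})^\top\mathsf{A}$.

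The main obstacle I anticipate is step (i)/(ii): precisely matching the geometry of the ``slide left and rotate'' operations in Construction~\ref{cons: piunwrap} to the matrix index conventions of Section~\ref{section: matrices}, and verifying that the set of entries surviving step (4) is exactly $\{(a,b): b\leq a<b+n\}$ with the $0$s outside this band and the internal $0$s correctly accounted for. This is bookkeeping rather than conceptual difficulty — it is the direct analog of the calculation already carried out in the proof of Theorem~\ref{thm: cons} — but the ragged lower edge of a $\pi^\dagger$-frieze means one must check that the deleted $0$s below the band really are the ones the construction discards, rather than entries that ought to be retained. I would handle this by explicitly writing the parallelogram of step (3) in coordinates, applying the rotation, and comparing termwise with the formula in Corollary~\ref{coro: twistcons}. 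A brief remark pointing to Remark~\ref{rem: loopcons} for the modification covering loops would close out the section.

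\begin{proof}
Let $\C$ denote the output of Construction~\ref{cons: piunwrap}. By steps (1) and (2), the entries of the matrix $\rt(\mathsf{A})^\top\mathsf{A}$ are the dot products $\rt(\mathsf{A})_{\overline a}\cdot\mathsf{A}_{\overline b}$ for $\overline a,\overline b\in\{1,\dots,n\}$; in particular its diagonal is all $1$s, since $\pi$ is loop-free and hence $a\in L_a$ and $\rt(\mathsf{A})_{a}\cdot\mathsf{A}_{a}=1$ for every $a$.

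Steps (3) and (4) rearrange these entries into an infinite horizontal strip, $n$-periodically. Tracking indices through the slide of the strictly-upper-triangular part to the left (with the sign $(-1)^{k-1}$) and the subsequent $45^\circ$ counterclockwise rotation, one finds that for $b\leq a<b+n$, writing $\overline a,\overline b\in\{1,\dots,n\}$ for the residues of $a,b$ mod $n$,
\[
\C_{a,b} = \begin{cases}
(\rt(\mathsf{A})^\top\mathsf{A})_{\overline a,\overline b} = \rt(\mathsf{A})_{\overline a}\cdot\mathsf{A}_{\overline b} & \text{if } \overline b\leq\overline a,\\
(-1)^{k-1}(\rt(\mathsf{A})^\top\mathsf{A})_{\overline a,\overline b} = (-1)^{k-1}\rt(\mathsf{A})_{\overline a}\cdot\mathsf{A}_{\overline b} & \text{if } \overline b>\overline a,
\end{cases}
\]
while $\C_{a,b}=0$ for all other $(a,b)$ (these are precisely the ``superfluous $0$s'' deleted in step (4): the entries with $b\not\leq a$ or $a\geq b+n$, together with the zeros produced when $\overline b\in\overline{L_a}\smallsetminus\{\overline a\}$, for which $\det(\mathsf{A}_{L_a\smallsetminus\{a\}\cup\{b\}})=0$).

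Comparing with Corollary~\ref{coro: twistcons}, and using that $\pi$ is loop-free so that the cases $\pi(a)=a$ never occur, we see that $\C_{a,b} = \Fr(\mathsf{A})_{a,b}$ for all $a,b\in\mathbb{Z}$. Hence $\C = \Fr(\mathsf{A})$, and by Theorem~\ref{thm: detpifrieze} this is a $\pi^\dagger$-frieze.
\end{proof}
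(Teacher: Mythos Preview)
Your proposal is correct and follows essentially the same route as the paper: both arguments invoke Corollary~\ref{coro: twistcons}, specialize it to the loop-free case to obtain the two-case formula in terms of $\rt(\mathsf{A})_{\overline a}\cdot\mathsf{A}_{\overline b}$, and then observe that Construction~\ref{cons: piunwrap} produces exactly these entries. The paper's proof is considerably terser (two sentences), whereas you spell out the index bookkeeping for steps (3)--(4) and the treatment of the deleted zeros more explicitly; but the logical content is the same.
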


\begin{proof}[Proof of Theorem \ref{thm: equivcons}]
If $\pi$ is loop-free, then the corollary specializes to 
\[ \Fr(\mathsf{A})_{a,b} :=\begin{cases}
\rt(\mathsf{A})_{\overline{a}} \cdot \mathsf{A}_{\overline{b}}  & \text{if $b\leq a < b+n$ and $\overline{b}\leq \overline{a}$}, \\
(-1)^{k-1} \rt(\mathsf{A})_{\overline{a}} \cdot \mathsf{A}_{\overline{b}} & \text{if $b\leq a < b+n$ and $\overline{b}> \overline{a}$}, \\
0 & \text{otherwise}.
\end{cases} \]
Since $(\rt(\mathsf{A})^\top \mathsf{A}) _{\overline{a},\overline{b}} = \rt(\mathsf{A})_{\overline{a}} \cdot \mathsf{A}_{\overline{b}} $, Construction \ref{cons: piunwrap} produces the above entries.
\end{proof}


\begin{rem}\label{rem: loopcons}
Corollary \ref{coro: twistcons} indicates how to adapt Construction \ref{cons: piunwrap} to the case when $\pi$ has loops. Specifically, if $\pi(a)=a$, then $(\rt(\mathsf{A})^\top\mathsf{A})_{a,a}=0$. In Step (3) of Construction \ref{cons: piunwrap}, this $0$ should be split into $1+(-1)$. The $1$ stays in place and the $(-1)$ moves with the above-diagonal entries to become a $(-1)^k$ in position $\Fr(\mathsf{A})_{a+n,a}$.
%
\end{rem}

\subsection{$\Fr(\mathsf{A})$ is a $\pi^\dagger$-frieze (Theorem \ref{thm: detpifrieze})}

We will prove that $\Fr(\mathsf{A})$ is a $\pi^\dagger$-frieze by first showing it is a $\pi^\dagger$-prefrieze, and then showing its kernel consists of superperiodic sequences. 

\begin{lemma}\label{lemma: Fprefrieze}
If $\mathsf{A}$ is $\pi$-unimodular, then $\Fr(\mathsf{A})$ is a $\pi^\dagger$-prefrieze.
\end{lemma}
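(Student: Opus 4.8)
The plan is to verify directly that $\Fr(\mathsf{A})$ satisfies the defining equations of a $\pi^\dagger$-prefrieze, namely lower unitriangularity together with the entry conditions in Equation \eqref{eq: prefrieze} for the juggling function $\pi^\dagger$ (recalling that $\pi$ has $k$-many balls, so $\pi^\dagger$ has $n-k$ balls). Throughout I would work with the determinantal formula \eqref{eq: detformula} defining $\Fr(\mathsf{A})_{a,b}$, using that $\mathsf{A}$ is $\pi$-unimodular, so in particular $\det(\mathsf{A}_{L_a}) = 1$ for every $a$ and $\mathrm{rank}(\mathsf{A}_{[a,b]}) \le |L_a \cap [a,b]|$ for every interval $[a,b]$ with $b < a+n$.

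The verification splits into the following checks. First, lower unitriangularity: if $a < b$ then I must show $\Fr(\mathsf{A})_{a,b} = 0$, and if $a = b$ then $\Fr(\mathsf{A})_{a,a} = 1$. The diagonal case is immediate from \eqref{eq: detformula}: when $a < \pi(a)$ we get $\det(\mathsf{A}_{L_a \smallsetminus\{a\}\cup\{a\}}) = \det(\mathsf{A}_{L_a}) = 1$, and the sign $(-1)^{|S_{\pi^\dagger}(a,a)|}$ is $(-1)^0 = 1$; when $\pi(a) = a$ the second case of \eqref{eq: detformula} gives $1$ directly. For $a < b$, either $a \notin [b, \pi(b)] \bmod$ the relevant range so the "otherwise" clause gives $0$, or we are in the range $b \le a < b+n$ is violated (since $a < b$), so again the "otherwise" clause applies; in all subcases $\Fr(\mathsf{A})_{a,b} = 0$. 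Second, the vanishing condition in \eqref{eq: prefrieze}: for $\pi^\dagger$, this asks that $\Fr(\mathsf{A})_{a,b} = 0$ whenever $b < {\pi^\dagger}^{-1}(a)$ or ${\pi^\dagger}(b) < a$; unwinding ${\pi^\dagger}^{-1}(a) = \pi(a) - n$ and ${\pi^\dagger}(b) = \pi^{-1}(b) + n$, these become $b < \pi(a) - n$, i.e.\ $a > \pi^{-1}(b+n) = \pi^{-1}(b) + n$, hmm, let me instead say: the condition $b + n < \pi(a)$ or $a > \pi^{-1}(b) + n$. In the first situation $b \notin L_a$ forces... actually the cleaner route is: $\Fr(\mathsf{A})_{a,b}$ is a determinant of a $k\times k$ submatrix of $\mathsf{A}$ built from the columns $L_a \smallsetminus\{a\}\cup\{b\}$, and I want to show that in the relevant range this set of columns is linearly dependent, forcing the determinant to vanish. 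This is exactly where the rank bound in the definition of $\pi$-unimodularity enters: when the index $b$ falls outside the appropriate window, the columns $L_a\smallsetminus\{a\}\cup\{b\}$ all lie in a short interval $[a', b']$ with $b' < a' + n$ whose columns span a space of dimension $< k$, so the determinant is zero. Third, the nonvanishing condition in \eqref{eq: prefrieze}: when $a = \pi^\dagger(b)$, i.e.\ $a = \pi^{-1}(b) + n$, I must show $\Fr(\mathsf{A})_{a,b} = (-1)^{|S_{\pi^\dagger}(b,a)|}$. Writing $b = \pi^{-1}(a - n)$, wait — let me reparametrize: set $b' = \pi(a - n)$... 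The key identity to establish is that $L_a \smallsetminus\{a\}\cup\{b\}$, when $a = \pi^\dagger(b)$, is a cyclic shift of a landing schedule, specifically $\mathsf{A}_{L_a\smallsetminus\{a\}\cup\{b\}}$ has determinant $\pm 1$ equal (up to the reordering sign) to $\det(\mathsf{A}_{L_{a'}})=1$ for an appropriate $a'$; I would use the landing-schedule recursion $L_{a+1} = L_a \smallsetminus\{a\}\cup\{\pi(a)\}$ (property (2) of landing schedules) to identify the relevant $a'$ and track the column-reordering sign, which should match $(-1)^{|S_{\pi^\dagger}(b,a)|}$ after invoking the combinatorics already packaged in Lemma \ref{lemma: taudotproduct} / Corollary \ref{coro: twistcons}. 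Indeed, Corollary \ref{coro: twistcons} rewrites $\Fr(\mathsf{A})_{a,b}$ as $\pm \rt(\mathsf{A})_{\overline a}\cdot \mathsf{A}_{\overline b}$, and the dot-product characterization of the twist makes the $a = \pi^\dagger(b)$ entry transparent.

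The main obstacle, as the paper's own warnings advertise, will be bookkeeping the signs: matching $(-1)^{|S_{\pi^\dagger}(b,a)|}$ against the column-permutation sign that arises from writing a reordered submatrix $\mathsf{A}_{L_a\smallsetminus\{a\}\cup\{b\}}$ as $\pm\mathsf{A}_{L_{a'}}$, and handling separately the cases $\overline a \ge \overline b$ versus $\overline a < \overline b$ (which introduce the extra $(-1)^{k-1}$ seen in Lemma \ref{lemma: taudotproduct}), as well as the loop cases $\pi(a) = a$. I would organize this by first doing the loop-free case cleanly via Corollary \ref{coro: twistcons} and the twist's dot-product equations, then checking the loop entries ($\pi(a) = a = b$ and $\pi(a) = a = b+n$) against the two special cases in the definition of the dual $\C^\dagger$ in Section \ref{section: jugfrieze} and the prefrieze condition, using that a loop of $\pi$ is a coloop of $\pi^\dagger$ and $|S_{\pi^\dagger}(a, a+n)| = |L_a\smallsetminus\{a\}| = k - 1$ as computed in the proof of Lemma \ref{lemma: doubledual}. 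Once all entries of $\Fr(\mathsf{A})$ are shown to agree with the prescribed prefrieze values, Lemma \ref{lemma: prefrieze} (equivalently the definition of a $\pi^\dagger$-prefrieze) gives the conclusion.
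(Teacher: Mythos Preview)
Your plan is essentially the paper's own argument: verify the conditions of Lemma~\ref{lemma: prefrieze} for $\pi^\dagger$ by checking lower unitriangularity, the entries $\Fr(\mathsf{A})_{\pi^\dagger(b),b}$ via the landing-schedule recursion, and the vanishing entries. Two remarks on where your sketch drifts from the paper.

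For the vanishing entries, the paper rewrites $\Fr(\mathsf{A})_{a,b}=\pm\,\rt(\mathsf{A})_{\overline a}\cdot\mathsf{A}_{\overline b}$ via Corollary~\ref{coro: twistcons} and cites \cite[Lemma~6.4]{MS17}. Your direct rank argument also works, but the interval must be chosen with care: for $c=b+n\in(a,\pi(a))$ the relevant interval is $[a+1,\pi(a)-1]$, which gives $\mathrm{rank}(\mathsf{A}_{[a+1,\pi(a)-1]})\le\bigl|(L_a\smallsetminus\{a\})\cap[a+1,\pi(a)-1]\bigr|$ and hence $\mathsf{A}_c\in\mathrm{span}\{\mathsf{A}_d:d\in L_a\smallsetminus\{a\}\}$. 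The naive interval $[a+1,a+n-1]$ can have full rank $k$ and would not suffice. The other vanishing case ($\pi^\dagger(b)<a$) is easier: then $b+n\in L_a\smallsetminus\{a\}$, so the column set has a repeated residue mod $n$.

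For the non-vanishing entries, you anticipate a column-reordering sign to match against $(-1)^{|S_{\pi^\dagger}(b,a)|}$, but there is none: since $L_{\pi^\dagger(a)}\smallsetminus\{\pi^\dagger(a)\}\cup\{a\}$ and $L_{\pi^{-1}(a)+1}$ have the same residues mod $n$, and $\mathsf{A}_I$ is ordered by residues, the determinants are literally equal and the built-in sign $(-1)^{|S_{\pi^\dagger}(a,\pi^\dagger(a))|}$ in \eqref{eq: detformula} is already the right one. Finally, in the loop case $\pi(a)=a$ you have $|S_{\pi^\dagger}(a,a+n)|=|L_a|=k$, not $k-1$; the computation you cite from Lemma~\ref{lemma: doubledual} concerns coloops of $\pi$, not loops.
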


\begin{proof}
Since $\Fr(\mathsf{A})$ is lower unitriangular, it suffices to show that $\Fr(\mathsf{A})$ satisfies Lemma \ref{lemma: prefrieze} for $\pi^\dagger$.
Given $a\in \mathbb{Z}$, we split into two cases.
\begin{itemize}
    \item If $a=\pi(a)$ (i.e.~$a$ is a loop of $\pi$), then $\pi^\dagger(a)=a+n$ and so
    $ \Fr(\mathsf{A})_{\pi^\dagger(a),a} := (-1)^{k} $.
    Then 
%
    \[ |S_{\pi^\dagger}(a,a+n)|= n - \text{(\# of balls in $\pi^\dagger$)} = \text{(\# of balls in $\pi$)} = k\]
    Therefore, 
    $ \Fr(\mathsf{A})_{\pi^\dagger(a),a} = (-1)^{|S_{\pi^\dagger}(a,\pi^\dagger(a))|} $.
    \item If $a<\pi(a)$, then $ \pi^\dagger(a) < a+n$. Then last inequality is equivalent to $\pi^{-1}(a)<a$, and so $\pi^{-1}(a)\in L_{\pi^{-1}(a)}$. Since $a= \pi(\pi^{-1}(a))$,
    \begin{align*}
    L_{\pi^{-1}(a)+1} 
    &= L_{\pi^{-1}(a)} \smallsetminus \{\pi^{-1}(a)\} \cup \{a\} \\
    \Fr(\mathsf{A})_{\pi^\dagger(a),a} 
    &:= (-1)^{|S_{\pi^\dagger}(a, \pi^\dagger(a))|} \det(\mathsf{A}_{L_{\pi^\dagger(a)}\smallsetminus \{\pi^\dagger(a)\}\cup \{a\}}) \\
    &= (-1)^{|S_{\pi^\dagger}(a, \pi^\dagger(a))|} \det(\mathsf{A}_{L_{\pi^{-1}(a)}\smallsetminus \{\pi^{-1}(a)\}\cup \{a\}}) \\
    &= (-1)^{|S_{\pi^\dagger}(a, \pi^\dagger(a))|} \det(\mathsf{A}_{L_{\pi^{-1}(a)+1}}) = (-1)^{|S_{\pi^\dagger}(a, \pi^\dagger(a))|}
    \end{align*}
    We have used that $\mathsf{A}_I$ only depends on $I$ modulo $n$, and that $\mathsf{A}$ is $\pi$-unimodular.
\end{itemize}
In all cases, we see that $\Fr(\mathsf{A})_{\pi^\dagger(a),a} = (-1)^{|S_{\pi^\dagger}(a, \pi^\dagger(a))|}$, satisfying the first case of Equation \eqref{eq: prefrieze}.

Next, consider $a,b$ such that $a-n< b<\pi^{\dagger-1}(a) $ or $\pi^\dagger(b)<a<b+n$. By Corollary \ref{coro: twistcons},
\[ \Fr(\mathsf{A})_{a,b} = \pm (\rt(\mathsf{A})_a \cdot \mathsf{A}_b)\]
The dot product on the right-hand-side always vanishes, e.g.\ by \cite[Lemma 6.4]{MS17}.\footnote{It is also not too hard to show directly that $\det(\mathsf{A}_{L_a \smallsetminus \{a\}\cup \{b\}})=0$ whenever $a-n< b<\pi^{\dagger-1}(a) $ or $\pi^\dagger(b)<a<b+n$.} Therefore, $\Fr(\mathsf{A})$ satisfies the conditions of Lemma \ref{lemma: prefrieze} for $\pi^\dagger$.
\end{proof}

Given a vector $\mathsf{v}\in \k^n$ and integers $(k,n)$, the \textbf{superperiodic extension} of $\mathsf{v}$ to be the infinite sequence $q_{(k,n)}(\mathsf{v})\in \k^\mathbb{Z}$ defined by
\[ q_{(k,n)}(\mathsf{v})_a := (-1)^{(k-1)\frac{a-\overline{a}}{n}}\mathsf{v}_{\overline{a}} \]
Note that $\mathsf{v}$ can be recovered from $q_{(k,n)}(\mathsf{v})$ by restricting to the terms indexed by $[n]\subset \mathbb{Z}$.

\begin{lemma}
\label{lemma: qpkernel}
%
%
Let $\mathsf{A}$ be a $\pi$-unimodular $k\times n$-matrix.
Then $\mathsf{Av=0}$ iff $\Fr(\mathsf{A})q_{(k,n)}(\mathsf{v})=0$.
%
\end{lemma}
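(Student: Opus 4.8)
The statement relates solutions of the linear recurrence $\C\mathsf{x}=0$ for $\C=\Fr(\mathsf{A})$ with the null space of the matrix $\mathsf{A}$ itself, via the superperiodic extension operator $q_{(k,n)}$. The plan is to compute the $a$th entry of $\Fr(\mathsf{A})\,q_{(k,n)}(\mathsf{v})$ directly and show it equals (a nonzero scalar multiple of) an entry of $\mathsf{A}\mathsf{v}$, so that the two vanishing conditions are equivalent row by row. The key tool is Corollary \ref{coro: twistcons}, which rewrites $\Fr(\mathsf{A})_{a,b}$ as $\pm\,\rt(\mathsf{A})_{\overline a}\cdot\mathsf{A}_{\overline b}$ (with the sign governed by whether $\overline b\le\overline a$ and whether $a$ is a loop), together with the fact that $\Fr(\mathsf{A})$ is lower unitriangular with nonzero entries only in the band $b\le a\le\pi^\dagger(b)$, i.e.\ for $b\le a<b+n$ outside the forced zeros.

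First I would fix $a\in\mathbb{Z}$ and expand $(\Fr(\mathsf{A})\,q_{(k,n)}(\mathsf{v}))_a=\sum_{b}\Fr(\mathsf{A})_{a,b}\,q_{(k,n)}(\mathsf{v})_b$. Because $\Fr(\mathsf{A})_{a,b}=0$ unless $b\le a<b+n$ (when $\pi$ is loop-free; the loop case $a+n$ is handled separately using the $(-1)^k$ entry), the sum has finitely many terms, one for each residue class mod $n$ hit by $[a-n+1,a]$. Substituting $q_{(k,n)}(\mathsf{v})_b=(-1)^{(k-1)(b-\overline b)/n}\mathsf{v}_{\overline b}$ and using Corollary \ref{coro: twistcons} to replace $\Fr(\mathsf{A})_{a,b}$ by $(-1)^{\varepsilon}\rt(\mathsf{A})_{\overline a}\cdot\mathsf{A}_{\overline b}$, I would check that the sign $(-1)^\varepsilon$ from the corollary (namely $(-1)^{k-1}$ exactly when $\overline b>\overline a$) cancels precisely against the superperiodic sign $(-1)^{(k-1)(b-\overline b)/n}$ — the point being that when $b\le a<b+n$, the shift $(b-\overline b)/n$ equals $0$ if $\overline b\le\overline a$ and $-1$ (mod $2$ contributing a factor $(-1)^{k-1}$) if $\overline b>\overline a$. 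After this cancellation the sum collapses to
\[ (\Fr(\mathsf{A})\,q_{(k,n)}(\mathsf{v}))_a = \rt(\mathsf{A})_{\overline a}\cdot\Big(\sum_{j=1}^{n}\mathsf{A}_j\,\mathsf{v}_j\Big) = \rt(\mathsf{A})_{\overline a}\cdot(\mathsf{A}\mathsf{v}), \]
since as $b$ ranges over $[a-n+1,a]$ its residue $\overline b$ ranges over all of $\{1,\dots,n\}$ exactly once.

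From this identity both directions follow quickly. If $\mathsf{A}\mathsf{v}=0$, then every entry $(\Fr(\mathsf{A})\,q_{(k,n)}(\mathsf{v}))_a=\rt(\mathsf{A})_{\overline a}\cdot 0=0$, so $q_{(k,n)}(\mathsf{v})\in\ker\Fr(\mathsf{A})$. Conversely, if $\Fr(\mathsf{A})\,q_{(k,n)}(\mathsf{v})=0$, then $\rt(\mathsf{A})_{\overline a}\cdot(\mathsf{A}\mathsf{v})=0$ for all $\overline a\in\{1,\dots,n\}$; since $\mathsf{A}$ is $\pi$-unimodular, $\rt(\mathsf{A})$ has rank $k$ (its columns indexed by any $L_a$ are linearly independent by $\det(\mathsf{A}_{L_a})=1$ and the defining dot-product relations), so the columns of $\rt(\mathsf{A})$ span $\k^k$, forcing $\mathsf{A}\mathsf{v}=0$. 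I would also dispose of the loop case: if $\pi(a)=a$, then $\rt(\mathsf{A})_a=0$, so the $a$th row of $\Fr(\mathsf{A})$ contributes only the unitriangular $1$ on the diagonal and the $(-1)^k$ entry at column $a-n$, and the same telescoping still yields $0=0$ automatically on that row while the nonloop rows still recover $\mathsf{A}\mathsf{v}=0$.

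The main obstacle I anticipate is bookkeeping the signs: making sure the $(-1)^{k-1}$ in Corollary \ref{coro: twistcons}, the $(-1)^{(k-1)(b-\overline b)/n}$ in the definition of $q_{(k,n)}$, and the precise range of $b$ (so that each residue is hit once and the shift $(b-\overline b)/n$ takes only the values $0$ and $-1$) all line up. Everything else is a short linear-algebra argument once the telescoping identity $(\Fr(\mathsf{A})\,q_{(k,n)}(\mathsf{v}))_a=\rt(\mathsf{A})_{\overline a}\cdot(\mathsf{A}\mathsf{v})$ is in hand.
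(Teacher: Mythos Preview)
Your forward computation is essentially the paper's: both reduce $(\Fr(\mathsf{A})q_{(k,n)}(\mathsf{v}))_a$ to $(\pm 1)\,(\rt(\mathsf{A})^\top\mathsf{A}\mathsf{v})_{\overline a}$ via Corollary~\ref{coro: twistcons}. One bookkeeping slip: for general $a\in\mathbb{Z}$ the shift $(b-\overline b)/n$ is not literally $0$ or $-1$ but $m$ or $m-1$, where $a=\overline a+mn$. The \emph{difference} between the two cases is still exactly $1$, so after combining with the $(-1)^{k-1}$ from the corollary you get a uniform factor $(-1)^{(k-1)m}$, not $1$; the paper carries this factor as $(-1)^{(k-1)i}$. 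Your earlier parenthetical ``a nonzero scalar multiple of'' had it right, and nothing in the argument is affected.

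Where you genuinely diverge is the converse. The paper does not use that the columns of $\rt(\mathsf{A})$ span $\k^k$; instead it argues by dimension: $\Fr(\mathsf{A})$ is a $\pi^\dagger$-prefrieze (Lemma~\ref{lemma: Fprefrieze}), so $\dim\ker\Fr(\mathsf{A})=n-k$ by Lemma~\ref{lemma: linrec}(4), which equals $\dim\ker\mathsf{A}$; since $q_{(k,n)}$ is injective, the inclusion $q_{(k,n)}(\ker\mathsf{A})\subseteq\ker\Fr(\mathsf{A})$ is an equality. Your route is more self-contained in that it avoids the prefrieze lemma, but it leans on $\rt(\mathsf{A})$ having rank $k$---true because $\rt(\mathsf{A})$ is itself $\pi$-unimodular (the Remark in Section~\ref{section: jugtwist}, from \cite{MS17}), hence $\det(\rt(\mathsf{A})_{L_a})=1$. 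Your parenthetical justification (``by $\det(\mathsf{A}_{L_a})=1$ and the defining dot-product relations'') does not quite prove this directly, since $\rt(\mathsf{A})_c$ is defined by pairing against $\mathsf{A}_{L_c}$, not $\mathsf{A}_{L_a}$; citing the $\pi$-unimodularity of $\rt(\mathsf{A})$ closes that gap cleanly. Both approaches are short once the forward identity is in hand.
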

\noindent I.e.~a vector $\mathsf{v}$ is in the kernel of $\mathsf{A}$ iff its superperiodic extension $q_{(k,n)}(\mathsf{v})$ is in the kernel of $\Fr(\mathsf{A})$.


\begin{proof}
Let $\mathsf{v}\in \ker(\mathsf{A})$, and
consider the $a$th entry in the product $\Fr(\mathsf{A}) q_{(k,n)}(\mathsf{v})$, which is
\[ \sum_{b\in \mathbb{Z}} \Fr(\mathsf{A})_{a,b} q_{(k,n)}(\mathsf{v})_b
=\sum_{b\in \mathbb{Z}}  \Fr(\mathsf{A})_{a,b}
(-1)^{(k-1)\frac{b-\overline{b}}{n}} 
\mathsf{v}_{\overline{b}}
\]
If $a=\pi(a)$, then the only non-zero terms occur when $b\in \{a,a-n\}$, in which case
\[ \sum_{b\in \mathbb{Z}} \Fr(\mathsf{A})_{a,b} q_{(k,n)}(\mathsf{v})_b
=
(-1)^{(k-1)\frac{a-\overline{a}}{n}} 
\mathsf{v}_{\overline{a}}
+
(-1)^{(k-1)\frac{a-n-\overline{a}}{n}} 
\mathsf{v}_{\overline{a}}
=(-1)^{(k-1) \frac{a-\overline{a}}{n}}
( \mathsf{v}_{\overline{a}} - \mathsf{v}_{\overline{a}}) = 0
\]
If $a<\pi(a)$, then the only non-zero terms occur when $b\in [a-n+1,a]$. Let $i$ be the unique integer with $a-n<in \leq a$. 
Then 
\begin{align*}
\sum_{b\in \mathbb{Z}} \Fr(\mathsf{A})_{a,b} q_{(k,n)}(\mathsf{v})_b
&=\sum_{b=a-n+1}^a  \Fr(\mathsf{A})_{a,b}
(-1)^{(k-1)\frac{b-\overline{b}}{n}} 
\mathsf{v}_{\overline{b}}\\
&=\sum_{b=a-n+1}^{in}  (\rt(\mathsf{A})_{\overline{a}}\cdot \mathsf{A}_{\overline{b}})
(-1)^{(k-1)i} 
\mathsf{v}_{\overline{b}}
+\sum_{b=in}^{a}  (-1)^{k-1}(\rt(\mathsf{A})_{\overline{a}}\cdot \mathsf{A}_{\overline{b}})
(-1)^{(k-1)(i-1)} 
\mathsf{v}_{\overline{b}} \\
&=(-1)^{(k-1)i} \sum_{b=a-n+1}^{an}  (\rt(\mathsf{A})_{\overline{a}}\cdot \mathsf{A}_{\overline{b}})
\mathsf{v}_{\overline{b}}
=(-1)^{(k-1)i} \sum_{\overline{b} =1}^{n}  (\rt(\mathsf{A})_{\overline{a}}\cdot \mathsf{A}_{\overline{b}})
\mathsf{v}_{\overline{b}} \\
&=  (-1)^{(k-1)i}  ( \rt(\mathsf{A})^\top \mathsf{A} \mathsf{v})_{\overline{a}} 
\end{align*}
Since $\mathsf{Av}=0$, this expression is $0$. Therefore, $\Fr(\mathsf{A}) q_{(k,n)}(\mathsf{v})$ is the infinite sequence of zeroes.

This establishes that the superperiodic extension $q_{(k,n)}(\ker(\mathsf{A}))$ of $\ker(\mathsf{A})$ is contained in $\ker(\Fr(\mathsf{A}))$; we show equality next. Recall that $\Fr(\mathsf{A})$ is a $\pi^\dagger$-prefrieze (by Lemma \ref{lemma: Fprefrieze}), and so the dimension of the kernel of $\Fr(\mathsf{A})$ equals the number of balls in $\pi^\dagger$ (by Lemma \ref{lemma: linrec}.4), which is $n-k$.  Since $\mathsf{A}$ is $k\times n$ with rank $k$, the dimension of $\mathrm{ker}(\mathsf{A})$ is $n-k$. Since $q_{(k,n)}:\k^n\rightarrow \k^\mathbb{Z}$ is an inclusion, $q_{(k,n)}(\ker(\mathsf{A}))$ has the same dimension as $\ker(\Fr(\mathsf{A}))$, and so the two spaces are equal.
\end{proof}

%
\vspace{0.5em}
\begin{thmA}
\label{thm: detpifrieze}
If $\mathsf{A}$ is a $\pi$-unimodular matrix, then $\Fr(\mathsf{A})$ is a $\pi^\dagger$-frieze.
\end{thmA}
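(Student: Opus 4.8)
The plan is to combine the two lemmas that have just been proven. The statement is that if $\mathsf{A}$ is $\pi$-unimodular, then $\Fr(\mathsf{A})$ is a $\pi^\dagger$-frieze. By Lemma \ref{lemma: Fprefrieze}, $\Fr(\mathsf{A})$ is already known to be a $\pi^\dagger$-prefrieze, so what remains is to verify the frieze and tameness conditions. Rather than checking these determinantal conditions directly, the strategy is to invoke the characterization of $\pi^\dagger$-friezes among $\pi^\dagger$-prefriezes by superperiodicity of the solutions to the associated linear recurrence, namely Theorem \ref{thm: juggleqp}.

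Concretely, first I would note that $\Fr(\mathsf{A})$ is a $\pi^\dagger$-prefrieze by Lemma \ref{lemma: Fprefrieze}, so $\Fr(\mathsf{A})\mathsf{x}=0$ is a bona fide linear recurrence of the required shape, and its solution space has dimension equal to the number of balls in $\pi^\dagger$, which is $n-k$ (by Lemma \ref{lemma: linrec}, part 4). Next, by Lemma \ref{lemma: qpkernel}, the kernel of $\Fr(\mathsf{A})$ is exactly $q_{(k,n)}(\ker(\mathsf{A}))$, the image under the superperiodic-extension map of the finite-dimensional kernel of $\mathsf{A}$. Every sequence of the form $q_{(k,n)}(\mathsf{v})$ satisfies, by the very definition of $q_{(k,n)}$, the identity $q_{(k,n)}(\mathsf{v})_{a+n} = (-1)^{k-1} q_{(k,n)}(\mathsf{v})_a$ for all $a\in\mathbb{Z}$. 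Since $\pi^\dagger$ has $n-k$ balls and $k = n - (n-k)$, writing $h' := n-k$ for the number of balls in $\pi^\dagger$ this reads $q_{(k,n)}(\mathsf{v})_{a+n} = (-1)^{n-h'-1}q_{(k,n)}(\mathsf{v})_a$, which is precisely the superperiodicity condition appearing in Theorem \ref{thm: juggleqp} applied to the juggling function $\pi^\dagger$. Therefore every solution to $\Fr(\mathsf{A})\mathsf{x}=0$ is superperiodic in the required sense, and Theorem \ref{thm: juggleqp} (the `if' direction, applied to the $\pi^\dagger$-prefrieze $\Fr(\mathsf{A})$) immediately gives that $\Fr(\mathsf{A})$ is a $\pi^\dagger$-frieze.

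The one bookkeeping point that needs care — and is the only place where anything could go wrong — is matching the sign exponent $n - h' - 1$ from Theorem \ref{thm: juggleqp} against the exponent $k-1$ built into $q_{(k,n)}$. This is just the identity $n - (n-k) - 1 = k-1$, so it is immediate, but I would state it explicitly so the reader sees that the conventions are consistent (and in particular that the period used in $q_{(k,n)}$, the period $n$ of $\pi$, agrees with the period of $\pi^\dagger$, which is also $n$). No genuine obstacle arises here: all the work has been front-loaded into Lemmas \ref{lemma: Fprefrieze} and \ref{lemma: qpkernel} and Theorem \ref{thm: juggleqp}, and the final proof is a two-line assembly. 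I would write it as: $\Fr(\mathsf{A})$ is a $\pi^\dagger$-prefrieze by Lemma \ref{lemma: Fprefrieze}; by Lemma \ref{lemma: qpkernel} its kernel is $q_{(k,n)}(\ker(\mathsf{A}))$, so every element $\mathsf{x}$ of the kernel satisfies $x_{a+n}=(-1)^{k-1}x_a = (-1)^{n-(n-k)-1}x_a$ for all $a$; since $n-k$ is the number of balls in $\pi^\dagger$, Theorem \ref{thm: juggleqp} applied to $\pi^\dagger$ shows $\Fr(\mathsf{A})$ is a $\pi^\dagger$-frieze. \qed
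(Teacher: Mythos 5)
Your proposal is correct and is essentially identical to the paper's own proof: Lemma \ref{lemma: Fprefrieze} gives the $\pi^\dagger$-prefrieze property, Lemma \ref{lemma: qpkernel} identifies the kernel with $q_{(k,n)}(\ker(\mathsf{A}))$, whose elements are superperiodic with the exponent $k-1 = n-(n-k)-1$ matching Theorem \ref{thm: juggleqp} for $\pi^\dagger$, and that theorem closes the argument. The explicit sign check you include is a harmless (and welcome) elaboration of what the paper leaves implicit.
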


\begin{proof}[Proof of Theorem \ref{thm: detpifrieze}]
By Lemma \ref{lemma: Fprefrieze}, $\Fr(\mathsf{A})$ is a $\pi^\dagger$-prefrieze, and by Lemma \ref{lemma: qpkernel}, it has superperiodic kernel. By Theorem \ref{thm: juggleqp}, $\Fr(\mathsf{A})$ is a $\pi^\dagger$-frieze.
%
%
\end{proof}

%


\subsection{Inverting $\Fr$ (Theorem \ref{thm: Frbijection})}

In this section, we show that every juggler's frieze can be constructed using $\Fr$. First, a useful formula relating minors of $\mathsf{A}$ to projections of $\ker(\mathsf{A})$.

\begin{lemma}\label{lemma: rankker}
Let $\mathsf{A}$ be a $k\times n$ matrix of rank $k$. Then for any subset $I\subset [n]$, 
\[ \mathrm{rank}(\mathsf{A}_I) = \dim(\mathrm{ker}(\mathsf{A})_{[n]\smallsetminus I}) + |I| -(n-k) \]
where $\mathrm{ker}(\mathsf{A})_{[n]\smallsetminus I}$ is the image of $\mathrm{ker}(\mathsf{A})$ under the restriction map $\k^n\rightarrow \k^{[n]\smallsetminus I}$.
\end{lemma}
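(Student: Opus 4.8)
The statement is a purely linear-algebraic identity about a rank-$k$ matrix $\mathsf{A}$ and relates the rank of a column-submatrix $\mathsf{A}_I$ to the dimension of the projection of $\ker(\mathsf{A})$ onto the complementary coordinates. The plan is to prove it by the rank–nullity theorem applied to a carefully chosen restriction of a carefully chosen map, rather than by any manipulation of determinants. Write $J := [n]\smallsetminus I$, and let $\pi_J\colon \Bbbk^n\to \Bbbk^{J}$ be the coordinate projection (and $\pi_I$ the projection to $\Bbbk^I$). I would consider the restriction $\pi_J|_{\ker(\mathsf{A})}\colon \ker(\mathsf{A})\to \Bbbk^{J}$. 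Its image is exactly $\ker(\mathsf{A})_{J}$ by definition, so it remains to identify its kernel.

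The key observation is that a vector $\mathsf{v}\in\ker(\mathsf{A})$ lies in $\ker(\pi_J|_{\ker(\mathsf{A})})$ iff $\mathsf{v}$ is supported on $I$, i.e.\ $\mathsf{v}=\iota_I(\mathsf{w})$ for some $\mathsf{w}\in\Bbbk^{I}$ (where $\iota_I\colon\Bbbk^I\hookrightarrow\Bbbk^n$ is the inclusion filling in zeros on $J$); and for such $\mathsf{v}$, the condition $\mathsf{A}\mathsf{v}=0$ is precisely $\mathsf{A}_I\mathsf{w}=0$. Hence $\ker(\pi_J|_{\ker(\mathsf{A})})\cong \ker(\mathsf{A}_I)$ as vector spaces. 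Applying rank–nullity to $\pi_J|_{\ker(\mathsf{A})}$ gives
\[
\dim\ker(\mathsf{A}) \;=\; \dim\ker(\mathsf{A}_I) \;+\; \dim\big(\ker(\mathsf{A})_{J}\big).
\]
Now substitute $\dim\ker(\mathsf{A}) = n-k$ (since $\mathsf{A}$ has rank $k$) and $\dim\ker(\mathsf{A}_I) = |I| - \operatorname{rank}(\mathsf{A}_I)$ (rank–nullity for the $k\times|I|$ matrix $\mathsf{A}_I$, whose domain is $\Bbbk^I$ of dimension $|I|$). Rearranging yields
\[
\operatorname{rank}(\mathsf{A}_I) \;=\; \dim\big(\ker(\mathsf{A})_{J}\big) \;+\; |I| \;-\; (n-k),
\]
which is the claimed identity with $J = [n]\smallsetminus I$.

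There is no real obstacle here; the only thing to be careful about is the bookkeeping of which space each map lives on — in particular distinguishing the abstract $\Bbbk^I$ (domain of $\mathsf{A}_I$) from the subspace $\iota_I(\Bbbk^I)\subset\Bbbk^n$, and checking that the isomorphism $\ker(\pi_J|_{\ker(\mathsf{A})})\cong\ker(\mathsf{A}_I)$ really is induced by $\iota_I$ and is a bijection. Once that identification is made cleanly, the rest is two invocations of rank–nullity and an arithmetic rearrangement, so I would present it compactly. (An alternative, essentially equivalent, route is to invoke the Gale-duality/positive-complement language already set up in the paper: if $\mathsf{A}^\pperp$ is a positive complement of $\mathsf{A}$, then $\ker(\mathsf{A})$ is the rowspan of $\mathsf{A}^\pperp$, $\ker(\mathsf{A})_{J}$ is the column span of $(\mathsf{A}^\pperp)_{J}$, and the identity becomes the standard relation $\operatorname{rank}(\mathsf{A}_I) + (n-k) = \operatorname{rank}((\mathsf{A}^\pperp)_{[n]\smallsetminus I}) + k$ between ranks of complementary submatrices of a matrix and its orthogonal complement — but the direct rank–nullity argument above is cleaner and self-contained.)
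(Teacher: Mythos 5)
Your argument is correct and is essentially the paper's own proof: the paper also identifies $\ker(\mathsf{A})_{[n]\smallsetminus I}$ with the quotient $\ker(\mathsf{A})/(\ker(\mathsf{A})\cap \k^I)$, notes $\ker(\mathsf{A})\cap\k^I\simeq\ker(\mathsf{A}_I)$, and concludes by rank--nullity. Your phrasing via the restricted projection $\pi_J|_{\ker(\mathsf{A})}$ is just the same computation written as a map rather than a quotient.
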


\begin{proof}
Since the restriction map $\k^n\rightarrow \k^{[n]\smallsetminus I}$ is the quotient map by the subspace $\k^I\subset \k^n$, 
\[ \mathrm{ker}(\mathsf{A})_{[n] \smallsetminus I} \simeq \frac{\mathrm{ker}(\mathsf{A} ) } { \mathrm{ker}(\mathsf{A} ) \cap \k^I} \]
Since $\mathrm{ker}(\mathsf{A} ) \cap \k^I\simeq \mathrm{ker}(\mathsf{A}_I)$, applying the Rank-Nullity Theorem gives the following
\[
\dim(\mathrm{ker}(\mathsf{A})_{[n] \smallsetminus I}) 
= \dim( \mathrm{ker}(\mathsf{A} ) ) - \dim ( \mathrm{ker}(\mathsf{A}_I ) ) 
= (n - \overbrace{\mathrm{rank}(\mathsf{A}}^{=k}) ) - ( |I| - \mathrm{rank}(\mathsf{A}_I) ) \qedhere
\]
%
\end{proof}

\begin{lemma}
\label{lemma: Frinv}
Let $\pi$ be an $n$-periodic juggling function with $k$-many balls and let $\C$ be a $\pi^\dagger$-frieze. If $\mathsf{A}$ is a $k\times n$ matrix such that, for all vectors $\mathsf{v}\in\k^n$, 
\[ \mathsf{A} \mathsf{v} = 0 \Leftrightarrow \C q_{(k,n)}(\mathsf{v}) = 0 \]
then there is a $\pi$-unimodular matrix $\mathsf{A}'$ with the same kernel as $\mathsf{A}$ and $\Fr(\mathsf{A}')=\C$.
\end{lemma}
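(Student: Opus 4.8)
The plan is to construct $\mathsf{A}'$ explicitly from the hypothesis that $\ker(\mathsf{A})$ and $\ker(\C)$ are matched by the superperiodic extension $q_{(k,n)}$, and then verify the two defining conditions of $\pi$-unimodularity (Definition \ref{defn: piunimodular}) together with the identity $\Fr(\mathsf{A}')=\C$. First I would normalize $\mathsf{A}$: since $\det(\mathsf{A}_{L_1})$ depends only on $\ker(\mathsf{A})$ up to the $\mathrm{GL}(k)$-action, and (as the hypothesis forces via Lemma \ref{lemma: rankker} applied to $I=\overline{L_1}$) the matrix $\mathsf{A}_{L_1}$ is invertible, I may replace $\mathsf{A}$ by $\mathsf{A}':=(\mathsf{A}_{L_1})^{-1}\mathsf{A}$; this has the same kernel and satisfies $\det(\mathsf{A}'_{L_1})=1$. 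To get $\det(\mathsf{A}'_{L_a})=1$ for \emph{all} $a$, I would use that successive landing schedules differ by swapping one column (property (2) of landing schedules): the hypothesis (again via Lemma \ref{lemma: rankker}, comparing $I=\overline{L_a}$ with $I=\overline{L_{a+1}}$ and using that $\C$ is a $\pi^\dagger$-prefrieze so its kernel has dimension $n-k$) shows each $\mathsf{A}'_{L_a}$ is invertible, and then a direct computation — essentially the content already appearing in the proof of Lemma \ref{lemma: Fprefrieze} — shows $\Fr(\mathsf{A}')_{\pi^\dagger(a),a} = \pm\det(\mathsf{A}'_{L_{\pi^{-1}(a)+1}})/\det(\mathsf{A}'_{L_{\pi^{-1}(a)}})$, which combined with the fact that $\C=\Fr(\mathsf{A}')$ (to be proven) has the correct entry $(-1)^{|S_{\pi^\dagger}(a,\pi^\dagger(a))|}$ pins down $\det(\mathsf{A}'_{L_a})=1$.

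The cleanest route, though, is probably to decouple these: establish $\Fr(\mathsf{A}')=\C$ first using only $\ker(\mathsf{A}')=\ker(\mathsf{A})$ and the normalization $\det(\mathsf{A}'_{L_1})=1$, and then deduce full $\pi$-unimodularity afterward. For $\Fr(\mathsf{A}')=\C$: by Lemma \ref{lemma: qpkernel}'s proof technique, the map $\mathsf{v}\mapsto q_{(k,n)}(\mathsf{v})$ carries $\ker(\mathsf{A}')$ into $\ker(\Fr(\mathsf{A}'))$ once $\mathsf{A}'_{L_a}$ is invertible (one does not actually need $\det=1$ for the \emph{vanishing}, only for the normalization of entries), and the hypothesis says it also carries $\ker(\mathsf{A}')$ onto $\ker(\C)$; since both $\Fr(\mathsf{A}')$ and $\C$ are $\pi^\dagger$-prefriezes (the former by Lemma \ref{lemma: Fprefrieze} — which only uses $\pi$-unimodularity, so I must be careful about the logical order here — the latter by hypothesis), and since a $\pi^\dagger$-prefrieze is determined by its kernel (Lemma \ref{lemma: linrec}, part \ref{lemma: linrec1}), the equality $\ker(\Fr(\mathsf{A}'))=\ker(\C)$ would give $\Fr(\mathsf{A}')=\C$.

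The main obstacle, and the thing to handle carefully, is exactly this circularity: Lemma \ref{lemma: Fprefrieze} (that $\Fr(\mathsf{A}')$ is a $\pi^\dagger$-prefrieze) is stated for $\pi$-unimodular $\mathsf{A}'$, so I cannot invoke it until I know $\mathsf{A}'$ is $\pi$-unimodular, yet the cleanest proof of $\pi$-unimodularity seems to want $\Fr(\mathsf{A}')=\C$ in hand. I would break the loop by proving directly — without assuming $\pi$-unimodularity — that if all $\mathsf{A}'_{L_a}$ are invertible then $\Fr(\mathsf{A}')$ (with the determinantal formula \eqref{eq: detformula}, interpreted via Corollary \ref{coro: twistcons} using the twist, which only needs $\det(\mathsf{A}'_{L_a})\neq 0$) is lower unitriangular with the vanishing entries prescribed by a $\pi^\dagger$-prefrieze, i.e.\ it satisfies the second case of \eqref{eq: prefrieze}; the vanishing dot products $\rt(\mathsf{A}')_a\cdot\mathsf{A}'_b=0$ in the stated range follow from \cite[Lemma 6.4]{MS17} applied to the non-unimodular twist, or from the footnote's direct argument that $\det(\mathsf{A}'_{L_a\smallsetminus\{a\}\cup\{b\}})=0$. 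Once I know $\Fr(\mathsf{A}')$ is lower unitriangular and satisfies the vanishing constraints, and $\ker(\Fr(\mathsf{A}'))\supseteq q_{(k,n)}(\ker(\mathsf{A}'))=q_{(k,n)}(\ker(\C))$ which has dimension $n-k$ matching $\dim\ker(\C)$ — and $\dim\ker(\Fr(\mathsf{A}'))\le n-k$ since $\Fr(\mathsf{A}')$ still has $k$ rows of $1$s forcing at least $k$ independent linear constraints (or: the kernel of a lower-unitriangular matrix with a row of $1$s at each coloop-spacing is controlled) — I get $\ker(\Fr(\mathsf{A}'))=\ker(\C)$, hence $\Fr(\mathsf{A}')=\C$ by part \ref{lemma: linrec1} of Lemma \ref{lemma: linrec} (which applies once I know $\Fr(\mathsf{A}')$ is a $\pi^\dagger$-prefrieze — and now that its entries equal $\C$'s, which is a $\pi^\dagger$-prefrieze, this is automatic). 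Finally, $\pi$-unimodularity of $\mathsf{A}'$: the rank condition $\mathrm{rank}(\mathsf{A}'_{[a,b]})\le |L_a\cap[a,b]|$ follows from Lemma \ref{lemma: rankker} together with the fact that $\ker(\mathsf{A}')=\ker(\C)$ and the structure of $\ker$ of a $\pi^\dagger$-prefrieze (the solution matrix $\sol(\C)$ of Lemma \ref{lemma: linrec} has its support governed by $\pi^\dagger$, which translates back to the required bound on $\ker(\mathsf{A}')_{[n]\smallsetminus I}$); and $\det(\mathsf{A}'_{L_a})=1$ follows now that $\Fr(\mathsf{A}')=\C$ has the correct nonzero entries $\C_{\pi^\dagger(a),a}=(-1)^{|S_{\pi^\dagger}(a,\pi^\dagger(a))|}$ combined with the formula $\Fr(\mathsf{A}')_{\pi^\dagger(a),a}=(-1)^{|S_{\pi^\dagger}(a,\pi^\dagger(a))|}\det(\mathsf{A}'_{L_{\pi^\dagger(a)}\smallsetminus\{\pi^\dagger(a)\}\cup\{a\}})$ from \eqref{eq: detformula} and the Cramer's-rule identity $\det(\mathsf{A}'_{L_{\pi^{-1}(a)}\smallsetminus\{\pi^{-1}(a)\}\cup\{a\}})=\det(\mathsf{A}'_{L_{\pi^{-1}(a)+1}})$, so inductively all these determinants equal $\det(\mathsf{A}'_{L_1})=1$.
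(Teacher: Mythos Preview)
Your approach is essentially the paper's, and you have correctly identified all the key ingredients: the vanishing constraints via \cite[Lemma~6.4]{MS17}, the kernel computation via the twist formula, uniqueness of a recurrence matrix with given kernel, and recovery of the determinant conditions from the entries $\C_{\pi^\dagger(b),b}$. The differences are ordering, and how the circularity is broken.

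The paper proves the rank condition \emph{first}: using Lemma~\ref{lemma: rankker} together with \cite[Prop.~7.12]{MulRes1} (which counts $\dim(\ker(\C)_{[b+1,a+n-1]})$ as the number of $\pi^\dagger$-balls in that interval) yields $\mathrm{rank}(\mathsf{A}_{\overline{[a,b]}}) = |L_a\cap[a,b]|$ with equality, for the original $\mathsf{A}$. This gives invertibility of every $\mathsf{A}_{L_a}$ cleanly and in one stroke, rather than bootstrapping from $L_1$. The paper also normalizes at the \emph{end}: it first shows all $\det(\mathsf{A}_{L_a})$ are equal (via the $\pm1$ entries of $\C$, exactly as you describe), and only then multiplies by a single $\mathsf{G}\in\mathrm{GL}(k)$.

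The one genuine soft spot in your write-up is the appeal to Lemma~\ref{lemma: linrec}.\ref{lemma: linrec1}: as stated there, it asks that both matrices be $\pi^\dagger$-prefriezes, and your parenthetical justification (``now that its entries equal $\C$'s\ldots'') is indeed circular. The paper resolves this by going one level down: Lemma~\ref{lemma: linrec}.\ref{lemma: linrec1} is a translation of \cite[Theorem~6.5]{MulRes1}, which is stated for \emph{reduced recurrence matrices} --- and those require only the vanishing part of Equation~\eqref{eq: prefrieze}, not the $\pm1$ entries. So once you have shown (as you do) that $\Fr(\mathsf{A}')$ is lower unitriangular with the correct vanishing pattern, it is already a reduced recurrence matrix of shape $(\pi^\dagger)^{-1}$, and \cite[Theorem~6.5]{MulRes1} applies directly to give $\Fr(\mathsf{A}')=\C$ without ever needing to know $\Fr(\mathsf{A}')$ is a $\pi^\dagger$-prefrieze. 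This also obviates your hand-wavy dimension bound $\dim\ker(\Fr(\mathsf{A}'))\le n-k$.
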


\begin{proof}
Let $[a,b]\subset \mathbb{Z}$ be any interval with $b-a<n$, and let $\overline{[a,b]} \subset [n]$ denote the residues mod $n$. By Lemma \ref{lemma: rankker},
\[ \mathrm{rank}(\mathsf{A}_{\overline{[a,b]}}) = \dim(\ker(\mathsf{A})_{[n] \smallsetminus \overline{[a,b]}}) + (b-a+1) - (n-k) \]
Since $[n] \smallsetminus \overline{[a,b]} = \overline{[b+1,a+n-1]}$, 
\[ \dim(\ker(\mathsf{A})_{[n] \smallsetminus \overline{[a,b]}}) 
= \dim(\ker(\mathsf{A})_{\overline{[b+1,a+n-1]}})
= \dim(\mathrm{ker}(\C)_{[b+1,a+n-1]})
\]
By \cite[Prop.~7.12]{MulRes1}, this is equal to the number of $\pi^\dagger$-balls in the interval $[b+1,a+n-1]$; i.e.
\begin{align*}
\dim(\mathrm{ker}(\C)_{[b+1,a+n-1]}) 
&= |\{ i \in\mathbb{Z} \mid b< i < a+n \text{ and }a+ n \leq \pi^{\dagger}(i)\}| \\
&= |\{ i \in\mathbb{Z} \mid b< i < a+n \text{ and }a \leq \pi^{-1}(i) \} | \\
&= (a+n-b -1) - |\{ i \in\mathbb{Z} \mid b< i < a+n \text{ and }a > \pi^{-1}(i) \} | \\
&= (a+n-b -1) - | L_a \cap [b+1,a+n-1] | \\
&= (a+n-b -1) - (| L_a |  - | L_a \cap [a,b]|)
\end{align*}
Since $|L_a|=k$, the previous formulas yield that $\mathrm{rank}(\mathsf{A}_{\overline{[a,b]}}) = | L_a \cap [a,b]| $; that is, $\mathsf{A}$ satisfies the second condition of being $\pi$-unimodular.


Define a $\mathbb{Z}\times \mathbb{Z}$-matrix $\Fr(\mathsf{A})$ via the formula (identical to \ref{eq: detformula} but without assuming $\pi$-unimodular)
\[ \Fr(\mathsf{A})_{a,b} := \begin{cases}
\rt(\mathsf{A})_{\overline{a}} \cdot \mathsf{A}_{\overline{b}} & \text{if $b\leq a < b+n$, $\overline{b}\leq \overline{a}$, and $a<\pi(a)$}, \\
(-1)^{k-1}\rt(\mathsf{A})_{\overline{a}} \cdot \mathsf{A}_{\overline{b}} & \text{if $b\leq a < b+n$, $\overline{b}> \overline{a}$, and $a<\pi(a)$}, \\
1 & \text{if }\pi(a)=a=b, \\
(-1)^{k} & \text{if }\pi(a)=a=b+n, \\
0 & \text{otherwise}.
\end{cases} \]
If $a> \pi^\dagger (b)$, then either $a\geq b+n$ or $a<b+n$. In the first case, $\Fr(\mathsf{A})_{a,b}$ is defined to be $0$. In the second case, $\pi^{-1}(b)< a-n < b$, 
and so $\overline{b}\in \overline{L_a}\smallsetminus \{\overline{a}\}$. Therefore, $\Fr(\mathsf{A})_{a,b} = \pm(\tau(\mathsf{A})_{\overline{a}}\cdot \mathsf{A}_{\overline{b}}) = 0$.
If $b<(\pi^\dagger)^{-1}(a)$, then  either $a\geq b+n$ or $a<b+n$. In the first case, $\Fr(\mathsf{A})_{a,b}$ is defined to be $0$. In the second case, $a < b+n< \pi(a)$,  and so $\tau(\mathsf{A})_{\overline{a}}\cdot \mathsf{A}_{\overline{b}}=0$ by \cite[Lemma~6.4]{MS17}, and $\Fr(\mathsf{A})_{a,b} =0$.

Therefore, $\Fr(\mathsf{A})$ is a \emph{reduced recurrence matrix} in the sense of \cite{MulRes1}. Since $\C$ is another reduced recurrence matrix with the same kernel, $\Fr(\mathsf{A})=\C$ by \cite[Theorem~6.5]{MulRes1}. Then for any $b\in \mathbb{Z}$,
\[
(-1)^{|S_\dagger(b,\pi^\dagger(b))|}  
= \C_{\pi^\dagger(b),b} 
= \Fr(\mathsf{A})_{\pi^\dagger(b),b}
\]
Assuming $\pi^\dagger(b)<b+n$, then $\pi(\pi^\dagger(b)) > \pi^\dagger(b)$ and $\overline{\pi^\dagger(b)} = \pi^{-1}(\overline{b})$ so 
\[ 
(-1)^{|S_\dagger(b,\pi^\dagger(b))|}  
=
\begin{cases}
\rt(\mathsf{A})_{\pi^\dagger(\overline{b})} \cdot \mathsf{A}_{\overline{b}} & \text{if $\overline{b}\leq \overline{a}$},\\
(-1)^{k-1}\rt(\mathsf{A})_{\pi^\dagger(\overline{b})} \cdot \mathsf{A}_{\overline{b}} & \text{if $\overline{b}> \overline{a}$}.
\end{cases}
\]
By an analogous computation to the proof of Lemma \ref{lemma: taudotproduct},
\[ 
\rt(\mathsf{A})_{\pi^\dagger(\overline{b})} \cdot \mathsf{A}_{\overline{b}}
=
(-1)^{|S_\dagger(b,\pi^\dagger(b))|}  
\begin{cases}
\frac{\det(\mathsf{A}_{L_{b-1}})}{\det(\mathsf{A}_{L_b})} & \text{if $\overline{b}\leq \overline{a}$},\\
(-1)^{k-1} \frac{\det(\mathsf{A}_{L_{b-1}})}{\det(\mathsf{A}_{L_b})} & \text{if $\overline{b}> \overline{a}$}.
\end{cases}
\]
Therefore, $\det(\mathsf{A}_{L_{b-1}})=\det(\mathsf{A}_{L_b})$. In the missing case when $\pi^\dagger(b) = b+n$, $L_{b-1}=L_{b}$ and so $\det(\mathsf{A}_{L_{b-1}})=\det(\mathsf{A}_{L_b})$. It follows that $\det(\mathsf{A}_{L_a})=\det(\mathsf{A}_{L_b})$ for all $a,b\in \mathbb{Z}$. Choose any $k\times k$ matrix $\mathsf{G}$ with $\det(\mathsf{G})= \det(\mathsf{A}_{L_a})^{-1}$ and set $\mathsf{A}':=\mathsf{GA}$. Then $\mathsf{A}'$ is $\pi$-unimodular and $\ker(\mathsf{A}')=\ker(\mathsf{A})$.
%
\end{proof}

\begin{thmA}
\label{thm: Frbijection}
For each juggling function $\pi$ with $k$-many balls,
the map $\Fr$ descends to a bijection
\[ \mathrm{SL}(k)\backslash \{ \text{$\pi$-unimodular matrices}\} \xrightarrow{\Fr} \{\text{$\pi^\dagger$-friezes}\} \]
\end{thmA}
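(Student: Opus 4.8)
The plan is to combine three ingredients already assembled in the paper: the determinantal/twist construction $\Fr$ (Theorem~\ref{thm: detpifrieze}), the invariance of $\Fr$ under left $\mathrm{SL}(k)$-multiplication (which follows since the formula \eqref{eq: detformula} depends only on $k\times k$-minors of $\mathsf A$), and the "inversion" Lemma~\ref{lemma: Frinv}. First I would observe that $\Fr$ is well-defined on $\mathrm{SL}(k)$-orbits: if $\mathsf G\in\mathrm{SL}(k)$ and $\mathsf A$ is $\pi$-unimodular, then $\mathsf{GA}$ is $\pi$-unimodular (each minor $\det((\mathsf{GA})_I)=\det(\mathsf G)\det(\mathsf A_I)=\det(\mathsf A_I)$, and the rank condition is unchanged), and the entries of $\Fr(\mathsf{GA})$ coincide with those of $\Fr(\mathsf A)$ by the same Cauchy--Binet-type identity; Theorem~\ref{thm: detpifrieze} guarantees the image lands in $\{\pi^\dagger\text{-friezes}\}$. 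So $\Fr$ descends to a map on the orbit space.

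Next I would prove surjectivity. Let $\C$ be a $\pi^\dagger$-frieze. By Lemma~\ref{lemma: linrec}.4 applied to the $\pi^\dagger$-prefrieze $\C$, the solution space $\ker(\C)$ has dimension $n-k$. By Theorem~\ref{thm: juggleqp}, every solution $x$ to $\C x=0$ satisfies $x_{a+n}=(-1)^{k-1}x_a$, i.e.\ is the superperiodic extension $q_{(k,n)}(\mathsf v)$ of its restriction $\mathsf v$ to $[n]$; conversely restriction to $[n]$ is injective on this space. Define $W:=\{\mathsf v\in\k^n : q_{(k,n)}(\mathsf v)\in\ker(\C)\}$, a subspace of $\k^n$ of dimension $n-k$, and let $\mathsf A$ be any $k\times n$ matrix with $\ker(\mathsf A)=W$ (so $\mathsf A$ has rank $k$ and $\mathsf{Av}=0\iff \C q_{(k,n)}(\mathsf v)=0$). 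Lemma~\ref{lemma: Frinv} then yields a $\pi$-unimodular $\mathsf A'$ with the same kernel and $\Fr(\mathsf A')=\C$. Hence $\Fr$ is surjective on orbit spaces.

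For injectivity, suppose $\mathsf A,\mathsf A'$ are $\pi$-unimodular with $\Fr(\mathsf A)=\Fr(\mathsf A')=:\C$. By Lemma~\ref{lemma: qpkernel}, $\ker(\mathsf A)=\{\mathsf v : \C q_{(k,n)}(\mathsf v)=0\}=\ker(\mathsf A')$ (both equal the preimage under $q_{(k,n)}$ of $\ker(\C)$). Two rank-$k$ matrices of size $k\times n$ with the same kernel differ by left multiplication by some $\mathsf G\in\mathrm{GL}(k)$, namely $\mathsf A'=\mathsf{GA}$. Then $\pi$-unimodularity of both forces $1=\det(\mathsf A'_{L_a})=\det(\mathsf G)\det(\mathsf A_{L_a})=\det(\mathsf G)$, so $\mathsf G\in\mathrm{SL}(k)$ and $\mathsf A,\mathsf A'$ lie in the same orbit. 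This completes the bijection. The only step requiring genuine work is surjectivity, but the heavy lifting there is entirely delegated to Lemma~\ref{lemma: Frinv} (whose proof in turn leans on \cite{MulRes1}); in the write-up the main thing to be careful about is packaging the dimension count and the superperiodicity of $\ker(\C)$ so that the hypothesis $\mathsf{Av}=0\iff\C q_{(k,n)}(\mathsf v)=0$ of Lemma~\ref{lemma: Frinv} is cleanly verified for the chosen $\mathsf A$.

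\begin{proof}
First, $\Fr$ is constant on left $\mathrm{SL}(k)$-orbits of $\pi$-unimodular matrices. Indeed, if $\mathsf G\in\mathrm{SL}(k)$ and $\mathsf A$ is $\pi$-unimodular, then for every index set $I$ we have $\det((\mathsf{GA})_I)=\det(\mathsf G)\det(\mathsf A_I)=\det(\mathsf A_I)$, so $\mathsf{GA}$ is again $\pi$-unimodular (the rank condition in Definition~\ref{defn: piunimodular} is also preserved), and formula \eqref{eq: detformula} shows $\Fr(\mathsf{GA})_{a,b}=\Fr(\mathsf A)_{a,b}$ for all $a,b$. By Theorem~\ref{thm: detpifrieze}, $\Fr$ descends to a map from $\mathrm{SL}(k)\backslash\{\pi\text{-unimodular matrices}\}$ to $\{\pi^\dagger\text{-friezes}\}$.

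\emph{Surjectivity.} Let $\C$ be a $\pi^\dagger$-frieze. By Lemma~\ref{lemma: linrec}.4 (applied to $\pi^\dagger$), $\ker(\C)$ has dimension equal to the number of balls in $\pi^\dagger$, which is $n-k$. By Theorem~\ref{thm: juggleqp}, every $x\in\ker(\C)$ satisfies $x_{a+n}=(-1)^{n-(n-k)-1}x_a=(-1)^{k-1}x_a$ for all $a$, so $x=q_{(k,n)}(\mathsf v)$ where $\mathsf v\in\k^n$ is the restriction of $x$ to $[n]$; conversely $q_{(k,n)}$ is injective. Hence $W:=\{\mathsf v\in\k^n:q_{(k,n)}(\mathsf v)\in\ker(\C)\}$ is an $(n-k)$-dimensional subspace of $\k^n$, and $q_{(k,n)}$ restricts to an isomorphism $W\xrightarrow{\sim}\ker(\C)$. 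Choose any $k\times n$ matrix $\mathsf A$ of rank $k$ with $\ker(\mathsf A)=W$; then for all $\mathsf v\in\k^n$ we have $\mathsf{Av}=0\iff\mathsf v\in W\iff\C q_{(k,n)}(\mathsf v)=0$. By Lemma~\ref{lemma: Frinv}, there is a $\pi$-unimodular matrix $\mathsf A'$ with $\ker(\mathsf A')=\ker(\mathsf A)$ and $\Fr(\mathsf A')=\C$. Thus $\C$ lies in the image of $\Fr$.

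\emph{Injectivity.} Suppose $\mathsf A,\mathsf A'$ are $\pi$-unimodular with $\Fr(\mathsf A)=\Fr(\mathsf A')=:\C$. By Lemma~\ref{lemma: qpkernel}, $\mathsf{Av}=0\iff\C q_{(k,n)}(\mathsf v)=0\iff\mathsf A'\mathsf v=0$, so $\ker(\mathsf A)=\ker(\mathsf A')$. Two $k\times n$ matrices of rank $k$ with the same kernel have the same rowspan, hence $\mathsf A'=\mathsf{GA}$ for some $\mathsf G\in\mathrm{GL}(k)$. For any $a$, $\pi$-unimodularity gives $1=\det(\mathsf A'_{L_a})=\det(\mathsf G)\det(\mathsf A_{L_a})=\det(\mathsf G)$, so $\mathsf G\in\mathrm{SL}(k)$. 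Therefore $\mathsf A$ and $\mathsf A'$ represent the same class, and $\Fr$ is injective on $\mathrm{SL}(k)$-orbits.

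Combining the two displays, $\Fr:\mathrm{SL}(k)\backslash\{\pi\text{-unimodular matrices}\}\to\{\pi^\dagger\text{-friezes}\}$ is a bijection.
\end{proof}
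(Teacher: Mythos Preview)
Your proof is correct and follows essentially the same route as the paper's: surjectivity via Lemma~\ref{lemma: Frinv} and injectivity via Lemma~\ref{lemma: qpkernel} plus the $\det(\mathsf A_{L_a})=1$ constraint. The only difference is cosmetic: the paper's proof simply asserts that Lemma~\ref{lemma: Frinv} produces a preimage, whereas you explicitly construct the input matrix $\mathsf A$ (via the dimension count from Lemma~\ref{lemma: linrec}.4 and the superperiodicity from Theorem~\ref{thm: juggleqp}) needed to satisfy that lemma's hypothesis, and you also spell out the well-definedness on $\mathrm{SL}(k)$-orbits that the paper handles in the surrounding text rather than in the proof itself.
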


\begin{proof}
Lemma \ref{lemma: Frinv} gives an explicit preimage of each $\pi^\dagger$-frieze, so $\Fr$ is surjective.


Next, we show that $\Fr$ is injective. Let $\mathsf{A}$ and $\mathsf{A}'$ be $\pi$-unimodular matrices with $\Fr(\mathsf{A}) = \Fr(\mathsf{A}')$.
Then $\mathrm{ker}(\Fr(\mathsf{A})) = \mathrm{ker}(\Fr(\mathsf{A}'))$, and so Lemma \ref{lemma: qpkernel} implies $\mathrm{ker}(\mathsf{A})=\mathrm{ker}(\mathsf{A}')$. Then there is an invertible $k\times k$-matrix $\mathsf{G}$ such that $\mathsf{GA}=\mathsf{A}'$.
Choosing any $a$,
\[ \det( \mathsf{A}'_{L_a}) = \det( (\mathsf{GA})_{L_a}) = \det(\mathsf{G}) \det( \mathsf{A}_{L_a}) \]
By $\pi$-unimodularity, $\det(\mathsf{G})=1$, and so $\mathsf{A}$ and $\mathsf{A}'$ are in the same $\mathrm{SL}(k)$-orbit.
\end{proof}

\vspace{0.5em}

\subsection{Duality and the twist (Theorem \ref{thm: dualities})}

In this section, we demonstrate the relation between the twist, the positive complement, and the dual of a frieze. 

\newpage

\begin{prop}
\label{prop: twistduality}
Let $\mathsf{A}$ be $\pi$-unimodular, and let $\mathsf{A}^\ddagger$ be a positive complement of $\mathsf{A}$. Then 
$\tau(\mathsf{A})$ is a positive complement to $\tau^{-1}(\mathsf{A}^\ddagger)$.
\end{prop}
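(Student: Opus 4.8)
The plan is to convert the statement into a single orthogonality identity and then recognize that identity as the compatibility of the twist with Grassmann (positive-complement) duality.

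First I would reduce to a statement about row spans. Since $\tau(\mathsf{A})$ is $\pi$-unimodular (the twist preserves $\pi$-unimodularity), and since the genuine positive complement of the $\pi^\dagger$-unimodular matrix $\tau^{-1}(\mathsf{A}^\ddagger)$ is again $\pi$-unimodular, both candidates have all ``frozen'' minors $\det(\cdot_{L_a})$ equal to $1$; hence two such matrices with the same row span differ by $\mathrm{SL}(k)$ on the left, not merely by $\mathrm{GL}(k)$. So it suffices to show that $\mathrm{rowspan}(\tau(\mathsf{A}))$ agrees with the row span of the positive complement of $\tau^{-1}(\mathsf{A}^\ddagger)$. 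Unwinding the definition of a positive complement — its rows span $\mathsf{E}\cdot W^{\perp}$, where $W$ is the relevant row span and $\mathsf{E}=\mathrm{diag}((-1)^{i})_{i}$ negates the odd-indexed columns — this is equivalent, after a dimension count (the two row spans have dimensions $k$ and $n-k$, summing to $n$), to the single bilinear identity
\[
\tau(\mathsf{A})\,\mathsf{E}\,\bigl(\tau^{-1}(\mathsf{A}^\ddagger)\bigr)^{\!\top}=0 .
\]

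Writing $V:=\mathrm{rowspan}(\mathsf{A})$ and $V^{\perp_{\mathsf{E}}}:=\mathsf{E}\cdot V^{\perp}=\mathrm{rowspan}(\mathsf{A}^\ddagger)$, this identity reads $\tau^{-1}(V^{\perp_{\mathsf{E}}})=(\tau(V))^{\perp_{\mathsf{E}}}$; that is, the Grassmann complement $\ddagger$ conjugates the right twist $\tau$ into the left twist $\tau^{-1}$. This is a statement purely about twists of positroid varieties, and I would deduce it from the twist calculus of \cite{MS17}, after matching the sign conventions ($\mathsf{E}$ against the combinatorial signs $(-1)^{|S_{\pi^\dagger}(\cdot,\cdot)|}$ and against the orientation of the landing schedules $L_a$). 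Alternatively one argues directly: when $a<\pi(a)$, the defining equations $\tau(\mathsf{A})_a\cdot\mathsf{A}_b=\delta_{ab}$ for $b\in L_a$ exhibit $\tau(\mathsf{A})_a$ as the generalized cross product of the columns $\{\mathsf{A}_b : b\in L_a\smallsetminus\{a\}\}$, normalized by $\det(\mathsf{A}_{L_a})=1$ (while $\tau(\mathsf{A})_a=0$ when $\pi(a)=a$), and $\tau^{-1}(\mathsf{A}^\ddagger)_a$ is pinned down analogously by dot products against $\mathsf{A}^\ddagger_b$ for $b\in L^{\pi^\dagger}_a$. Expanding each entry of $\tau(\mathsf{A})\,\mathsf{E}\,(\tau^{-1}(\mathsf{A}^\ddagger))^{\top}$ as an alternating sum of products of a Plücker coordinate of $\mathsf{A}$ with a complementary minor, and feeding in the defining relation $\det(\mathsf{A}_I)=\det(\mathsf{A}^\ddagger_{[n]\smallsetminus I})$, one can make the sum telescope to $0$ by a Plücker (Laplace-expansion) identity.

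The main obstacle is not the overall shape of the argument but the signs. Matching $\mathsf{E}=\mathrm{diag}((-1)^{i})$ with the signs built into the definitions of $\pi$-prefriezes, of positive complements, and of landing schedules — and, if one does not simply invoke \cite{MS17}, also fixing the cyclic orderings inside the cross-product/Plücker computation for the \emph{left} twist — is precisely the bookkeeping the paper warns about at the start of this section, and it is where essentially all of the work lies; conceptually the proposition only says that the positive complement interchanges the left and right twists.
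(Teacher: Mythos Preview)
Your reduction is sound: passing to row spans via $\pi$-unimodularity of $\tau(\mathsf{A})$ and $\tau^{-1}(\mathsf{A}^\ddagger)$ is correct, and the reformulation as the single orthogonality identity $\tau(\mathsf{A})\,\mathsf{E}\,(\tau^{-1}(\mathsf{A}^\ddagger))^{\top}=0$ (equivalently, $\ddagger$ conjugates $\tau$ into $\tau^{-1}$ on positroid varieties) is exactly the right target. But that identity \emph{is} the proposition, and you do not actually prove it. Your option (a) --- ``deduce it from the twist calculus of \cite{MS17}'' --- is not a citation: no such compatibility is stated there ready-made, so you would still have to extract it. Your option (b) --- expand each entry as a cross-product, pair complementary minors via $\det(\mathsf{A}_I)=\det(\mathsf{A}^\ddagger_{[n]\smallsetminus I})$, and telescope by a Pl\"ucker relation --- is plausible in outline, but you yourself concede that the sign bookkeeping (matching $\mathsf{E}$, the $(-1)^{|S_{\pi^\dagger}(\cdot,\cdot)|}$ signs, and the cyclic ordering of $L_a$ versus $L^{\pi^\dagger}_a$) is where all the work lies and is not done. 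As written, this is a strategy, not a proof.

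The paper takes a route that sidesteps the sign chase entirely. It works on the dense locus parametrized by edge weightings of a reduced plabic graph $\Gamma$ for $\pi$, and invokes \cite[Theorem~7.1]{MS17} twice: once to identify the source-labelled face Pl\"ucker coordinates of $\tau(\mathsf{A})$ with the downstream face weights $\overrightarrow{\mathbb{M}}(w)$, and once (on the color-swapped graph $\Gamma^\dagger$, whose boundary measurement gives $\mathsf{A}^\ddagger$) to identify the target-labelled face Pl\"ucker coordinates of $\tau^{-1}(\mathsf{A}^\ddagger)$ with the upstream face weights $\overleftarrow{\mathbb{M}}^\dagger(w)$. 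Since swapping colors turns downstream wedges into upstream wedges and turns source labels into the complements of target labels, the two face-coordinate systems agree, forcing $\tau(\mathsf{A})$ and a positive complement of $\tau^{-1}(\mathsf{A}^\ddagger)$ to have the same row span. Continuity then extends the statement from this dense locus to the whole positroid variety. If you want to salvage your direct Pl\"ucker approach, you would need to actually carry out the Laplace expansion with all the cyclic-order signs pinned down; otherwise, the plabic-graph argument is the efficient path.
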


\noindent Equivalently, we may write $\tau(\mathsf{A})^\ddagger \equiv \tau^{-1}(\mathsf{A}^\ddagger)$ (modulo left multiplication by $\mathrm{SL}(k)$).

\begin{proof}
\emph{This proof uses notation and terminology from \cite{MS17} that has not been introduced and won't be used elsewhere.}
%
%
Consider a reduced plabic graph $\Gamma$ with juggling function $\pi$, and assume there is an edge weighting $w\in (\k^\times)^{E}$ of $\Gamma$ whose image $\mathbb{D}(w)$ under the boundary measurement map is the rowspan of $\mathsf{A}$. By \cite[Theorem~7.1]{MS17},
\[ \overset{\bullet \rightarrow}{\mathbb{F}}(\tau(\mathsf{A})) = \overset{\bullet \rightarrow}{\mathbb{F}}\tau\mathbb{D}(w) = \overrightarrow{\mathbb{M}}(w) \]
where $\overset{\bullet \rightarrow}{\mathbb{F}}$ denote the source-labeled Pl\"ucker coordinates of $\Gamma$ and $\overrightarrow{\mathbb{M}}(w)$ is the face weighting of $\Gamma$ determined by the downstream wedges of the edges in $w$.

Let $\Gamma^{\dagger}$ denote the reduced plabic graph obtained by swapping the colors of the vertices of $\Gamma$. The edge weighting $w$ may be be identified with an edge weighting of $\Gamma^\dagger$, and the image $\mathbb{D}^\dagger(w)$ under the boundary measurement map of $\Gamma^\dagger$ is the rowspan of $\mathsf{A}^\ddagger$. By \cite[Theorem~7.1]{MS17},
\[ \overset{\bullet \leftarrow}{\mathbb{F}}^\dagger(\tau^{-1}(\mathsf{A}^\ddagger)) = \overset{\bullet \leftarrow}{\mathbb{F}}^\dagger\tau^{-1}\mathbb{D}^\dagger(w) = \overleftarrow{\mathbb{M}}^\dagger(w) \]
where $\overset{\bullet \leftarrow}{\mathbb{F}}^\dagger$ denote the target-labeled Pl\"ucker coordinates of $\Gamma^\dagger$ and $\overleftarrow{\mathbb{M}}^\dagger(w)$ is the face weighting of $\Gamma^\dagger$ determined by the upstream wedges of the edges in $w$. Since the upstream wedges in $\Gamma^\dagger$ are the downstream wedges in $\Gamma$, $\overleftarrow{\mathbb{M}}^\dagger(w) = \overrightarrow{\mathbb{M}}(w)$ and so
\[ \overset{\bullet \rightarrow}{\mathbb{F}}(\tau(\mathsf{A})) =  \overset{\bullet \leftarrow}{\mathbb{F}}^\dagger(\tau^{-1}(\mathsf{A}^\ddagger)) \] 
Since the target labellings of $\Gamma^\dagger$ are the complements of the source labellings of $\Gamma$, 
\[ \overset{\bullet \leftarrow}{\mathbb{F}}^\dagger(\tau^{-1}(\mathsf{A}^\ddagger)) = \overset{\bullet \rightarrow}{\mathbb{F}}(\tau^{-1}(\mathsf{A}^\ddagger)^\ddagger)  \] 
where $\tau^{-1}(\mathsf{A}^\ddagger)^\ddagger$ is a positive complement to $\tau^{-1}(\mathsf{A}^\ddagger)$. Since $\overset{\bullet \rightarrow}{\mathbb{F}}$ is bijective on the image of $\tau\circ \mathbb{D}$ \cite[Theorem~7.1]{MS17}, $\tau(\mathsf{A})$ is a positive complement of $\tau^{-1}(\mathsf{A}^\ddagger)$.


This verifies the proposition when the rowspan of $\mathsf{A}$ is in the image of the boundary measurement map of $\Gamma$. By \cite[Prop.~7.6]{MS17}, this set is dense in the entire positroid variety, and so the proposition holds in general by continuity.
\end{proof}

\begin{lemma}
\label{lemma: tilingentries}
If $\mathsf{A}$ is a $\pi$-unimodular $k\times n$-matrix, then for all $a,b\in \mathbb{Z}$, 
\[ \mathrm{Tiling}(\Fr(\mathsf{A}))_{a,b} = (-1)^{a+b+(k-1)\frac{a+b-\overline{a}-\overline{b}}{n}} \tau (\mathsf{A})_{\overline{a}} \cdot \mathsf{A}_{\overline{b}}  \]
\end{lemma}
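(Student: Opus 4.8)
The plan is to unfold the definition of $\mathrm{Tiling}$ applied to $\Fr(\mathsf{A})$ and recognize each summand via the twist-dot-product formula. Recall that $\Fr(\mathsf{A})$ is a $\pi^\dagger$-frieze (Theorem~\ref{thm: detpifrieze}), so it is in particular a $\pi^\dagger$-prefrieze, and $\pi^\dagger$ has $n-k$ balls; hence the tiling is formed by the rule $\mathrm{Tiling}(\Fr(\mathsf{A}))_{a,b} = \sum_{i\in\mathbb{Z}} (-1)^{a+b+i(n-(n-k)-1)}\Fr(\mathsf{A})_{a+in,b} = \sum_{i\in\mathbb{Z}} (-1)^{a+b+i(k-1)}\Fr(\mathsf{A})_{a+in,b}$. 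So the first step is simply to write out this sum and then determine, for fixed $a,b$, which terms $\Fr(\mathsf{A})_{a+in,b}$ are non-zero.

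Since $\Fr(\mathsf{A})$ is lower unitriangular and vanishes outside the band $b\leq a' < b+n$, there is at most one value of $i$ — call it $i_0$ — for which $\Fr(\mathsf{A})_{a+in,b}\neq 0$, namely the unique $i_0$ with $b\leq a+i_0 n < b+n$. (One should also handle the loop case $\pi(a)=a$ separately: there $a+i_0n$ and $a+(i_0+1)n$ can both contribute, giving $\Fr(\mathsf{A})_{a+i_0n,b}=1$ at $b=a+i_0n$ and $\Fr(\mathsf{A})_{a+(i_0+1)n,b}=(-1)^k$ at $b+n=a+(i_0+1)n$; but in fact the formula to be proved, with $\tau(\mathsf{A})_{\overline a}\cdot\mathsf{A}_{\overline b}$ which is $0$ when $a$ is a loop, forces the loop case to be $0$, and indeed $1\cdot(-1)^{a+b}+(-1)^k(-1)^{a+b+(k-1)} = (-1)^{a+b}(1 + (-1)^{k}(-1)^{k-1})=0$ — so the loop case works out automatically. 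I would mention this in a sentence.) Then $\mathrm{Tiling}(\Fr(\mathsf{A}))_{a,b} = (-1)^{a+b+i_0(k-1)}\Fr(\mathsf{A})_{a+i_0n,b}$.

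The next step is to invoke Corollary~\ref{coro: twistcons}, which expresses $\Fr(\mathsf{A})_{a',b}$ (for $b\leq a' < b+n$) as $\tau(\mathsf{A})_{\overline{a'}}\cdot\mathsf{A}_{\overline b}$ when $\overline b\leq \overline{a'}$ and as $(-1)^{k-1}\tau(\mathsf{A})_{\overline{a'}}\cdot\mathsf{A}_{\overline b}$ when $\overline b>\overline{a'}$. With $a'=a+i_0n$ we have $\overline{a'}=\overline a$, so $\Fr(\mathsf{A})_{a+i_0n,b}$ equals $\tau(\mathsf{A})_{\overline a}\cdot\mathsf{A}_{\overline b}$ times a sign $\epsilon$ that is $1$ if $\overline b\leq\overline a$ and $(-1)^{k-1}$ if $\overline b>\overline a$. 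Combining, $\mathrm{Tiling}(\Fr(\mathsf{A}))_{a,b} = (-1)^{a+b}(-1)^{i_0(k-1)}\epsilon\cdot\tau(\mathsf{A})_{\overline a}\cdot\mathsf{A}_{\overline b}$. So the whole proof reduces to the bookkeeping identity $(-1)^{i_0(k-1)}\epsilon = (-1)^{(k-1)\frac{a+b-\overline a-\overline b}{n}}$, i.e., that $i_0(k-1) + (\text{the exponent contributing }\epsilon)\equiv (k-1)\frac{a+b-\overline a-\overline b}{n}\pmod 2$.

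The only real work — and the step I expect to be the main obstacle — is this sign verification: one must express $\frac{a+b-\overline a-\overline b}{n}$ in terms of $i_0$ and the relative sizes of $\overline a$ and $\overline b$. Writing $a = \overline a + pn$ and $b=\overline b + qn$ with $p,q\in\mathbb{Z}$, one has $\frac{a+b-\overline a-\overline b}{n} = p+q$, while the condition $b\leq a+i_0n<b+n$ becomes $\overline b + qn \leq \overline a + (p+i_0)n < \overline b + n + qn$, which forces $i_0 = q-p$ if $\overline b\leq\overline a$ and $i_0 = q-p-1$ if $\overline b>\overline a$. In the first case $\epsilon=1$ and $i_0(k-1) = (q-p)(k-1)$; we need $(q-p)(k-1)\equiv (p+q)(k-1)\pmod 2$, which holds since $q-p$ and $p+q$ have the same parity. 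In the second case $\epsilon=(-1)^{k-1}$, contributing an extra $(k-1)$ to the exponent, and $i_0(k-1)=(q-p-1)(k-1)$, so the total exponent is $(q-p-1)(k-1)+(k-1) = (q-p)(k-1)\equiv(p+q)(k-1)$ as before. Either way the signs match, completing the proof. I would write this out cleanly, being careful that the ``$\epsilon$'' sign in Corollary~\ref{coro: twistcons} is stated as a factor of $(-1)^{k-1}$ exactly in the case $\overline b>\overline a$, and noting that the formula also holds trivially (both sides zero) when $\tau(\mathsf{A})_{\overline a}\cdot\mathsf{A}_{\overline b}=0$, in particular at loops.
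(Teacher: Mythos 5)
Your proof is correct and takes essentially the same route as the paper's: expand the tiling sum, isolate the unique surviving index $i_0$ (respectively the two cancelling indices in the loop case), apply Corollary \ref{coro: twistcons}, and check the parity of the exponent. One minor slip: for $\overline{b}>\overline{a}$ the condition $b\leq a+i_0n<b+n$ forces $i_0=q-p+1$ rather than $q-p-1$, but since the two differ by $2$ your parity verification, and hence the conclusion, is unaffected.
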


\begin{proof}
Since $\Fr(\mathsf{A})$ is a $\pi^\dagger$-frieze with $(n-k)$-many balls, the $(a,b)$th entry of the tiling is defined as
\begin{equation}
\label{eq: frtilingsum}
 \mathrm{Tiling}(\Fr(\mathsf{A}))_{a,b}
= \sum_{i\in \mathbb{Z}}
(-1)^{a+b+i(k-1)} \Fr(\mathsf{A})_{a+in,b} 
\end{equation}
The latter sum can be evaluated using the cases in Corollary \ref{coro: twistcons}.

If $\pi^\dagger(a)<a+n$ (that is, $a$ is not a loop of $\pi$), then every term in the sum \eqref{eq: frtilingsum} must vanish except for the $i\in \mathbb{Z}$ such that $b\leq  a+in<b+n$.
Equivalently, $1\leq a-b+1+in \leq n$, and so 
$i = \frac{(a-b+1) - \overline{a-b+1}}{n} $.
Corollary \ref{coro: twistcons} gives the value of the remaining term as
\begin{align*}
\mathrm{Tiling}(\Fr(\mathsf{A}))_{a,b}
&= 
(-1)^{a+b+i(k-1)}\begin{cases}
\rt(\mathsf{A})_{\overline{a}} \cdot \mathsf{A}_{\overline{b}}  & \text{if $\overline{b}\leq \overline{a}$},\\
(-1)^{k-1} \rt(\mathsf{A})_{\overline{a}} \cdot \mathsf{A}_{\overline{b}} & \text{if $\overline{b}> \overline{a}$}.\\
\end{cases}
\end{align*}
If $\overline{b}\leq \overline{a}$, then $\overline{a-b+1}=\overline{a}-\overline{b}+1$, and so $i= \frac{(a-b)-(\overline{a}-\overline{b})}{n}$.
If $\overline{b}>\overline{a}$, $\overline{a-b+1}=(\overline{a}-\overline{b}+1)+n$, and so $i= \frac{(a-b)-(\overline{a}-\overline{b})}{n}-1$. Therefore, the lemma holds whenever $\pi^\dagger(a)<a+n$.

If $\pi^\dagger(a)=a+n$, then $\pi(a)=a$ and so $\mathsf{A}_{\overline{a}}=0$ and $\tau(\mathsf{A})_{\overline{a}})=0$.
If $b\not\equiv a$ mod $n$, then every term in the sum \eqref{eq: frtilingsum} vanishes.
If $b=a+in$ for some $i$, then the sum \eqref{eq: frtilingsum} has two terms, and 
\begin{align*}
\mathrm{Tiling}(\Fr(\mathsf{A}))_{a,b}
&= 
(-1)^{a+b+i(k-1)} \Fr(\mathsf{A})_{a+in,b} + (-1)^{a+b+(i+1)(k-1)} \Fr(\mathsf{A})_{a+(i+1)n,b}
\\ &=
(-1)^{a+b+i(k-1)} (1) + (-1)^{a+b+(i+1)(k-1)} (-1)^k = 0
\end{align*}
Therefore, the lemma holds whenever $\pi^\dagger(a)=a+n$.
%
%
%
\end{proof}

\begin{thmA}
\label{thm: dualities}
Let $\mathsf{A}$ be a $\pi$-unimodular matrix. Then
\[ \mathrm{F}(\mathsf{A})^\dagger = \mathrm{F}(\rt(\mathsf{A}) ^\pperp) = \mathrm{F}(\rt^{-1}(\mathsf{A} ^\pperp)) \]
\end{thmA}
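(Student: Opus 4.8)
The plan is to deduce Theorem~\ref{thm: dualities} by combining the previously established identifications of $\Fr$, the twist, and the positive complement, stringing together the results proven in the preceding subsections. First I would handle the second equality $\mathrm{F}(\rt(\mathsf{A}) ^\pperp) = \mathrm{F}(\rt^{-1}(\mathsf{A} ^\pperp))$. By Proposition~\ref{prop: twistduality}, applied to the $\pi^\dagger$-unimodular matrix $\mathsf{A}^\pperp$ (using that $\pperp$ takes $\pi$-unimodular matrices to $\pi^\dagger$-unimodular matrices and $(\pi^\dagger)^\dagger = \pi$), we have that $\rt(\mathsf{A}^\pperp)$ is a positive complement to $\rt^{-1}((\mathsf{A}^\pperp)^\pperp) = \rt^{-1}(\mathsf{A})$. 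Unwinding the symmetry of the positive complement relation, this says $\rt(\mathsf{A})^\pperp \equiv \rt^{-1}(\mathsf{A}^\pperp)$ modulo left multiplication by $\mathrm{SL}(k)$; since $\Fr$ only depends on the $\mathrm{SL}(k)$-orbit (Theorem~\ref{thm: Frbijection}, or the remark before it), the two right-hand sides agree. So it remains to prove the first equality.

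For the first equality $\mathrm{F}(\mathsf{A})^\dagger = \mathrm{F}(\rt(\mathsf{A}) ^\pperp)$, the cleanest route is via the solution matrices / tilings, using Lemma~\ref{lemma: tilingentries} and Proposition~\ref{prop: soltiling}. Both sides are $\pi$-friezes (the left by Theorem~\ref{thm: juggleduality} applied to the $\pi^\dagger$-frieze $\Fr(\mathsf{A})$; the right by Theorem~\ref{thm: detpifrieze} applied to the $\pi^\dagger$-unimodular matrix $\rt(\mathsf{A})^\pperp$, since $(\pi^\dagger)^\dagger = \pi$). Since a juggler's frieze is determined by its solution matrix (Lemma~\ref{lemma: linrec}, part~\ref{lemma: linrec1}, plus the fact that $\sol$ is determined by the kernel), it suffices to show $\sol(\Fr(\mathsf{A})^\dagger) = \sol(\Fr(\rt(\mathsf{A})^\pperp))$. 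By Proposition~\ref{prop: soltiling}, $\sol(\Fr(\mathsf{A})^\dagger) = \mathrm{Tiling}((\Fr(\mathsf{A})^\dagger)^\dagger) = \mathrm{Tiling}(\Fr(\mathsf{A}))$, using Lemma~\ref{lemma: doubledual}. On the other side, by Lemma~\ref{lemma: qpkernel} applied to $\rt(\mathsf{A})^\pperp$, the kernel of $\Fr(\rt(\mathsf{A})^\pperp)$ is the superperiodic extension of $\ker(\rt(\mathsf{A})^\pperp) = \mathrm{rowspan}(\mathsf{A})^{\perp\text{-ish}}$ — more precisely, the defining property of the positive complement says the Plücker coordinates of $\rt(\mathsf{A})^\pperp$ are indexed-complementary to those of $\rt(\mathsf{A})$, hence $\ker(\rt(\mathsf{A})^\pperp)$ is determined by $\mathrm{rowspan}(\rt(\mathsf{A}))$. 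The alternative, and probably faster, approach is to compare entries directly: use Lemma~\ref{lemma: tilingentries} to write $\mathrm{Tiling}(\Fr(\mathsf{A}))_{a,b}$ in terms of $\rt(\mathsf{A})_{\overline a}\cdot \mathsf{A}_{\overline b}$, and then recognize this dot product, via Cauchy–Binet and the positive-complement identity $\det(\mathsf{A}_I) = \det(\mathsf{A}^\pperp_{[n]\smallsetminus I})$, as the corresponding dot product $\rt(\mathsf{A})^\pperp_{\overline a}\cdot (\text{something})_{\overline b}$ computing $\Fr(\rt(\mathsf{A})^\pperp)$.

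The key steps, in order: (1) Reduce the second equality to Proposition~\ref{prop: twistduality} and the $\mathrm{SL}(k)$-invariance of $\Fr$. (2) Observe both sides of the first equality are $\pi$-friezes, so it suffices to match solution matrices (equivalently kernels), by Lemma~\ref{lemma: linrec}. (3) Compute $\sol(\Fr(\mathsf{A})^\dagger) = \mathrm{Tiling}(\Fr(\mathsf{A}))$ via Proposition~\ref{prop: soltiling} and Lemma~\ref{lemma: doubledual}. (4) Compute $\sol(\Fr(\rt(\mathsf{A})^\pperp))$, again via Proposition~\ref{prop: soltiling}, as $\mathrm{Tiling}(\Fr(\rt(\mathsf{A})^\pperp)^\dagger)$; then use Lemma~\ref{lemma: tilingentries} on both tilings and reduce to an identity between dot products $\rt(\mathsf{A})_{\overline a}\cdot \mathsf{A}_{\overline b}$ and $\rt(\rt(\mathsf{A})^\pperp)_{\overline a}\cdot (\rt(\mathsf{A})^\pperp)_{\overline b}$, which follows from Cauchy–Binet together with the defining determinantal identity of the positive complement (and a careful sign bookkeeping using the $(-1)^{|S_{\pi^\dagger}(b,a)|}$ factors). (5) Conclude equality of solution matrices, hence of the friezes.

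The main obstacle I anticipate is step (4): the sign bookkeeping. Matching $\rt(\mathsf{A})_{\overline a}\cdot \mathsf{A}_{\overline b}$ with the twist of the positive complement requires relating the landing schedules $L_a$ of $\pi$ to those of $\pi^\dagger$ (they are complementary mod $n$ by the Grassmann necklace duality), and the signs $(-1)^{|S_{\pi^\dagger}(b,a)|}$ versus $(-1)^{|S_\pi(b,a)|}$ and the $(-1)^{k-1}$ wrap-around factors must be tracked through Cauchy–Binet. This is exactly the kind of verification the paper warns is tedious; using the plabic-graph identity in Proposition~\ref{prop: twistduality} as a black box (which already encodes the source/target Plücker duality) together with the abstract characterization of friezes by their kernels (steps 2–3) is what lets us avoid doing this computation twice.
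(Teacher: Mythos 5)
Your overall architecture matches the paper's: the second equality is disposed of exactly as you say (Proposition~\ref{prop: twistduality} plus $\mathrm{SL}(k)$-invariance of $\Fr$ — the paper records this as the remark ``$\tau(\mathsf{A})^\pperp \equiv \tau^{-1}(\mathsf{A}^\pperp)$ modulo $\mathrm{SL}(k)$'' immediately after that proposition), and the first equality is reduced to matching solution matrices, with $\sol(\Fr(\mathsf{A})^\dagger) = \mathrm{Tiling}(\Fr(\mathsf{A}))$ obtained from Proposition~\ref{prop: soltiling} and Lemma~\ref{lemma: doubledual} just as you propose. Where you diverge is in how the other side is computed. The paper does \emph{not} go through $\ker(\rt(\mathsf{A})^\pperp)$ and a subspace comparison; it invokes the factorization $\sol(\Fr(\tau^{-1}(\mathsf{A}^\pperp))) = q_{(n-k,n)}(\tau(\mathsf{B}))^\top q_{(n-k,n)}(\mathsf{B})$ from \cite[Theorem~9.6]{MulRes1} (where $\mathsf{B}$ is $\mathsf{A}$ with odd columns negated), computes the $(a,b)$ entry explicitly, and matches it against Lemma~\ref{lemma: tilingentries} entry by entry. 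Your first alternative — identify $\ker(\Fr(\rt(\mathsf{A})^\pperp))$ as $q_{(n-k,n)}(\ker(\rt(\mathsf{A})^\pperp))$ via Lemma~\ref{lemma: qpkernel} and then appeal to Lemma~\ref{lemma: linrec}(1) — is logically sufficient and avoids some of the sign bookkeeping, but the remaining work is not trivial: you still must show that $\ker(\Fr(\mathsf{A})^\dagger)$, i.e.\ the column span of $\mathrm{Tiling}(\Fr(\mathsf{A}))$, coincides with the sign-twisted superperiodic extension of $\mathrm{rowspan}(\rt(\mathsf{A}))$ (which is $\ker(\rt(\mathsf{A})^\pperp)$ up to negating odd coordinates). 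This does follow from Lemma~\ref{lemma: tilingentries} — each column of the tiling is visibly a superperiodic extension of a vector in $\epsilon(\mathrm{rowspan}(\rt(\mathsf{A})))$, and a dimension count ($k$ on both sides) finishes it — but that containment-plus-dimension step is the real content and you leave it implicit.

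One concrete caution about your second alternative in step (4): Lemma~\ref{lemma: tilingentries} computes the entries of $\mathrm{Tiling}(\Fr(\mathsf{M}))$ for a unimodular $\mathsf{M}$, whereas $\sol(\Fr(\rt(\mathsf{A})^\pperp))$ equals $\mathrm{Tiling}(\Fr(\rt(\mathsf{A})^\pperp)^\dagger)$ by Proposition~\ref{prop: soltiling} — the tiling of the \emph{dual} of a frieze of the form $\Fr(\mathsf{M})$. You cannot apply Lemma~\ref{lemma: tilingentries} to that object without already knowing that $\Fr(\rt(\mathsf{A})^\pperp)^\dagger = \Fr(\mathsf{N})$ for an explicit $\mathsf{N}$, which is essentially the statement being proved; and the dot-product identity you would then need, $\rt(\mathsf{A})_{\overline a}\cdot\mathsf{A}_{\overline b} = \pm\,\rt(\rt(\mathsf{A})^\pperp)_{\overline a}\cdot(\rt(\mathsf{A})^\pperp)_{\overline b}$, is itself equivalent in strength to the theorem. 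So the kernel-matching route is the one to pursue; the direct entry comparison as you sketch it is circular.
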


\begin{proof}
%
%
Let $\mathsf{A}$ be a $\pi$-unimodular $k\times n$-matrix, and let $\mathsf{B}$ be the negation of the odd numbered columns of $\mathsf{A}$, so that the rows of $\mathsf{B}$ span the kernel of $\mathsf{A}^\ddagger$. By Prop.~\ref{prop: twistduality}, $\tau(\mathsf{A})$ is a positive complement of $\tau^{-1}(\mathsf{A}^\ddagger)$ and the rows of $\tau(\mathsf{B})$ span the kernel of $\tau^{-1}(\mathsf{A}^\ddagger)$. 

Given a $k\times n$-matrix $\mathsf{M}$, let $q_{(n-k,n)}(\mathsf{M})$ denote the $k\times \mathbb{Z}$-matrix whose rows are the $(n-k,n)$ superperiodic extensions of the rows of $\mathsf{M}$; that is,
\[ q_{(n-k,n)}(\mathsf{M})_{a,b} = (-1)^{(n-k-1)\frac{b-\overline{b}}{n}} \mathsf{M}_{a,\overline{b}} \]
Since $\tau^{-1}(\mathsf{A}^\ddagger)$ is $(n-k)\times n$,  by Lemma \ref{lemma: qpkernel}, the rows of $q_{(n-k,n)}(\tau(\mathsf{B}))$ span the kernel of $\Fr(\tau^{-1}(\mathsf{A}^\ddagger))$. 
By \cite[Theorem~9.6]{MulRes1}, the solution matrix of this frieze factors as
\[ \sol( \Fr(\tau(\mathsf{A}^\ddagger) ) ) = q_{(n-k,n)}(\tau(\mathsf{B}))^\top q_{(n-k,n)}(\mathsf{B}) \]
The $(a,b)$th entry of $ \sol( \Fr(\tau(\mathsf{A})^\ddagger) )$ is therefore
\begin{align*}
\left(q_{(n-k,n)}(\tau(\mathsf{B}))^\top q_{(n-k,n)}(\mathsf{B})\right)_{a,b}
&=
q_{(n-k,n)}(\tau(\mathsf{B}))_a\cdot q_{(n-k,n)}(\mathsf{B})_b \\
&= \left((-1)^{(n-k-1)\frac{a-\overline{a}}{n}}\tau(\mathsf{B})_{\overline{a}}\right)\cdot
\left((-1)^{(n-k-1)\frac{b-\overline{b}}{n}}\tau(\mathsf{B})_{\overline{b}}\right) \\
&= \left((-1)^{(n-k-1)\frac{a-\overline{a}}{n}+\overline{a}}\tau(\mathsf{A})_{\overline{a}} \right)\cdot
\left((-1)^{(n-k-1)\frac{b-\overline{b}}{n}+\overline{b}}\mathsf{A}_{\overline{b}} \right)
\\
&= (-1)^{(n-k-1)\frac{a+b-\overline{a}-\overline{b}}{n}+\overline{a}+\overline{b}} \left(\tau(\mathsf{A})_{\overline{a}} 
\cdot\mathsf{A}_{\overline{b}} \right)
\\
&= (-1)^{(k-1)\frac{a+b-\overline{a}-\overline{b}}{n}+{a}+{b}} \left(\tau(\mathsf{A})_{\overline{a}} 
\cdot\mathsf{A}_{\overline{b}} \right) 
\end{align*}
By Lemma \ref{lemma: tilingentries}, this coincides with the $(a,b)$th entry of $\mathrm{Tiling}(\Fr(\mathsf{A}))$. With Proposition \ref{prop: soltiling},
\[ \sol(\Fr(\tau^{-1}(\mathsf{A}^\ddagger))) = \mathrm{Tiling}(\Fr(\mathsf{A})) = \sol(\Fr(\mathsf{A})^\dagger) \]
Since the friezes $\Fr(\tau^{-1}(\mathsf{A}^\ddagger))$ and $\Fr(\mathsf{A})^\dagger$ have the same solution matrix, they have the same kernel by Lemma \ref{lemma: linrec}.4, and so they are equal by Lemma \ref{lemma: linrec}.1.
\end{proof}

\appendix

\section{Collected notational conventions}

We collect a number of notational conventions here for the reader's convenience.

\begin{itemize}
    \item \high{$\k$} denotes an arbitrary choice of field.
    \item For an integer $n>0$, \high{$[n]$} denotes the interval of consecutive integers from $1$ to $n$; that is,
    \[ [n] := \{ 1,2,3,...,n\} \]
    \item For $a,b\in \mathbb{Z}$, \high{$[a,b]$} denotes the interval of consecutive integers from $a$ to $b$; that is,
    \[ [a,b] := \{ a,a+1,a+2,...,b\} \]
    Similarly, \high{$[a,b)$}, \high{$(a,b]$}, and \high{$(a,b)$} denote consecutive integers save one or both of $a$ and $b$.
    \item When a choice of $n$ is clear from context, we use the following notation.
    \begin{itemize}
        \item For each $a\in\mathbb{Z}$, \high{$\overline{a}$} denotes the unique integer in $[n]$ congruent to $a$ mod $n$.
        \item For each $I\subset \mathbb{Z}$, \high{$\overline{I}$} denotes the set of integers in $[n]$ congruent to elements of $I$ mod $n$.
    \end{itemize}
    \item When $\mathsf{A}$ is a $k\times n$-matrix, we use the following notation.
    \begin{itemize}
        \item Given $a\in [n]$, \high{$\mathsf{A}_a$} denotes the $a$th column of $\mathsf{A}$.
        \item Given $a\in \mathbb{Z}$, \high{$\mathsf{A}_a$} denotes the $\overline{a}$th column of $\mathsf{A}$.
        \item Given $I\subset [n]$, \high{$\mathsf{A}_I$} denotes the $k\times |I|$-submatrix of $\mathsf{A}$ whose columns are indexed by $I$.
        \item Given $I\subset \mathbb{Z}$, \high{$\mathsf{A}_I$} denotes the $k\times |\overline{I}|$-submatrix of $\mathsf{A}$ whose columns are indexed by $\overline{I}$.
        
        \emph{Warning:} The columns of $\mathsf{A}_I$ are ordered according to the indices order in $\overline{I}$, not in $I$.
        
    \end{itemize}
\end{itemize}

\subsection*{Acknowledgements}

This paper owes its existence to conversations with two people.
\begin{itemize}
    \item David Speyer, with whom the second author discovered several curious properties of the twist; most notably the $\mathrm{GL}(k)$-invariance of the function $\mathsf{A}\mapsto \tau(\mathsf{A})^\top \mathsf{A}$.
    \item Karin Baur, whose talks on their work in \cite{BFGST18a,BFGST21} connected these curious properties to friezes and inspired this project.
\end{itemize}
We are also grateful to Emily Gunawan and Khrystyna Serhiyenko for helpful conversations.


\begin{bibdiv}
\begin{biblist}

\bib{BFGST18a}{article}{
   author={Baur, Karin},
   author={Faber, Eleonore},
   author={Gratz, Sira},
   author={Serhiyenko, Khrystyna},
   author={Todorov, Gordana},
   title={Mutation of friezes},
   journal={Bull. Sci. Math.},
   volume={142},
   date={2018},
   pages={1--48},
   issn={0007-4497},
   review={\MR{3758161}},
   doi={10.1016/j.bulsci.2017.09.004},
}

\bib{BFGST21}{article}{
   author={Baur, Karin},
   author={Faber, Eleonore},
   author={Gratz, Sira},
   author={Serhiyenko, Khrystyna},
   author={Todorov, Gordana},
   title={Friezes satisfying higher ${\rm SL}_k$-determinants},
   journal={Algebra Number Theory},
   volume={15},
   date={2021},
   number={1},
   pages={29--68},
   issn={1937-0652},
   review={\MR{4226982}},
   doi={10.2140/ant.2021.15.29},
}

\bib{BR10}{article}{
   author={Bergeron, Fran\c{c}ois},
   author={Reutenauer, Christophe},
   title={$SL_k$-tilings of the plane},
   journal={Illinois J. Math.},
   volume={54},
   date={2010},
   number={1},
   pages={263--300},
   issn={0019-2082},
   review={\MR{2776996}},
   doi={10.1215/ijm/1299679749},
}

\bib{CC73a}{article}{
   author={Conway, John H.},
   author={Coxeter, Harold S. M.},
   title={Triangulated polygons and frieze patterns},
   journal={Math. Gaz.},
   volume={57},
   date={1973},
   number={400},
   pages={87--94},
   issn={0025-5572},
   review={\MR{461269}},
   doi={10.2307/3615344},
}

\bib{CC06}{article}{
   author={Caldero, Philippe},
   author={Chapoton, Fr\'{e}d\'{e}ric},
   title={Cluster algebras as Hall algebras of quiver representations},
   journal={Comment. Math. Helv.},
   volume={81},
   date={2006},
   number={3},
   pages={595--616},
   issn={0010-2571},
   review={\MR{2250855}},
   doi={10.4171/CMH/65},
}

\bib{Cox71}{article}{
   author={Coxeter, Harold S. M.},
   title={Frieze patterns},
   journal={Acta Arith.},
   volume={18},
   date={1971},
   pages={297--310},
   issn={0065-1036},
   review={\MR{286771}},
   doi={10.4064/aa-18-1-297-310},
}

\bib{CR72}{article}{
   author={Cordes, Craig M.},
   author={Roselle, D. P.},
   title={Generalized frieze patterns},
   journal={Duke Math. J.},
   volume={39},
   date={1972},
   pages={637--648},
   issn={0012-7094},
   review={\MR{314658}},
   doi={10.1215/S0012-7094-72-03970-1},
}

\bib{MulRes2}{article}{
    author={Docampo, Roi},
    author={Muller, Greg},
    title={Spaces of quasiperiodic sequences},
    note={In preparation},
}

\bib{GL19}{article}{
    author={Galashin, Pavel},
    author={Lam, Thomas},
    title={Positroid varieties and cluster algebras},
    date={2019},
    eprinttype={arXiv},
    eprint={1906.03501},
    doi={10.48550/arXiv.1906.03501}
}

\bib{Gal21}{article}{
  author = {Galashin, Pavel},
  title = {Critical varieties in the Grassmannian},
  date = {2021},
  eprinttype = {arXiv},
  eprint= {2102.13339},
  doi = {10.48550/arXiv.2102.13339},
  url = {https://arxiv.org/abs/2102.13339},
  keywords = {Combinatorics (math.CO), Mathematical Physics (math-ph), FOS: Mathematics, FOS: Mathematics, FOS: Physical sciences, FOS: Physical sciences, Primary: 14M15. Secondary: 15B48, 82B27, 05E99},
  copyright = {arXiv.org perpetual, non-exclusive license}
}

\bib{KV15}{article}{
   author={Kedem, Rinat},
   author={Vichitkunakorn, Panupong},
   title={$T$-systems and the pentagram map},
   journal={J. Geom. Phys.},
   volume={87},
   date={2015},
   pages={233--247},
   issn={0393-0440},
   review={\MR{3282370}},
   doi={10.1016/j.geomphys.2014.07.003},
}

\bib{KLS13}{article}{
   author={Knutson, Allen},
   author={Lam, Thomas},
   author={Speyer, David E.},
   title={Positroid varieties: juggling and geometry},
   journal={Compos. Math.},
   volume={149},
   date={2013},
   number={10},
   pages={1710--1752},
   issn={0010-437X},
   review={\MR{3123307}},
   doi={10.1112/S0010437X13007240},
}

\bib{Lec16}{article}{
   author={Leclerc, Bernard},
   title={Cluster structures on strata of flag varieties},
   journal={Adv. Math.},
   volume={300},
   date={2016},
   pages={190--228},
   issn={0001-8708},
   review={\MR{3534832}},
   doi={10.1016/j.aim.2016.03.018},
}

\bib{MG15}{article}{
   author={Morier-Genoud, Sophie},
   title={Coxeter's frieze patterns at the crossroads of algebra, geometry
   and combinatorics},
   journal={Bull. Lond. Math. Soc.},
   volume={47},
   date={2015},
   number={6},
   pages={895--938},
   issn={0024-6093},
   review={\MR{3431573}},
   doi={10.1112/blms/bdv070},
}

\bib{MGOST14}{article}{
   author={Morier-Genoud, Sophie},
   author={Ovsienko, Valentin},
   author={Schwartz, Richard Evan},
   author={Tabachnikov, Serge},
   title={Linear difference equations, frieze patterns, and the
   combinatorial Gale transform},
   journal={Forum Math. Sigma},
   volume={2},
   date={2014},
   pages={Paper No. e22, 45},
   review={\MR{3264259}},
   doi={10.1017/fms.2014.20},
}

\bib{MS16}{article}{
   author={Marsh, Bethany R.},
   author={Scott, Jeanne S.},
   title={Twists of Pl\"{u}cker coordinates as dimer partition functions},
   journal={Comm. Math. Phys.},
   volume={341},
   date={2016},
   number={3},
   pages={821--884},
   issn={0010-3616},
   review={\MR{3452273}},
   doi={10.1007/s00220-015-2493-7},
}

\bib{MS17}{article}{
   author={Muller, Greg},
   author={Speyer, David E.},
   title={The twist for positroid varieties},
   journal={Proc. Lond. Math. Soc. (3)},
   volume={115},
   date={2017},
   number={5},
   pages={1014--1071},
   issn={0024-6115},
   review={\MR{3733558}},
   doi={10.1112/plms.12056},
}

\bib{MulRes1}{article}{,
    author={Muller, Greg},
    title={Linear recurrences indexed by $\mathbb{Z}$},
    date={2019},
    eprinttype={arXiv},
    eprint={1906.04311},
    doi={10.48550/arXiv.1906.04311},
}

\bib{Pos}{article}{,
    author={Postnikov, Alexander},
    title={Total positivity, Grassmannians, and networks},
    date={2006},
    eprinttype={arXiv},
    eprint={math/0609764},
    doi={10.48550/arXiv.math/0609764},
}

\bib{Sco06}{article}{
   author={Scott, J. S.},
   title={Grassmannians and cluster algebras},
   journal={Proc. London Math. Soc. (3)},
   volume={92},
   date={2006},
   number={2},
   pages={345--380},
   issn={0024-6115},
   review={\MR{2205721}},
   doi={10.1112/S0024611505015571},
}

\bib{Sha84}{article}{
   author={Shapiro, Louis W.},
   title={Positive definite matrices and Catalan numbers, revisited},
   journal={Proc. Amer. Math. Soc.},
   volume={90},
   date={1984},
   number={3},
   pages={488--496},
   issn={0002-9939},
   review={\MR{728375}},
   doi={10.2307/2044500},
}

\bib{SSBW20}{article}{
   author={Serhiyenko, Khrystyna},
   author={Sherman-Bennett, Melissa},
   author={Williams, Lauren},
   title={Combinatorics of cluster structures in Schubert varieties},
   language={English, with English and French summaries},
   journal={S\'{e}m. Lothar. Combin.},
   volume={82B},
   date={2020},
   pages={Art. 8, 12},
   review={\MR{4098229}},
   doi={10.48550/arXiv.1811.02724},
}

\end{biblist}
\end{bibdiv}


\end{document}